\theoremstyle:=definition,remark,plain\do{%
     \expandafter\g@addto@macro\csname th@\theoremstyle\endcsname{%
        \addtolength\thm@preskip\parskip
     }%
   }
\newtheorem{remark}[thm]{Remark}
\newtheorem{hyp1}[mainthm]{Hypothesis}
\DeclareMathOperator{\CZ}{CZ}
\newcommand{\wt}[1]{\widetilde{#1}}
\newcommand{\norm}[1]{\left\| #1 \right\|}
\numberwithin{equation}{section}
\def\iff{\Leftrightarrow}
\def\C{\mathbb{C}}
\def\Q{\mathbb{Q}}
\def\R{\mathbb{R}}
\def\Z{\mathbb{Z}}
\def\CP{\mathbb{CP}}
\def\bD{\mathbb{D}}
\def\cA{\mathcal{A}}
\def\cC{\mathcal{C}}
\def\cF{\mathcal{F}}
\def\cJ{\mathcal{J}}
\def\cK{\mathcal{K}}
\def\cL{\mathcal{L}}
\def\cN{\mathcal{N}}
\def\cO{\mathcal{O}}
\def\cP{\mathcal{P}}
\def\cQ{\mathcal{Q}}
\def\a{\alpha}
\def\eps{\epsilon}
\def\s{\sigma}
\def\w{\omega}
\def\bfD{\mathbf{D}}
\def\bflambda{\boldsymbol{\lambda}}
\def\bfkappa{\boldsymbol{\kappa}}
\def\ginv{f}
\def\Nov{\Lambda}
\begin{document}
\title{Quantum cohomology as a deformation of symplectic cohomology}

\author{Matthew Strom Borman,  Nick Sheridan, 
 and Umut Varolgunes
}

\address{Matthew Strom Borman, Department of Mathematics, University of Illinois,  1409 W. Green Street, Urbana, IL 61801, U.S.A.}
\address{Nick Sheridan, School of Mathematics, University of Edinburgh, Edinburgh EH9 3FD, U.K.}
\address{Umut Varolgunes, School of Mathematics, University of Edinburgh, Edinburgh EH9 3FD, U.K.}

\begin{abstract}
We prove that under certain conditions, the quantum cohomology of a positively monotone compact symplectic manifold is a deformation of the symplectic cohomology of the complement of a simple crossings symplectic divisor. 
We also prove rigidity results for the skeleton of the divisor complement. 
\end{abstract}

\maketitle

\setlength{\parindent}{0pt}


\section{Introduction}

\subsection{Geometric setup}\label{subsec:setup}

Let $(M,\w)$ be a compact symplectic manifold satisfying the monotonicity condition: $$ 2 \kappa c_1(TM) =  [\w], \text{ for some }\kappa>0$$ in $H^2(M;\R)$.  

Let $D=\cup_{i=1}^N D_i \subset M$ be an SC symplectic divisor (in the sense of \cite[Section 2]{Tehrani2018a}) and set $X = M \setminus D$.\footnote{From now on, we systematically shorten SC symplectic divisor to SC divisor as we believe this will not cause confusion.}

We assume that there exist positive rational numbers $\lambda_1,\ldots ,\lambda_N$ called \textit{weights} such that $$2c_1(TM)=\sum_i \lambda_i \cdot \mathrm{PD}(D_i)\quad\text{  in  $H^2(M;\R)$.}$$ Note that the number of weights in the setup depends on the divisor. If $\mathrm{PD}(D_i)$ are linearly independent classes in $H^2(M;\R)$ (e.g., if $D$ is smooth), the weights are canonically determined. Otherwise, the choice of weights is extra data.

The classes $\mathrm{PD}(D_i)$ have canonical lifts to the relative cohomology group $H^2(M,X;\R)$ along the canonical map $$H^2(M,X;\R)\to H^2(M;\R),$$see Section \ref{ss=div-basic} for more details. Let us denote these classes by $\mathrm{PD}^{rel}(D_i)$ and note that they form a basis of $H^2(M,X;\R).$ We define the class \[ \bflambda := \sum_i \lambda_i\cdot \mathrm{PD}^{rel}(D_i) \in H^2(M,X;\R), \] which is a lift of $2c_1(TM)$ by construction. Consequently, $\kappa\bflambda$ is a lift of $[\omega]$.

Let us denote by $\Omega^*(M,X)$ the relative de Rham complex, which is by definition the cone of the restriction map $\Omega^*(M) \to \Omega^*(X)$. Note that there is a relative de Rham isomorphism $$H^*(\Omega^*(M,X))\to H^*(M,X;\R),$$ which in particular tells us that there exists a one-form $\theta \in \Omega^1(X)$ satisfying 
\[ \w|_X = d\theta,\qquad \text{and} \qquad \int_u \omega - \int_{\partial u} \theta = \kappa\bflambda \cdot u \qquad \text{for all $u \in H_2(M,X)$.}\]

Using that $\kappa\lambda_i>0$ for all $i$, we may arrange that $(X,\theta)$ is a finite type convex symplectic manifold. Moreover $c_1(TX)=0$, and a preferred homotopy class of trivializations of a power of the canonical bundle of $X$ is determined by the choice of weights $\lambda_i$ (see Section \ref{subsec:SH} for details).

\begin{example}\label{eg:proj}
Suppose that $M$ is a smooth complex projective variety, $D$ a simple normal crossings divisor, and $\bfD^{\bflambda} = \sum_i \lambda_i D_i$ is an effective ample $\mathbb{Q}$-divisor whose class in $\mathrm{CH}_*(M)_\Q$ is twice the anticanonical class: $$[\bfD^{\bflambda}]=-2K_M.$$ Let us also choose an arbitrary $\kappa>0$.

Choose a positive integer $k$ such that $k\bfD^{\bflambda}$ has integral coefficients, and let $\cL$ be the corresponding complex line bundle with section $s$. Ampleness implies that we can choose a positive Hermitian metric $\norm{\cdot}$ on $\cL$ with curvature $2$-form $F$. We define the symplectic form $\omega:=\frac{-i\kappa}{k}F$ on $M$. We can also define the primitive $\theta:=\frac{\kappa}{k}d^c\log\norm{s}$ of $\omega$ on $X$. Using $D$ as our SC divisor and $\lambda_i$ as the weights, this puts us in the geometric set-up described above.
Note that in this case $X$ is an affine variety.
\end{example}

The setup that we described thus far is among the most studied in symplectic geometry. Now, we introduce an hypothesis which is less common, but which is very crucial for our results.

\begin{hyp1}\label{hyp:gr}
We have $\lambda_i\leq 2$ for all $i=1,\ldots,  N$. 
\end{hyp1}

\begin{rmk} 
Recalling that $[\bfD^{\bflambda}] = -2K_M$, we note that the extreme case of Hypothesis \ref{hyp:gr}, namely $\lambda_i = 2$ for all $i$, corresponds to $(M,D)$ being log Calabi--Yau. 
If we in addition assume that each irreducible component of $D$ is ample, then Hypothesis \ref{hyp:gr} implies that $(M,D)$ is either log Calabi--Yau or log general type. \end{rmk}

\begin{example}\label{eg:cpn}
Consider the setup of Example \ref{eg:proj}. Let us take $M=\mathbb{C}\mathbb{P}^n$, $D$ a simple normal crossings divisor of degree $d$. 
Then we may choose weights $\lambda_i$ such that Hypothesis \ref{hyp:gr} holds if and only if $d \ge n+1$. 
Note that $(M,D)$ is log Calabi--Yau if $d=n+1$, and log general type if $d>n+1$.
To see one direction of the implication, assume that $D = \cup_{i=1}^N D_i$ with $D_i$ smooth of degree $d_i$ and Hypothesis \ref{hyp:gr} holds. Then we have
$$2(n+1) = \sum_i \lambda_i d_i \le \sum_i 2d_i = 2d.$$
Note that in the log Calabi--Yau case $d=n+1$, all weights $\lambda_i$ must be equal to $2$. 
\end{example}

\subsection{Quantum cohomology is a deformation of symplectic cohomology}\label{ss-defintro}

We fix, once and for all, a commutative ring $\Bbbk$. Let $A \subset \Q$ be the subgroup generated by the weights $\lambda_i$, and set $\Nov = \Bbbk[A]$ to be the group algebra of $A$.\footnote{Explicitly, $\Nov$ is the $\Bbbk$-algebra of $\Bbbk$-linear combinations of the symbols $e^a$ where $a \in A$, and $e^a \cdot e^b = e^{a+b}$.}  
We define a $\Q$-grading on $\Nov$ by putting $e^a$ in degree $i(e^a) = a$. Let $a_0>0$ be a generator of $A$, and define $q:=e^{a_0}$. Hence, we have an isomorphism of algebras $\Nov\cong \Bbbk[q,q^{-1}]$.

Throughout the paper, we will consider various filtrations associated to filtration maps (see Section \ref{ss-filtration-maps} for a review of this notion). 
We will abuse notation by using the same symbol for the filtration map and the associated filtration. 
In the first instance of this abuse of notation, we introduce the filtration $\cQ_{\ge \bullet}$ on $\Nov$ associated to the filtration map $\cQ:\Nov \to \Z$ induced by $\cQ(q^a) = a$. 
Thus $\cQ_{\ge p} \Nov$ consists of all linear combinations of monomials $q^a$ with $a \ge p$.

We define the graded $\Nov$-module $QH^*(M;\Nov) := H^*(M;\Bbbk) \otimes_\Bbbk \Nov$, equipped with the tensor product grading.\footnote{Our main results do not concern the quantum cup product on $QH^*(M;\Nov)$, but it plays a role in some of the conjectures in Section \ref{s-conj}.} We are concerned with the following idealized and vague conjecture:

\begin{conj}\label{conj:def}
Under certain hypotheses:
\begin{enumerate}[(a)]
\item \label{it:a} $QH^*(M;\Nov)$ is the cohomology of a natural deformation of the symplectic cochain complex $SC^*(X;\Bbbk)$ over $\Nov$;
\item \label{it:b} Furthermore, the associated spectral sequence converges: $E_1 = SH^*(X;\Bbbk) \otimes_\Bbbk \Nov \Rightarrow QH^*(M;\Nov)$.
\end{enumerate}
\end{conj}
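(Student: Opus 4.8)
The plan is to realize the conjectural deformation of $SC^*(X;\Bbbk)$ as the Hamiltonian Floer complex of the \emph{closed} manifold $M$, with the divisor $D$ entering only through Novikov weights. Fix a Hamiltonian $H$ on $M$ that is $C^2$-small Morse on $X$ outside a neighbourhood $\nu(D)$ of $D$ and, in the radial directions normal to each $D_i$ inside $\nu(D)$, is linear of large slope; and fix an almost complex structure adapted to the SC divisor $D$, so that pseudo-holomorphic curves meet each $D_i$ positively, with well-defined multiplicities $(D_i\cdot u)\in\Z_{\ge 0}$. For a fixed slope the $1$-periodic orbits are critical points of the Morse function (representing $H^*(M;\Bbbk)$) together with orbits wrapping the circle directions normal to the $D_i$; as the slope $\to\infty$ and one assembles the Floer complexes by continuation maps into a telescope, the generators and the part of the differential that stays in $X$ reproduce $SC^*(X;\Bbbk)$. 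One then defines the deformed differential $\partial_\Lambda$ by counting the same index-one Floer cylinders $u$ in $M$, now weighting each by $q^{\sum_i\lambda_i(D_i\cdot u)}$. Because $A$ is generated by the $\lambda_i$, this weight lies in $\cQ_{\ge 0}\Nov$, and cylinders disjoint from $D$ recover the undeformed differential with weight $q^0=1$; using the filtration-map formalism reviewed earlier one checks $\partial_\Lambda^2=0$ and that the construction is canonical up to the appropriate equivalence, which gives part~(a) modulo the computation of cohomology.

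For the identification $H^*(SC^*(X;\Nov),\partial_\Lambda)\cong QH^*(M;\Nov)$: at finite slope $HF^*(M,H;\Bbbk)\cong QH^*(M;\Bbbk)$ by the closed-string PSS isomorphism, and monotonicity upgrades this to an isomorphism over $\Nov$, since $\w(B)=2\kappa c_1(B)$ and $2c_1(B)=\sum_i\lambda_i(D_i\cdot B)$, so reweighting each Floer and PSS solution by its intersection pattern with $D$ is compatible with the energy filtration and merely records data already present in the geometry. Every continuation map in the telescope is then a filtered isomorphism on cohomology --- no orbit escapes to infinity, as $M$ is closed --- so the telescope, which by construction is $(SC^*(X;\Nov),\partial_\Lambda)$, still has cohomology $QH^*(M;\Nov)$. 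The substantive point is that all of the extra structure concentrates near $D$: the orbits wrapping the $D_i$, together with the cylinders meeting $D$, must organize in the limit exactly into the deformation $\partial_\Lambda$ rather than into anything larger. This is where Hypothesis \ref{hyp:gr} is indispensable; when some $\lambda_i>2$ the statement genuinely fails, as $\C^n=X\subset M=\CP^n$ with $D$ a hyperplane shows ($SH^*(\C^n;\Bbbk)=0$ while $QH^*(\CP^n;\Nov)\neq 0$; compare Example \ref{eg:cpn}).

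For part~(b), filter $SC^*(X;\Nov)$ by $\mathcal F^p:=SC^*(X;\Bbbk)\otimes_\Bbbk\cQ_{\ge p}\Nov$. The deformed differential preserves $\mathcal F^\bullet$ --- every cylinder carries weight $q^{\sum_i\lambda_i(D_i\cdot u)}$ with exponent $\ge 0$ --- and strictly decreases it precisely on the cylinders meeting $D$, so the associated graded differential is $\partial\otimes\mathrm{id}$ and $E_1=SH^*(X;\Bbbk)\otimes_\Bbbk\Nov$, abutting to $QH^*(M;\Nov)$ by the previous paragraph. Convergence is the crux, and it is here that the full force of Hypothesis \ref{hyp:gr} is used: after neck-stretching along $\partial\nu(D)$, the part of a broken Floer trajectory that wraps $D_i$ with total multiplicity $m_i$ contributes $\sum_i m_i(\lambda_i-2)\le 0$ to the expected dimension, so in a fixed bidegree the $r$-th differential of the spectral sequence is carried by moduli spaces that become negative-dimensional once $\sum_i m_i$ is large; combined with the energy estimate --- each intersection with $D_i$ costs at least the minimal symplectic area of $D_i$, which is positive because $\kappa\lambda_i>0$ --- this shows only finitely many multiplicity vectors contribute in each bidegree, so the spectral sequence is locally finite and converges.

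The main obstacle I anticipate is not this homological bookkeeping but the geometric input: one must make sense of moduli spaces of Floer cylinders in $M$ that meet a \emph{symplectic} (not holomorphic) SC divisor $D$ with prescribed multiplicities. This requires $D$-adapted almost complex structures for which positivity and multiplicity of intersections are meaningful, transversality for the constrained moduli spaces (perhaps only virtually), and a compactness and gluing theory --- in the spirit of relative/logarithmic Gromov--Witten theory and of neck-stretching along SC divisors --- matching the boundary strata of these moduli spaces both with $\partial_\Lambda^2$ and with the continuation maps above. Determining the precise ``certain hypotheses'' of Conjecture \ref{conj:def} beyond Hypothesis \ref{hyp:gr} is essentially the problem of making all of this go through.
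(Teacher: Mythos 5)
Your overall framework --- a Floer $1$-ray on the closed manifold $M$ with Hamiltonians adapted to $D$, a telescope complex, PSS, and a $\cQ$-filtration recording intersections with $D$ through Novikov weights --- is the same as the paper's, but there are two genuine gaps, both located exactly where you wave your hands. First, the telescope of your $1$-ray is \emph{not} $SC^*(X;\Bbbk)\otimes_\Bbbk\Nov$: for each finite slope, the Hamiltonian has, besides the $SH$-type orbits in $X$, a second population of orbits sitting on and near $D$ itself (the $D$-type orbits), and these are genuine extra generators which the differential mixes with the $SH$-type ones. You assert that the orbits wrapping the $D_i$ ``must organize in the limit exactly into the deformation $\partial_\Lambda$'' but give no mechanism, and this is precisely the hardest step. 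The paper's mechanism is Proposition \ref{prop:imix-div}: with a Hamiltonian linear in $\rho$ all the way up to $D$ (so the flow rotates rapidly about $D$ and the $D$-type orbits are nearly constant), every $D$-type orbit of $H_n$ has mixed index at least $\kappa^{-1}\delta\ell_n$; one then passes to the subquotients $\sigma_{<p}\cF_{\ge p}\,tel(\cC)$, which contain no $D$-type generators (Lemma \ref{lem:cleanDorbits}), and takes a homotopy inverse limit over $p$. This is exactly why only a \emph{modified} version of part \ref{it:a} is proved, with $SC_\Nov$ replaced by a filtered quasi-isomorphic $\wt{SC}_\Nov$. Note that none of this uses Hypothesis \ref{hyp:gr}: you attribute the failure in $\C^n\subset\CP^n$ to part \ref{it:a}, but what fails there is only part \ref{it:b} (the paper verifies the deformation statement for $S^2$ with $D$ a point, where $\lambda=4>2$). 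Also, the relative-Gromov--Witten-style machinery you anticipate (adapted $J$, positivity and multiplicity of intersection with a symplectic divisor) is never needed: intersections with $D$ are recorded purely topologically through the caps, and the inequality $u\cdot\bflambda\ge 0$ is proved by an integrated maximum principle along the contact hypersurface $\partial K_\s$ (Proposition \ref{prop-pos-inta}), not by intersection theory with $D$.

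Second, your convergence argument for part \ref{it:b} aims at the wrong target. Local finiteness of the differentials (finitely many multiplicity vectors per bidegree) does not imply convergence of the spectral sequence of a filtered complex; what is needed is that the $\cQ$-filtration be exhaustive and bounded below degreewise, and the paper's example $M=S^2$, $D=\mathrm{pt}$ shows that when this fails the spectral sequence computes the cohomology of the completion (which is $0$) rather than $QH^*\neq 0$, even though every individual differential there is finite. Boundedness below in degree $i$ is equivalent to a lower bound on $i(\gamma,u_{in})$ over generators, since $a_0\cQ(\gamma\otimes q^j)=i-i(\gamma,u_{in})$; this is where $2-\lambda_i\ge 0$ actually enters, via the generator index estimate $i(\gamma,u_{in})\ge\sum_i(2-\lambda_i)\nu_i\ge 0$ of Proposition \ref{prop:imix-pos} and Lemma \ref{lem-ind-final}, not via expected dimensions of neck-stretched moduli spaces. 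Your dimension count for broken trajectories is morally related but proves the wrong statement.
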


We will prove a modified version of Conjecture \ref{conj:def} in the set-up described in Section \ref{subsec:setup}. Notably, for the analogue of part (b) we will need Hypothesis \ref{hyp:gr}.

\begin{remark}
Conjecture \ref{conj:def} part (b) is not true in general.  For example, if we take $M = \CP^n$ and $D$ a hyperplane, then $X=M \setminus D = \C^n$ has vanishing symplectic cohomology. 
But we can't have a spectral sequence with vanishing $E_1$ page, converging to the non-vanishing cohomology of $\CP^n$! 
Note that Hypothesis \ref{hyp:gr} is not satisfied in this case by Example \ref{eg:cpn}. More generally it is not satisfied for $D$ a union of $N \le n$ hyperplanes; and $X= \C^{n+1-N} \times (\C^*)^{N-1}$ still has vanishing symplectic cohomology in these cases.
\end{remark}

Note that Conjecture \ref{conj:def} \ref{it:a} is a statement about the chain complex $SC^*(X;\Bbbk)$, which depends on various auxiliary data which we have not included in the notation. 
Given such a choice, we consider the chain complex \begin{equation}\label{eqdef-SCNov}SC^*_\Nov:=(SC^*(X;\Bbbk)\otimes_\Bbbk \Nov,d\otimes id_\Nov) \end{equation} with the tensor product grading.\footnote{In general this will be a $\mathbb{Q}$-grading.} 
It admits a $\cQ$-filtration induced by the filtration map $\cQ(\gamma \otimes r) = \cQ(r)$. 
In the modified version of Conjecture \ref{conj:def} \ref{it:a} that we prove, we will need to replace $SC_\Nov$ with an `equivalent' filtered complex $\wt{SC}_\Nov$:

\begin{main}[a modified version of Conjecture \ref{conj:def} \ref{it:a}]
\label{thm:aa}
Assume that we are in the setup described in Section \ref{subsec:setup}. 
Then there exists a choice of the auxiliary data needed to define $SC^*(X;\Bbbk)$, and a filtered cochain complex of $\cQ_{\ge 0}\Nov$-modules, $\wt{SC}_\Nov := \left(\widetilde{SC}^*_\Nov,\wt{d},\wt{\cQ}_{\ge \bullet}\right)$, with the following properties:
\begin{enumerate}
\item \label{it:1a} $\left(\wt{SC}^*_\Nov,\wt{d},\wt{\cQ}_{\ge \bullet}\right)$ is filtered quasi-isomorphic to $\left(SC^*_\Nov,d \otimes id_\Nov, \cQ_{\ge \bullet}\right)$ (see Section \ref{ss-thmaproof} for the precise meaning of this statement).
\item \label{it:2a} There exists a second $\cQ_{\ge 0}\Nov$-linear differential $\partial$ on $\wt{SC}^*_\Nov$, such that $\partial - \wt{d}$ strictly increases the $\wt{\cQ}$-filtration.  
We call $\partial$ the deformed differential.
\item \label{it:3a} We have $H^*\left(\wt{SC}^*_\Nov,\partial\right) \cong QH^*(M;\Nov)$. 
\end{enumerate}
\end{main}

By considering the spectral sequence associated to the deformed filtered complex $\left(\wt{SC}^*_\Nov,\partial,\wt{\cQ}_{\ge \bullet}\right)$, we then obtain:

\begin{main}[Conjecture \ref{conj:def} \ref{it:b}]\label{thm:specseqa}
Assume now that Hypothesis \ref{hyp:gr} holds. 
Then the spectral sequence associated to the filtered complex $(\wt{SC}_\Nov,\partial,\wt{\cQ}_{\ge \bullet})$ converges, and has $$E_1^{j,k}=SH^{k+j(1+a_0)}(X;\Bbbk) \otimes_\Bbbk \Bbbk \cdot q^{-j}\Rightarrow QH^{j+k}(M;\Nov),$$ where $j\in \mathbb{Z}$ and $k\in \mathbb{Q}$.
\end{main}

\begin{rmk}
Because our Floer complexes are $\Q$-graded, our spectral sequence $(E_i^{j,k},d_i^{j,k})$ has $i,j \in \Z$ and $k \in \Q$, rather than the usual $k \in \Z$. 
All the standard theory of spectral sequences goes through in this slightly more general context. 
Indeed, one can think of such a spectral sequence as a collection of ordinary spectral sequences indexed by $\{c \in \Q:0 \le c < 1\}$, by setting $E(c)_i^{j,k} = E_i^{j,k-c}$.
\end{rmk}

Let us note an immediate corollary:

\begin{cor}
Under Hypothesis \ref{hyp:gr}, the affine variety $X$ from Example \ref{eg:proj} has non-vanishing symplectic cohomology. In particular, it is not flexible.
\end{cor}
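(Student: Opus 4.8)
The plan is to derive the corollary directly from Theorem \ref{thm:specseqa}. The key point is that the spectral sequence in that theorem converges to $QH^*(M;\Nov)$, which is visibly nonzero (for instance, the unit class $1 \in H^0(M;\Bbbk)$ survives, and more crudely $QH^*(M;\Nov) = H^*(M;\Bbbk)\otimes_\Bbbk \Nov$ is nonzero as a $\Bbbk$-module since $M$ is a nonempty compact manifold). If $SH^*(X;\Bbbk)$ were zero, then every $E_1$ page entry $E_1^{j,k} = SH^{k+j(1+a_0)}(X;\Bbbk)\otimes_\Bbbk \Bbbk\cdot q^{-j}$ would vanish, hence $E_\infty = 0$, hence (by convergence) $QH^*(M;\Nov) = 0$ — a contradiction. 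Therefore $SH^*(X;\Bbbk) \neq 0$.

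\begin{proof}
We are in the setup of Example \ref{eg:proj}, and Hypothesis \ref{hyp:gr} is assumed, so Theorem \ref{thm:specseqa} applies: the spectral sequence associated to $(\wt{SC}_\Nov,\partial,\wt{\cQ}_{\ge\bullet})$ converges, with
\[ E_1^{j,k}=SH^{k+j(1+a_0)}(X;\Bbbk) \otimes_\Bbbk \Bbbk \cdot q^{-j}\ \Rightarrow\ QH^{j+k}(M;\Nov). \]
Suppose for contradiction that $SH^*(X;\Bbbk)=0$. Then $E_1^{j,k}=0$ for all $j,k$, so the spectral sequence degenerates with $E_\infty^{j,k}=0$ for all $j,k$, and by convergence $QH^{*}(M;\Nov)=0$. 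But $QH^*(M;\Nov)=H^*(M;\Bbbk)\otimes_\Bbbk\Nov$ contains the unit $1\in H^0(M;\Bbbk)\subset QH^0(M;\Nov)$, which is nonzero since $M$ is a nonempty compact symplectic manifold and $\Nov\neq 0$. This contradiction shows $SH^*(X;\Bbbk)\neq 0$.

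Finally, a flexible Weinstein (or Liouville) domain has vanishing symplectic cohomology (with arbitrary coefficients). Since $X$ is a finite type convex symplectic manifold with $SH^*(X;\Bbbk)\neq 0$, it cannot be flexible.
\end{proof}

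The argument is essentially immediate once Theorem \ref{thm:specseqa} is in hand; there is no real obstacle. The only minor points requiring care are: checking that $QH^*(M;\Nov)$ is genuinely nonzero as a graded $\Nov$-module (clear, since $H^0(M;\Bbbk)\neq 0$ and tensoring with the nonzero ring $\Nov$ over $\Bbbk$ is faithful on free modules), and invoking the standard fact that convergence of the spectral sequence together with a vanishing $E_1$ page forces the vanishing of the abutment — which is legitimate here because the convergence statement is part of the hypothesis of Theorem \ref{thm:specseqa}. The remark about flexibility just uses the well-known vanishing of symplectic cohomology for flexible Weinstein domains.
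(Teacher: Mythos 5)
Your argument is correct and is exactly the one the paper intends: the corollary is stated as an immediate consequence of Theorem \ref{thm:specseqa}, since a convergent spectral sequence with vanishing $E_1$ page cannot abut to the nonzero module $QH^*(M;\Nov)$, and flexible Weinstein domains have vanishing symplectic cohomology. No further comparison is needed.
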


\begin{rmk}
We expect that analogues of Theorems \ref{thm:aa} and \ref{thm:specseqa} hold also under the assumption that $M$ is Calabi--Yau, i.e., $c_1(TM) = 0$. Indeed,  Yuhan Sun has recently proved very closely related results \cite{Sun2021}. 
In this case, the key notion is `index boundedness',  as used by McLean in \cite{McLean2020}, together with certain lower bounds on the indices of the one-periodic orbits `on the divisor'. We refer the reader to \cite{Sun2021} for more details.
\end{rmk}

\subsection{Rigidity results}\label{ss-rigid-sk}

By applying the same techniques as those used to prove Theorems \ref{thm:aa} and \ref{thm:specseqa}, 
we will prove a rigidity result for certain subsets of $M$.

The main tool used to prove the result is a version of the relative symplectic cohomology developed by the third author in \cite{Varolgunes2018} (with which we assume some familiarity). Slightly modifying the construction there, for any compact $K\subset M$, we can define a $\mathbb{Q}$-graded $\Nov$-module $SH_M^*(K;\Nov)$. 
The definition of this invariant involves choosing acceleration data to define a Floer $1$-ray, and then the chain-level invariant is defined to be not the telescope but a degreewise-completed telescope. 
More details are given in Section \ref{ss-rel-SH}.\footnote{
The construction that we give here can be generalized to all symplectic manifolds with the property $$ 2 c_1(TM) = \eta [\w] \quad\text{ for some }\eta\in \R,$$ and subgroup $B\subset\R$ which contains $\omega(\pi_2(M))$. 
Namely, we define the filtered graded algebra $\Bbbk[B]$ where $i(e^b)=\eta b$ and the filtration level of $e^b$ is $b$. We then define the Novikov-type algebra $$\Lambda_{B,\eta}:= \widehat{\Bbbk[B]},$$ where the completion is degreewise. Our $\Lambda$ in this paper is nothing but $\Lambda_{\kappa A,\kappa^{-1}}$, whereas the Novikov field used in e.g. \cite{Tonkonog2020} is  $\Lambda_{\R,0}.$ 
The construction produces a $\Z+\eta B $-graded $\Nov_{B,\eta}$-module $SH_M^*(K;\Nov_{B,\eta})$. When $c_1(M)=0$, and taking into account only the contractible orbits, the invariant that is denoted by $SH_M^*(K;\Nov)$ in \cite{Tonkonog2020} is a special case of this construction as well. It would have been called $SH_M^*(K;\Nov_{\mathbb{R},0})$ in our notation here, and a capped orbit $(\gamma,u)$ here would be interpreted as $T^{\cA(\gamma,u)}\gamma$ in that paper's notation. Let us also note that $\eta<0$ requires using virtual techniques, which forces us to make certain assumptions on $\Bbbk$.
}

Following \cite{Tonkonog2020}, we say that $K$ is \emph{$SH$-invisible} if $SH_M^*(K;\Nov)= 0$, and \emph{$SH$-visible} otherwise. 
One can prove that $SH$-visible subsets are not stably displaceable (see Theorem \ref{analog_thesis2}).\footnote{Stably displaceable means $K\times Z\subset M\times T^*S^1$ is displaceable from itself by a Hamiltonian diffeomorphism, where $Z$ is the zero section of $T^*S^1$.} For example, PSS isomorphisms imply that $QH^*(M;\Nov)\cong SH_M(M;\Nov)$, so $M$ is $SH$-visible; and as a result $M$ is not stably displaceable (as a subset of itself). 
Moreover, there are restriction maps $SH_M^*(K';\Nov)\to SH_M^*(K;\Nov)$, whenever $K'$ contains $K$. By a unitality argument, it follows that any compact subset of an $SH$-invisible subset is $SH$-invisible (see Theorem \ref{analog_thesis3}).

We say that $K$ is \emph{nearly $SH$-visible} if any compact domain that contains $K$ in its interior is $SH$-visible. 
As straightforward consequences of the previous paragraph, one can show that $SH$-visible subsets are nearly $SH$-visible,\footnote{We do not have examples of nearly $SH$-visible subsets which are not $SH$-visible.} and nearly $SH$-visible subsets are not stably displaceable.

We say that $K$ is \emph{$SH$-full} if for any compact $K'$ contained in $M\setminus K$, $SH_M^*(K';\Nov)= 0$. 
$SH$-full subsets are nearly $SH$-visible, as a consequence of the Mayer--Vietoris property of relative symplectic cohomology \cite{Varolgunes2018}. 
One can prove that an $SH$-full subset cannot be displaced from a nearly $SH$-visible subset by a symplectomorphism. It is also the case that $SH$-full subsets are not stably displaceable from themselves (see \cite[Corollary 1.9]{Tonkonog2020}). By a closed--open map plus unitality argument (see \cite[proof of Theorem 1.2 (6)]{Tonkonog2020}), it can be shown that Floer-theoretically essential (over $\Bbbk$) monotone Lagrangians cannot be displaced from $SH$-full subsets by a symplectomorphism.

Now we state our result, which will need some extra hypotheses beyond those already mentioned in Section \ref{subsec:setup}.
First of all, we assume that $D$ is an orthogonal SC divisor. 
Then there exist Hamiltonian circle actions rotating about the $D_i$, and commuting on the overlaps, by \cite{McL12T}; we assume that $\theta$ is `adapted' to such a system of commuting Hamiltonians, in an appropriate sense. We make these notions precise in Section \ref{s-basics-of-SC} below. 
We remark that the data we need is weaker than what McLean calls a `standard tubular neighbourhood' in \cite{McLean2020}.

Let $Z$ be the Liouville vector field of $(X,\theta)$. 
We define the continuous function $\rho^0: X \to \R$, so that the Liouville flow starting at $x$ is defined precisely for time $t <-\log(\rho^0(x))$. 
Note that $\mathbb{L} = \{\rho^0 = 0\}$ is the Lagrangian skeleton of $(X,\theta)$. 
We extend the function to $\rho^0:M \to \R$ by setting $\rho^0|_D = 1$.

\begin{defn}\label{def-crit}
We define 
$$\tilde{\s}_{crit} := 1-\frac{2}{\max_i \lambda_i},\qquad \s_{crit} := \max(0,\tilde{\s}_{crit}),$$
and set
$$K_{crit} := \{\rho^0 \le \s_{crit}\} \subset M.$$
Note that $\s_{crit} = 0$, and hence $K_{crit} = \mathbb{L}$, if and only if Hypothesis \ref{hyp:gr} is satisfied.
\end{defn}

Equivalently, $K_{crit}$ is the image of the Liouville flow for time $\log(\s_{crit})$.

\begin{main}\label{thm-superheavy}
The subset $K_{crit} \subset M$ is $SH$-full. 
In particular, if Hypothesis \ref{hyp:gr} is satisfied, then $\mathbb{L}$ is $SH$-full.
\end{main}

For example, this means that when Hypothesis \ref{hyp:gr} is satisfied, $\mathbb{L}$ cannot be displaced from any Floer-theoretically essential (over $\Bbbk$) monotone Lagrangian.

\begin{rmk}
It is possible for a compact subset to be $SH$-full for one choice of $\Bbbk$ but not for another. We did not make a big deal about this as our result is uniform for all ground fields. We expect this to play a real role in the context of Conjecture \ref{conj:ss-nohyp}. We also refer the reader to Remark 1.8 of \cite{Tonkonog2020} for another weakening of the notion of $SH$-fullness.
\end{rmk}

\begin{rmk}
An analogue of Theorem \ref{thm-superheavy}, in the case that $M$ is Calabi--Yau, was proved in \cite{Tonkonog2020}.
\end{rmk}

\subsection{Floer theory conventions}

We give a quick outline of our conventions for Hamiltonian Floer theory on $M$, for the purposes of giving an overview of the proofs of our main results in the following section (see Section \ref{s-HF-conventions} for more details). 
Let $A' \subset \Z$ be the image of $2c_1(TM): \pi_2(M) \to \Z$, and set $\Nov' := \Bbbk[A']$. 
Note $A' \subset A$, so $\Nov' \subset \Nov$. 

A `cap' for an orbit $\gamma:S^1 \to M$ of a Hamiltonian $H: S^1 \times M \to \R$ is an equivalence class of discs $u$ bounding $\gamma$, where two discs  are considered equivalent if they have the same symplectic area. 
One can associate an index $i(\gamma,u)$ and action $\cA(\gamma,u)$ to a capped orbit $(\gamma,u)$ of a non-degenerate Hamiltonian. 
The `mixed index' 
$$i_{mix}(\gamma) = i(\gamma,u) - \kappa^{-1} \cA(\gamma,u)$$
is independent of the cap $u$. 

We define $CF^i(M,H)$ to be the free $\Z$-graded $\Bbbk$-module generated by capped orbits $(\gamma,u)$ of $H$ satisfying $i(\gamma,u) = i$. 
This becomes a graded $\Nov'$-module, where $e^a \cdot (\gamma,u) = (\gamma,u \# B)$ where $2c_1(TM)(B) = a$. 
It also admits an action filtration, associated to the filtration map induced by $\cA(\gamma,u)$. 
We define $CF^*(M,H;\Nov) := CF^*(M,H) \otimes_{\Nov'} \Nov$. 
It has a $\Bbbk$-basis of `fractional caps': a fractional cap for $\gamma$ is a formal expression $ u + a$, where $u$ is a cap for $\gamma$ and $a \in A$, and we declare $u+a \sim u' + a'$ iff $a-a' \in A'$ and $(\gamma,u') = e^{a-a'} \cdot (\gamma,u)$. 

The Floer differential increases degree by $1$, and respects the action filtration (i.e., it does not decrease action). 
The PSS isomorphism identifies $HF^*(M,H;\Nov) \cong QH^*(M;\Nov)$.
If $H_1 \le H_2$ pointwise, then there exists a continuation map $CF^*(M,H_1;\Nov) \to CF^*(M,H_2;\Nov)$ which respects action filtrations.

We now explain our conventions for relative symplectic cohomology. 
Given $K \subset M$ compact, a choice of acceleration data $(H_\tau,J_\tau)$ is the data required to define a Floer 1-ray
$$
\mathcal{C}(H_\tau,J_\tau):= CF^*(M, H_1; \Nov)\to CF^*(M, H_2; \Nov)\to\ldots 
$$
consisting of Floer cohomology groups and continuation maps, where the monotone sequence of Hamiltonians $H_1 \le H_2 \le \ldots$ converges to $0$ on $K$ and $+\infty$ outside of $K$. 
We consider the telescope complex $tel(\cC)$, which is constructed so that 
$$H^*(tel(\cC)) = \varinjlim_i HF^*(M,H_i;\Nov) = QH^*(M;\Nov).$$ 
We define $\widehat{tel}(\cC)$ to be the degreewise completion of $tel(\cC)$ with respect to the action filtration, and $SH^*_M(K;\Nov) := H^*(\widehat{tel}(\cC))$.

\subsection{Outline of proofs}\label{ss-proofs}

In this section we give an extended overview of the proofs of our main results, trying to convey the main ideas while avoiding technicalities. 
We assume that we are in the geometric setup described in Section \ref{subsec:setup}, with the additional properties and data explained in Sections \ref{ss-defintro} and \ref{ss-rigid-sk}. 

We will construct a function $\rho:M \to \R$ which is a smoothing of $\rho^0$ (really, a family of smoothings $\rho^R$ for $R>0$ sufficiently small) with the following properties: 
\begin{itemize}
\item it will be continuous on $M$, and smooth on the complement of $\mathbb{L}$; 
\item $\rho|_{\mathbb{L}} = 0$ and $\rho|_D \approx 1$;\footnote{If $D$ is smooth then we can arrange that $\rho|_D = 1$; if $D$ is normal crossings then $\rho|_D$ will be equal to $1$ away from a neighbourhood of the singularities of $D$, where an error is introduced by `rounding corners'.}
\item it will satisfy $Z(\rho) = \rho$ on $X \setminus \mathbb{L}$. 
\end{itemize}
It also has the property that $K_\s := \{\rho \le \s\}$ is a Liouville subdomain of $X$ for any $\s \in (0,1)$. 
Because $Z(\rho) = \rho$, $K_\s \to \mathbb{L}$ as $\s \to 0$. 

\subsubsection{Theorem \ref{thm:aa}}

We choose $\s \in (\s_{crit},1)$, and construct acceleration data $(H_\tau,J_\tau)$ for $K_\s \subset M$ as follows. 
Fix $0<\ell_1< \ell_2<\ldots$ such that the Reeb flow on $Y=\partial K_\s$ has no $\ell_n$-periodic orbits for all $n$, and $\ell_n \to \infty$ as $n \to \infty$. 
We choose an increasing family of smooth functions $h_n : \R \to \R$, approximating the piecewise-linear functions $\max(0,\ell_n(\rho - \s))$ with increasing accuracy as $n \to \infty$, and being linear with slope $\ell_n$ for $\rho \ge \s$ (see Figure \ref{fig:B}).
We consider acceleration data $(H_\tau,J_\tau)$ for $K_\s\subset M$ such that $J_\tau$ is of contact type near $\partial K_\s$ and $H_n$ is equal to a carefully chosen perturbation of $h_n \circ \rho$.
The $1$-periodic orbits of Hamiltonians $H_n$ then fall into two groups (1) $SH$-type: contained in $K_\s$ and (2) $D$-type: outside of $K_\s$. We also make sure that the $SH$-type orbits that are not ``Reeb type'' are constant. 

We now consider the Floer $1$-ray  
$$
\mathcal{C}(H_\tau,J_\tau):= CF^*(M, H_1; \Nov)\to CF^*(M, H_2; \Nov)\to\ldots 
$$
associated to our choice of acceleration data. 
We decompose the associated telescope complex as a direct sum of the $SH$-type generators and the $D$-type generators:
$$tel(\cC) = tel(\cC)_{SH} \oplus tel(\cC)_D.$$
This is a direct sum as $\Nov$-modules, not as cochain complexes: the differential, which we denote by $\partial$, mixes up the factors. 

By restricting the acceleration data to $K_\s$, we also obtain a Floer $1$-ray of $\Bbbk$-cochain complexes
$$
\mathcal{C}_{SH}(H_\tau,J_\tau):= CF^*(K_\s, H_1|_{K_\s}; \Bbbk)\to CF^*(K_\s, H_2|_{K_\s}; \Bbbk)\to\ldots 
$$
and we set
$$SC^*(X;\Bbbk) := tel(\cC_{SH}).$$ 
We denote the differential by $d$. Strictly speaking, this is the cochain complex defining the symplectic cohomology of the Liouville domain $K_\s$ \`a la Viterbo \cite{Vit}. Our notation is justified by the fact that in \cite[Section 4]{McL12T}, McLean shows that $H^*(SC^*(X;\Bbbk))$ only depends on $X$.

We associate a canonical fractional cap $u_{in}$ to each $SH$-type orbit $\gamma$, by setting $u_{in} := u - u \cdot \bflambda$ for an arbitrary cap $u$ (one easily checks that $u_{in}$ is independent of $u$). 
There is then an isomorphism of $\Nov$-modules (recall Equation \eqref{eqdef-SCNov})
\begin{align} 
\label{eq:SCtoCF} SC^*_\Nov & \xrightarrow{\sim} tel(\cC)_{SH} \\
\nonumber \gamma \otimes q^a & \mapsto q^a \cdot (\gamma,u_{in}).
\end{align}
However this is not a chain map: indeed, the matrix component $\partial_{SH,SH}$ need not even be a differential.

\begin{prop}[= Proposition \ref{prop-pos-inta}]\label{prop-pos-int}
For any Floer solution $u$ that contributes to $\mathcal{C}(H_\tau,J_\tau)$ with both ends asymptotic to $SH$-type orbits, we have $u\cdot\bflambda\geq 0.$ In case of equality, $u$ is contained in $K_\s$.
\end{prop}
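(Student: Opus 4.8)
The plan is to analyze the quantity $u\cdot\bflambda$ geometrically, using the fact that $\bflambda$ is a lift of $2c_1(TM)$ to $H^2(M,X;\R)$ with $\bflambda = \sum_i \lambda_i\,\mathrm{PD}^{rel}(D_i)$, and the positivity coming from intersections with the $D_i$. First I would recall that $u\cdot\bflambda = \sum_i \lambda_i\,(u\cdot D_i)$, where $u\cdot D_i$ is the relative intersection number of the Floer cylinder $u$ with the divisor component $D_i$, defined using the canonical relative class $\mathrm{PD}^{rel}(D_i)$. The key geometric input is that for a suitably chosen almost complex structure $J_\tau$ — of contact type near $\partial K_\s$ and, crucially, making the $D_i$ (pseudo-holomorphic, or at least) such that Floer solutions meet them positively — every local intersection of $u$ with $D_i$ contributes non-negatively. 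Since the $\lambda_i$ are positive rationals, this gives $u\cdot\bflambda\geq 0$ immediately, provided we know each $u\cdot D_i\geq 0$.

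The heart of the argument is therefore establishing this positivity of intersection. I would proceed as follows. The Hamiltonians $H_n$ are perturbations of $h_n\circ\rho$, and $\rho$ is $S^1$-invariant and built from the rotation-invariant structure near $D$; the almost complex structures $J_\tau$ are chosen compatibly, so that near each $D_i$ the pair $(H_\tau,J_\tau)$ respects the local holomorphic/symplectic-fibration structure transverse to $D_i$. Under these conditions a Floer solution $u$ — which satisfies a perturbed Cauchy–Riemann equation — behaves near $D_i$ like a $J$-holomorphic curve transverse to $D_i$: the Hamiltonian term is tangent to the $S^1$-rotation and hence does not destroy positivity of intersection with $D_i$. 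This is the standard ``asymptotically cylindrical/divisor-adapted'' setup (as in Tehrani's and McLean's work on SC divisors), and I would invoke it to conclude that $u\cdot D_i$ equals a sum of strictly positive local contributions. Since both asymptotic orbits of $u$ are $SH$-type, i.e. contained in $K_\s$, and $K_\s\subset X = M\setminus D$ is disjoint from $D$, the ends of $u$ contribute nothing to the intersection number; so $u\cdot D_i\geq 0$, with equality iff $u$ never meets $D_i$.

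For the equality case, suppose $u\cdot\bflambda = 0$. Since every $\lambda_i>0$ and every $u\cdot D_i\geq 0$, this forces $u\cdot D_i = 0$ for all $i$, hence $u$ is disjoint from $D$, i.e. $u$ maps into $X$. It remains to upgrade ``$u\subset X$'' to ``$u\subset K_\s$''. Here I would use the contact-type condition on $J_\tau$ near $\partial K_\s$ together with a maximum-principle / integrated-maximum-principle argument: on the region $X\setminus K_\s$ the Hamiltonian $H_n$ is (a perturbation of) $\ell_n(\rho-\s)$, which is a function of $\rho$ with positive slope, and for such radial Hamiltonians with contact-type $J$ a Floer solution with both asymptotics inside $K_\s$ cannot exit $K_\s$ — any portion in $\{\rho>\s\}$ would violate the maximum principle on $\rho\circ u$. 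This is exactly the mechanism that makes Viterbo-type symplectic cohomology well-defined, so I would cite the relevant no-escape lemma. Combining, $u$ is contained in $K_\s$, as claimed.

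The main obstacle I anticipate is not the homological bookkeeping (which is routine once $\bflambda = \sum\lambda_i\mathrm{PD}^{rel}(D_i)$ is in hand) but verifying positivity of intersection $u\cdot D_i\geq 0$ rigorously for the \emph{perturbed} Floer equation near a \emph{simple crossings} (not just smooth) divisor: one must check that the carefully chosen perturbation of $h_n\circ\rho$ and the adapted $J_\tau$ genuinely preserve the local positivity near the multiple-component strata of $D$, where the ``rounding corners'' takes place. I would handle this by confining all the delicate local analysis to the precise choice of auxiliary data already fixed in the construction (contact-type $J_\tau$, $S^1$-invariance, divisor-adapted neighborhoods in the sense of the weakened ``standard tubular neighbourhood'' mentioned in the excerpt), and reduce to the smooth-divisor positivity statement component by component.
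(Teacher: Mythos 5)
Your route is the ``positivity of intersection'' heuristic that the paper explicitly mentions (crediting Tonkonog) but deliberately does \emph{not} follow; the actual proof applies the integrated-maximum-principle estimate of Proposition \ref{prop-pre-pos-int} to the portion of $u$ lying in $\{\rho^R \ge \s+\eps\}$, which directly bounds $[\omega;c\theta]$ of that portion --- equal to $\kappa\, u\cdot\bflambda$ because the relative de Rham class of $(\omega,\theta)$ is $\kappa\bflambda$ --- using only the contact-type condition on $J_\tau$ and the radial form of $H_\tau$ over the neck, with no hypotheses whatsoever on $J_\tau$ near $D$. Your decomposition $u\cdot\bflambda=\sum_i\lambda_i\,(u\cdot D_i)$ is correct, but the step you yourself flag as the ``main obstacle'' is a genuine gap, not a technicality: the acceleration data constructed in the paper impose no divisor-adapted condition on $J_\tau$ near $D$, and the Hamiltonians $H_n$ are far from constant there --- their flow rotates around $D_i$ at speed roughly $\ell_n/\kappa\lambda_i$, which is unbounded in $n$. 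Local positivity of intersection for solutions of the perturbed Floer equation in this regime cannot be reduced to the unperturbed holomorphic case by soft means; one would have to build extra structure into $(H_\tau,J_\tau)$ (e.g.\ a Gromov-trick change of coordinates straightening the Hamiltonian term compatibly with every stratum $D_I$) and then reprove the local statement, none of which is supplied.

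There is also a misstatement in your equality case: it is false that a Floer solution with both asymptotics inside $K_\s$ cannot exit $K_\s$. Such solutions do exit --- they are precisely the ones with $u\cdot\bflambda>0$ that produce the deformed differential, which is the point of the whole construction --- and $\rho^R\circ u$ obeys no pointwise maximum principle on the neck because the Hamiltonian term does not vanish there. Your argument survives only because you invoke no-escape \emph{after} establishing $u\cap D=\emptyset$, so that $\omega$ is exact on the region swept out beyond the neck and an integrated Stokes-type estimate applies; but at that point you are running exactly the estimate which, applied from the outset to the part of $u$ beyond $\partial K^R_{\s+\eps}$, proves the entire proposition with no intersection theory at all.
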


One could think of Proposition \ref{prop-pos-int} as a manifestation of positivity of intersection of Floer trajectories with the components of the divisor $D$ (c.f. \cite[Lemma 4.2]{Tonkonog}), although we actually prove it using an argument related to Abouzaid--Seidel's `integrated maximum principle' \cite[Lemma 7.2]{Abouzaid2007}.

The consequence of Proposition \ref{prop-pos-int} is that $d \otimes id_\Nov - \partial_{SH,SH}$ strictly increases the $\cQ$-filtration. 
Using PSS isomorphisms, we also see that the homology of $tel(\mathcal{C})$ is isomorphic to $QH^*(M;\Nov).$ 
Thus we are some way towards proving Theorem \ref{thm:aa}, but we are troubled by the existence of $D$-type orbits.
The following proposition is the most important ingredient in the proof of Theorem \ref{thm:aa}, as it allows us to `throw out' the $D$-type orbits.

\begin{prop}\label{prop:imix-div}
There exists $\delta>0$ such that
$$i_{mix}({\gamma})\geq \kappa^{-1}\delta \ell_n$$
for any $D$-type orbit $\gamma$ of $H_n$. 
\end{prop}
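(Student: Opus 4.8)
The plan is to bound the mixed index of a $D$-type orbit from below by relating it to the Hamiltonian value and to the weights $\lambda_i$. Recall that $i_{mix}(\gamma) = i(\gamma,u) - \kappa^{-1}\mathcal{A}(\gamma,u)$ is cap-independent, so I may compute it with any convenient cap. For a $D$-type orbit $\gamma$ of $H_n$, which lies outside $K_\s$ in a region where $H_n$ is (a perturbation of) $h_n \circ \rho$ with $\rho$ close to $1$, the orbit is essentially a Reeb-type orbit of the circle action rotating about some stratum of $D$, composed with the Hamiltonian flow. The key point is that the action $\mathcal{A}(\gamma,u)$ of such an orbit, when capped by a disc $u$ contained in a tubular neighbourhood of $D$, is governed by the intersection numbers $u \cdot D_i$ and the slope $\ell_n$: heuristically $\mathcal{A}(\gamma,u) \approx -\kappa \sum_i (u\cdot D_i)(\text{something})$ plus lower-order terms from $h_n$, while the action without the $h_n$-contribution is bounded.

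First I would fix the geometric normal form near $D$: using that $D$ is orthogonal and $\theta$ is adapted to the commuting Hamiltonian circle actions, write down local models in which the $D$-type orbits of $H_n$ are identified with products of Reeb orbits for the circle actions wrapping $k_i$ times around $D_i$ (for $i$ in the relevant stratum), where the $k_i$ are positive integers with $\sum_i$(bounded weighted combination)$\,\approx \ell_n \cdot(\text{distance of }\rho\text{ from }1)$ — in particular at least one $k_i$ grows linearly in $\ell_n$. Second, I would compute the index $i(\gamma,u)$ via a Conley--Zehnder/Maslov computation in this normal form: the index of such a Reeb-wrapping orbit, capped inside the neighbourhood, picks up a contribution of the form $2\sum_i k_i$ from the rotation numbers (up to a bounded error coming from the Morse index in the base direction and from the perturbation), using the preferred trivialization of the power of the canonical bundle determined by the weights. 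Third, I would compute $\kappa^{-1}\mathcal{A}(\gamma,u)$ in the same model: the symplectic area of the capping disc contributes $-\kappa^{-1}\sum_i(u\cdot D_i)\,\kappa\lambda_i \cdot(\cdots)$ via the defining property of $\theta$ and $\bflambda$ (namely $\int_u\omega - \int_{\partial u}\theta = \kappa\bflambda\cdot u$), so that the combination $i(\gamma,u) - \kappa^{-1}\mathcal{A}(\gamma,u)$ reduces, up to bounded error, to $\sum_i k_i(2 - \lambda_i) + (\text{terms proportional to }\ell_n\text{ from }h_n)$.

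Then I would combine these: since $\lambda_i \le 2$ under Hypothesis~\ref{hyp:gr} (wait — but this Proposition is stated without Hypothesis~\ref{hyp:gr}), I should instead keep the general bound. The honest statement uses $\s > \s_{crit} = \max(0, 1 - 2/\max_i\lambda_i)$, which is exactly the condition ensuring $\ell_n\s(\text{or the relevant }h_n\text{ contribution}) > \sum_i k_i(\lambda_i - 2)_+$ with a definite linear-in-$\ell_n$ surplus. Concretely, the $h_n\circ\rho$ contribution to $\mathcal{A}$ at $\rho\approx 1$ is $\approx -\ell_n(1-\s)$ times a positive constant, giving $-\kappa^{-1}\mathcal{A} \gtrsim \kappa^{-1}\ell_n(1-\s)$; meanwhile the possibly-negative $\sum_i k_i(\lambda_i-2)$ is bounded below by $-(\max_i\lambda_i - 2)_+ \sum_i k_i$, and $\sum_i k_i$ is in turn controlled by $\ell_n$ times the relevant geometric constant. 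Choosing $\delta > 0$ small enough that $\kappa^{-1}\delta$ is less than the net positive coefficient of $\ell_n$ obtained this way (which is positive precisely because $\s > \s_{crit}$) yields $i_{mix}(\gamma) \ge \kappa^{-1}\delta\ell_n$ for all $n$, after absorbing the bounded error terms into a slightly smaller $\delta$ and using that $\ell_n \to \infty$.

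The main obstacle I expect is the bookkeeping in the local normal form near the crossing strata of $D$: getting the index and action formulas simultaneously correct with explicit enough constants, handling the `rounding corners' error in $\rho|_D$ and the perturbation turning $h_n\circ\rho$ into a non-degenerate Hamiltonian (which creates the non-Reeb-type and Morse-theoretic contributions to the index), and verifying that the preferred trivialization coming from the weights $\lambda_i$ is the one that makes the $\sum_i k_i(2-\lambda_i)$ term appear with the right sign. Everything else — the $\ell_n\to\infty$ limit, the choice of $\delta$ — is then a matter of collecting inequalities.
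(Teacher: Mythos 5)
Your proposal follows essentially the same route as the paper's proof (Lemma~\ref{lem:Dtypebound}, built on Lemmas~\ref{lem:act}, \ref{lem:ind}, \ref{lem:mindex} and \ref{lem-B}): compute $i_{mix}$ with the cap near $D$, get $i(\gamma,u_{out})\approx 2\sum_i \nu^h_i$, $\cA(\gamma,u_{out})\approx h_n\approx\ell_n(1-\s)$, and use the relation $\sum_i\kappa\lambda_i\nu^h_i\approx\ell_n$ forced by the adapted Liouville form to extract the net coefficient $\s-\s_{crit}>0$ of $\ell_n$. The one point to be careful about is that the corner-rounding and perturbation errors are not merely bounded but scale with $\ell_n$ (size $\ell_n\cdot\eps_1(R)$ and $\eps(\gamma)$ respectively), so they cannot simply be absorbed "into a slightly smaller $\delta$" for large $n$; the paper handles this by taking $R$ small enough that $\s>\s^B_{crit}(R)$ and by choosing the perturbation with $\eps(\gamma)<\ell_n\delta$.
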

\begin{proof}[Sketch of proof when $D$ is smooth]
The Hamiltonian $H_n$ is approximately equal to $\ell_n\left(\rho - \s\right)$ near $D$. 
When $D$ is smooth we have $\rho = r/\kappa\lambda$, where $r$ is the moment map for a Hamiltonian circle action rotating a neighbourhood of $D$ about $D$ with unit speed. 
In particular, the Hamiltonian flow of $H_n$ approximately rotates around $D$ at speed $\ell_n/\kappa \lambda$, and the $D$-type orbits are approximately constant. (This is in contrast to the Hamiltonians used, for example, in \cite{Tonkonog}, which are approximately constant near $D$, and which have non-constant $D$-type orbits linking $D$.)  

We compute the mixed index with respect to the approximately constant cap, which is called $u_{out}$ in the body of the paper. 
As the Hamiltonian flow of $H_n$ rotates around $D$ at speed $\ell_n/\kappa \lambda$, we have $i(\gamma,u_{out}) \approx 2\ell_n/\kappa \lambda$.  
On the other hand we have $H_n \approx h_n(1) \approx \ell_n(1-\s)$ along $D$, and $\omega(u_{out}) \approx 0$, so $\cA(\gamma,u_{out}) \approx \ell_n(1-\s)$. 
Combining we have
\begin{align*}
i_{mix}(\gamma) &= i(\gamma,u_{out}) - \kappa^{-1} \cA(\gamma,u_{out}) \\
&\approx \frac{2\ell_n}{\kappa \lambda} - \kappa^{-1}\ell_n(1-\s) \\
&\ge \kappa^{-1}\ell_n (\s - \s_{crit}),
\end{align*}
which gives the desired result, as we chose $\s > \s_{crit}$.
\end{proof}

Our first thought, in trying to `throw out' the $D$-type orbits, might be to consider the submodule of $tel(\cC)$ spanned by orbits satisfying $i_{mix}(\gamma) < \kappa^{-1} \delta \ell_n$, as that is contained in $tel(\cC)_{SH}$ by Proposition \ref{prop:imix-div}. 
However this does not behave well with respect to the differential: it is neither subcomplex, quotient complex, nor subquotient. 
Instead, we consider a family of subquotient complexes $(SC^{(p)}_\Nov,\partial_p)$ of $tel(\cC)$, indexed by $p \in \R$, spanned by generators $(\gamma,u)$ satisfying 
$$ i(\gamma,u) < p \le \frac{\cA(\gamma,u) + \delta\ell_n}{\kappa}.$$
(Note that these are contained in $tel(\cC)_{SH}$ by Proposition \ref{prop:imix-div}, which is identified with $SC_\Nov$ by \eqref{eq:SCtoCF}.)

To see that this is a subquotient of $tel(\cC)$, we first observe that the differential clearly increases the quantity $\cF(\gamma,u) = \frac{\cA(\gamma,u) + \delta\ell_n}{\kappa}$: it increases action, and increases $n$ and hence $\ell_n$ by the definition of the telescope complex. 
Therefore it defines a filtration map, so $\cF_{\ge p} tel(\cC)$ is a subcomplex. 
On the other hand, the degree truncation $\sigma_{<p} C^\bullet := \bigoplus_{i<p} C^i$ is always a quotient complex of any cochain complex. 
Thus $(SC^{(p)}_\Nov,d \otimes id_\Nov) = \sigma_{<p} \cF_{\ge p} SC_\Nov$ is a subquotient of $SC_\Nov$, whose generators are all of $SH$-type by Proposition \ref{prop:imix-div}.

\begin{prop}\label{prop-qiso}
For any $p\in \mathbb{R}$, both $\mathcal{F}_{\geq p}tel(\mathcal{C})\subset tel(\mathcal{C})$ and $\cF_{\ge p} tel(\cC_{SH}) \subset tel(\cC_{SH})$ are quasi-isomorphic subcomplexes.
\end{prop}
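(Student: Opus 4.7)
My plan is to identify both filtered subcomplexes as mapping telescopes of action-truncated directed subsystems, and then reduce the quasi-isomorphism claim to a check at the level of colimits of cohomology. I will focus on $tel(\cC)$; the argument for $tel(\cC_{SH})$ is identical (and in fact simpler, because each $C^{SH}_n$ is finite-dimensional over $\Bbbk$).

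First I set $a_n := \kappa p - \delta \ell_n$, a strictly decreasing sequence with $a_n \to -\infty$. At level $n$, a generator of $tel(\cC)$ has filtration value $\cF = (\cA(\gamma,u) + \delta \ell_n)/\kappa$, so $\cF_{\geq p}$ selects capped orbits with $\cA \geq a_n$. Since the Floer differentials and continuation maps respect the action filtration and $a_{n+1} < a_n$, the action-truncated subspaces $\cA_{\geq a_n} CF^*(M, H_n; \Nov)$ form a directed subsystem, and I will identify $\cF_{\geq p} tel(\cC)$, as a subcomplex of $tel(\cC)$, with the mapping telescope of this subsystem. Applying the standard quasi-isomorphism $H^*(tel) \cong \varinjlim_n H^*$ to both telescopes reduces the claim to showing that the natural map
\[
\varinjlim_n H^*\bigl(\cA_{\geq a_n} CF^*(M, H_n; \Nov)\bigr) \longrightarrow \varinjlim_n HF^*(M, H_n; \Nov)
\]
induced by inclusion is an isomorphism.

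For surjectivity, I will use that any cocycle $x \in CF^*(M, H_n; \Nov)$ is a finite $\Bbbk$-linear combination of fractional capped orbits, hence has an action lower bound $a_0$; for $m$ large enough that $a_m \leq a_0$, the continuation $f_{m-1} \cdots f_n(x)$ automatically lies in $\cA_{\geq a_m}$ and represents the same class in the colimit. For injectivity, if $f_{N-1} \cdots f_n(x) = dy$, then $y$ also has an action lower bound $b$, and for $M \geq N$ with $a_M \leq b$, the primitive $f_{M-1} \cdots f_N(y)$ lies in $\cA_{\geq a_M}$ and witnesses vanishing. The main obstacle, though more bookkeeping than conceptually deep, will be verifying the first identification: the level-shifting term $-f_n(y_n)$ in the telescope differential must remain in $\cF_{\geq p}$ whenever $y_n$ does, which follows from $\cA(f_n(y_n)) \geq \cA(y_n) \geq a_n > a_{n+1}$ and hence $\cF(f_n(y_n)) \geq (a_n + \delta \ell_{n+1})/\kappa > p$. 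Once this compatibility is in place, the rest of the argument is formal.
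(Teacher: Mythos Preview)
Your proposal is correct and follows essentially the same approach as the paper: identify $\cF_{\ge p} tel(\cC)$ with the telescope of the sub-$1$-ray $\cA_{\ge \kappa p - \delta\ell_n} CF^*(M,H_n;\Nov)$, then use that $\kappa p - \delta\ell_n \to -\infty$ and exhaustiveness of the action filtration to conclude the inclusion is a quasi-isomorphism after passing to direct limits. The only cosmetic difference is that the paper packages the cofinality step into a general lemma (showing the chain-level direct limit of the quotients $\cC_n/\cC'_n$ vanishes), whereas you argue surjectivity and injectivity of the induced map on $\varinjlim H^*$ directly; these are two phrasings of the same observation.
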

\begin{proof}[Sketch of proof]
We may identify $\cF_{\ge p} tel(\cC)$ as the telescope complex of the 1-ray of Floer groups $\cA_{\ge \kappa p - \delta\ell_n} CF^*(M,H_n,\Nov)$. 
The key point is that $\kappa p - \delta\ell_n \to - \infty$ as $n \to \infty$, and the action filtration is exhaustive, so the direct limit `eventually catches everything' (see Appendix \ref{ss-cofinal}). 
The argument for $\cF_{\ge p} tel(\cC_{SH}) \subset tel(\cC_{SH})$ is identical.
\end{proof}

Because $H^j(\sigma_{<p}C^\bullet) = H^j(C^\bullet)$ for $j<p-1$, we have
$$ H^j(\sigma_{<p}\cF_{\ge p} tel(\cC),\partial) = H^j(M;\Nov) \qquad \text{for $j < p-1$.}$$
If we were willing to weaken the statement in Theorem \ref{thm:aa}, and only achieve the isomorphism of item \eqref{it:3a} up to degree $p-1$, we would now be done: we could simply take $\wt{SC}_\Nov = SC^{(p)}_\Nov$, with $\wt{\cQ}$ equal to the filtration induced by $\cQ$. 
However, to get the corresponding statement in all degrees, we observe that there are natural maps $SC^{(p)}_\Nov \to SC^{(q)}_\Nov$ for all $p \ge q$, induced by the inclusion $\cF_{\ge p} \subset \cF_{\ge q}$ and the projection $\sigma_{<p} \twoheadrightarrow \sigma_{<q}$. 
We define $(\widetilde{SC}_\Nov,\partial)$ to be the homotopy inverse limit of the inverse system of chain complexes $(SC^{(p)}_\Nov,\partial_p)$, and $\wt{\cQ}$ the filtration induced by the $\cQ$-filtration on $SC_\Nov$. 
The result is that
$$H^*(\widetilde{SC}_\Nov,\partial) = \varprojlim_p H^*(SC^{(p)}_\Nov,\partial_p) = QH^*(M;\Nov)$$
as desired. (We remark that this step requires us to check that $\varprojlim^1 H^*(SC^{(p)}_\Nov,\partial_p) = 0$; indeed the inverse system is easily seen to satisfy the Mittag-Leffler property.)
This completes the sketch proof of Theorem \ref{thm:aa}.

\subsubsection{Theorem \ref{thm:specseqa}}\label{sss-C}

In order to prove Theorem \ref{thm:specseqa}, it suffices to prove that the $\wt{\cQ}$-filtration is bounded below and exhaustive, by the `Classical Convergence Theorem' \cite[Theorem 5.5.1]{Weibel}. 
The $\cQ$-filtration on each $SC^{(p)}_\Nov$ is exhaustive by definition, but the $\wt{\cQ}$-filtration on $\wt{SC}_\Nov$ is not exhaustive, due to the direct product taken in the construction. 
Nevertheless one can show that the inclusion $\cup_q \wt{\cQ}_{\ge q} \wt{SC}_\Nov \subset \wt{SC}_\Nov$ is a quasi-isomorphism, and the $\wt{\cQ}$-filtration on this quasi-isomorphic subcomplex is exhaustive by construction. 

Thus the main thing to prove, in order to apply the Classical Convergence Theorem, is that the $\wt{\cQ}$-filtration is bounded below. 
The key ingredient is the following:

\begin{prop}\label{prop:imix-pos}
Suppose that Hypothesis \ref{hyp:gr} is satisfied. 
Then for any $SH$-type orbit $\gamma$, we have $i(\gamma,u_{in}) \ge 0$.
\end{prop}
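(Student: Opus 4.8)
The plan is to compute $i(\gamma, u_{in})$ for an $SH$-type orbit $\gamma$ of $H_n$ by splitting it into a ``Conley--Zehnder'' contribution and a ``capping'' contribution, and then bound each piece. First recall that by construction the $SH$-type orbits that are not of Reeb type are constant, and for a constant orbit the index is manifestly controlled (up to the usual Morse-theoretic shift by $\dim$); since $u_{in}$ is the distinguished fractional cap with zero $\bflambda$-area, its index is simply the Morse index of the critical point, which lies in the range $[0, \dim_{\R} X]$, so such orbits are harmless. The real content is the Reeb-type $SH$-orbits, which live in a neighbourhood of $Y = \partial K_\s$ where $H_n$ is (a perturbation of) $h_n \circ \rho$ with $h_n$ linear of slope $\ell_n$. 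For such an orbit $\gamma$, lying over a Reeb orbit of period $T$ on $Y$ with $T < \ell_n$ (by our choice of $\ell_n$ avoiding Reeb periods and the shape of $h_n$), the Conley--Zehnder index computed with the contact-type trivialization is $\CZ(\gamma) = \mu_{RS}(\text{linearized Reeb return map}) + (\text{shift})$, and by the usual estimate for Hamiltonians that are linear in the symplectization variable this is bounded below by a quantity of the form $-c$ for a constant $c$ depending only on $(Y, \ker\theta|_Y)$ and the chosen trivialization, not on $n$.

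Next I would relate the contact-type trivialization near $Y$ to the preferred trivialization of (a power of) the canonical bundle of $X$ determined by the weights $\lambda_i$, which is what enters the definition of $i(\gamma, u)$. This comparison produces a correction term; the point is that this term is again independent of $n$, being determined by the geometry of $(X, \theta)$ and the divisor data near $Y$. Combining, $i(\gamma, u_{in}) \ge -c'$ for some $n$-independent constant $c'$. To upgrade ``bounded below by a constant'' to ``$\ge 0$'', I would invoke Hypothesis \ref{hyp:gr} exactly where the sketch of Proposition \ref{prop:imix-div} used $\s > \s_{crit}$: for the $SH$-type (Reeb) orbits, $u_{in}$ has $\bflambda$-area zero, so $\cA(\gamma, u_{in}) = \kappa \cdot i_{mix}(\gamma)$, and one shows $i_{mix}(\gamma) \ge 0$ for these orbits by the same rotation-speed computation — near the skeleton side the Hamiltonian is positive with small $\omega$-area, and the flow rotation contributes a nonnegative index — the gain being that $\s_{crit} = 0$ under Hypothesis \ref{hyp:gr} removes the negative $-\kappa^{-1}\ell_n \s_{crit}$ term entirely. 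Since $i(\gamma, u_{in}) = i_{mix}(\gamma) + \kappa^{-1}\cA(\gamma, u_{in}) = i_{mix}(\gamma) + i_{mix}(\gamma)$... more carefully, $i(\gamma, u_{in}) = i_{mix}(\gamma)$ precisely because $\cA(\gamma, u_{in}) = 0$ is \emph{not} quite right — rather $\cA(\gamma,u_{in})$ is the Hamiltonian-action part with the $\omega$-area normalized away, so I would track the precise relation $i(\gamma,u_{in}) = i_{mix}(\gamma) + \kappa^{-1}\cA_{H}(\gamma)$ and use that $\cA_H(\gamma) \ge 0$ for $SH$-type orbits together with $i_{mix}(\gamma) \ge -($bounded$)$, closing the gap with Hypothesis \ref{hyp:gr}.

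The main obstacle I anticipate is bookkeeping the several index conventions and trivializations consistently: the Conley--Zehnder index of a (possibly degenerate, then perturbed) Reeb-type orbit computed in the natural contact trivialization, versus the index $i(\gamma, u)$ appearing in the Floer complex which is normalized against the $\lambda_i$-determined trivialization of a power of $K_X$, versus the constant-orbit normalization — and making sure that the corner-rounding in the normal crossings case (where $\rho|_D \approx 1$ only) does not introduce an $n$-dependent error into the index estimate. Getting the sign and the precise constant in ``$i_{mix} \ge \kappa^{-1}\ell_n(\s - \s_{crit})$'' to specialize correctly at $\s \to 0^+$, $\s_{crit} = 0$ for the $SH$-side orbits (rather than the $D$-side orbits treated in Proposition \ref{prop:imix-div}) is where I expect the real care to be needed; everything else is a standard linear-Hamiltonian index computation.
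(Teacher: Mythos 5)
There is a genuine gap, and it sits exactly at the point your proposal treats as routine. You claim that the Conley--Zehnder index of a Reeb-type $SH$-orbit in the contact-type trivialization is bounded below by an $n$-independent constant, and that the correction from comparing that trivialization with the trivialization $\eta_{\bflambda}$ determined by the weights is ``again independent of $n$, being determined by the geometry near $Y$''. The second claim is false: the correction depends on the homotopy class of the orbit, namely on its winding numbers $\nu^h_i(\gamma)$ around the components $D_i$, and equals $-\sum_i \lambda_i\,\nu^h_i(\gamma)$, which is unbounded below as $n \to \infty$ (the slopes $\ell_n \to \infty$ force orbits of arbitrarily large winding). What the paper actually proves (Lemmas \ref{lem:ind} and \ref{lem-ind-final}) is an exact cancellation: from the local toric model near $D$ one computes $i(\gamma,u_{out}) = 2\sum_i \lceil \nu^h_i(\bar{\gamma})\rceil + \delta(\gamma)$ with $0 \le \delta(\gamma) \le 2n$, hence $i(\gamma,u_{in}) = i(\gamma,u_{out}) - \sum_i \lambda_i \nu^h_i(\bar{\gamma}) \ge \sum_i (2-\lambda_i)\nu^h_i(\bar{\gamma})$, and Hypothesis \ref{hyp:gr} is precisely the condition $\lambda_i \le 2$ making each summand non-negative. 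There is no intermediate ``bounded below by $-c'$'' statement available without the hypothesis; when some $\lambda_i > 2$ the indices $i(\gamma,u_{in})$ genuinely tend to $-\infty$.

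Your attempted ``upgrade'' via the mixed index also runs backwards. For $SH$-type orbits one has $\cA(\gamma,u_{in}) = h(\rho) - \rho\, h'(\rho) \le 0$, so $i(\gamma,u_{in}) = i_{mix}(\gamma) + \kappa^{-1}\cA(\gamma,u_{in}) \le i_{mix}(\gamma)$; knowing $i_{mix}(\gamma) \ge 0$ therefore tells you nothing about $i(\gamma,u_{in})$. In the paper the implication goes the other way: Proposition \ref{prop-heavy} deduces $i_{mix} \ge 0$ \emph{from} the present proposition together with $\cA(\gamma,u_{in}) \le 0$. Moreover, the rotation-speed computation of Proposition \ref{prop:imix-div} concerns $D$-type orbits near $\rho \approx 1$, where $H_n \approx \ell_n(1-\s)$ is large and positive; it has no analogue for $SH$-type orbits near $\partial K_\s$. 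The constant-orbit case and the overall decomposition in your first paragraph are fine and agree with the paper; the missing idea is the explicit formula $i(\gamma,u_{out}) \approx 2\sum_i \nu^h_i$ and its term-by-term comparison with the cap-change cost $\sum_i \lambda_i \nu^h_i$.
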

\begin{proof}[Sketch of proof when $D$ is smooth]
Note that the result is trivial for constant $SH$-type orbits, as $i(\gamma,u_{in})$ is equal to a Morse index which is non-negative. 
For a Reeb-type orbit $\gamma$, we define $u_{out}$ be the small cap passing through $D$. 
Then the orbit $\gamma$ winds $\nu = u_{out} \cdot D$ times around $D$, so $i(\gamma,u_{out}) \approx 2\nu$. 
Thus we have 
$$i(\gamma,u_{in}) = i(\gamma,u_{out}) - \lambda u_{out} \cdot D = (2-\lambda) \nu \ge 0,$$
as required.
\end{proof}

We now show that the $\wt{\cQ}$-filtration is bounded below. 
To be precise, we need to show that for any $i$ there exists $q(i)$ such that $\wt{\cQ}_{\ge q(i)} \widetilde{SC}^i_\Nov = 0$.\footnote{The terminology is counterintuitive as our filtrations are decreasing, whereas the standard conventions for spectral sequences are for the filtrations to be increasing.} 
Indeed, we observe that for $i(\gamma \otimes e^a) = i$ fixed, we have
\[ a_0 \wt{\cQ}(\gamma \otimes e^a) = a = i(\gamma \otimes e^a) - i(\gamma,u_{in}) \le i\]
by Proposition \ref{prop:imix-pos}; thus we may take $q(i) = i/a_0$.

The following result is an immediate consequence of Theorem \ref{thm-superheavy} and the Mayer--Vietoris property of relative symplectic cohomology \cite{Varolgunes2018}.
However it also admits a simple direct proof using Proposition \ref{prop:imix-pos}, which we feel is illuminating, so we give it here.

\begin{prop}\label{prop-heavy}
Suppose Hypothesis \ref{hyp:gr} is satisfied. 
Then the restriction map $$SH_M(M;\Nov)\to SH_M(K_\s;\Nov) $$ is an isomorphism for all $\s\in (0,1)$. In particular, $K_\s\subset M$ is $SH$-visible for all $\s\in (0,1)$ and $\mathbb{L}$ is weakly $SH$-visible, hence not stably displaceable from itself.
\end{prop}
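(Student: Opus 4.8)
The plan is to exploit the fact that, under Hypothesis \ref{hyp:gr}, the $D$-type orbits are "invisible" to the relative symplectic cohomology after completion, so that the degreewise-completed telescope computing $SH_M(M;\Nov)$ coincides with the one computing $SH_M(K_\s;\Nov)$. Concretely, I would work with the acceleration data $(H_\tau,J_\tau)$ for $K_\s\subset M$ constructed in the sketch of Theorem \ref{thm:aa} (for any fixed $\s\in(0,1)$; if $\s\le \s_{crit}$ this is exactly the case where $K_{crit}=\mathbb L$ and Hypothesis \ref{hyp:gr} holds, so in fact $\s_{crit}=0$ and every $\s\in(0,1)$ is allowed). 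Recall the decomposition $tel(\cC)=tel(\cC)_{SH}\oplus tel(\cC)_D$ of $\Nov$-modules, and that by construction $H^*(\widehat{tel}(\cC))=SH_M(M;\Nov)$, while restricting the acceleration data to $K_\s$ gives the Floer $1$-ray $\cC_{SH}$ with $H^*(\widehat{tel}(\cC_{SH}))=SH_M(K_\s;\Nov)$.

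The first step is to observe that the obvious projection map $tel(\cC)\twoheadrightarrow tel(\cC_{SH})$, which kills the $D$-type generators and uses the identification of $SH$-type generators with $SC_\Nov$ via the fractional cap $u_{in}$ of \eqref{eq:SCtoCF}, is a chain map \emph{after completion}. This is where Proposition \ref{prop:imix-pos} enters: for each $n$, the $SH$-type generators $(\gamma,u)$ with $i(\gamma,u)=i$ fixed have $\cQ$-filtration bounded above by $i/a_0$ (since $a_0\,\cQ(\gamma\otimes e^a)=a=i-i(\gamma,u_{in})\le i$), hence their action $\cA(\gamma,u)=\kappa(\text{const}+a\,a_0)$ is bounded above in each fixed degree. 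Meanwhile, by Proposition \ref{prop:imix-div}, any $D$-type orbit of $H_n$ has $i_{mix}(\gamma)\ge \kappa^{-1}\delta\ell_n$, so in a fixed degree $i$ its action satisfies $\cA(\gamma,u)=\kappa(i-i_{mix}(\gamma))\le \kappa i-\delta\ell_n\to-\infty$. Thus in each fixed degree, as we pass further along the telescope, the $D$-type generators acquire action tending to $-\infty$ while the $SH$-type generators keep bounded-above action; after degreewise completion with respect to the (decreasing, i.e. action-increasing is a quotient) action filtration, I would argue that the subcomplex generated by $D$-type orbits becomes acyclic — more precisely, that the projection $\widehat{tel}(\cC)\to\widehat{tel}(\cC_{SH})$ is a quasi-isomorphism. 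The cleanest way to see this is probably to filter $\widehat{tel}(\cC)$ by the $D$-type count (or by $i_{mix}$) and compare with the analogous statement for $tel(\cC)_{SH}$, using that the completion turns the "$D$-type action $\to-\infty$" phenomenon into genuine vanishing of the relevant $\varprojlim$ and $\varprojlim^1$ terms.

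Granting this, the second step is a diagram chase: the restriction map $SH_M(M;\Nov)\to SH_M(K_\s;\Nov)$ is, at the chain level, induced by exactly this projection (restricting acceleration data to $K_\s$ and reading off the $SH$-type part), so it is an isomorphism. The final sentence then follows formally: $M$ is $SH$-visible because $SH_M(M;\Nov)\cong QH^*(M;\Nov)\ne 0$ via PSS, hence $SH_M(K_\s;\Nov)\ne 0$, i.e. $K_\s$ is $SH$-visible for every $\s\in(0,1)$; and since every compact domain containing $\mathbb L$ in its interior contains some $K_\s$, and restriction maps $SH_M(K';\Nov)\to SH_M(K_\s;\Nov)$ are unital (so nonzero source forces nonzero target by the unitality argument preceding Theorem \ref{thm-superheavy}), $\mathbb L$ is nearly $SH$-visible, hence not stably displaceable from itself by Theorem \ref{analog_thesis2}.

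I expect the main obstacle to be the completion bookkeeping in the first step: one must check that degreewise completion genuinely annihilates the $D$-type contribution, which requires care because completion is exact only on the nose for the filtration at hand and one needs the relevant $\varprojlim^1$-terms to vanish — this is where the uniform lower bound $i_{mix}(\gamma)\ge\kappa^{-1}\delta\ell_n$ of Proposition \ref{prop:imix-div}, combined with the degreewise finiteness and the action upper bound from Proposition \ref{prop:imix-pos}, does the real work, and one should phrase it as a Mittag-Leffler/cofinality argument analogous to Proposition \ref{prop-qiso}.
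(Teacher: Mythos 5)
There is a genuine gap, and it starts with the identification $H^*(\widehat{tel}(\cC_{SH}))=SH_M(K_\s;\Nov)$ in your first paragraph: this is false. The complex $tel(\cC_{SH})$, built by restricting the acceleration data to the Liouville domain $K_\s$, computes Viterbo's symplectic cohomology $SH^*(X;\Bbbk)$ of the divisor complement; the relative invariant $SH_M(K_\s;\Nov)$ is by definition $H^*(\widehat{tel}(\cC))$ for the \emph{full} Floer $1$-ray $\cC$ on $M$, $D$-type orbits included. (Relating these two objects is precisely the content of Theorems \ref{thm:aa} and \ref{thm:specseqa}; they are not in general isomorphic.) Consequently the map to be analysed is not the projection $tel(\cC)\twoheadrightarrow tel(\cC_{SH})$ killing the $D$-type generators: the restriction map $SH_M(M;\Nov)\to SH_M(K_\s;\Nov)$ is induced by continuation maps between two completed telescopes on $M$, one for acceleration data converging to $0$ everywhere and one for acceleration data blowing up outside $K_\s$.

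Second, the mechanism you invoke --- ``$D$-type actions tend to $-\infty$ in fixed degree, hence the $D$-type part becomes acyclic after completion'' --- is backwards. Degreewise completion with respect to the action filtration adjoins infinite sums whose actions tend to $+\infty$; it does not annihilate generators of very negative action (the kernel of $A\to\widehat{A}$ is $\bigcap_\rho F_{\ge\rho}A$). It is actions tending to $+\infty$ along the telescope that force acyclicity after completion (Lemma \ref{unboundedcofinal}); actions tending to $-\infty$ produce nothing. The actual argument runs the other way: under Hypothesis \ref{hyp:gr} \emph{every} orbit satisfies $i_{mix}(\gamma)\ge 0$ --- for $SH$-type orbits by combining $i(\gamma,u_{in})\ge 0$ (Proposition \ref{prop:imix-pos}) with the action formula $\cA(\gamma,u_{in})=h(\rho)-\rho h'(\rho)\le 0$, and for $D$-type orbits by Proposition \ref{prop:imix-div}. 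Hence in each fixed degree $i$ all generators have action $\le \kappa i$, the action filtration on $tel(\cC)^i$ is bounded above, and the completion has \emph{no effect at all}: $\widehat{tel}(\cC)=tel(\cC)$. Then $SH_M(K_\s;\Nov)=H^*(tel(\cC))=\varinjlim_n HF^*(M,H_n;\Nov)=QH^*(M;\Nov)$ by PSS, compatibly with $SH_M(M;\Nov)\cong QH^*(M;\Nov)$, which gives the isomorphism. (Note also that your bound on the actions of $SH$-type generators in fixed degree uses only the index inequality; to make it uniform over all telescope levels one genuinely needs $\cA(\gamma,u_{in})\le 0$, which your argument omits.) Your closing paragraph on visibility and non-displaceability is fine once the isomorphism is in place.
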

\begin{proof}
Note that we have $i(\gamma,u_{in}) \ge 0$ for any $SH$-type orbit, by Proposition \ref{prop:imix-pos}. 
We also have $\cA(\gamma,u_{in}) = h(\rho) - \rho \cdot h'(\rho) \le 0$, where $\rho = \rho(\gamma)$, by the well-known formula \cite[Section 1.2]{Vit}.\footnote{Note that our conventions are different from Viterbo's.} 
It follows that $i_{mix}(\gamma) \ge 0$. This inequality is satisfied for $D$-type orbits as well (recall Proposition \ref{prop:imix-div}), and therefore it is satisfied for all relevant one periodic orbits.

Now if we fix the index $i(\gamma,u) = i$, then the inequality $i_{mix}(\gamma) \ge 0$ yields an upper bound on the action: $\cA(\gamma,u) \le \kappa \cdot i$. 
Therefore the degreewise completion of the telescope complex has no effect: $$\widehat{tel}(\cC(H_\tau,J_\tau)) = tel(\cC(H_\tau,J_\tau)).$$ 
It follows that $SH^*_M(M;\Nov) \to SH^*_M(K_\s;\Nov)$ is an isomorphism as required.
\end{proof}

\subsubsection{Theorem \ref{thm-superheavy}}\label{sss-D}

In order to prove Theorem \ref{thm-superheavy}, we need to consider the dependence of our constructions on the `smoothing parameter' $R>0$, so we include it in the notation. 
The proof starts with the same strategy that was used in the proof of \cite[Theorem 1.24]{Tonkonog2020}. For $R$ sufficiently small and $\s$ sufficiently close to $1$, $M \setminus K^R_\s$ is stably displaceable (this follows from an $h$-principle as popularized by McLean in \cite{McLean2020}). Therefore, $SH_M\left(\overline{M\setminus K^R_\s};\Nov\right)=0$ for such $R,\s$. We then prove that there exists a continuous function $\s_{crit}^D(R)$, with $\s_{crit}^D(0) = \s_{crit}$, such that the following holds:

\begin{prop}[Proposition \ref{prop:index_bd_collar_invt}]
	\label{prop:index_bd_collar_invti}
	Let $\s_{crit}(R)<\s_1<\s_2<1$. Then, there exists an isomorphism
	$$
	SH^*_M\left(\overline{M\setminus K^R_{\s_1}};\Nov\right)\cong SH^*_M\left(\overline{M\setminus K^R_{\s_2}};\Nov\right).
	$$
\end{prop}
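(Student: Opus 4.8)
The plan is to realise the restriction map $SH^*_M(\overline{M\setminus K^R_{\s_1}};\Nov)\to SH^*_M(\overline{M\setminus K^R_{\s_2}};\Nov)$ at the level of degreewise completed telescope complexes and to show that it is an isomorphism; the argument runs parallel to the proof of Theorem \ref{thm:aa} sketched in Section \ref{ss-proofs}, with the divisor $D$ replaced by the ``outer region'' $\{\rho^R\ge\s_j\}$ (which contains $D$), and with Proposition \ref{prop:imix-div} replaced by an index estimate on the collar $\{\s_1\le\rho^R\le\s_2\}$ whose validity is precisely what forces the hypothesis $\s_1>\s_{crit}^D(R)$. Concretely: for $j=1,2$ I would fix acceleration data $(H^{(j)}_\tau,J_\tau)$ for $\overline{M\setminus K^R_{\s_j}}$, of the type used to prove Theorem \ref{thm:aa} but ``reflected'', with $J_\tau$ of contact type near the relevant level sets of $\rho^R$ and $H^{(j)}_n$ a carefully chosen perturbation of $h^{(j)}_n\circ\rho^R$, where $h^{(j)}_n\colon\R\to\R$ approximates $\max(0,\ell_n(\s_j-\,\cdot\,))$ with increasing accuracy, vanishes on $[\s_j,\infty)$, is linear of slope $-\ell_n$ on $(\s_{\min},\s_j]$ for a fixed $\s_{\min}\in(\s_{crit}^D(R),\s_1)$, and is interpolated to a large constant near $\mathbb{L}$; the slopes $\ell_n\to\infty$ are chosen so that the Reeb flow on $\{\rho^R=c\}$ has no $\ell_n$-periodic orbit for $c$ in a neighbourhood of $[\s_{\min},\s_2]$. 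Since $\s_1<\s_2$ one has $H^{(1)}_n\le H^{(2)}_n$ pointwise, with equality on $\{\rho^R\ge\s_2\}$; choosing the perturbations to agree there and the continuation data compatibly yields a morphism of Floer $1$-rays $\cC(H^{(1)}_\tau,J_\tau)\to\cC(H^{(2)}_\tau,J_\tau)$ and hence a chain map $\widehat{tel}(\cC^{(1)})\to\widehat{tel}(\cC^{(2)})$ inducing the restriction map.

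The heart of the argument is the analogue of Proposition \ref{prop:imix-div}, proved by the same computation with the ``outer'' cap $u_{out}$: there is a constant $\delta>0$, independent of $n$, so that every $1$-periodic orbit $\gamma$ of $H^{(j)}_n$ that projects to a level $\{\rho^R=c\}$ with $\s_{\min}\le c\le\s_j$ satisfies $i_{mix}(\gamma)\ge\kappa^{-1}\delta\ell_n$. Indeed such a $\gamma$ winds some $\nu\ge1$ times around $D$, so $i(\gamma,u_{out})$ equals, up to an error bounded uniformly in $n$, $2\nu$ times a quantity which over the generic stratum of $D$ is $\ell_n/(\kappa\max_i\lambda_i)$, while $\cA(\gamma,u_{out})\approx-\ell_n(1-\s_j)$; combining as in the sketch of Proposition \ref{prop:imix-div} gives $i_{mix}(\gamma)\gtrsim\kappa^{-1}\ell_n(\s_j-\s_{crit}^D(R))$, which is positive since $\s_j\ge\s_1>\s_{crit}^D(R)$. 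Here $\s_{crit}^D(R)$ is by definition the threshold needed to absorb the $R$-dependent error produced by the rounding of the corners of $\rho^R$ near the strata where several components of $D$ meet; tracking that error as a function of $R$ shows $\s_{crit}^D(R)$ may be taken continuous with $\s_{crit}^D(0)=\s_{crit}$. By the choice of $\ell_n$ there are no orbits over levels below $\s_{\min}$, so this estimate accounts for every orbit of $H^{(j)}_n$ other than the constant orbits in $\{\rho^R\ge\s_j\}$ and the Morse-type orbits in the interpolation region near $\mathbb{L}$; these two families are canonically identified for $j=1,2$, the caps differing only by the additive constant $\ell_n(\s_2-\s_1)$ near $\mathbb{L}$.

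It then remains to deduce that $\widehat{tel}(\cC^{(1)})\to\widehat{tel}(\cC^{(2)})$ is a quasi-isomorphism. The index estimate says the orbits that distinguish the two constructions -- those over the corners -- have mixed index growing linearly in $\ell_n$, equivalently that in each fixed degree their action tends to $-\infty$ as $n\to\infty$; so after passing to a suitable action truncation the telescopes $tel(\cC^{(1)})$ and $tel(\cC^{(2)})$ have the same generators (the identified ``outer'' and ``near-skeleton'' orbits), and one checks using Proposition \ref{prop-pos-int} and the integrated maximum principle of \cite[Lemma 7.2]{Abouzaid2007} that the morphism of $1$-rays restricts to a filtered quasi-isomorphism there. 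A cofinality argument in the style of Proposition \ref{prop-qiso} and Appendix \ref{ss-cofinal}, together with the truncation-and-inverse-limit argument at the end of the proof of Theorem \ref{thm:aa} (the relevant $\varprojlim^1$ vanishing by Mittag--Leffler), then promotes this to the asserted isomorphism of degreewise completed telescopes, i.e.\ of $SH^*_M$.

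The step I expect to be the main obstacle is the index estimate for a genuine SC -- rather than smooth -- divisor. As the parenthetical ``sketch of proof when $D$ is smooth'' for Proposition \ref{prop:imix-div} already signals, near the strata where several components of $D$ meet the function $\rho^R$ is only an approximation assembled from several commuting Hamiltonian circle actions, so pinning down the Conley--Zehnder index and the action of the corner orbits, bounding the resulting error, and verifying that it is uniform and continuous in the smoothing parameter $R$ (so that $\s_{crit}^D(R)\to\s_{crit}$ as $R\to 0$) will require the delicate local analysis already developed for Theorems \ref{thm:aa} and \ref{thm:specseqa}; the remaining steps are routine adaptations of that machinery.
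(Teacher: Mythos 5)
Your overall framing is built on the right intuitions (decreasing Hamiltonians with slope $-\ell_n$ on a collar below $\s_j$, an index estimate on the collar orbits as the place where the hypothesis $\s_1>\s^D_{crit}(R)$ enters, corner-rounding errors controlled by $R$), but there are two genuine problems. First, the sign of your key estimate is wrong. For these decreasing Hamiltonians the collar orbits wind \emph{negatively} around $D$: taking $D$ smooth, a Reeb-type orbit at level $c$ has $\nu^h=-\ell_n/\kappa\lambda$, $r=\kappa\lambda(1-c)$ and $h(c)\approx\ell_n(\s_j-c)$, so Lemma \ref{lem:mindex} gives $i_{mix}(\gamma)\approx\nu^h(2-\kappa^{-1}r)-\kappa^{-1}h(c)=-\kappa^{-1}\ell_n\left(\s_j-\tilde{\s}_{crit}\right)$, which is \emph{negative} and tends to $-\infty$; equivalently, in a fixed degree the actions of these orbits tend to $+\infty$, not $-\infty$. (This is why the paper's Lemma \ref{lem:keybound} is phrased in terms of $-i_{mix}$.) Orbits whose action tends to $+\infty$ in fixed degree are exactly the ones that survive into, and interact nontrivially with, the degreewise action completion; they cannot be discarded by an action truncation.

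Second, and more seriously, the final step does not go through. Even with the sign corrected, the two complexes do not have "the same generators" after any truncation: on the collar $\{\s_1\le\rho^R\le\s_2\}$ your $H^{(1)}_n$ is approximately constant and contributes Morse-type constant orbits of bounded action and index, while $H^{(2)}_n$ has slope $-\ell_n$ there and contributes Reeb-type orbits, and no subquotient identifies these. Moreover the tools you invoke (Proposition \ref{prop-qiso}, Appendix \ref{ss-cofinal}, the $\sigma_{<p}\cF_{\ge p}$ truncation and $\varprojlim$) all compute the cohomology of the \emph{uncompleted} telescope, i.e. $\varinjlim HF^*=QH^*(M;\Nov)$; they say nothing about $\widehat{tel}$, and indeed $SH^*_M(\overline{M\setminus K^R_\s};\Nov)$ is $0$ rather than $QH^*(M;\Nov)$, so an argument along those lines would prove the wrong statement. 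The paper avoids the continuation map entirely: the asserted isomorphism is not the restriction map. It pushes forward acceleration data for $\overline{M\setminus K^R_{\s_2}}$ by a Liouville-flow diffeomorphism $\phi$ carrying $K^R_{\s_2}$ to $K^R_{\s_1}$ (the contact Fukaya trick), so that $(\gamma,u)\mapsto(\phi\circ\gamma,\phi\circ u)$ is an honest isomorphism of Floer $1$-rays that merely fails to respect actions; the only thing left to prove is that the two action filtrations induce the same degreewise completion (Lemmas \ref{lem:keybound} and \ref{lem:A-equiv}), and that is where the estimate $-i_{mix}(\phi(\gamma))\ge\eta\cdot(-i_{mix}(\gamma))+C$, hence the hypothesis $\s_1>\s^D_{crit}(R)$ via Lemma \ref{lem-D}, is used. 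To salvage your route you would need a new argument for why the continuation map becomes a quasi-isomorphism after completion; nothing in the paper's toolkit supplies one.
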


In particular, $SH_M\left(\overline{M \setminus K^R_\s}\right) = 0$ for all  $\s \in( \s_{crit}^D(R),1)$; as the compact sets $\left\{\overline{M  \setminus K^R_\s}\right\}_{R>0,\s>\s_{crit}^D(R)}$ exhaust $M \setminus K_{crit}$, this implies that $K_{crit}$ is SH-full. 

The proof of Proposition \ref{prop:index_bd_collar_invti} uses the `contact Fukaya trick' of \cite{Tonkonog2020}. 
This allows us to set up acceleration data $(H_\tau,J_\tau)$ for $\overline{M\setminus K_{\s_2}}$ and $(\tilde{H}_\tau,\tilde{J}_\tau)$ for $\overline{M\setminus K_{\s_1}}$, so that there is an isomorphism of Floer 1-rays $\cC(H_\tau,J_\tau) \cong \cC(\tilde{H}_\tau,\tilde{J}_\tau)$, which however need not respect action filtrations. 
The key to proving the Proposition, then, is to show that the action filtrations on the corresponding telescope complexes are topologically equivalent. 
The reason why this last step worked in \cite{Tonkonog2020} was the index-boundedness property (also popularized in \cite{McLean2020}). 
In our setting we need estimates on the mixed index, which have a different nature.

\subsection{Conjectures}\label{s-conj}

\subsubsection{Filtration on $QH^*(M;\Nov)$}

Note that, as an immediate corollary of Theorem \ref{thm:aa} \eqref{it:3a}, there exists a filtration $\wt{\cQ}_{\ge \bullet}$ on $QH^*(M;\Nov)$ induced by the $\wt{\cQ}$-filtration on $\left(\wt{SC}_\Nov,\partial\right)$. 
(In general this is different from the `obvious' filtration on $QH^*(M;\Nov)$, i.e., the one with filtration map $\alpha \otimes r \mapsto \cQ(r)$ for $\alpha \in H^*(M;\Bbbk) $, $r \in \Nov$.) 
We give a conjectural description of this $\wt{\cQ}$-filtration. 
Consider the function $f: M \to \R$ defined by $$f(x) = \sum_{k: x \in D_k} \lambda_k-2,$$ 
and set $M^{j} := \{f < j\}$.

\begin{conj}\label{conj:filt}
We have
$$ \wt{\cQ}_{\ge j} H^i(M;\Nov) \supset \ker (H^i(M;\Nov) \to H^i(M^{ja_0-i};\Nov)).$$
When Hypothesis \ref{hyp:gr} holds, this inclusion is an equality.
\end{conj}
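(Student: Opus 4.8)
The plan is to prove the stated inclusion in general, and then upgrade it to an equality under Hypothesis~\ref{hyp:gr} by establishing a matching bound via Proposition~\ref{prop:imix-pos}. One works in the telescope model from the proof of Theorem~\ref{thm:aa}: $QH^*(M;\Nov)=H^*(tel(\cC),\partial)$, the summand $tel(\cC)_{SH}$ is identified with $SC_\Nov$ via \eqref{eq:SCtoCF}, and $\wt{SC}_\Nov$ is the homotopy limit of the subquotients $SC^{(p)}_\Nov$. The first step is a Morse--Bott refinement of the acceleration data in which every one-periodic orbit $\gamma$ of $H_n$ carries a \emph{linking stratum} $D_{I(\gamma)}\subset D$ --- the union of the components about which $\gamma$ winds nontrivially, so that $I(\gamma)=\emptyset$ for constant orbits --- together with an internal Conley--Zehnder index $i_{\mathrm{int}}(\gamma)\ge 0$ computed in the directions tangent to $D_{I(\gamma)}$ and a winding vector $(\nu_k)_{k\in I(\gamma)}$ with each $\nu_k\ge 1$. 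A local computation of exactly the nature of the sketch proofs of Propositions~\ref{prop:imix-div} and~\ref{prop:imix-pos} should yield
\[ i(\gamma,u_{in}) = i_{\mathrm{int}}(\gamma) + \sum_{k\in I(\gamma)}(2-\lambda_k)\,\nu_k + \varepsilon(\gamma), \]
with $\varepsilon(\gamma)$ a bounded corner-rounding error coming from the smoothing $\rho$ near $D_{\mathrm{sing}}$. Since $f|_{D_I}=\sum_{k\in I}\lambda_k-2$, this says that an $SH$-type generator in cochain-degree $i$ has $\wt\cQ$-level $(i-i(\gamma,u_{in}))/a_0$, and that this level is $\ge j$ exactly when its linking stratum is ``deep'', i.e. $f|_{D_{I(\gamma)}}$ is large --- modulo the non-negative $i_{\mathrm{int}}(\gamma)$ and the error $\varepsilon(\gamma)$, which can be absorbed by refining the acceleration data and letting $\s\to 1$, $R\to 0$.

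For the inclusion $\supset$, take $\alpha\in\ker\big(H^i(M;\Nov)\to H^i(M^{c};\Nov)\big)$ with $c=ja_0-i$. Since $\alpha$ vanishes on the open set $M^c=\{f<c\}$, Poincar\'e--Lefschetz duality represents it by a relative class in $H^i(M,M^c;\Nov)$, i.e. by a Borel--Moore cycle supported in the closed locus $\{f\ge c\}$ of deep strata. Pushing this cycle into $tel(\cC)$ by a PSS map built from Morse data adapted to $\rho$ and to a stratification-compatible Morse function, one arranges that the resulting $\partial$-cocycle is supported on orbits whose linking strata all lie in $\{f\ge c\}$; by the index formula these are $SH$-type orbits of $\wt\cQ$-level $\ge j$. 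Passing from $tel(\cC)$ to the homotopy-limit model exactly as in the proof of Theorem~\ref{thm:aa} (via Proposition~\ref{prop-qiso} and the subquotients $SC^{(p)}_\Nov$) then exhibits $\alpha$ inside $\wt\cQ_{\ge j}QH^i(M;\Nov)$.

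For the reverse inclusion under Hypothesis~\ref{hyp:gr}: if $\alpha\in\wt\cQ_{\ge j}QH^i(M;\Nov)$ it has a $\partial$-cocycle representative every generator $(\gamma,u)$ of which satisfies $i(\gamma,u_{in})\le i-ja_0=-c$. By Proposition~\ref{prop:imix-pos} (the only place $\lambda_k\le 2$ is used), together with $i_{\mathrm{int}}(\gamma)\ge 0$ and $(2-\lambda_k)\nu_k\ge 0$ in the index formula, the inequality $-c\ge\sum_{k\in I(\gamma)}(2-\lambda_k)\nu_k+\varepsilon(\gamma)$ forces $f|_{D_{I(\gamma)}}\ge c$ for every such $\gamma$. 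Hence the representing cocycle is supported near $\{f\ge c\}=M\setminus M^c$, and applying the restriction map $H^*(tel(\cC),\partial)\to H^*(M^c;\Nov)$, which kills classes supported away from $M^c$, gives $\alpha|_{M^c}=0$, as required.

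The hard part will be making the linking-stratum bookkeeping rigorous: it requires an honest Morse--Bott model for the orbits clustered near each SC stratum $D_I$, respecting the $\T^{|I|}$-symmetry of the commuting rotations, and control of the error $\varepsilon(\gamma)$ near $D_{\mathrm{sing}}$ (where $\rho|_D\ne 1$ exactly) sharp enough that the filtration threshold comes out as $ja_0-i$ on the nose rather than off by a bounded amount. The second delicate point is the simultaneous compatibility of the two PSS maps --- $QH^*(M;\Nov)\to H^*(tel(\cC),\partial)$ and restriction to $M^c$ --- with all the filtrations at once; here one should be able to recycle the action and index estimates of Propositions~\ref{prop-pos-int} and~\ref{prop:imix-div} rather than prove anything genuinely new.
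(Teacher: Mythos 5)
This statement is Conjecture~\ref{conj:filt}: the paper does not prove it, and offers only a heuristic motivation (consistency with $q$-multiplication, plus the ``natural expectation'' that the isomorphism of Theorem~\ref{thm:aa}~\eqref{it:3a} sends $\mathrm{PD}(C)$ for $C \subset D_I$ to $e^{\sum_{i\in I}\lambda_i}\cdot\mathrm{PSS}_{log}(C)$ plus higher-order terms). So there is no proof in the paper to compare against, and your proposal should be judged as an attempt at an open problem. Your index bookkeeping is consistent with what the paper actually establishes: Lemmas~\ref{lem:ind} and~\ref{lem-ind-final} give $i(\gamma,u_{in}) \ge \sum_k(2-\lambda_k)\nu_k^h(\bar\gamma) = -f|_{D_{I(\gamma)}}$ up to the non-negative discrepancy $2\lceil\nu\rceil-2\nu+\delta(\gamma)$, and since $a_0\wt\cQ(\gamma\otimes e^a)=i-i(\gamma,u_{in})$, a generator of total degree $i$ in filtration level $\ge j$ does force $f|_{D_{I(\gamma)}}\ge ja_0-i$ under Hypothesis~\ref{hyp:gr}. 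That part of your reverse inclusion is sound arithmetic.

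The genuine gaps are the two PSS steps, and they are not recyclable from Propositions~\ref{prop-pos-int} and~\ref{prop:imix-div} as you claim. For ``$\supset$'' you need a chain-level map sending a relative cycle supported on $\{f\ge c\}$ to a Floer cocycle built from orbits linking only deep strata, with the correct $q$-power; this is exactly the log PSS map of Ganatra--Pomerleano together with its (unproven in this setting) compatibility with the isomorphism of Theorem~\ref{thm:aa}~\eqref{it:3a} --- i.e.\ you are assuming the very expectation the conjecture is extracted from. For the equality direction, the step ``the representing cocycle is supported near $\{f\ge c\}$, and the restriction map kills classes supported away from $M^c$'' conflates Floer-theoretic support of a cocycle with the topological restriction $H^i(M;\Nov)\to H^i(M^c;\Nov)$: an $SH$-type orbit winding around $D_I$ lives in $K_\s$, not near $D_I$, and no compatibility between the Floer model, its filtration, and restriction to open subsets of $M$ is established anywhere in the paper. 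Both steps require genuinely new analytic input (a log-PSS package adapted to the filtered telescope model), not bookkeeping; until they are supplied this remains a plausible strategy for the conjecture rather than a proof.
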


We first observe that the Conjecture is consistent with the fact that
$$ q \cdot \wt{\cQ}_{\ge j}H^i(M;\Nov) = \wt{\cQ}_{\ge j+1} H^{i+a_0}(M;\Nov).$$
It is motivated by this together with the natural expectation that the isomorphism of Theorem \ref{thm:aa} \eqref{it:3a} sends
$$\mathrm{PD}(C) \mapsto \left[e^{\sum_{i \in I} \lambda_i} \cdot \mathrm{PSS}_{log}(C) + \text{(higher-order terms)}\right],$$
where $C$ is a cycle contained in $D_I$, and $\mathrm{PSS}_{log}$ is the log PSS map of \cite{Ganatra2020}. 
Thus we expect $\wt{\cQ}(\mathrm{PD}(C)) \ge \sum_{i \in I} \lambda_i/a_0$. 

\begin{rmk}
The filtration in Conjecture \ref{conj:filt} exhibits intriguing parallels with the weight filtration in Hodge theory, c.f. \cite{ElZein2002,Harder2019}.
\end{rmk}

\subsubsection{Analogue of Theorem \ref{thm:specseqa} in the absence of Hypothesis \ref{hyp:gr}}\label{sss-nohyp}

Let us consider the spectral sequence associated to the filtered complex $(\wt{SC}_\Nov,\partial, \wt{\cQ}_{\ge \bullet})$ of Theorem \ref{thm:aa}. 
If Hypothesis \ref{hyp:gr} holds, then it converges to $QH^*(M;\Nov)$ by Theorem \ref{thm:specseqa}; but it is also interesting to study the spectral sequence when this Hypothesis does not hold. 

As we saw  in Section \ref{sss-C}, the reason Hypothesis \ref{hyp:gr} is necessary for Theorem \ref{thm:specseqa} to hold is that it guarantees the $\wt{\cQ}$-filtration on $\wt{SC}_\Nov$ is bounded below, and in particular complete. 
Let us denote by $(\overline{SC}_\Nov,\partial)$ the completion of $(\wt{SC}_\Nov,\partial)$ with respect to the $\wt{\cQ}$-filtration. 
Note that taking the completion does not change the spectral sequence.

We give a conjectural description of $H^*(\overline{SC}_\Nov,\partial)$, based on suggestions made to us independently by Pomerleano and Seidel. 
For each $i \in I$, define $QH^*(M;\Nov)_i$ to be the $0$-generalized eigenspace of the operator $\mathrm{PD}(D_i)\star(-)$ on $QH^*(M;\Nov)$, where $\star$ denotes the quantum cup product. 
I.e., it is the subspace of $\alpha \in QH^*(M;\Nov)$ such that $\mathrm{PD}(D_i)^{\star k} \star \alpha = 0$ for some $k$.
We then define
$$QH^*(M;\Nov)_{crit} := \bigcap_{i: \lambda_i>2} QH^*(M;\Nov)_i.$$

\begin{conj}\label{conj:ss-nohyp}
We have $H^*(\overline{SC}_\Nov,\partial) \cong QH^*(M;\Nov)_{crit}.$
Furthermore, the resulting spectral sequence converges to $QH^*(M;\Nov)_{crit}$.
\end{conj}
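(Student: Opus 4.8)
The plan is to prove Conjecture \ref{conj:ss-nohyp} by bootstrapping from the results already established, reducing everything to a localization statement about the operators $\mathrm{PD}(D_i)\star(-)$ acting on the deformed complex. First I would reinterpret the $\wt{\cQ}$-completion concretely: since $(\overline{SC}_\Nov,\partial)$ is the $\wt{\cQ}$-adic completion of $(\wt{SC}_\Nov,\partial)$, and since the classes $q^{-j}\cdot\mathrm{PSS}_{log}(C)$ for $C$ a cycle on $D_I$ have $\wt{\cQ}$-level controlled by $\sum_{i\in I}\lambda_i$ (as indicated in the motivation for Conjecture \ref{conj:filt}), completing with respect to $\wt{\cQ}$ should have the effect of killing precisely the generalized eigenspaces on which multiplication by the ``large-weight'' divisor classes $\mathrm{PD}(D_i)$ ($\lambda_i>2$) acts invertibly. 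Concretely: the operator $q^{a_0}\cdot(-)$ on $\wt{SC}_\Nov$ shifts $\wt{\cQ}$ by $1$ and shifts degree by $a_0$; the conjectural identification of Theorem \ref{thm:aa}\eqref{it:3a} intertwines $\mathrm{PD}(D_i)\star(-)$ with an operator of the form $q^{\lambda_i}\cdot(\text{unit}+\text{higher }\wt{\cQ})$. Thus on $H^*(\overline{SC}_\Nov,\partial)$, for $\lambda_i>2$ the operator $\mathrm{PD}(D_i)\star(-)$ strictly increases $\wt{\cQ}$ and hence is topologically nilpotent in the completed topology, which forces $H^*(\overline{SC}_\Nov,\partial)$ to lie in the $0$-generalized eigenspace of every such operator — giving the inclusion $H^*(\overline{SC}_\Nov,\partial)\subseteq QH^*(M;\Nov)_{crit}$.

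For the reverse inclusion I would argue that the completion map $QH^*(M;\Nov)=H^*(\wt{SC}_\Nov,\partial)\to H^*(\overline{SC}_\Nov,\partial)$ restricts to an isomorphism on $QH^*(M;\Nov)_{crit}$. The key point is that on the subspace where $\mathrm{PD}(D_i)\star(-)$ is nilpotent for all $\lambda_i>2$, the $\wt{\cQ}$-filtration is already bounded below — the same index estimates as in Section \ref{sss-C} (Proposition \ref{prop:imix-pos}) apply once one restricts attention to the orbits near those $D_i$ with $\lambda_i\le 2$, while the contribution of the large-weight divisors is exactly what the generalized-eigenspace condition annihilates. Put differently, after decomposing $\wt{SC}_\Nov$ (up to filtered quasi-isomorphism) according to the generalized-eigenspace decomposition of the commuting operators $\mathrm{PD}(D_i)\star(-)$, on the ``critical'' summand the filtration becomes bounded below, so completion is the identity and the spectral sequence converges — exactly as in the proof of Theorem \ref{thm:specseqa}. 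Then I would invoke the Classical Convergence Theorem \cite[Theorem 5.5.1]{Weibel} again, now applied to this summand, to get convergence of the spectral sequence to $QH^*(M;\Nov)_{crit}$.

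The main obstacle, which is why this is stated as a conjecture rather than a theorem, is making rigorous the statement that the isomorphism of Theorem \ref{thm:aa}\eqref{it:3a} intertwines $\mathrm{PD}(D_i)\star(-)$ with an explicit filtered operator $q^{\lambda_i}\cdot(\text{unit})+(\text{higher order})$. This requires a chain-level module structure on $\wt{SC}_\Nov$ over $QH^*(M;\Nov)$ compatible with the deformation, which in turn needs either the log PSS formalism of \cite{Ganatra2020} adapted to our acceleration data, or a direct count of Floer solutions with an extra marked point constrained to $D_i$ — together with a positivity-of-intersection estimate (in the spirit of Proposition \ref{prop-pos-int}) showing such solutions shift the $\cF$- and hence $\wt{\cQ}$-filtration by exactly $\lambda_i$. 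A secondary technical difficulty is commuting the generalized-eigenspace decomposition past the homotopy inverse limit used to build $\wt{SC}_\Nov$, i.e. checking the relevant $\varprojlim^1$ terms still vanish after decomposing; this should follow from the Mittag-Leffler property as in the proof of Theorem \ref{thm:aa}, but it needs to be verified summand by summand. Once the module structure and its filtration behavior are pinned down, the rest is a formal consequence of the machinery already developed in this paper.
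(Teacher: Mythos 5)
This statement is Conjecture \ref{conj:ss-nohyp}: the paper does not prove it, and offers only the first half of your argument as heuristic evidence (the observation that for $\lambda_i>2$ the class $e^{-2}\mathrm{PD}(D_i)$ has $\wt{\cQ}$-level $\ge(\lambda_i-2)/a_0>0$, so $c-e^{-2}\mathrm{PD}(D_i)$ becomes invertible in the completion and the $c$-generalized eigenspaces with $c\neq 0$ die). Your proposal should therefore be read as a strategy, not a proof, and it has genuine gaps beyond the ones you flag. The most serious one you do not name: your first paragraph establishes (at best) a statement about the $\wt{\cQ}$-completion of the cohomology $H^*(\wt{SC}_\Nov,\partial)\cong QH^*(M;\Nov)$, whereas the conjecture concerns the cohomology of the completed complex $H^*(\overline{SC}_\Nov,\partial)$. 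These do not agree in general; exchanging completion and cohomology is exactly the kind of convergence issue that Hypothesis \ref{hyp:gr} was needed to control (via boundedness below of the filtration and the Classical Convergence Theorem), and in the regime of this conjecture that hypothesis fails by assumption. The paper's $S^2$-with-a-point example illustrates the phenomenon concretely: the uncompleted cohomology is rank $2$ while the completed complex is acyclic, so no naive ``complete the cohomology'' argument can be correct without further input.

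The reverse inclusion is also not salvageable as written. The generalized-eigenspace decomposition lives on $QH^*(M;\Nov)$ (and, per Lemma \ref{lem:idemp}, is induced by idempotents), but you need to lift it to a direct sum decomposition of the \emph{filtered chain complex} $\wt{SC}_\Nov$ up to filtered quasi-isomorphism before you can argue summand-by-summand that the filtration is bounded below; nothing in the paper produces such a chain-level splitting, and ``restricting attention to the orbits near those $D_i$ with $\lambda_i\le 2$'' is not a subcomplex or a filtered direct summand, so Proposition \ref{prop:imix-pos} cannot be applied to it. You correctly identify that the intertwining of $\mathrm{PD}(D_i)\star(-)$ with a filtered operator of leading order $q^{\lambda_i}$ requires an unconstructed chain-level module structure (log PSS or a marked-point count with a positivity estimate); note that even granting it, one still needs the Jordan-normal-form hypothesis on $\Bbbk$ that the paper flags, and the completion-versus-cohomology issue above. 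In short: your outline reproduces and extends the paper's own heuristic, and the obstacles you list are real, but the proposal does not close the central gap, which is why the statement remains a conjecture.
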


As evidence for the conjecture, we use Conjecture \ref{conj:filt} to argue that whenever $\lambda_i>2$, the degree-$0$ class $c - e^{-2}\mathrm{PD}(D_i)$ is invertible in the $\wt{\cQ}$-completed quantum cohomology, for any $c \neq 0$. 
Indeed its inverse is
$$\left(c - e^{-2}\mathrm{PD}(D_i)\right)^{-1} = c^{-1} \cdot \sum_{j =0}^\infty \left(c^{-1} e^{-2} \mathrm{PD}(D_i) \right)^{\star j},$$
which converges because $\wt{\cQ}(e^{-2} \mathrm{PD}(D_i)) \ge (\lambda_i - 2)/a_0>0$.
Therefore, any $c$-generalized eigenvector of $e^{-2}\mathrm{PD}(D_i) \star (-)$ dies in the $\wt{\cQ}$-completion:
$$ \left(c - e^{-2}\mathrm{PD}(D_i)\right)^{\star k} \star \alpha = 0 \qquad \Rightarrow \qquad \alpha = 0,$$
by multiplying on the left by the inverse. 

Assuming that the $\Bbbk$-linear endomorphisms $e^{-2}\mathrm{PD}(D_i) \star (-)$ admit Jordan normal forms, the above argument suggests that only the $0$-generalized eigenspaces can `survive'. 
This gives some evidence for Conjecture \ref{conj:ss-nohyp} in the case that $\Bbbk$ is an algebraically closed field. 
It is reasonable to believe that one can bootstrap from there to the case of a general commutative ring $\Bbbk$. For the rest of this section we will assume that $\Bbbk$ is an algebraically closed field.

\begin{remark} We strongly expect that $H^*(\overline{SC}_\Nov,\partial)$ is nothing but the relative symplectic cohomology of the skeleton of $X$. There is an intriguing contrast between Conjecture \ref{conj:ss-nohyp} and Ritter's work \cite{Ritter2014}: precisely, let us consider the case that $D$ is smooth and $\lambda>2$, and let $\cN$ be the total space of the inverse of the normal bundle to $D$. Then Conjecture \ref{conj:ss-nohyp} (together with the above expectation) says that $QH^*(M)_{crit}$, which is the $0$-generalized eigenspace of $QH^*(M)$, `lives on the skeleton of $X$'; whereas Ritter shows that $SH^*(\cN)$ is the quotient of $QH^*(\cN)$ by its $0$-generalized eigenspace. 
Note that we can obtain $\cN$ from the Liouville completion $\widehat{X}$ of $X$ by replacing a neighbourhood of the skeleton with a copy of $D$ (more precisely, the symplectic cut of $\widehat{X}$ along the hypersurface $\{\rho = 1\}$ is $M \coprod \cN$).
\end{remark}

\begin{remark}
In light of Venkatesh's quantitative generalization of Ritter's results \cite{Venkatesh2021}, we expect that considering Liouville domain neighborhoods $V$ of the skeleton of varying sizes (vaguely speaking, `in the directions of the components of the divisors'), one might observe that additional simultaneous generalized eigenspaces start contributing to $SH_M^*(V;\Nov)$. It might be possible to interpret Theorem \ref{thm-superheavy} as the other end of this size dependence: if the size of $V$ is large enough in all directions (e.g., if it contains $K_{crit}$), then all simultaneous generalized eigenspaces contribute to $SH_M^*(V;\Nov).$
\end{remark}

Further evidence for Conjecture \ref{conj:ss-nohyp} is provided in \cite{EliashbergPolterovich}, in the case $M = \CP^1 \times \CP^1$, where $D$ is a $(1,1)$ hypersurface: indeed the conjecture is confirmed in this case. 
We discuss further examples in Sections \ref{sss-quadric} and \ref{sss-fanohyp} below.

We now recall a variation on the definition of relative symplectic cohomology from \cite[Remark 1.8]{Tonkonog2020}. 
The relative symplectic cohomology $SH^*_M(K;\Nov)$ is a module over $SH^*_M(M;\Nov) = QH^*(M;\Nov)$, via the restriction map. 
For any idempotent $a \in QH^0(M;\Nov)$, we define the `$a$-relative symplectic cohomology of $K$' to be $a \cdot SH^*_M(K;\Nov)$. 
We define corresponding properties of subsets of $M$: $a$-$SH$-visible, $a$-$SH$-full, etc.

\begin{lem}\label{lem:idemp}
The subspace $QH^*(M;\Nov)_{crit} \subset QH^*(M;\Nov)$ is an ideal which is generated by an idempotent $a$.
\end{lem}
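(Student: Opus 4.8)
The plan is to exploit the fact that $QH^*(M;\Nov)$ is a finite-dimensional (or, at least, finitely generated) $\Nov$-algebra which is graded-commutative, with $QH^*(M;\Nov)_i$ defined as the $0$-generalized eigenspace of the operator $L_i := \mathrm{PD}(D_i)\star(-)$. First I would establish the purely algebraic statement: in a commutative ring $R$, if $L$ is multiplication by an element $x$ and $R = R_0 \oplus R_1$ is the (module) decomposition into the $0$-generalized eigenspace $R_0 = \bigcup_k \ker(x^k\cdot(-))$ and the complementary generalized eigenspace $R_1$ (on which $x$ acts invertibly, after passing to an appropriate localization or using finite-dimensionality over the base field $\Bbbk$), then $R_0$ is an ideal cut out by an idempotent. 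The point is that $R_0$ and $R_1$ are each ideals, being kernels/images of powers of the $R$-module endomorphism $L$, and $R = R_0 \oplus R_1$ as rings; hence the unit decomposes as $1 = a + b$ with $a \in R_0$, $b \in R_1$ orthogonal idempotents, and $R_0 = aR$. Here we use that $\Bbbk$ is an algebraically closed field (assumed in this part of the paper) so that $QH^*(M;\Nov)$ is finite-dimensional over the field of fractions of $\Nov$, guaranteeing the generalized eigenspace decomposition exists and that $x$ acts invertibly on the complement.

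Next I would upgrade from a single operator to the finite family $\{L_i : \lambda_i > 2\}$. Since the quantum product is commutative, all the $L_i$ commute, so their generalized eigenspace decompositions are simultaneously refinable: $QH^*(M;\Nov) = \bigoplus_{\vec c} V_{\vec c}$ where $V_{\vec c}$ is the simultaneous generalized eigenspace for eigenvalues $\vec c = (c_i)$. Then $QH^*(M;\Nov)_{crit} = \bigcap_{i:\lambda_i>2} QH^*(M;\Nov)_i = V_{\vec 0}$, the simultaneous $0$-generalized eigenspace, which is the direct sum of those $V_{\vec c}$ with $\vec c = \vec 0$. Each $V_{\vec c}$ is an ideal (intersection of the ideals that are generalized eigenspaces for the individual $L_i$), the decomposition is a ring decomposition, and therefore $V_{\vec 0}$ is cut out by the idempotent obtained by summing the primitive idempotents corresponding to the blocks with $\vec c = \vec 0$. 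Concretely, one can also produce the idempotent by a single polynomial expression: choose $k$ with $L_i^k = 0$ on $V_{\vec 0}$ for all relevant $i$ and $L_i$ invertible on the complement; then a suitable polynomial in the commuting operators $\prod_{i:\lambda_i>2} L_i$ applied to $1$, interpolating $1$ on the $\vec 0$-block and $0$ elsewhere (a Chinese-Remainder / Lagrange-interpolation construction on the finite set of eigenvalue-tuples), gives the desired $a \in QH^0(M;\Nov)$; it lies in degree $0$ because each $L_i$ is a degree-$0$ operator (as $\mathrm{PD}(D_i)$ is a degree-$2$ class and the quantum product combined with the grading on $\Nov$ makes $\star$-multiplication by a degree-$2$ class together with the appropriate power of $q$ degree-preserving; more simply, the generalized eigenspace decomposition is a decomposition of graded modules since the $L_i$ respect the grading after the standard regrading, so the block idempotents are homogeneous of degree $0$).

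The main obstacle I anticipate is the grading bookkeeping: $QH^*(M;\Nov)$ is only $\Q$-graded, $\mathrm{PD}(D_i)\star(-)$ is not literally degree-preserving on the nose but shifts by $2 - \lambda_i \cdot(\text{something})$ once one accounts for the Novikov variable attached to the relevant quantum corrections, so one must be careful that the operators $L_i$ whose generalized eigenspaces we take are genuinely graded endomorphisms of the $\Q$-graded module — equivalently, that $\mathrm{PD}(D_i)$ should be regarded together with its canonical Novikov weight so that $L_i$ has degree $0$. Once this is set up correctly (and it is forced by the convention that makes $QH^0$ a subring), the ring-theoretic argument is completely standard: the only real input is finite-dimensionality over a field, which holds here, and commutativity, which holds for quantum cohomology. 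I would also remark that the ideal claim alone does not need algebraic closedness — $QH^*(M;\Nov)_{crit}$ is visibly an ideal over any $\Bbbk$ since it is an intersection of kernels of $QH^*$-module endomorphisms — but the idempotent generation does use that we can split off the complementary generalized eigenspaces, hence the standing assumption that $\Bbbk$ is an algebraically closed field in this subsection.
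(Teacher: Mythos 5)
Your argument is correct and lands in the same place as the paper's, but the key lemma is different. The paper gets the ``ideal generated by an idempotent'' statement for each $QH^*(M;\Nov)_i$ from the Frobenius structure: for an even element $\alpha$ of a supercommutative Frobenius algebra, the generalized eigenspace decomposition of $\alpha\star(-)$ is orthogonal with respect to the Poincar\'e pairing and the product, and orthogonality of the summands forces them to be ideals cut out by idempotents; the intersection is then generated by the product of the idempotents. You instead observe that in a (super)commutative ring the $0$-generalized eigenspace $\bigcup_k \ker(L_i^k)$ and its Fitting complement $\bigcap_k \operatorname{im}(L_i^k)$ are automatically ideals, so the splitting is a ring decomposition and the unit splits into orthogonal idempotents --- no pairing needed. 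This is more elementary and, as you note, does not require $\Bbbk$ algebraically closed for the $0$-block-versus-complement splitting (only the full simultaneous eigenvalue decomposition needs that, and it is not needed here). Your final step, that the intersection of ideals generated by commuting idempotents is generated by their product, matches the paper's.

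One point deserves more care than you give it: the input to Fitting's lemma is finite length, and $QH^*(M;\Nov)$ is a finitely generated free module over $\Nov=\Bbbk[q,q^{-1}]$, which is neither a field nor Artinian, so the decomposition $\ker(L_i^N)\oplus\operatorname{im}(L_i^N)$ is not automatic (compare multiplication by $q-1$ on $\Nov$ itself, where kernel and image powers never span). Passing to $\operatorname{Frac}(\Nov)$, as you suggest in passing, produces an idempotent that a priori lives only after base change, and you would still need to argue it descends to $QH^0(M;\Nov)$. The clean fix is the one you sketch in your grading paragraph: since everything is $\Q$-graded with $q$ invertible and homogeneous, the graded module $QH^*(M;\Nov)$ is determined by a finite-dimensional $\Bbbk$-vector space (e.g.\ by specializing $q\mapsto 1$, or by restricting to degrees in a fundamental domain for the $\deg(q)$-translation), $L_i$ is a homogeneous operator, Fitting's lemma applies at the $\Bbbk$-level, and the resulting idempotent is homogeneous of degree $0$, hence an element of $QH^0(M;\Nov)$. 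Spelling that reduction out would close the only real gap; the paper's own sketch is no more explicit on this point.
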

\begin{proof}
We first observe that for any even element $\alpha$ in a supercommutative Frobenius algebra, the decomposition into generalized eigenspaces of $\alpha \star (-)$ is orthogonal (with respect to the pairing and the algebra structure), and hence the generalized eigenspaces are ideals generated by idempotents.
It follows for each $i$, the subspace $QH^*(M;\Nov)_i$ is an ideal generated by an idempotent; so the intersection is an ideal generated by the product of these idempotents.
\end{proof}

\begin{conj}\label{conj:idemp}
Under the same hypotheses as for Theorem \ref{thm-superheavy}  (without assuming Hypothesis \ref{hyp:gr}), the skeleton $\mathbb{L}$ is $a$-$SH$-full, where $a$ is the idempotent from Lemma \ref{lem:idemp}. 
\end{conj}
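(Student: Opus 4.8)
The plan is to adapt the proof strategy of Theorem \ref{thm-superheavy} (as sketched in Section \ref{sss-D}), replacing every invariant and statement by its $a$-relative version, where $a\in QH^0(M;\Nov)$ is the idempotent from Lemma \ref{lem:idemp}. Concretely, I would first establish the $a$-relative analogue of the input ``$M\setminus K^R_\s$ is stably displaceable $\Rightarrow$ $SH_M(\overline{M\setminus K^R_\s};\Nov)=0$''. This requires showing that if a compact set $K'$ is stably displaceable, then $a\cdot SH_M^*(K';\Nov)=0$ as well. But since $a$-relative symplectic cohomology is the image of multiplication by $a$ on the full relative symplectic cohomology, and the latter already vanishes for stably displaceable sets by Theorem \ref{analog_thesis2}, this direction is automatic: stable displaceability already kills the \emph{whole} invariant, so its $a$-part also vanishes. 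Thus the $R$ sufficiently small, $\s$ sufficiently close to $1$ input transfers verbatim.

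The heart of the matter is an $a$-relative analogue of Proposition \ref{prop:index_bd_collar_invti}: for $\s_{crit}^D(R)<\s_1<\s_2<1$ one needs an isomorphism $a\cdot SH_M^*(\overline{M\setminus K_{\s_1}^R};\Nov)\cong a\cdot SH_M^*(\overline{M\setminus K_{\s_2}^R};\Nov)$. Here I would rerun the contact Fukaya trick of \cite{Tonkonog2020} exactly as in the proof of Proposition \ref{prop:index_bd_collar_invti}; the construction produces an isomorphism of Floer $1$-rays (not respecting action filtrations) and the whole content is to show the action filtrations on the two telescope complexes are topologically equivalent. The point of passing to the $a$-part is that Hypothesis \ref{hyp:gr} is no longer assumed, so the estimates on the mixed index of the $D$-type orbits are no longer enough to make the relevant filtration bounded below on the nose. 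However, $a$ projects onto $QH^*(M;\Nov)_{crit}$, which by the heuristic in Section \ref{sss-nohyp} is precisely the part of the quantum cohomology on which the ``bad'' eigenvalue-$c$ contributions of the operators $\mathrm{PD}(D_i)\star(-)$, $\lambda_i>2$, have been killed. One should show that multiplying by $a$ (equivalently, working inside the $a\cdot QH^*$-module structure on the relative symplectic cohomology) effectively removes the generators whose mixed index estimates go the wrong way — heuristically, the capped orbits linking a component $D_i$ with $\lambda_i>2$ contribute to the $c\neq 0$ generalized eigenspaces of $\mathrm{PD}(D_i)\star(-)$, so they die after applying $a$. Making this rigorous on the chain level — identifying the subcomplex/subquotient cut out by $a$ with the one spanned by orbits obeying good mixed-index bounds — is the main obstacle.

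Granting that $a$-relative analogue of Proposition \ref{prop:index_bd_collar_invti}, the conclusion follows exactly as in Section \ref{sss-D}: combining with the vanishing for $\s$ close to $1$, we get $a\cdot SH_M(\overline{M\setminus K_\s^R};\Nov)=0$ for all $\s\in(\s_{crit}^D(R),1)$ and all sufficiently small $R$, and since the compact sets $\overline{M\setminus K_\s^R}$ exhaust $M\setminus K_{crit}$, the Mayer--Vietoris property of relative symplectic cohomology \cite{Varolgunes2018} (which is compatible with the $QH^*(M;\Nov)$-module structure, hence with multiplying by the idempotent $a$) shows that every compact $K'\subset M\setminus\mathbb{L}$ satisfies $a\cdot SH_M^*(K';\Nov)=0$. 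That is precisely the statement that $\mathbb{L}$ is $a$-$SH$-full. A subtlety to record is that Mayer--Vietoris should be verified to descend to $a$-relative symplectic cohomology; this is immediate because the restriction maps are $QH^*(M;\Nov)$-linear, so multiplication by $a$ is a natural transformation of the relevant exact triangle.

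Finally, I would isolate the one genuinely new geometric ingredient — the passage from ``index-boundedness'' (used in \cite{McLean2020, Tonkonog2020}) to the weaker ``mixed-index'' control that suffices after applying $a$ — as a standalone lemma, since it is the piece most likely to require care about the virtual techniques forced by allowing $\eta<0$ (equivalently, the assumptions on $\Bbbk$ mentioned in the footnote of Section \ref{ss-rigid-sk}). I expect the diagram-chasing and spectral-sequence bookkeeping to be routine given Theorems \ref{thm:aa}, \ref{thm:specseqa} and Lemma \ref{lem:idemp}; the real work, and the main obstacle, is the chain-level identification of the $a$-part of the telescope complex with the part where the mixed-index estimates hold.
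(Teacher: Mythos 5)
The statement you are attempting is Conjecture \ref{conj:idemp}: the paper offers no proof of it, so there is nothing to compare your argument against, and the honest question is whether your proposal actually closes the conjecture. It does not.

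Your outline is a sensible strategy, and parts of it are unproblematic: stable displaceability kills the full invariant by Theorem \ref{analog_thesis2}, hence a fortiori its $a$-part; and the $QH^*(M;\Nov)$-linearity of restriction maps does make the Mayer--Vietoris and unitality formalism descend to $a\cdot SH_M^*(-;\Nov)$ in a routine way. But the core of the argument is missing, and you say so yourself. Proposition \ref{prop:index_bd_collar_invt} already holds without Hypothesis \ref{hyp:gr}; the obstruction is that it only applies for $\s>\s_{crit}^D(R)$, so the sets $\overline{M\setminus K^R_\s}$ it controls exhaust only $M\setminus K_{crit}$, which is strictly smaller than $M\setminus\mathbb{L}$ when some $\lambda_i>2$. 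To prove the conjecture you must push $\s_1$ below $\s_{crit}^D(R)$, where Lemma \ref{lem-D} genuinely fails (the quantity $2-\kappa^{-1}r_i^{max}$ can be negative near a component with $\lambda_i>2$), and the contact Fukaya trick then gives an isomorphism of $1$-rays whose action filtrations are \emph{not} topologically equivalent. Your proposed rescue --- that multiplying by $a$ kills, at chain level, exactly the generators violating the mixed-index bounds because ``orbits linking $D_i$ contribute to the $c\neq 0$ generalized eigenspaces'' --- is a heuristic imported from Section \ref{sss-nohyp}, which itself rests on Conjectures \ref{conj:filt} and \ref{conj:ss-nohyp}. There is no mechanism in the paper that identifies the idempotent $a$ (an operation on cohomology, defined via the quantum product) with a subcomplex or subquotient of the telescope singled out by index/action estimates; constructing such an identification is the entire content of the conjecture, not a step one can grant. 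As it stands, your proposal is a research plan with the decisive lemma unproved, not a proof.
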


Conjecture \ref{conj:idemp} implies, for example, that $\mathbb{L}$ must intersect every $a$-Floer-theoretically essential (over $\Bbbk$) monotone Lagrangian, where the latter condition means that $CO(a \otimes_\Nov \Bbbk) \in HF^0(L;\Bbbk)$ is non-zero. (Here we have used the algebra homomorphism $\Nov \to \Bbbk$, which sends $q \mapsto 1$, to define an idempotent $a \otimes_\Nov \Bbbk \in QH^0(M;\Bbbk)$).

\subsubsection{Maurer--Cartan element}\label{sss-MC}

For the purpose of this section, we assume that $\Bbbk$ is a field of characteristic zero, and we assume that Hypothesis \ref{hyp:gr} holds.
 
Recall that the symplectic cochain complex $SC^*(X;\Bbbk)$ carries an $L_\infty$ structure \cite{FabertSalchow}. 
This consists of a sequence of operations $\ell^k: SC^*(X;\Bbbk)^{\otimes k} \to SC^*(X;\Bbbk)$ of degree $3-2k$, satisfying the $L_\infty$
 relations; and $\ell^1 = d$ is the standard differential. 
We extend these linearly to make $SC^*_\Nov$ into an $L_\infty$ algebra. 
We recall that a \emph{Maurer--Cartan element} for the $L_\infty$ algebra $(SC^*_\Nov,\ell^k)$ is an element $\beta \in \cQ_{\ge 1}SC^2_\Nov$, satisfying the Maurer--Cartan equation:
$$ \sum_k \frac{\ell^k(\beta,\ldots,\beta)}{k!} = 0.$$
We remark that this is in fact a finite sum, because the terms live in successively higher levels of the $\cQ$-filtration, which Hypothesis \ref{hyp:gr} ensures is bounded below (see Section \ref{sss-D}).

A Maurer--Cartan element $\beta$ can be used to deform the $L_\infty$ structure to get a new one $\ell^k_{\beta}$ on $SC_\Nov$ (see e.g. \cite[Section 4]{Getzler2009}). In particular, the resulting operation $\ell^1_\beta$ defines a new differential on $SC_\Nov$.

\begin{conj}\label{conj:MC}
There exists a Maurer--Cartan element $\beta \in SC^2_\Nov$ such that in the statement of Theorem \ref{thm:aa}, we may take $\widetilde{SC}_\Nov = SC_\Nov$ and $\partial  = \ell^1_\beta$. 
\end{conj}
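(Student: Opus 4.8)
The plan is to upgrade the module-level isomorphism \eqref{eq:SCtoCF} to a chain-level statement by choosing the auxiliary data carefully and then extracting the Maurer--Cartan element from the $L_\infty$-structure via an obstruction-theoretic induction on the $\cQ$-filtration. First I would recall that by Theorem \ref{thm:aa} and the sketch in Section \ref{ss-proofs}, the deformed differential $\partial$ is the full Floer differential $\partial$ on $tel(\cC)_{SH} \cong SC^*_\Nov$, and Proposition \ref{prop-pos-int} guarantees that $\partial - d\otimes\mathrm{id}_\Nov$ strictly increases the $\cQ$-filtration; moreover under Hypothesis \ref{hyp:gr} the $\cQ$-filtration is bounded below (Section \ref{sss-C}). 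So the task is to realize the single extra operator $\partial^{(1)} := \partial - d\otimes\mathrm{id}_\Nov$ (and all its higher-filtration refinements) as $\ell^1_\beta - \ell^1$ for a suitable $\beta \in \cQ_{\ge 1}SC^2_\Nov$. The expected mechanism: $\beta$ should be (the image under PSS-type maps of) the count of Floer solutions asymptotic to $D$-type orbits at one end, which is exactly the geometric origin of the filtration-increasing part of $\partial$ — i.e., $\beta$ is a "bounding cochain" recording the deformation induced by the divisor, in the spirit of \cite{Ganatra2020}.

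The key steps, in order. (1) Set up the $L_\infty$-structure on $SC^*_\Nov$ equivariantly with respect to the action/$\cQ$-filtrations, using the $S^1$-equivariant or cyclic operadic structure on the symplectic cochain complex of the Liouville domain $K_\s$ (from \cite{FabertSalchow}), so that each $\ell^k$ strictly respects the $\cQ$-filtration (all structure constants lie in $\cQ_{\ge 0}\Nov$). (2) Define $\beta$ as a count of Floer cylinders/pair-of-pants type configurations in $M$ with inputs on $D$-type orbits, weighted by $q^{u\cdot\bflambda}$; positivity of intersection (Proposition \ref{prop-pos-int}, more precisely the part giving $u\cdot\bflambda \ge 1$ when the solution leaves $K_\s$) ensures $\beta \in \cQ_{\ge 1}$. (3) Prove the Maurer--Cartan equation for $\beta$ by the standard gluing/compactness argument: the boundary of the relevant $1$-dimensional moduli spaces breaks into configurations enumerated by $\sum_k \ell^k(\beta,\dots,\beta)/k!$, which therefore vanishes; here the finiteness of the sum follows from boundedness-below of the $\cQ$-filtration as already noted in the statement. (4) Identify $\ell^1_\beta$ with the Floer differential $\partial$ on $tel(\cC)_{SH}$: both are obtained from the undeformed $d$ by adding exactly the contributions of solutions that pass near $D$, and a bijection-of-moduli-spaces / continuation argument matches them. (5) Conclude $H^*(SC^*_\Nov, \ell^1_\beta) = H^*(tel(\cC), \partial) = QH^*(M;\Nov)$ by PSS, so $\widetilde{SC}_\Nov = SC_\Nov$ with $\partial = \ell^1_\beta$ works in Theorem \ref{thm:aa}.

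The main obstacle I expect is Step (4) — identifying the *higher* operations $\ell^k$ with the geometric moduli spaces and then checking that the deformation $\ell^1_\beta$ reproduces the Floer differential $\partial$ on the nose, rather than merely up to filtered quasi-isomorphism. The $L_\infty$-operations on $SC^*(X;\Bbbk)$ in \cite{FabertSalchow} are defined via a specific (Morse--Bott, $S^1$-equivariant) model, and matching the $\beta$-deformation of *that* model with the particular telescope model of $\partial$ coming from the acceleration data of Section \ref{ss-proofs} requires a homotopy transfer/comparison argument, carried out compatibly with the $\cQ$-filtration. A secondary difficulty is analytic transversality for the moduli spaces defining $\beta$ and the MC relation in the non-exact manifold $M$ (the inputs live on $D$-type orbits, which are the delicate ones); this is presumably why the paper restricts to characteristic zero and states it only as a conjecture — one may need either a semi-positivity/monotonicity argument or virtual perturbations, the latter conflicting with the char-zero hypothesis unless handled by classical means. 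An alternative, possibly cleaner route avoiding explicit geometry: work purely homological-algebraically, showing that any filtered $\cQ_{\ge 0}\Nov$-linear differential $\partial$ on an $L_\infty$-algebra with $\partial - \ell^1$ strictly filtration-increasing and filtration bounded below is gauge-equivalent to $\ell^1_\beta$ for some $\beta \in \cQ_{\ge 1}$, by solving the Maurer--Cartan equation order-by-order in the filtration (each step has no obstruction because the relevant cohomological obstruction lives in a filtration level that is eventually zero) — but this abstract approach would only give $\widetilde{SC}_\Nov \simeq SC_\Nov$ up to filtered $A_\infty/L_\infty$-quasi-isomorphism, so reducing it to literal equality $\widetilde{SC}_\Nov = SC_\Nov$ still needs the geometric input above.
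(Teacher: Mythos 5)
The statement you are addressing is Conjecture \ref{conj:MC}: the paper offers no proof of it, explicitly presents it as open, and even records that a proof announced in 2015 never appeared. So there is nothing in the paper to compare your argument against, and what you have written is a strategy outline rather than a proof; the genuine gaps are exactly the steps you label (2)--(4). Concretely: (a) you never construct $\beta$. Saying it is ``a count of Floer cylinders with inputs on $D$-type orbits'' does not define an element of $SC^2_\Nov$; the intended candidate (cf.\ Conjecture \ref{conj:split}) is $\sum_i e^{\lambda_i}B_i$ where $B_i$ counts caps in $M$ passing through $D_i$ with a negative puncture asymptotic to an orbit in $X$, as in \cite{Tonkonog,LogPSS}, and defining these moduli spaces and proving the Maurer--Cartan equation requires controlling degenerations of curves \emph{into the divisor} -- this is not ``the standard gluing/compactness argument'' and is precisely the unsolved analytic content. (b) Your step (4) asks to identify $\ell^1_\beta$, built on the Fabert--Salchow $L_\infty$ model of $SC^*(X;\Bbbk)$, with the telescope differential $\partial$ coming from the specific acceleration data of Section \ref{ss-proofs}. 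These live on different chain models; homotopy transfer produces at best a filtered $L_\infty$ quasi-isomorphism, not the literal equality $\widetilde{SC}_\Nov = SC_\Nov$, $\partial = \ell^1_\beta$ demanded by the conjecture. You acknowledge this yourself, but acknowledging an obstruction is not the same as overcoming it.

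Your proposed ``cleaner'' homological-algebra route has the same defect in sharper form: solving the Maurer--Cartan equation order by order in the $\cQ$-filtration requires the obstruction classes at each stage to vanish, and there is no reason they do -- the relevant obstruction at filtration level $k$ lives in $H^3$ of the associated graded, which is $SH^3(X;\Bbbk)\otimes \mathrm{Gr}_k\Nov$ and is generally nonzero; your parenthetical claim that ``the relevant cohomological obstruction lives in a filtration level that is eventually zero'' confuses vanishing of the obstruction \emph{group} in high enough filtration (which would only help for finitely many of infinitely many steps if the filtration were bounded above, which it is not in the relevant direction) with vanishing of the obstruction \emph{class}. In short: the proposal correctly identifies the shape of an argument the authors themselves envisioned, but every load-bearing step is left unproved, and those steps are exactly why the statement remains a conjecture.
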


\begin{rmk} Cieliebak and Latschev have outlined ideas closely related to Conjecture \ref{conj:MC} (but in a more general context) in talks as far back as 2014.
\end{rmk}

\begin{rmk}
Moreover, one expects that Floer-theoretic operations on quantum cohomology of $M$ (such as the quantum cup product) are deformations of the corresponding operations on symplectic cohomology of $X$ by $\beta$, c.f. \cite{Fabert}.
\end{rmk}

\begin{rmk}
In the proof of Theorem \ref{thm:aa} presented in this paper, we need to replace $SC_\Nov$ with $\wt{SC}_\Nov$. 
Conjecture \ref{conj:MC} suggests an alternative proof, in which no such replacement is necessary. 
The cost is that the construction is significantly more elaborate, relying on the $L_\infty$ structure and a version of the homotopy transfer theorem, which makes it harder to see the key geometric ideas, which are the same in both proofs.
\end{rmk}

\begin{rmk}\label{rmk:partialcomp}
It is natural to envision generalizations of our results, as well as of Conjecture \ref{conj:MC}, where $M$ is allowed to be only a partial compactification of $X$; and furthermore, where some of the weights $\lambda_i$ are allowed to be equal to $0$. 
We present several examples in Section \ref{s-eg} below which illustrate such a generalization. 
For example, Remark \ref{rmk:RP2} gives evidence for this generalized conjecture in the case $M=T^*\mathbb{RP}^2$, with $D \subset M$ a smooth divisor equipped with weight $\lambda=0$; the generalized conjecture in this case says that $SC^*(M;\Bbbk)$ is a `deformation' of $SC^*(X;\Bbbk)$ (note that there is no need for a Novikov ring in the definition of symplectic cohomology of $M$, as it is exact). 
We put scare quotes around `deformation' because when the weights are $0$, the extra terms in the deformed differential may simply preserve the $\cQ$-filtration, rather than strictly increasing it; so there is no sense in which they are `small'.
To make a useful version of the conjecture one would need an alternative to the $\cQ$-filtration, which \emph{is} strictly increased by the extra terms; it would probably be defined in terms of the grading.
\end{rmk}
  
Note that the projection of $\beta$ to $\mathrm{Gr}_1 SC^2_\Nov$ is $d$-closed, and hence defines a class $[\beta_1] \in \mathrm{Gr}_1 SH^2(X;\Nov)$. 
It is immediate from Conjecture \ref{conj:MC} that the differential on the $E_1$ page of the spectral sequence is given by $[[\beta_1],-]$, where $[-,-]$ denotes the Lie bracket on $SH^*(X;\Bbbk)$.

We now explain how our conjectures connect with work of Tonkonog \cite{Tonkonog}. 
Tonkonog considers the following setup: $\bar{M}$ is a compact Fano variety equipped with its monotone K\"ahler form, $\bar{D} \subset \bar{M}$ a simple normal crossings anticanonical divisor, $X=\bar{M} \setminus \bar{D}$, and $M = \bar{M} \setminus \cup_{i=1}^J \bar{D}_i$ is a partial compactification of $X$, with compactifying divisor $D=M \cap \bar{D}$. 
Tonkonog defines a class $\mathcal{BS} \in SH^0(X;\Bbbk)$ by counting pseudoholomorphic `caps' in $M$, such that the following holds:

\begin{thm}[Theorem 6.5 in \cite{Tonkonog}]\label{thm:Tonk}
For any exact closed Lagrangian $L \subset X$ equipped with a $\Bbbk^*$-local system $\xi$, we have $\mathcal{CO}(\mathcal{BS}) = \mathfrak{m}^0_{(L,\xi)}$, where $\mathcal{CO}:SH^*(X;\Nov) \to H^*(L;\Nov)$ is the closed--open map, and $\mathfrak{m}^0_{(L,\xi)} \in H^2(L;\Nov)$ is the disc potential.
\end{thm}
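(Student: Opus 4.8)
The plan is to compare the two sides by a neck-stretching / degeneration argument relating holomorphic discs in $X$ with boundary on $L$ to holomorphic caps in $M$. Recall that $\mathfrak{m}^0_{(L,\xi)} \in H^2(L;\Nov)$ counts Maslov-index-$2$ holomorphic discs with boundary on $L$, weighted by the local system $\xi$ and by symplectic area, while $\mathcal{CO}(\mathcal{BS}) \in H^*(L;\Nov)$ is the image under the closed--open map of the Borman--Sheridan-type class $\mathcal{BS} \in SH^0(X;\Bbbk)$, which by definition counts pseudoholomorphic caps in $M$ passing through $D$ with total intersection number one. The closed--open map $\mathcal{CO}$ is defined by counting holomorphic maps from a disc with one interior puncture (asymptotic to the Floer input) and boundary on $L$. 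So $\mathcal{CO}(\mathcal{BS})$ is computed by a count of configurations consisting of a cap in $M$ glued at the puncture to a half-cylinder mapped into $X$ with boundary on $L$.

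The key step is a gluing argument: I would set up a one-parameter family of almost complex structures on $M$ that stretches the neck along the unit cotangent-type hypersurface $\{\rho = \text{const}\}$ separating a neighbourhood of $D$ from the bulk of $X$, or more directly along the boundary of a Weinstein neighbourhood of $L$. In the limit, the relevant moduli spaces of holomorphic discs with boundary on $L$ in $M$ (which by positivity of intersection with $D$, and the fact that $L \subset X$ is exact so bounds no discs in $X$, must intersect $D$) break into two-level configurations: a top-level cap in $M$ meeting $D$, and a bottom-level disc in $X$ with boundary on $L$. Matching the intersection number with $D$ equal to one on the top level, and using the exactness of $L$ to kill any purely-$X$ bubbling, one identifies this broken moduli space precisely with the one defining $\mathcal{CO}(\mathcal{BS})$. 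A transversality-and-gluing argument (index counting shows the glued configuration is rigid exactly when both levels are rigid) then gives the identification of signed counts, and hence $\mathcal{CO}(\mathcal{BS}) = \mathfrak{m}^0_{(L,\xi)}$ in $H^*(L;\Nov)$. The weighting by $\xi$ is transported along the boundary of the bottom-level disc; the area weighting splits additively under the degeneration and matches the power of $q$ recorded by $\mathcal{BS}$ together with the cap areas.

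I expect the main obstacle to be the analytic bookkeeping of the neck-stretching: controlling bubbling at the breaking hypersurface, ensuring that no extra components (e.g.\ closed spheres, or multiply-covered caps with higher intersection number with $D$ that could in principle contribute to lower-energy strata) appear, and verifying the gluing theorem with the correct signs and orientations relative to the conventions used to define both $\mathcal{BS}$ and the closed--open map. A secondary point requiring care is the degree count: one must check that $\mathcal{CO}(\mathcal{BS})$ lands in $H^2(L;\Nov)$ (as opposed to other degrees), which follows from $\mathcal{BS} \in SH^0$, the degree of $\mathcal{CO}$, and the grading conventions fixed via the weights $\lambda_i$; and that the relevant discs indeed have Maslov index $2$, which is forced by intersection number one with $D$ together with the relation $2c_1(TM) = \sum_i \lambda_i \mathrm{PD}(D_i)$ and $c_1(TX) = 0$. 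Since this theorem is quoted from \cite{Tonkonog}, I would ultimately defer the full analytic details there and present here only the conceptual degeneration picture.
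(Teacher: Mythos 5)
The paper does not prove this statement: it is imported verbatim as Theorem 6.5 of \cite{Tonkonog}, so there is no in-paper proof to compare against. Your conceptual picture does match the strategy of the cited proof --- one studies Maslov-index-$2$ discs in $M$ with boundary on the exact Lagrangian $L\subset X$ (which must meet $D$ exactly once, by Stokes' theorem for the exact primitive $\theta$ together with positivity of intersection and the anticanonical condition), and degenerates them into a cap through $D$ glued to a punctured disc in $X$, the latter being exactly the configuration computing $\mathcal{CO}$ applied to the cap count $\mathcal{BS}$.

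That said, as a standalone proof your write-up has genuine gaps, which you yourself flag as ``obstacles'' and then defer. Concretely: (i) you never establish compactness of the moduli space under the neck-stretching --- in particular you must rule out configurations where the top level meets $D$ with multiplicity $\ge 2$ while lower-index pieces absorb the deficit, and rule out sphere bubbles in $M$ and disc bubbles on $L$ (the latter is where exactness of $L$ is actually used, and it should be invoked at that precise point rather than in passing); (ii) the gluing theorem identifying the broken configurations bijectively (with signs) with both the input of $\mathcal{CO}(\mathcal{BS})$ and the boundary of the one-parameter family is asserted, not argued; (iii) the statement is an identity in cohomology, not a bijection of moduli spaces, so one must exhibit the interpolating one-parameter family whose boundary contributions give $\mathcal{CO}(\mathcal{BS})-\mathfrak{m}^0_{(L,\xi)}$ as an exact term, and check that the local-system and area weights are preserved along the cobordism. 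None of these steps is routine, and all of them constitute the actual content of Tonkonog's proof. Since the theorem is quoted from \cite{Tonkonog}, deferring to that reference is legitimate in the context of this paper, but then the honest statement is ``see \cite{Tonkonog}'' rather than a proof sketch.
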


This fits into the generalized geometric setup alluded to in Remark \ref{rmk:partialcomp} (we are in the log Calabi--Yau setting, and we equip each component of $D$ with its canonical weight $2$). 
It connects with our conjectures as follows:

\begin{conj}
We have $\mathcal{BS} = [\beta_1]$.
\end{conj}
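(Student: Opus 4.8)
The plan is to identify the Maurer--Cartan element $\beta$ of Conjecture \ref{conj:MC} on the nose with a cochain-level model for the count of pseudoholomorphic caps, and then match $[\beta_1]$ with $\mathcal{BS}$ by comparing their defining moduli spaces. Concretely, in the setup of Tonkonog's Theorem \ref{thm:Tonk} (which is the log Calabi--Yau case of our setup, with the compactifying divisor $D = M \cap \bar D$ carrying the canonical weight $2$, so Hypothesis \ref{hyp:gr} holds with equality), the class $\mathcal{BS} \in SH^0(X;\Bbbk)$ is defined by counting rigid pseudoholomorphic planes in $M$ that are asymptotic at their positive puncture to orbits near $D$ and that intersect $D$ with total multiplicity one. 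On the other side, unwinding the proof of Theorem \ref{thm:aa}, the deformed differential $\partial$ on $tel(\cC)$ is precisely $\ell^1_\beta$ under Conjecture \ref{conj:MC}, and the off-diagonal term $\partial_{SH,D}$ followed by the continuation back to $SH$-type generators is, after passing to $\mathrm{Gr}_1$ and using \eqref{eq:SCtoCF}, exactly the first Taylor coefficient $[[\beta_1],-]$ of this deformation. So one is reduced to a geometric statement: the $D$-type orbits which lie in the lowest nontrivial level of the $\cQ$-filtration are exactly the orbits winding once around $D$, and the Floer cylinders from an $SH$-type orbit through these orbits and back are in bijection, after a neck-stretching/compactification argument, with Tonkonog's cap-counting planes glued to a trivial Floer cylinder.

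The key steps, in order, would be: (1) Fix a model for the $L_\infty$-structure on $SC^*(X;\Bbbk)$ and for $\beta$ realizing Conjecture \ref{conj:MC}; extract $[\beta_1] \in \mathrm{Gr}_1 SH^2(X;\Nov)$ as the $d$-closed leading term, and observe via the last paragraph of the excerpt that the $E_1$-differential is $[[\beta_1],-]$. (2) Independently, compute the $E_1$-differential of the spectral sequence of Theorem \ref{thm:specseqa} directly from the telescope picture: its only contribution comes from Floer solutions with input an $SH$-type orbit and output an $SH$-type orbit whose cap $u$ satisfies $u \cdot \bflambda = 1$ exactly (by Proposition \ref{prop-pos-int}, $u\cdot\bflambda \ge 0$, and the $\cQ$-grading jump is by $1$). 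By Proposition \ref{prop:imix-div} and its proof such a solution must `pass through' a $D$-type orbit of index $\approx 2$ winding once around $D$. (3) Perform an SFT-style neck-stretch along $\partial K_\s$ (or along $\{\rho = 1-\epsilon\}$), breaking each such Floer solution into a piece in $K_\s$ — a Floer trajectory in the Liouville domain, which after the cut contributes a genuine symplectic-cohomology operation — and a piece in the collar near $D$ which is a finite-energy plane in $M$ of the type Tonkonog counts. (4) Match the resulting gluing formula with the coefficients of $\mathcal{BS}$ in Tonkonog's definition of $\mathcal{BS}$, identifying orientations, the Novikov weight $e^{-2}$ (from $u_{in} = u - u\cdot\bflambda$, consistent with the expected form of $\mathrm{PSS}_{log}$ in Conjecture \ref{conj:filt}), and the asymptotic conventions.

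The main obstacle I expect is step (3)–(4): matching the two cap-counting moduli problems rigorously. Tonkonog's $\mathcal{BS}$ is defined using a specific class of Hamiltonians that are \emph{approximately constant} near $D$, with $D$-type orbits that \emph{link} $D$, whereas our Hamiltonians (as the sketch of Proposition \ref{prop:imix-div} stresses) are instead \emph{rotating} near $D$ at speed $\ell_n/\kappa\lambda$, so the $D$-type orbits are approximately constant. Bridging these two conventions requires either a continuation/homotopy argument between the two families of acceleration data that is compatible with the relevant filtration levels, or a direct SFT compactness argument showing both counts equal the same count of finite-energy planes in $M$ in the limit. One must also verify that the lowest-filtration $D$-type orbits really do all wind exactly once (no higher-winding orbits sneak into $\cQ$-level $1$), and control orientations and the $\varprojlim^1$/completion subtleties so that the comparison descends to $SH^0$. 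A cleaner route, which I would pursue in parallel, is to bypass Hamiltonian Floer theory altogether and phrase both $\mathcal{BS}$ and $[\beta_1]$ as counts in a common moduli space of finite-energy punctured spheres in $(M,D)$ — e.g. using the symplectic cut or the log-PSS package of \cite{Ganatra2020} — reducing the conjecture to the identification of two a priori different compactifications of the same interior moduli space, which is then a standard (if delicate) gluing and transversality argument.
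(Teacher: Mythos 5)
The first thing to say is that the statement you are addressing is labelled a \emph{Conjecture} in the paper, and the paper contains no proof of it: the authors offer only circumstantial evidence (Theorem \ref{thm:Tonk}, grading constraints as in Conjecture \ref{conj:split}, and the observation that under Conjecture \ref{conj:MC} the $E_1$-differential of the spectral sequence would be $[[\beta_1],-]$). So there is no argument of theirs to compare yours against. Your outline is, moreover, doubly conditional in a way you partly acknowledge but should foreground: the class $[\beta_1]$ only exists if Conjecture \ref{conj:MC} holds (the paper's actual Theorem \ref{thm:aa} produces a deformed differential on the homotopy inverse limit $\wt{SC}_\Nov$, not a Maurer--Cartan element for an $L_\infty$-structure on $SC_\Nov$), and in Tonkonog's setting $M$ is only a \emph{partial} compactification of $X$, so one also needs the extension of the whole framework sketched in Remark \ref{rmk:partialcomp}, which the paper does not carry out.

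Granting all that, your strategy has one structural gap and one correctly identified analytic gap. The structural gap: steps (1)--(2) compare the two candidate $E_1$-differentials, i.e.\ the operators $[[\beta_1],-]$ and $[\mathcal{BS},-]$. Equality of these operators only shows that $[\beta_1]-\mathcal{BS}$ lies in the centre of the Lie algebra $SH^*(X;\Bbbk)$; since the unit is central and $\mathcal{BS}$ sits in $SH^0$, this is a genuine ambiguity (compare the paper's own remark that a central summand $B_0$ is invisible to $\ell^1_\beta$). So the conclusion must come entirely from the direct moduli-space identification in your steps (3)--(4), not from the differential comparison. And that identification is exactly where the paper itself flags the difficulty: Tonkonog's caps are counted with Hamiltonians that are approximately constant near $D$, whose $D$-type orbits link $D$, whereas here the Hamiltonians rotate near $D$ at speed $\ell_n/\kappa\lambda$ and the $D$-type orbits are approximately constant. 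You name this obstacle correctly, but bridging the two regimes by a filtration-compatible continuation argument, or by the SFT neck-stretching alternative, is a substantial new analytic construction rather than a routine verification; until it is carried out, your proposal remains a plausible programme rather than a proof --- which is consistent with the statement's status in the paper as an open conjecture.
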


In many settings, we can tightly constrain the class $\beta$ using grading considerations. 
For each $i$ we can define a cocycle $B_i \in SC^{2-\lambda_j}(X;\Bbbk)$ by `counting caps passing through $D_i$', following \cite{Tonkonog} or \cite{LogPSS}. 
We define 
$$ B:= \sum_i e^{\lambda_i} \cdot B_i \in SC^2(X;\Nov).$$

\begin{conj}\label{conj:split}
Suppose we are in the log Calabi--Yau case: i.e., $\lambda_i = 2$ for all $i$, and furthermore that the minimal Chern number of $M$ is $\ge 2$. 
Then we have $\beta = B$. 
\end{conj}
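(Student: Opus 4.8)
The plan is to separate two issues: that $\beta$ carries no terms of $\cQ$-filtration level $\geq 2$, so that $\beta$ is determined by its leading term; and that this leading term equals $\sum_i e^{\lambda_i} B_i$. The first is a pure grading argument. In the log Calabi--Yau case all weights equal $2$, so $A = 2\Z$, $a_0 = 2$, $q = e^2$, and $SC^*_\Nov$ is $\Z$-graded, with a generator $\gamma \otimes q^m$ sitting in degree $i(\gamma,u_{in}) + 2m$. By Proposition \ref{prop:imix-pos}, which applies since Hypothesis \ref{hyp:gr} holds in the log Calabi--Yau case, $i(\gamma,u_{in}) \geq 0$ for every $SH$-type orbit $\gamma$. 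Hence a generator of total degree $2$ with $m \geq 1$ forces $i(\gamma,u_{in}) = 2-2m$, so $m=1$; and there is no generator of total degree $3$ with $m \geq 2$. Thus $\cQ_{\geq 1} SC^2_\Nov = q \cdot SC^0(X;\Bbbk)$ and $\cQ_{\geq 2} SC^3_\Nov = 0$. The first identity shows that the Maurer--Cartan element $\beta$ (from Conjecture \ref{conj:MC}, or directly the one produced by the homotopy-transfer proof of Theorem \ref{thm:aa}) must be of the form $\beta = q\bar\beta$ with $\bar\beta \in SC^0(X;\Bbbk)$ unique, with no higher correction terms. The second identity shows that the terms $\tfrac{1}{k!}\ell^k(\beta,\dots,\beta)$ with $k\geq 2$ vanish, so `being a cocycle' is the whole Maurer--Cartan condition. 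Since $B = \sum_i e^{\lambda_i} B_i = q\sum_i B_i$ with each $B_i \in SC^0(X;\Bbbk)$ a cocycle by construction, the statement $\beta = B$ reduces to the chain-level identity $\bar\beta = \sum_i B_i$.

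This remaining identity is the refinement, on the cochain level and for the present model, of the conjecture $\mathcal{BS} = [\beta_1]$ stated above. In the homotopy-transfer construction, $\bar\beta$ is assembled from the matrix components of the telescope differential $\partial$ that involve $D$-type orbits; by Propositions \ref{prop-pos-int} and \ref{prop:imix-div}, every such component carries $\cQ$-weight $\geq 1$, and one of $\cQ$-weight exactly $1$ comes from a single Floer trajectory $u$ with $\bflambda \cdot u = 2$, i.e.\ (as all $\lambda_i = 2$) meeting exactly one $D_i$, once. With the Hamiltonians used here, which rotate a neighbourhood of $D$ about $D$, such trajectories degenerate, under the appropriate neck-stretching, to pseudoholomorphic caps through $D_i$ with one cylindrical end, and conversely; one then checks that the induced correspondence of transversely cut out rigid moduli spaces is an orientation-preserving bijection, and that the Novikov weights match, a cap meeting $D_i$ once contributing $e^{\lambda_i} = e^2 = q$. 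Identifying these moduli spaces with those defining $B_i$ in \cite{Tonkonog}/\cite{LogPSS} --- that is, checking that these two a priori different counts agree --- then gives $\bar\beta = \sum_i B_i$, hence $\beta = B$.

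The hypothesis that the minimal Chern number of $M$ is $\geq 2$ is what keeps the cap moduli spaces well behaved: an index-$2$ sphere in $M$ would otherwise appear in codimension $1$ in the one-dimensional moduli of caps, so that $dB_i$, and its comparison with the Floer boundary operator, could acquire bubbling contributions; with minimal Chern number $\geq 2$ this is excluded, each $B_i$ is a genuine cocycle, and the degeneration above produces no extra strata. The main obstacle is precisely this comparison of moduli spaces: one must match the Floer trajectories adapted to the rotating Hamiltonians $H_n$ with caps defined through a PSS-type construction, with careful control of the Morse--Bott asymptotics along $D$, of transversality, and of orientations --- a problem of the same nature as proving that $\mathcal{BS}$ is independent of auxiliary choices, and as the comparison arguments in \cite{LogPSS}. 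Once that is in place, the grading argument above finishes the proof with no further input beyond Hypothesis \ref{hyp:gr} (automatic here) and the minimal Chern number hypothesis.
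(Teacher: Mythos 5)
This statement is a \emph{conjecture}: the paper does not prove it, and only offers heuristic evidence, namely exactly the grading observations you reproduce ($\cQ_{\ge 2}SC^2_\Nov = 0$ because $a_0=2$ and $i(\gamma,u_{in})\ge 0$ by Proposition \ref{prop:imix-pos}, and $\mathrm{Gr}_1^\cQ SC^2_\Nov$ being spanned by the $qB_i$ and the unit). Your grading reductions are correct and in fact slightly sharper than the paper's (the observation that $\cQ_{\ge 2}SC^3_\Nov=0$, so the Maurer--Cartan equation collapses to closedness, is a nice addition). But the proposal is not a proof, for two reasons. First, it presupposes Conjecture \ref{conj:MC} --- the existence of a Maurer--Cartan element $\beta$ with $\widetilde{SC}_\Nov = SC_\Nov$ and $\partial = \ell^1_\beta$ --- which is itself open in this paper; the actual proof of Theorem \ref{thm:aa} given here replaces $SC_\Nov$ by $\wt{SC}_\Nov$ precisely to avoid constructing $\beta$, so there is no ``homotopy-transfer construction'' in the paper from which to read off $\bar\beta$. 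Second, and more importantly, the entire geometric content of the conjecture is the chain-level identity $\bar\beta = \sum_i B_i$, which you reduce to ``a correspondence of transversely cut out rigid moduli spaces'' under neck-stretching that ``one then checks.'' That comparison --- matching Floer trajectories for the rotating Hamiltonians $H_n$ crossing $D_i$ once against the caps of \cite{Tonkonog}/\cite{LogPSS}, with control of Morse--Bott asymptotics, transversality, orientations, and signs --- is exactly the open problem; asserting it is not proving it.

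A smaller point: your account of the role of the minimal Chern number hypothesis differs from the paper's. The paper's stated reason is that the coefficient of the unit $q\cdot 1$ in $\mathrm{Gr}_1^\cQ\beta$ counts Chern-number-$1$ spheres, which are excluded when the minimal Chern number is $\ge 2$ (see the remark following the conjecture, where minimal Chern number $1$ forces an extra term $e^2 B_0$). Your grading argument only places $\bar\beta$ in $SC^0(X;\Bbbk)$, which still contains the unit, so even granting the moduli comparison for the $B_i$-components you would need this sphere-counting argument (not the bubbling argument you give) to rule out a unit summand and conclude $\beta = B$ exactly.
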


\begin{rmk}
If the minimal Chern number of $M$ is $1$, then we conjecture that $\beta = B +e^2 \cdot B_0$, where $B_0 \in SC^0(X;\Bbbk)$ is a multiple of the unit, and counts certain holomorphic spheres in $M$ of Chern number $1$. 
Note that the additional term $B_0$ is irrelevant for the purposes of Conjecture \ref{conj:MC}, as $\ell^1_B = \ell^1_{B+B_0}$ using the fact that $B_0$ is a multiple of the unit.
\end{rmk}

As evidence for the Conjecture, we first observe that $\mathrm{Gr}_1^\cQ SC^2_\Nov$ is generated by the classes $qB_i$, together with the unit $q \cdot 1$; and argue that the coefficient of the unit in $\beta$ must count certain Chern-number-1 spheres. 
We further observe that $\cQ_{\ge 2} SC^2_\Nov = 0$. 
This follows as we have $a_0 = 2$, so any generator $\gamma \otimes q^j$ of $SC^2_\Nov$ with $j \ge 2$ must have $i(\gamma) \le -2$; however $i(\gamma) \ge 0$ by Proposition \ref{prop:imix-pos}. 

\begin{rmk}
Based on \cite[Lemma 6.4]{SheridanVersality}, we also expect Conjecture \ref{conj:split} to hold under either of the following hypotheses:
\begin{itemize}
\item $D$ is smooth and Hypothesis \ref{hyp:gr} is satisfied.
\item $M$ is a projective variety, $D$ a complex divisor, and $c_1(TM)$ lies in the interior of the cone $Amp'(M,D) \subset H^2(M;\R)$ defined in \cite[Definition 3.26]{SheridanVersality}.
\end{itemize}
\end{rmk}

In settings where Conjecture \ref{conj:split} holds, the Maurer--Cartan element $\beta$ is determined up to gauge equivalence by the cohomology classes $[B_i]$. 
Furthermore, the components of $\beta$ get `turned on' one by one as the corresponding divisors get added compactifying $X$.

\color{black}

\subsubsection{Mirror symmetry in the log Calabi--Yau case}\label{sss-MS}

Let us consider the log Calabi--Yau case, where $X=M\setminus D$ and $X$ is equipped with its preferred Liouville structure and trivialization of canonical bundle. 
In this case we have $a_0 = 2$, so $\Nov = \Bbbk[q,q^{-1}]$ where $i(q) = 2$.

Assume that $Y$ is a mirror scheme to $X$ over $\Bbbk$, which is smooth. 
Even though we choose to leave what this means vague, we will assume that it implies
\begin{equation}
\label{eq:SHmirr}
SH^i(X;\Bbbk) \simeq \bigoplus_{p+q = i} H^q(Y,\Lambda^p TY),
\end{equation}
and in particular
$$SH^0(X;\Bbbk)\simeq H^0(Y,\mathcal{O}_Y).$$ 
Therefore, the classes $B_i \in SH^0(X;\Bbbk)$ are mirror to functions $w_i \in H^0(Y,\mathcal{O}_Y)$. 
We set $W:= \sum_i w_i$. This sum includes the constant term $w_0$, which may be non-zero in the case that the minimal Chern number of $M$ is $1$.

Now let $Y_\Nov$ denote the base change of $Y$ to $\Nov$, and $W_\Nov = qW$ be a function on $Y_\Nov$.

\begin{conj}\label{conj:MS}
The Landau--Ginzburg model $(Y_\Nov,W_\Nov)$ is mirror to $M$.
\end{conj}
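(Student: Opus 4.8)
\emph{Proof proposal.} The plan is to deduce Conjecture~\ref{conj:MS} from the chain of conjectures in this section together with the assumed mirror symmetry for the open variety $X$, by transporting the Maurer--Cartan deformation $\beta$ across the open mirror equivalence. First I would pin down ``$Y$ is a mirror to $X$'' in a strong, chain-level sense: a quasi-equivalence between the wrapped Fukaya category $\mathcal{W}(X)$ and $D^b\mathrm{Coh}(Y)$ that is compatible with closed--open structures, so that in addition to \eqref{eq:SHmirr} the $L_\infty$ algebra $(SC^*(X;\Bbbk),\ell^k)$ is $L_\infty$-quasi-isomorphic to the polyvector-field algebra $\mathrm{PV}^*(Y) := \bigoplus_{p,q}H^q(Y,\Lambda^p TY)$, with its Schouten--Nijenhuis bracket and zero differential, under which $B_i \in SH^0(X;\Bbbk)$ goes to $w_i \in H^0(Y,\cO_Y)$.

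\emph{Closed string.} Next I would base change to $\Nov$ and invoke Conjectures~\ref{conj:MC} and~\ref{conj:split}, so that $QH^*(M;\Nov)$ is the cohomology of $(SC^*_\Nov,\ell^1_\beta)$ with $\beta = q\sum_i B_i$ (using $\lambda_i = a_0 = 2$, $q = e^2$). Transporting $\beta$ through the $L_\infty$-quasi-isomorphism yields a Maurer--Cartan element of $\mathrm{PV}^*(Y_\Nov)$ gauge-equivalent to $W_\Nov = qW$, regarded as a degree-$2$ element of $H^0(Y_\Nov,\cO_{Y_\Nov}) \subset \mathrm{PV}^*(Y_\Nov)$ (degree $2$ since $i(q)=2$). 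Because $\mathrm{PV}^*(Y)$ is a differential-graded Gerstenhaber algebra, all higher brackets vanish and the Schouten bracket of a function with a polyvector field is, up to sign, contraction with its de Rham differential; hence the deformed differential is $\ell^1_{W_\Nov} = \bar\partial \pm \iota_{dW_\Nov}$, whose cohomology is the hypercohomology $\mathbb{H}^*\big(Y_\Nov,(\Lambda^\bullet TY_\Nov,\iota_{dW_\Nov})\big)$. By the theorems of Dyckerhoff, Lin--Pomerleano and Preygel this computes $HH^*$ of the category of matrix factorizations $\mathrm{MF}(Y_\Nov,W_\Nov)$, so chasing identifications gives $QH^*(M;\Nov) \cong HH^*(\mathrm{MF}(Y_\Nov,W_\Nov))$ --- the closed-string shadow of the conjecture.

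\emph{Open string.} To upgrade to homological mirror symmetry I would then produce a quasi-equivalence $\cF(M)^{\beta} \simeq \mathrm{MF}(Y_\Nov,W_\Nov)$ between the bulk-deformed Fukaya category of $M$ and the category of matrix factorizations. An object of $\cF(M)^{\beta}$ supported in $X$ is a $\beta$-deformation of an object of $\mathcal{W}(X)$; under the open mirror equivalence, base changed to $\Nov$, such a curved deformation should match an object of $\mathrm{MF}(Y_\Nov,W_\Nov)$, viewed as the curved deformation of $D^b\mathrm{Coh}(Y_\Nov)$ by $W_\Nov$ in the sense of Orlov and Positselski. I would then check (i) that every matrix factorization arises this way --- standard, since images of generators of $D^b\mathrm{Coh}(Y)$ generate $\mathrm{MF}(Y_\Nov,W_\Nov)$ --- and (ii) that the $X$-Lagrangians split-generate $\cF(M)^\beta$, via an Abouzaid-type generation criterion fed by the closed-string isomorphism just established together with surjectivity of the open--closed map onto the unit.

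\emph{Main obstacle.} Granting the prerequisites (Conjectures~\ref{conj:MC} and~\ref{conj:split}), the closed-string step is essentially formal once one has a sufficiently structured mirror symmetry for $X$. The real difficulty is the open-string step, and within it: one needs a form of mirror symmetry for $X$ functorial enough to transport bulk deformations --- a genuinely chain-level, not merely cohomological, input --- and one must control the Lagrangians of $M$ that do not lie in $X$, either by the split-generation argument above or by matching them with matrix factorizations directly. The hard part, in short, will be establishing that the \emph{open} sector of mirror symmetry for $X$ is robust enough to survive the compactification to $M$.
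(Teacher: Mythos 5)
There is nothing to check your argument against: Conjecture~\ref{conj:MS} is stated in the paper as a conjecture and is not proved there. The paper deliberately leaves the meaning of ``mirror to'' vague, and the only support it offers is heuristic: the comparison of the spectral sequence of Theorem~\ref{thm:specseqa} with the hypercohomology spectral sequence of the Koszul complex $K(dW)$, plus the examples of Section~\ref{s-eg}. Your ``closed string'' step is essentially a restatement of that heuristic, and it is conditional at every stage on statements that are themselves open: Conjecture~\ref{conj:MC} (existence of the Maurer--Cartan element $\beta$ with $\partial = \ell^1_\beta$), Conjecture~\ref{conj:split} ($\beta = B$), and an upgrade of the isomorphism \eqref{eq:SHmirr} to an $L_\infty$ quasi-isomorphism under which $B_i \mapsto w_i$. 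The last of these is exactly what the paper says it ``expects'' but cannot prove, and your assertion that the transported Maurer--Cartan element is gauge-equivalent to the \emph{function} $qW$ rather than to a general polyvector field is precisely issue~\eqref{it:poly} of Remark~\ref{rmk:MS-gen}, which the paper flags as unresolved. Note also that when $Y$ is not affine the identification $\ell^1_{W_\Nov} = \bar\partial \pm \iota_{dW_\Nov}$ requires a specific dg model (a Cartan--Eilenberg or Dolbeault resolution), and the transferred $L_\infty$ structure on $\bigoplus_{p,q} H^q(Y,\Lambda^p TY)$ need not be strict just because polyvector fields form a Gerstenhaber algebra; formality is an additional assumption.

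The ``open string'' step goes well beyond anything in the paper and is where all the genuine difficulty sits: you would need a chain-level homological mirror symmetry statement for $X$ compatible with closed--open maps, a construction of the bulk-deformed Fukaya category $\cF(M)^\beta$ together with a comparison to the honest Fukaya category of $M$, an identification of curved deformations of $D^b\mathrm{Coh}(Y_\Nov)$ with $\mathrm{MF}(Y_\Nov,W_\Nov)$ in this setting, and a split-generation argument for Lagrangians not contained in $X$. None of these are supplied, and several are major open problems in their own right. What you have written is a reasonable research programme --- and your identification of the open-string functoriality as the main obstacle is sound --- but it is not a proof, and since the conjecture's statement is itself not given a precise meaning in the paper, the target of any such proof would first have to be pinned down.
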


\begin{rmk}
In fact, Conjecture \ref{conj:MS} should extend beyond the log Calabi--Yau case we consider here. 
However, it becomes difficult (and confusing) to interpret the mirror in terms of the language of classical algebraic geometry: the polyvector fields on $Y_\Nov$ are given a non-standard grading, and in general $W_\Nov$ may be a polyvector field rather than a function. 
In contrast, in the log Calabi--Yau case one can give a transparent interpretation of Conjecture \ref{conj:MS} in terms of the classical algebraic geometry of the Landau--Ginzburg model $(Y,W)$ defined over $\Bbbk$, which we now do. 
(We discuss the non-log-Calabi--Yau case in Remark \ref{rmk:MS-gen} at the end of this section.)
\end{rmk}

We consider the Koszul complex associated to the section $dW$ of $T^*Y$:
$$K(dW) := \left\{\ldots \to \Lambda^{p+1}(TY)\xrightarrow{dW} \Lambda^{p}(TY)\to\ldots\to TY \xrightarrow{dW} \mathcal{O}_Y\right\}.$$ 
This is a complex of vector bundles over $Y$. 
When the critical locus $Z:=\mathrm{Crit}(W)$ is isolated, $K(dW)$ is a resolution of $\mathcal{O}_Z$, and therefore its hypercohomology gives the algebra of functions on the critical locus: $\mathbb{H}^*(K(dW)) \cong \cO(Z)$ (the hypercohomology is concentrated in degree $*=0$). 
In general, we define $\cO(Z^h):= \mathbb{H}^*(K(dW))$, because this hypercohomology is, essentially by definition, the graded algebra of functions on the `derived critical locus of $W$' (see e.g. \cite{Vezzosi}). 

Conjecture \ref{conj:MS} implies, among other things, that we have an isomorphism of graded $\Nov$-algebras
\begin{equation}\label{eq:QHmirr}
\cO(Z^h) \otimes_\Bbbk \Nov \cong QH^*(M;\Nov).
 \end{equation}
We expect that the mirror to the spectral sequence of Theorem \ref{thm:specseqa} on the RHS, is the hypercohomology spectral sequence on the LHS, in a sense we now make clear.

We recall the construction of the hypercohomology spectral sequence 
$$^I \! E_1^{p,q} = H^q(\Lambda^{-p} TY) \Rightarrow \cO(Z^h),$$
following \cite[Section 5.7]{Weibel}. 
We take a Cartan--Eilenberg resolution $\cC^{p,q}$ of $K(dW)$, and consider the resulting bicomplex $C^{p,q} = \Gamma(\cC^{p,q})$. 
We define a filtration map on this complex by $\cQ(c) = p$ for $c \in C^{p,q}$ (i.e., we have $\cQ(c)=-p$ for $c$ a section of $\Lambda^p TY$). 
The resulting $\cQ$-filtration induces the spectral sequence with $E_1$ page as above. 
The differential on the $E_1$ page is given by contracting with $dW$.

We now consider the bicomplex $C^{p,q} \otimes_\Bbbk \Nov$, and equip it with the filtration map $\cQ(c \otimes r) = \cQ(c)+\cQ(r)$. 
We conjecture that the resulting filtered complex is filtered quasi-isomorphic to $(\wt{SC}_\Nov,\partial,\wt{\cQ}_{\ge \bullet})$, and in particular the corresponding spectral sequence is isomorphic to the one from Theorem \ref{thm:specseqa}. 
As evidence, we compute that the spectral sequence has
\begin{align*}
E_1^{j,k} &= \bigoplus_{p+q=3j+k} H^q(\Lambda^p TY) \otimes_\Bbbk \Bbbk \cdot q^{-j+p} \\
&\cong SH^{3j+k}(X;\Bbbk) \otimes_\Bbbk \Bbbk \cdot q^{-j+p},
\end{align*}
which is clearly isomorphic to the $E_1$ page of the spectral sequence from Theorem \ref{thm:specseqa}.

\begin{rmk}
The attentive reader may notice the presence of an extra `$p$' in the exponent of $q$, compared with the $E_1$ page from Theorem \ref{thm:specseqa}. 
This is because the isomorphism of $E_1$ pages
\begin{align*}
SH \otimes_\Bbbk \Lambda &= \bigoplus_{q,p}H^q(\Lambda^p TY) \otimes_\Bbbk \Lambda \qquad \text{sends}\\
SH \otimes_\Bbbk \Bbbk & \mapsto \bigoplus_{q,p} H^q(\Lambda^p TY) \otimes_\Bbbk \Bbbk \cdot q^{-p}.
\end{align*}
This reflects the fact that $\cQ(c) =-p$ for $c \in \Lambda^p TY$. 
\end{rmk}

We now explain how this fits with the picture from the previous section. 
The isomorphism \eqref{eq:SHmirr} is expected to respect the natural graded Lie algebra structures on both sides (among other things), where the Lie bracket on the polyvector field cohomology is given by the Schouten--Nijenhuis bracket. 
The differential on the $E_1$ page of the symplectic spectral sequence is given by $[B,-]$. 
The differential on the $E_1$ page of the hypercohomology spectral sequence is given by contraction with $qdW$, which coincides with $[qW,-]$ (as one can see from the definition of the Schouten--Nijenhuis bracket); thus the two differentials match. 

More precisely, we expect that the isomorphism of Lie algebras \eqref{eq:SHmirr} can be refined to a quasi-isomorphism of $L_\infty$ algebras, and the Maurer--Cartan element $\beta$ matches with the Maurer--Cartan element $qW$ up to gauge equivalence. 
This would yield a chain-level quasi-isomorphism underlying \eqref{eq:QHmirr}, which would imply the isomorphism of spectral sequences discussed above.

Note that when $Y$ is affine, there is no need to take a Cartan--Eilenberg resolution: we may take $C^{p,0} = \Gamma(\Lambda^{-p} TY)$ and $C^{p,q} = 0$ for $q \neq 0$, with differential given by contracting with $dW$, and the bicomplex is simply a complex. 
In particular the hypercohomology spectral sequence degenerates at $E_2$.
This leads us to make the following: 

\begin{conj}
If $X$ in addition (to the conditions from the first paragraph of this section) admits a homological Lagrangian section and $SH^0(X;\Bbbk)$ is a smooth algebra, then the spectral sequence of Theorem \ref{thm:specseqa} degenerates at $E_2$ page. 
\end{conj}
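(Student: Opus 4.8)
The plan is to bootstrap from the affine-$Y$ analysis that precedes this conjecture. Under the stated hypotheses we expect the symplectic cochain complex $SC^*(X;\Bbbk)$ to be quasi-isomorphic to the (single-complex) polyvector field complex $\bigl(\Gamma(\Lambda^{-\bullet}TY), \iota_{dW}\bigr)$ computing $\cO(Z^h)$, with the $\cQ$-filtration matching on both sides; this is precisely the content conjecturally underlying \eqref{eq:SHmirr} when $Y$ is affine, upgraded to a filtered quasi-isomorphism as discussed just above. The first step is therefore to record that, when $SH^0(X;\Bbbk)$ is smooth and $X$ admits a homological Lagrangian section, the mirror $Y=\mathrm{Spec}\,SH^0(X;\Bbbk)$ is affine and smooth, so no Cartan--Eilenberg resolution is needed and the bicomplex $C^{p,q}\otimes_\Bbbk\Nov$ reduces to the single complex $\bigl(\Gamma(\Lambda^{-\bullet}TY)\otimes_\Bbbk\Nov,\ \iota_{q\,dW}\bigr)$ with filtration map $\cQ(c\otimes r)=\cQ(c)+\cQ(r)$.

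Next I would identify the resulting filtered complex with $(\wt{SC}_\Nov,\partial,\wt\cQ_{\ge\bullet})$ of Theorem \ref{thm:aa}, using Conjecture \ref{conj:MC} (so $\partial=\ell^1_\beta$) together with Conjecture \ref{conj:split} (so $\beta=B$, mirror to $qW$) and the expected $L_\infty$-quasi-isomorphism refining \eqref{eq:SHmirr} under which $\beta$ is gauge-equivalent to $qW$; deforming the $L_\infty$ structure by gauge-equivalent Maurer--Cartan elements yields filtered-quasi-isomorphic complexes, and on the polyvector side deformation by the Maurer--Cartan element $qW$ produces exactly the Koszul-type differential $\iota_{q\,dW}=[qW,-]$. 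Granting this identification, the spectral sequence of Theorem \ref{thm:specseqa} is isomorphic to the hypercohomology spectral sequence of $K(q\,dW)$, whose $E_1$ page is as computed in the excerpt.

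The conclusion is then immediate: when $Y$ is affine the hypercohomology spectral sequence is just the spectral sequence of a single complex sitting in one `$q$-row', so the $E_1$ differential is $\iota_{q\,dW}$ and $E_2 = H^*\bigl(\Gamma(\Lambda^{-\bullet}TY)\otimes_\Bbbk\Nov,\iota_{q\,dW}\bigr) = \cO(Z^h)\otimes_\Bbbk\Nov$, with no room for higher differentials (all $d_r$ for $r\ge2$ vanish for degree reasons, as there is only one nonzero column in the relevant bicomplex). Hence the spectral sequence degenerates at $E_2$, and by the identification above so does the one from Theorem \ref{thm:specseqa}. The main obstacle is the second paragraph: making the passage from the geometric/Floer-theoretic spectral sequence to the algebraic Koszul spectral sequence rigorous requires Conjectures \ref{conj:MC} and \ref{conj:split} and the $L_\infty$-enhancement of homological mirror symmetry for $X$, none of which is available in the generality stated—so this `proof' is necessarily conditional on those conjectures, exactly as the surrounding discussion is.
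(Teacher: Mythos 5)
Your argument is exactly the paper's own motivation for this conjecture: identify the mirror with the smooth affine scheme $\mathrm{Spec}(SH^0(X;\Bbbk))$, note that no Cartan--Eilenberg resolution is needed so the hypercohomology spectral sequence lives in a single row and degenerates at $E_2$, and transport this across the (conjectural) filtered $L_\infty$-quasi-isomorphism. Note that the statement is a conjecture precisely because, as you acknowledge in your final paragraph, every step of the identification rests on Conjectures \ref{conj:MC}, \ref{conj:split}, and the mirror-symmetry expectations, so the paper offers no proof and neither does your proposal --- but your conditional reasoning coincides with the authors' own.
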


Under these assumptions on $X$ one can take $Y$ to be the smooth affine scheme $Spec(SH^0(X;\Bbbk))$ (see \cite{Pomerleano2021}), which would satisfy \eqref{eq:SHmirr}, which is our reason to make this conjecture.

For example, the conjecture holds in the toric Fano examples (see Section \ref{sss-toric-fano}), essentially by the argument given above. This degeneration also follows from the fact that one can construct $SC^*(X;\Bbbk)$ with zero differential in this case!

\begin{rmk}\label{rmk:MS-gen}  
We now discuss the non-log-Calabi--Yau case of Conjecture \ref{conj:MS}, which will appear in several examples in Section \ref{s-eg} below. 
There are three complicating factors: 
\begin{enumerate}
\item The mirror to $X$ will in general be a Landau--Ginzburg model $(Y,w)$, rather than simply a variety $Y$;
\item \label{it:grad}The algebra of polyvector fields on $Y$ must be equipped with a non-standard grading;
\item \label{it:poly} \emph{a priori}, $\beta$ will be mirror to a gauge equivalence class of Maurer--Cartan elements for the differential graded Lie algebra of polyvector fields on $(Y,w)$, rather than simply a function $W$ on $Y$. 
\end{enumerate}

Issue \eqref{it:grad} is already present if one wants to talk about the mirror of $T^*S^1$ with a non-standard trivialization of its canonical bundle and then consider the correspondence between compactifications and deformations. In this case one cannot use a traditional SYZ approach as the zero section of $T^*S^1$ does not even have vanishing Maslov class with respect to such a trivialization. It seems that in order to develop some general geometric intuition in the non-log Calabi-Yau cases, it would be helpful to use the language of derived algebraic geometry but we do not feel comfortable enough to do this at this point. 

Concerning issue \eqref{it:poly}, we actually expect that $\beta$ should be mirror to a function in broad generality, although it is not clear how to prove this. 
In some cases it follows from grading considerations, as in Conjecture \ref{conj:split} and the ensuing remarks. 
\end{rmk}

\begin{rmk}\label{rmk:logfano}
Even though we avoid a general discussion, we do use our expectations in the log Fano case in some examples in Section \ref{s-eg} below. 
Here is our starting ansatz in these examples: start with a log Calabi--Yau pair $(M,D' )$, where $$D' = \bigcup_{i=1}^{N+J} D'_i,\qquad \text{and set} \qquad D = \bigcup_{i=1}^N D'_i.$$ Suppose that $X'=M \setminus D'$ is mirror to $Y$ as at the start of this section. 
This means that we could choose all weights $\lambda'_i = 2$; we assume, however, that there exists a valid choice of weights with $\lambda'_i > 0$ for all $1 \le i \le N$, and $\lambda'_i=0$ for $N+1 \le i \le N+J$. 
We equip $X'$ with the trivialization of its canonical bundle corresponding to these weights, and equip the algebra $SH^0(X';\Bbbk)$ with its induced grading. We posit that this is the graded algebra of functions on the mirror of $X'$ (with the alternative trivialization), which we regard as a `graded scheme'. We set $X = M \setminus D$, and posit that the mirror to $X$ is $(Y,w)$ where $w = \sum_{i=N+1}^{N+J} w_i$. 
We furthermore posit that the Maurer--Cartan element $\beta$ corresponding to $X \subset M$ is mirror to $W_\Nov = \sum_{i=1}^N e^{\lambda_i} w_i$, and therefore that the mirror to $M$ is $(Y_\Nov,w+W_\Nov)$.
\end{rmk}

\color{black}

\subsection{Examples}\label{s-eg}

\subsubsection{Fano toric varieties}\label{sss-toric-fano}

Let $\Delta\subset \mathbb{R}^n$ be a Fano Delzant polytope. This means that it is a Delzant polytope equal to the intersection of half-spaces (with no redundancy) $$\nu_i(x)+2\kappa\geq 0,\text{ }i=1,\ldots,m$$ for $\kappa>0$ and $\nu_i\in(\mathbb{Z}^n)^\vee$ primitive. Using the symplectic boundary reduction construction (one of the many options), we construct a symplectic manifold $(M_\Delta,\omega)$ with a Hamiltonian $T^n$ action and moment map $$\pi: M_\Delta\to  \mathbb{R}^n.$$ The image of the moment map is by construction $\Delta$. Finally, note that $M_\Delta$ satisfies the monotonicity condition $ 2 \kappa c_1(TM_\Delta) =  [\w]$.

We define the toric SC divisor $D_\Delta$ as the preimage of the boundary of $\Delta$ under the moment map. Note that $D_\Delta=\bigcup_{i=1}^m D_i $ is automatically an orthogonal SC divisor. We define $X_\Delta=M_\Delta\setminus D_\Delta.$ Again by construction $X_\Delta$ is a product $int(\Delta)\times (\mathbb{R}^n)^\vee/ (\mathbb{Z}^n)^\vee.$ Denoting the coordinates on $\mathbb{R}^n$ by $x_1,\ldots,x_n$ and the circle valued coordinates on  $(\mathbb{R}^n)^\vee/ (\mathbb{Z}^n)^\vee$ by $\phi_1,\ldots,\phi_n$ we have $$\omega|_X=\sum dx_id\phi_i.$$

We note the short exact sequence $$\xymatrix{
  0 \ar[r] & H^1(X_\Delta;\R)   \ar[r]^-{f} & H^2(M_\Delta,X_\Delta;\R)  \ar[r]^-{g} & H^2(M_\Delta;\R)  \ar[r] & 0. }$$

A choice of weights is (as always) equivalent to the choice of a rational class \[ \bflambda\in H^2(M_\Delta,X_\Delta;\R)\cong \mathbb{R}^m, \] which is sent to $2c_1(TM_\Delta)$ by $g$ and which has positive coordinates. We have a preferred lift given by $$\bflambda^{can}=(2,\ldots,2).$$

Let us also use the natural isomorphism $H^1(X_\Delta;\R)\cong \mathbb{R}^n$. The map $f$ is easily computed to be $$x\mapsto \nu_i(x).$$ Hence, the set of all possible positive weights is the image of the rational points in the interior of $\frac{1}{\kappa}\Delta$ under the map $\mathbb{R}^n\to   \mathbb{R}^m$ given by $$(x_1,\ldots,x_n)\mapsto(\nu_1(x)+2,\ldots,\nu_f(x)+2).$$

We see that the only weight that satisfies Hypothesis \ref{hyp:gr} is the canonical weight, which corresponds to $0\in \frac{1}{\kappa}\Delta$.

Now let us outline how Theorems \ref{thm:aa} and \ref{thm:specseqa} work in this context, assuming the conjectural results of Section \ref{sss-MC}.
We can arrange that
$$SC^*(X_\Delta;\Bbbk) \cong \Bbbk[z_1^{\pm 1},\ldots,z_n^{\pm 1},\partial/\partial z_1,\ldots,\partial/\partial z_n]$$
where the variables $z_i$ are commuting and have degree $0$, and the variables $\partial/\partial z_i$ are anticommuting and have degree $1$ (where the degrees are induced by $\bflambda^{can}$). 
We can also arrange that the $L_\infty$ structure is trivial, with the exception of the Lie bracket $\ell^2$, which coincides with the Schouten--Nijenhuis bracket. 
We can compute, for instance via Theorem \ref{thm:Tonk} and Cho--Oh's computation of the disc potential of toric Fano varieties \cite{ChoOh}, that $\beta = qW$, where 
$$W = \sum_i z^{\nu_i}.$$
Now Conjecture \ref{conj:MC} says that in the statement of Theorem \ref{thm:aa}, we can take
\begin{align*}
\wt{SC}_\Nov = SC_\Nov &= \Nov [z_1^{\pm 1},\ldots,z_n^{\pm 1},\partial/\partial z_1,\ldots,\partial/\partial z_n] ,\qquad \text{with}\\
\partial & = [qW,-].
\end{align*}
As explained in Section \ref{sss-MS}, this is the Koszul complex for $dW$, tensored with $\Nov$. 
One can show that $W$ has isolated singularities, so the cohomology of the Koszul complex is
$$\cO(Z) = \frac{\Bbbk [z_1^{\pm 1},\ldots,z_n^{\pm 1}]}{\left\langle \frac{\partial W}{\partial z_1},\ldots,\frac{\partial W}{\partial z_n} \right\rangle} = Jac(W).$$
Thus, assuming Conjecture \ref{conj:MC}, Theorem \ref{thm:aa} gives
$$QH^*(M_\Delta;\Nov) \cong H^*(SC_\Nov,[qW,-]) \cong Jac(W) \otimes_\Bbbk \Nov,$$
which is the familiar statement of closed-string mirror symmetry for toric Fano varieties, c.f. \cite{Batyrev}. 
Note that the spectral sequence of Theorem \ref{thm:specseqa} has $E_0 = E_1 = SC_\Nov$, $E_2 = Jac(W)\otimes_\Bbbk \Nov$, and degenerates at $E_2$ because the differential on $SC_\Nov$ vanishes (or alternatively, because $Jac(W)$ is concentrated in even degree).

Now let us outline how Theorem \ref{thm-superheavy} works in this context.  
For each $i=1,\ldots,m$, we have a Hamiltonian circle action with moment map $\nu_i \circ \pi$, which rotates around $D_i$, and these actions commute on the overlaps. 
It follows that they define a system of commuting Hamiltonians for $D_\Delta$, in the sense of Section \ref{s-basics-of-SC}. 
For any $p\in int(\Delta)$ we define the corresponding weights $\bflambda_p:=f(\frac{p}{\kappa})+\bflambda^{can}$ and  
primitive (of $\omega|_{X_\Delta}$) $$\theta_p=\sum (x_i - p_i)d\phi_i.$$ The relative de Rham class of $(\omega,\theta_p)$ is easily seen to be $f(p)+\kappa\bflambda^{can}=\kappa\bflambda_p$. The Liouville vector field corresponding to $\theta_p$ is $Z_p = \sum (x_i - p_i) \partial/\partial x_i$. It follows that $\theta_p$ is adapted to the system of commuting Hamiltonians in the sense of Section \ref{s-basics-of-SC}. The skeleton $\mathbb{L}_p$ for $\theta_p$ is nothing but the Lagrangian torus above $p$. The corresponding subset $K_{crit,p}$ is easily computed to be $\pi^{-1}(\tilde{K}_{crit,p})$, where $\tilde{K}_{crit,p} \subset \Delta$ is the smallest rescaling of $\Delta$, centred at $p$, which contains the origin. 
In particular, $K_{crit,p}$ coincides with $\mathbb{L}_p$ if and only if $\bflambda_p = \bflambda^{can}$, if and only if Hypothesis \ref{hyp:gr} is satisfied.

Our Theorem \ref{thm-superheavy} says that the monotone torus fiber $\mathbb{L}_0$ is $SH$-full. 
It follows that it is not stably displaceable. This result can also be obtained using Lagrangian Floer theory, using the fact that the disc potential always has a critical point in this case. Our result says nothing about the skeleta $\mathbb{L}_p$ for $p\neq 0$. Indeed it is known that for $n\leq 3$ all of these non-monotone fibers are displaceable by probes \cite[Corollary 3.9 and Proposition 4.7]{McDuff}.

The fact that $\mathbb{L}_0$ is $SH$-full also implies that it intersects every Floer theoretically essential (over some commutative ring) monotone Lagrangian. This result also follows from the fact that $\mathbb{L}_0$, equipped with appropriate local systems, split-generates each component of the monotone Fukaya category over an arbitrary field \cite[Corollary 1.3.1]{EvansLekili}.

\subsubsection{Skeleta in $S^2$}

Let us move on to a non-toric example. Consider $S^2$ with a symplectic structure $\omega$ such that $[\omega]=4\kappa \mathrm{PD}(\mathrm{pt}).$ Let $D$ be the union of $N$ distinct points $p_1,\ldots ,p_N\in S^2$. Consider weights $\lambda_1,\ldots,\lambda_N>0,$ which needs to satisfy $$\lambda_1+\ldots+\lambda_N=4.$$

Let $\theta$ be a primitive of $\omega$ on $S^2\setminus D$ compatible with the weights and with some choice of local moment maps for the circle actions rotating about the $p_i$. Let $\mathbb{L}$ be the induced skeleton. The complement $S^2-\mathbb{L}$ is a disjoint union of open disks $U_i$, $i=1,\ldots N$, one for each point $p_1,\ldots p_N$. $\mathbb{L}$ itself is the union of all critical points, homoclinical and heteroclinical orbits, and periodic orbits of the Liouville vector field by the Poincar\'e--Bendixson theorem. 
It is elementary to compute (using the compatibility with weights) that the symplectic area of $U_i$ is equal to $\kappa \lambda_i$. 
If we restrict the function $\rho:M \to \R$ to the disc $U_i$, then it extends continuously to $0$ along the boundary of the closed disk, it is equal to $1$ at $p_i$, and it generates a Hamiltonian circle action rotating $U_i$ about $p_i$. 

Hypothesis \ref{hyp:gr} is satisfied if and only if no weight is bigger than $2$, which means no disc $U_i$ has area more than half the area of $S^2$. 
In this case the subset $K_{crit}$ coincides with the skeleton $\mathbb{L}$. 
Otherwise, we have $\lambda_i>2$ for some $i$, and $K_{crit}$ is the union of $\mathbb{L}$ with a collar around the boundary of $U_i$, so that the rest of $U_i$ has area equal to half the area of $S^2$. 
Theorem \ref{thm-superheavy} says that $K_{crit}$ is $SH$-full. 
This implies that it is not stably displaceable, and furthermore that no two such subsets can be disjoint  from each other. 
It is easy to see explicitly that it is necessary to add the collar to $K_{crit}$ in order for these results to hold.

\subsubsection{The case $M=S^2$, $D=$ a point}

Let $M=S^2$, and $D$ be a single point. 
We start by sketching how Theorem \ref{thm:aa} works in this case. 
It is possible to take simpler models for $SC^*(X;\Bbbk)$ and $\widetilde{SC}_\Nov$ than those which appear in the actual proof of the Theorem. 

We take a model for $SC^*(X;\Bbbk)$ which is isomorphic to $\Bbbk[z,z\theta]$ where $z$ is a commutative variable of degree $-2$, and $\theta$ is anticommutative of degree $1$. 
The generator $1$ corresponds to the unique constant orbit, $z^j$ to the fundamental cycle of the Reeb orbit going $j$ times around $D$, and $z^j\theta$ to the point class of the same Reeb orbit. 
The differential $d$ sends $z^j \mapsto 0$ and $z^j\theta \mapsto z^{j-1}$. 
In particular the cohomology vanishes: symplectic cohomology of the disc is zero. 

We have $\Nov = \C[q]$ with $i(q) = 4$. 
We take $\widetilde{SC}_\Nov = SC_\Nov$, and consider the deformed differential $\partial$, where $\partial - d$ sends $z^j \mapsto 0$ and $z^j \theta \mapsto q z^{j+1}$. 
The cohomology of this differential is free of rank 2 over $\Nov$, with a basis given by $1$ and $qz$. 
In particular it is isomorphic to $QH^*(M;\Nov)$, in accordance with Theorem \ref{thm:aa}: the class $1$ corresponds to $1 \in QH^0(M;\Nov)$, and the class $qz$ corresponds to $\mathrm{PD}(\mathrm{pt}) \in QH^2(M;\Nov)$.

Theorem \ref{thm:specseqa} does not apply in this case, because Hypothesis \ref{hyp:gr} is not satisfied: we have $\lambda = 4 > 2$. 
And indeed the conclusion of the Theorem fails, because we can't have a spectral sequence with $E_1$ page vanishing, converging to $QH^*(M;\Nov) \neq 0$. 
The reason the proof of Theorem \ref{thm:specseqa} does not run is that the $\cQ$-filtration on $SC_\Nov$ is not degreewise complete. 
For example, the classes $q^kz^{2k}$ all have degree $0$, but their $\cQ$-values go to $+\infty$. The convergence theorems for spectral sequences all require completeness, and indeed it could not be otherwise: taking the completion does not change the spectral sequence associated to a filtered complex, by inspection of the construction. 
It is easy to verify that the degreewise completion of $(SC_\Nov,\partial)$ is acyclic: for example,
$$1 = \partial\left( \sum_{j=0}^\infty (-qz^2)^j \cdot z\theta\right).$$
This confirms Conjecture \ref{conj:ss-nohyp} in this case, as $QH^*(S^2;\Nov)_{crit} = 0$.

Theorem \ref{thm-superheavy} simply says in this case that a disc occupying half the area of the sphere is $SH$-full, c.f. \cite[Section 1.2.2]{VarolgunesThesis}.

We now offer another perspective on this computation following Remark \ref{rmk:logfano}, which will be useful in the next two sections. First, we take $M=S^2$ and $D' = D'_1 \cup D'_2$ to be an anticanonical divisor on $S^2$, where $D'_1$ and $D'_2$ are distinct points. 
If we equip each point with weight $\lambda'_i = 2$, then this is a special case of Section \ref{sss-toric-fano}: we see $QH^*(M;\Nov')$ as a deformation of $SC^*(X';\Bbbk)$ where $X' = M \setminus D'$. 
In this case $(SC_{\Nov'},\partial')$ is quasi-isomorphic to the complex $$\Nov'[x,x^{-1},\partial_x],$$ where $\partial_x^2=0$, the generator $q'$ of the Novikov ring is in degree $2$, $x$ is in degree $0$, and $\partial_x$ is in degree $1$; the differential $\partial$ is $\Nov'[x,x^{-1}]$-linear and sends
$$ 1 \mapsto 0, \qquad \partial_x\mapsto q'(1-x^{-2}).$$ As expected, this chain complex is degree-wise complete with respect to the $\cQ$-filtration and we obtain $$QH^*(S^2;\Nov') \cong \Nov[x,x^{-1}]/\left\langle x^2-1\right\rangle.$$

Now we consider the case that $\lambda_1 = 0$, $\lambda_2=4$. 
Following the recipe of Remark \ref{rmk:logfano}, if $X = M \setminus D'_2$ then $SC^*(X;\Bbbk)$ should be quasi-isomorphic to $\Bbbk[x,x^{-1},\partial_x],$ where $\partial_x^2=0$, $x$ is in degree $2$, and $\partial_x$ is in degree $-1$; the differential $d$ is $\Bbbk[x,x^{-1}]$-linear and sends
$$ 1 \mapsto 0, \qquad \partial_x\mapsto 1.$$ 
As expected, this chain complex is acyclic. 
The chain complex $(SC_\Nov,\partial)$ is quasi-isomorphic to $\Nov[x,x^{-1},\partial_x]$, with $x$ and $\partial_x$ graded as before, and the generator $q$ of $\Nov$ in degree $4$; the differential $\partial$ sends
$$1 \mapsto 0,\qquad \partial_x \mapsto 1-qx^{-2}.$$ 
Note that as expected, we have an isomorphism of chain complexes
\begin{align*}
(SC_\Nov,\partial) \otimes_\Nov \Nov' &\cong (SC_{\Nov'},\partial') \qquad \text{via the algebra map sending}\\
x & \mapsto q' x,\\
x\partial_x &\mapsto x \partial_x.
\end{align*}
We learned nothing new so far but we believe that this exercise might help unraveling the more complicated examples in the next two sections below.

\subsubsection{The quadric in $\CP^2$}\label{sss-quadric}

Consider $\mathbb{C}\mathbb{P}^2$ with its Fubini-Study symplectic form, and $D$ a smooth quadric with its canonical weight $3$, which does not satisfy Hypothesis \ref{hyp:gr}. $\mathbb{L}$ in this case is the monotone Lagrangian $\mathbb{R}\mathbb{P}^2$, which is known to be stably non-displaceable. On the other hand $\mathbb{R}\mathbb{P}^2$ can be displaced from the Chekanov torus (see \cite{Wu2015}), hence it is not $SH$-full for a general $\Bbbk$.
As was pointed out to us by Leonid Polterovich, it is also known that $\mathbb{R}\mathbb{P}^2$ is $[\mathbb{C}\mathbb{P}^2]$-superheavy over $\mathbb{Z}/2$, see \cite[Example 4.12]{EntovICM}.

Let us now test Theorem \ref{thm:aa} and Conjecture \ref{conj:ss-nohyp} in this case, using the mirror picture outlined in Remark \ref{rmk:logfano}. 
The expectation, following \cite[Section 5.2]{Auroux2007}, is as follows. 

Consider the graded ring
$$R := \Bbbk[x,y,z]/(z(xy-1)-1),\qquad \text{where $|x|=-1$, $|y|=1$, $|z|=0$,}$$
and consider elements $w_1 = y^2z$ and $w_2 = x$ of $R$. 
We set $Y = Spec(R)$. Then $X$ should be mirror to the Landau--Ginzburg model $(Y,w_1)$ while $M$ should be mirror to $(Y_\Nov,w_1+qw_2)$, where $|q|=3$.

We expect $(SC^*(X;\Bbbk),d)$ to be quasi-isomorphic to
\begin{equation}\label{eq:RP2mir}
\left(\bigoplus_p \Lambda^p TY , [w_1,-]\right),
\end{equation}
while $(SC_\Nov,\partial)$ should be filtered quasi-isomorphic to 
$$\left(\bigoplus_p \Lambda^p TY \otimes_\Bbbk \Nov, [w_1 + qw_2,-]\right),$$
with the filtration map given by $\cQ(c \otimes q^a) = -p + a $ for $c \in \Lambda^p TY$. 
We can compute the cohomology of this complex: it comes out as the Jacobian ring of $w_1+qw_2$, which is 
\begin{align*}
\Nov[x,y,z]/(q-y^3z^2,2yz-xy^2z^2,z(xy-1)-1) &\cong \Nov[x,y,z]/(z-1,x-2q^{-1}y^2,y^3-q) \\
&\cong \Nov[y](y^3-q)\\
& \cong QH^*(\CP^2;\Nov).
\end{align*}
This agrees with Theorem \ref{thm:aa} in this case.

Now we turn to Conjecture \ref{conj:ss-nohyp}. We consider two cases:

\textbf{Case 1: $2$ is invertible in $\Bbbk$.} We easily deduce that $1-q(x/2)^{3}$ is nullhomologous; it is also clearly invertible in the $\cQ$-completion. This implies that the cohomology vanishes after $\cQ$-completion.

\textbf{Case 2: $2=0$ in $\Bbbk$.} In this case the Jacobian ring is $\Nov[x,y,z]/(z-1,x,y^3-q) = \Nov[y]/(y^3-q)$. 
It is easy to see that $\cQ$-completion does not change the cohomology.

Both cases are in agreement with Conjecture \ref{conj:ss-nohyp}: if $2$ is invertible, then $\mathrm{PD}(D) \star(-)$ is invertible, so $QH^*(M;\Nov)_{crit} = 0$. On the other hand $[D]$ is $2$-divisible, so if $2=0$, then $QH^*(M;\Nov)_{crit} = QH^*(M;\Nov)$. 

This leads us to conjecture that $\mathbb{RP}^2 \subset \mathbb{CP}^2$ is $SH$-full if the characteristic of $\Bbbk$ is $2$, but not otherwise (Entov's result that $\mathbb{R}\mathbb{P}^2$ is $[\mathbb{C}\mathbb{P}^2]$-superheavy over $\Z/2$ can be considered as further evidence for this conjecture). 
This would imply that $\mathbb{RP}^2$ is non-stably displaceable (which is known), and intersects any monotone Lagrangian which is Floer-theoretically essential over a field of characteristic $2$ (note that this does \emph{not} include the Chekanov torus, as can easily be seen from the superpotential computed in \cite{Auroux2007}). 

\begin{rmk}\label{rmk:RP2}
We sketch some evidence for the mirror symmetry statement \eqref{eq:RP2mir}, in the case that $\mathrm{char}(\Bbbk)=2$. Note that the completion of $X$ is symplectomorphic to $T^*\mathbb{RP}^2$, so $SH^*(X;\Bbbk) \cong H^*(\cL \mathbb{RP}^2;\Bbbk)$ by Viterbo's theorem \cite{ViterboICM,ViterboII,Abbondandolo2006}. We can compute
$$H^*(\cL \mathbb{RP}^2;\Bbbk) \cong H^*(\mathbb{RP}^2;\Bbbk) \oplus \bigoplus_{k >1} H^*(S(T\mathbb{RP}^2);\Bbbk)[k]$$
by \cite{Ziller}, where the first factor comes from the manifold $\mathbb{RP}^2$ of constant loops, and the subsequent factors come from the manifolds $S(T\mathbb{RP}^2)$ of `length-$k$' geodesics. Of course $H^*(\mathbb{RP}^2;\Bbbk) \cong \Bbbk[y]/y^3$ with $|y|=1$, while $H^*(S(T\mathbb{RP}^2);\Bbbk)$ has rank 1 in degrees $0,1,2,3$. On the other hand, one may compute that 
\begin{align*}
H^*\left(\bigoplus_p \Lambda^p TY , [w_1,-]\right) &\cong \Bbbk[x,y,v]/(y^3,y^2x,y^2v)\\
&= \Bbbk[y]/y^3 \oplus \bigoplus_{k>1} \langle x^k,x^ky,x^{k-1} \cdot v, x^{k-1} y \cdot v\rangle,
\end{align*}
where $v=x \partial_x - y \partial_y$ is an anticommuting variable. We identify $\Bbbk[y]/y^3$ as corresponding to the constant loops, and the subsequent factors as corresponding to the length-$k$ geodesics. The degrees match up (we observe that $|v|=1$). We remark that $x=w_2$ is the basic loop around $D$, which corresponds to the family of length-$1$ geodesics, so it makes sense that multiplying by $x$ takes us to the next $k$-value. 
\end{rmk}

\subsubsection{Fano hypersurfaces}\label{sss-fanohyp}

We consider some examples motivated by \cite{SheridanFano}. 
They follow a similar philosophy to Remark \ref{rmk:logfano}, but are a bit different as they are obtained by partially compactifying an affine variety which is of log general type, rather than being log Calabi--Yau.

Let $M = M_{n,a}$ be a smooth hypersurface of degree $a \le n+1$ in $\CP^{n+1}$, and $D = D_{n,a,i}$ a union of $i \le n+2$ generic hyperplanes. 
This fits into the setup of Section \ref{subsec:setup}, and we may take the weights all to be equal to $\lambda = \frac{2(n+2-a)}{i}$. 
In particular, Hypothesis \ref{hyp:gr} is satisfied if and only if $n+2-a \le i$. 
This corresponds to the variety $X_{n,a,i} = M_{n,a} \setminus D_{n,a,i}$ being log Calabi--Yau (in the case of equality) or log general type (otherwise). 
Hypothesis \ref{hyp:gr} is not satisfied precisely when $X_{n,a,i}$ is log Fano.

We conjecture that the mirror to $X_{n,a,i}$ is the Landau--Ginzburg model $(Y_{n,a,i},W_{n,a,i})$, where 
\begin{align*}
Y_{n,a,i} &= [\Bbbk^{n+2}/G_{n,a,i}] \qquad \text{is a stack, where}\\
G_{n,a,i} &= \ker \left(\Z_a^{n+1} \xrightarrow{\sum} \Z_a\right),\qquad \text{and}\\
W_{n,a,i} &= -z_1\ldots z_{n+2} + \sum_{j=i+1}^{n+2} z_j^a,\qquad \text{and furthermore that}\\
\beta_{n,a,i} &= q \cdot \sum_{j=1}^i z_j^a.
\end{align*}
Here we assume that $\Bbbk$ contains all $a$th roots of unity. The group $G_{n,a,i}$ acts torically, preserving $W_{n,a,i}$. 
The variables $z_j$ have degree $(2-\lambda)/a$ for $j\le i$ and degree $2/a$ for $j > i$, and $q$ has degree $\lambda$.

Now let us drop the $n,a,i$ from the notation. 
By taking $M$ to be a Fermat hypersurface, we obtain a natural action of the dual group $G^*$ on $M$, respecting $D$. 
Restricting to the invariant pieces of the relevant group actions, mirror symmetry predicts that 
$$SH^*(X;\Bbbk)^{G^*} \cong H^*\left( \Bbbk[z_1,\ldots,z_{n+2},\partial/\partial z_1,\ldots,\partial/\partial z_{n+2}],[W,-]\right)^{G},$$
and in fact that there is an underlying quasi-isomorphism of $L_\infty$ algebras. 
In accordance with Conjecture \ref{conj:MC}, this gives us
$$H^*\left(\wt{SC}_\Nov,\partial\right)^{G^*} \cong H^*\left(\Nov[z_1,\ldots,z_{n+2},\partial/\partial z_1,\ldots,\partial/\partial z_{n+2}],[W+\beta,-]\right)^{G},$$
and hence
$$QH^*(M;\Nov)^{G^*} \cong Jac(W+\beta)^G.$$
The Jacobian ring has relations
\begin{align*}
\frac{z_1 \ldots z_{n+2}}{z_j} &= q z_j^{a-1} \qquad \text{for $j \le i$}\\
\frac{z_1 \ldots z_{n+2}}{z_j} &= z_j^{a-1} \qquad \text{for $j > i$.}
\end{align*}
Multiplying them together we get that
$$(z_1 \ldots z_{n+2})^{n+1} = q^i (z_1 \ldots z_{n+2})^{a-1}.$$
This allows us to compute that
$$ Jac(W+\beta)^G = \Nov[H]/\left(H^{n+1} - q^i H^{a-1}\right),$$
where $H = z_1\ldots z_{n+2}$.

The class $H$ corresponds to the hyperplane class (except for the case $n+2-a = 1$, when it corresponds to the hyperplane class plus $a! q^{2i}$). 
One can check that this is the correct answer for $QH^*(M;\Nov)^{G^*}$, see \cite{Givental1996,Jinzenji1997}. 
This is in agreement with Theorem \ref{thm:aa}. 

We can also check Conjecture \ref{conj:ss-nohyp} in this case. 
We can factor the defining relation in the Jacobian ring as:
$$H^{n+1} - q^i H^{a-1} = H^{a-1} \prod_{\zeta^{n+2-a} = 1} \left(H - \zeta q^{\frac{i}{n+2-a}}\right).$$
Note that we have $H=z_1 \ldots z_{n+2} = qz_1^a$, from the first relation in the Jacobian ring. 
Thus $\cQ(H)= 1$. 
On the other hand, $\cQ(q^{i/(n+2-a)}) = i/(n+2-a)$. 
Therefore, precisely when Hypothesis \ref{hyp:gr} is not satisfied, the factors $(H - \zeta q^{i/(n+2-a)})$ become invertible in the $\cQ$-completion, as argued in Section \ref{sss-nohyp}. 

The result is that the $\cQ$-completion gives $\Nov[H]/H^{a-1}$, which corresponds to the zero generalized eigenspace (note that Hypothesis \ref{hyp:gr} is satisfied for all $i \ge 1$ in the anomalous case $n+2-a=1$, when this corresponds to the $-a!q$ generalized eigenspace.)

\subsection{Outline}

In Section \ref{s-basics-of-SC} we examine the structure of our symplectic manifold in a neighbourhood of the divisor $D$. 
In particular, we introduce the notion of a `system of commuting Hamiltonians near $D$', and say what it means for a Liouville one-form to be `adapted' to such a system. This completes the statement of the results in Section \ref{ss-rigid-sk}, where these notions were used without being defined.

In Section \ref{s-HF-conventions} we establish our conventions for Hamiltonian Floer theory and relative symplectic cohomology in $M$, and explain how they are related to symplectic cohomology of the exact symplectic manifold $X$. In particular, we establish that the map \eqref{eq:SCtoCF} respects index and action; and we prove the `positivity of intersection'-type result which is used to prove Proposition \ref{prop-pos-int}.

In Section \ref{s-H} we construct the functions $\rho^R$ which are smoothings of $\rho^0$. 
We consider degenerate Hamiltonians of the form $h \circ \rho^R$, explain how to perturb them to obtain non-degenerate time-dependent Hamiltonians $H$, and give estimates for the index and action of their orbits.

In Section \ref{s-proofs} we prove our main results.

\paragraph{Acknowledgments:} N.S. is supported by a Royal Society University Research Fellowship, ERC Starting Grant 850713 -- HMS, and the Simons Collaboration on Homological Mirror Symmetry. U.V was partially supported by ERC Starting Grant 850713 -- HMS and is currently supported by TUBITAK CoCirculation2 grant number 121C034. We are grateful to Yoel Groman, Ana Rita Pires, Paul Seidel, Ivan Smith, and Dmitry Tonkonog for helpful conversations; and to Dan Pomerleano for explaining Conjecture \ref{conj:ss-nohyp} to us.

\paragraph{History of work:} This paper started with M.S.B. and N.S. trying to prove Conjecture \ref{conj:MC}. A solution was announced in 2015, but never appeared. The project languished, until U.V. joined the collaboration in 2019 and pushed it to completion in its current form. M.S.B. and N.S. apologize for the long delay between announcement and appearance of the work.


\section{Symplectic divisors} \label{s-basics-of-SC}

\subsection{Basics}\label{ss=div-basic}

We recall some notions from \cite[Section 2.1]{Tehrani2018a}. 
Let $(M, \w)$ be a $2n$-dimensional closed symplectic manifold and let $D = \cup_{i=1}^N D_i$ be a symplectic divisor in $(M, \w)$. 
This means that for each $i$, $D_i \subset M$ is a connected smooth closed submanifold with real codimension two and for each subset $I \subset [N]$ the intersection $\bigcap_{i \in I} D_i$ is transverse and
$$D_I := \bigcap_{i \in I} D_i \subset M$$
is a symplectic submanifold.
Since the $D_i$ intersect transversally, for each $I \subset [N]$ there is an isomorphism of vector bundles 
\begin{equation}\label{e:Transverse}
	N_{M} D_I \xrightarrow{\sim} \bigoplus_{i \in I} N_M D_i |_{D_I}
\end{equation}
over $D_I$, induced by the inclusions $TD_I \subset TD_{i}|_{D_I}$.
Recall the normal bundle $N_M D$ for any symplectic submanifold $D \subset (M, \w)$ has a \emph{symplectic orientation} induced
by the symplectic orientations of $TD$ and $TM$.  
\begin{defn}
	A symplectic divisor $D \subset (M, \w)$ is
	\begin{enumerate}
	\item[(i)] a \emph{simple crossings (SC) divisor} if
	\eqref{e:Transverse} is an isomorphism of oriented vector bundles, where each normal bundle is
	given its symplectic orientation, for all $I$.
	\item[(ii)] \emph{orthogonal} if for all $i \not= j$ and $x \in D_i \cap D_j$ the
	$\w$-normal bundle $(T_xD_i)^\w \subset T_xM$ is contained in $T_xD_j$.
	\end{enumerate}
\end{defn}

\begin{rmk}\label{rmk:Dorth}
In \cite[Section 5]{McL12T} McLean proved that any SC divisor $D \subset (M, \w)$ can be smoothly isotoped in the space of SC divisors to an orthogonal SC divisor $D' \subset (M,\w)$; and that $X' = M \setminus D'$ is convex deformation equivalent to $X = M \setminus D$. This implies that $SH^*(X;\Bbbk) \cong SH^*(X';\Bbbk)$, by \cite[Lemma 4.11]{McL12T}. 
These results mean that it suffices to prove Theorems \ref{thm:aa} and \ref{thm:specseqa} under the assumption that $D$ is orthogonal.
\end{rmk}

Setting $X = M \setminus D$, by Lefschetz duality (e.g. Proposition 7.2 of \cite{Dold}) we have 
\begin{equation}\label{e:RelHom}
	H_2(M, X) \cong \Z^{N} \quad\mbox{where}\quad A \mapsto (A\cdot D_i)_{i=1}^N.
\end{equation}
The inverse is given by mapping the $i$th basis vector to a disk $u_i: (\bD, \partial \bD) \to (M, X)$ that is disjoint from the other $D_j$ and 
with intersection number $u_i \cdot D_i = 1$. 
The dual basis vectors of $H^2(M,X) \cong \Z^N$ are what we called $\mathrm{PD}^{rel}(D_i)$ in Section \ref{subsec:setup}.

Assume that  
\[ \bfkappa= \sum_{i} \kappa_i \mbox{PD}^{rel}[D_i] \in H^2(M,X;\R)\]
is a lift of $[\w]$ under the map $H^2(M, X; \R) \to H^2(M; \R)$ with $\kappa_i\in\mathbb{R}$. 
\begin{rmk}
In the setup from Section \ref{subsec:setup}, $\bfkappa$ will be $\kappa\bflambda$.
\end{rmk}

Now consider a de Rham representative $(\w,\theta)$ for $\bfkappa$ consisting of the symplectic form $\w$ together with a one-form $\theta \in \Omega^1(X)$ satisfying $d\theta = \w|_X$, and 
$$
	\kappa_i = \int_{u_i} \w - \int_{\partial u_i} \theta.
$$

Following McLean \cite{McL12T, McLean2016} we call $\kappa_i$ 
the \emph{wrapping numbers for $D$ with respect to $\theta$}, though we use the opposite sign convention than in \cite{McL12T}.

\subsection{Systems of commuting Hamiltonians}\label{ss:scH}

\begin{defn}
Let $D=\cup_i D_i$ be an SC divisor in a closed symplectic manifold $(M,\omega)$, and $R>0$. A \emph{system of commuting Hamiltonians (scH) near $D$, of radius $R$,} is a collection of open neighborhoods $UD_i\supset D_i$ and proper smooth functions $r_i: UD_i\to [0,R)$, for each $i$, with the following properties.
For each $i$, 
\begin{itemize}
\item $r_i$ generates an $\mathbb{R}/\mathbb{Z}$ action on $UD_i$, and $r_i^{-1}(0)=D_i$.
\item The fixed point set of the $\mathbb{R}/\mathbb{Z}$ action on $UD_i$ is $D_i$. 
\item The $\mathbb{R}/\mathbb{Z}$ action on $UD_i\setminus D_i$ is free.
\end{itemize} 

For all pairs $i,j$,
\begin{itemize}
\item $UD_i\cap UD_j$ is invariant under the $\mathbb{R}/\mathbb{Z}$ action generated by $r_i$.
\item The Hamiltonians $r_i$ and $r_j$ Poisson commute on $UD_i\cap UD_j$.
\end{itemize}
We will denote a scH near $D$ of radius $R$ with the notation $\{r_i:UD_i\to[0,R)\}.$
\end{defn}

Note that for any scH of radius $R$, we can `shrink' it to an scH of radius $R'<R$ by replacing $UD_i$ with $\{r_i<R'\}$ for each $i$.

\begin{prop}
Let $D$ be an SC divisor in a closed symplectic manifold $(M,\omega)$. If $D$ admits a scH, then it is orthogonal.
\end{prop}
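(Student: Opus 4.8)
The plan is to localise at an arbitrary point $x\in D_i\cap D_j$ (for $i\neq j$; there is nothing to prove if the intersection is empty) and read off orthogonality from the linearised circle action generated by $r_i$. Write $\phi^i_t$, $t\in\mathbb{R}/\mathbb{Z}$, for the action generated by $r_i$ on $UD_i$. The first step is to check that $D_j$ is $\phi^i$-invariant near $x$: the open set $UD_i\cap UD_j$ is $\phi^i$-invariant by hypothesis, and on it $\{r_i,r_j\}=0$, so $r_j$ is constant along $\phi^i$-orbits there; hence every level set of $r_j$ inside $UD_i\cap UD_j$ is $\phi^i$-invariant, in particular $D_j\cap UD_i=r_j^{-1}(0)\cap UD_i$ is. Since $x\in D_i$ and $D_i$ is the fixed locus of $\phi^i$, the point $x$ is fixed, so we obtain a linear symplectic $\mathbb{R}/\mathbb{Z}$-action $d\phi^i_t|_x$ on $(T_xM,\omega_x)$, and the subspace $T_xD_j$ is one of its invariant subspaces (because $D_j$ is locally $\phi^i$-invariant through the fixed point $x$).

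Next I would record the structure of this linear action. Because $\phi^i$ fixes $D_i$ pointwise, $d\phi^i_t|_x$ is the identity on $T_xD_i$; because $D_i$ is a symplectic submanifold we have the symplectic splitting $T_xM=T_xD_i\oplus(T_xD_i)^{\omega}$, and $(T_xD_i)^{\omega}$ is $d\phi^i_t|_x$-invariant, since the symplectic complement of an invariant subspace is invariant for a symplectic action. The key structural claim is that the induced action on the $2$-plane $(T_xD_i)^{\omega}$ is a \emph{nontrivial} rotation (an $S^1$-action on a symplectic $2$-plane is by rotations in any compatible metric, so the only alternative is the trivial action). I would prove this by contradiction: if it were trivial then $d\phi^i_t|_x=\mathrm{id}$ for all $t$, and by Bochner's linearisation theorem for compact group actions near a fixed point $\phi^i$ would be trivial on a neighbourhood of $x$, contradicting both that the fixed locus of $\phi^i$ is exactly $D_i$ and that the action is free on $UD_i\setminus D_i$. (Equivalently, one can phrase this via the weight-space decomposition of the $S^1$-representation $T_xM$: the weight-zero summand must equal $T_xD_i$, leaving a single nonzero weight on the complementary $2$-plane $(T_xD_i)^{\omega}$.)

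Given this, the conclusion is a short piece of linear algebra. Transversality of $D_i$ and $D_j$ (part of the SC hypothesis) gives $T_xD_i+T_xD_j=T_xM$, and since both are codimension two this forces $T_xD_j\not\subseteq T_xD_i$; pick $v\in T_xD_j\setminus T_xD_i$ and write $v=v'+v''$ with $v'\in T_xD_i$ and $v''\in(T_xD_i)^{\omega}$, so $v''\neq0$. Averaging over the circle, $\int_0^1 d\phi^i_t|_x(v)\,dt=v'+\bigl(\int_0^1 d\phi^i_t|_x\,dt\bigr)v''=v'$, because the average of a nontrivial rotation over a full period vanishes; as $T_xD_j$ is $d\phi^i$-invariant this average lies in $T_xD_j$, so $v'\in T_xD_j$ and hence $v''=v-v'\in T_xD_j$. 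Finally the $d\phi^i$-orbit of the nonzero vector $v''$ spans $(T_xD_i)^{\omega}$, and it lies in $T_xD_j$ by invariance, so $(T_xD_i)^{\omega}\subseteq T_xD_j$ — which is precisely orthogonality at $x$. The one step requiring genuine care is the structural claim about the linear action on $(T_xD_i)^{\omega}$, i.e.\ ruling out a trivial (or merely partially trivial) normal action; this is exactly where the hypotheses "fixed locus $=D_i$" and "free off $D_i$" are used, through Bochner linearisation, while the rest of the argument is formal.
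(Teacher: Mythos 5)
Your proof is correct and follows essentially the same route as the paper's: linearise the $r_i$-circle action at $x$, use Poisson commutativity to see that $T_xD_j$ is invariant, invoke Bochner linearisation to rule out a trivial normal action on $(T_xD_i)^{\omega}$, and finish with linear algebra. The only (cosmetic) difference is in the last step, where you average over the circle to project onto $T_xD_i$, whereas the paper takes the difference $\theta\cdot(w+e)-(w+e)$ to land a nonzero vector of $(T_xD_i)^{\omega}$ in $T_xD_j$; both are valid.
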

\begin{proof}
Assume that $\{r_i:UD_i\to[0,R)\}$ is a scH near $D$. We need to show that  for all $i \not= j$ and $x \in D_i \cap D_j$ the symplectic orthogonal $(T_xD_i)^\w \subset T_xM$ is contained in $T_xD_j$. 

We consider the action of $S:=\mathbb{R}/\mathbb{Z}$ on $T_x M$ induced by $r_i$. The action on $T_xD_i\subset T_xM$ is trivial, since $D_i$ is fixed pointwise under the action of $S$. The action on $UD_i\cap UD_j$ leaves $\{r_j=0\}\cap UD_i\cap UD_j$ invariant by the Poisson commutativity property. Therefore $T_xD_j$ is an invariant subspace of $T_xM$ under the $S$ action. Finally, since the action of $S$ on $T_xM$ preserves the symplectic pairing, $(T_xD_i)^\w\subset T_xM$ is also an invariant subpace. 

Note that the action of $S$ on $(T_xD_i)^\w$ cannot be trivial by the Bochner linearization theorem, as $x$ does not have a neighhorhood on which $S$ acts trivially. Now we finish the proof with the following claim:
\begin{itemize}
\item  Assume that $V$ is a finite dimensional symplectic representation of $S$, which is the direct sum of two representations $W\oplus E$, where $W$ is the trivial representation on a symplectic codimension $2$ subspace, $E$ is not the trivial representation, and $E$ and $W$ are symplectically orthogonal. Let $W'$ be another codimension $2$ symplectic subspace of $V$ which is invariant under the action of $S$. Then if $W'$ is transverse to $W$, it has to contain $E$.
\end{itemize}
The proof of this statement is as follows. There exists $w+e\in W'$ with $e \neq 0$, as $W'$ is transverse to $W$. For any $\theta \in S$ we have $\theta \cdot (w+e) \in W'$, hence $\theta \cdot (w+e) - (w+e) = \theta \cdot e - e \in W'$. We may choose $\theta$ so that $\theta \cdot e \neq e$, so $W'\cap E\neq\{0\}$. 
This implies that $E \subset W'$ as required.
\end{proof}

\begin{defn} Let $D=\cup_{i=1}^N D_i$ be an SC divisor in a closed symplectic manifold $(M,\omega)$ and let $\{r_i:UD_i\to[0,R)\}$ be a scH near $D$. For all $I\subset [N],$ define $UD_I := \cap_{i \in I} UD_i$. We obtain a $(\mathbb{R}/\mathbb{Z})^I$ action on $UD_I$ with a moment map $$r_I: UD_I\to [0,R)^I$$ whose components are given by $r_i$, for $i\in I$.\end{defn}

\begin{prop}\label{prop-scH-exists}
Let $D$ be an orthogonal SC divisor in a closed symplectic manifold $(M,\omega)$. Then $D$ admits a scH.
\end{prop}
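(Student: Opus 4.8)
The plan is to construct the scH stratum by stratum, building an equivariant symplectic model near the deepest strata of $D$ first and propagating it outward. Enumerate the nonempty subsets $I\subset[N]$ with $D_I\neq\emptyset$ in order of decreasing cardinality. Orthogonality of $D$, together with the isomorphism \eqref{e:Transverse}, provides a \emph{symplectically orthogonal} splitting $N_M D_I\cong\bigoplus_{i\in I}N_i$ where $N_i:=N_M D_i|_{D_I}$; after choosing a compatible Hermitian metric on each complex line bundle $N_i$, the function $\mu_i(v)=\tfrac{1}{2}\pi|v_i|^2$ on the total space of $\bigoplus_{i\in I} N_i$ (with its canonical symplectic form near the zero section) is the moment map for the fibrewise rotation $S^1$-action in the $N_i$-summand. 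Because the summands are symplectically orthogonal, these circle actions commute and their moment maps Poisson-commute. Applying an equivariant version of the symplectic (Weinstein) neighbourhood theorem for the torus $(\mathbb{R}/\mathbb{Z})^I$ acting on a small disc-bundle neighbourhood of the zero section, one obtains an open neighbourhood $V_I\supset D_I$ in $M$, a $(\mathbb{R}/\mathbb{Z})^I$-action on $V_I$ fixing $D_I$ pointwise, and functions $r^I_i:V_I\to[0,R_I)$ for $i\in I$ that generate the circle factors, satisfy $(r^I_i)^{-1}(0)=D_i\cap V_I$, and Poisson-commute pairwise.

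The inductive step is to reconcile the model $V_I$ with the models $V_J$ already built for $J\supsetneq I$ on the overlaps: near a point of $D_I$ lying in the frontier of a deeper stratum $D_{J}$ (so $I\subset J$), the restriction to the $(\mathbb{R}/\mathbb{Z})^I$-factor of the action coming from $V_J$ and the newly constructed one must coincide. Both are standard models assembled from the same symplectic normal data, so by an equivariant Moser/isotopy argument there is a $(\mathbb{R}/\mathbb{Z})^I$-equivariant symplectomorphism between shrunk neighbourhoods intertwining them; the key point is that the Moser isotopy can be generated by a $(\mathbb{R}/\mathbb{Z})^I$-invariant (time-dependent) Hamiltonian, so composing with it preserves the structure of the previously built commuting actions. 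Patching the local models with an invariant partition of unity subordinate to $\{V_I\}$, and shrinking, yields globally defined open sets $UD_i\supset D_i$ with proper functions $r_i:UD_i\to[0,R)$ such that each $r_i$ generates a circle action that is free away from $D_i$, with $r_i^{-1}(0)=D_i$ equal to the fixed set, such that $UD_i\cap UD_j$ is $r_i$-invariant, and such that $r_i,r_j$ Poisson-commute on $UD_i\cap UD_j$ --- i.e., a scH near $D$. One finally shrinks all $UD_i$ to a common radius $R$.

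I expect the main obstacle to be precisely this compatibility/gluing step: extending the equivariant model across a stratum of lower depth while keeping \emph{all} of the previously constructed circle actions simultaneously Hamiltonian and pairwise commuting. This is where orthogonality is essential --- without it, the normal bundle to $D_I$ need not split symplectically-orthogonally, and rotations in different normal directions would neither commute nor have Poisson-commuting moment maps --- and it is also where one must run the Weinstein/Moser normalization \emph{relative} to the already-normalized structure near the frontier, with an invariant generating Hamiltonian. Concretely I would phrase the induction so that at stage $k$ one has a scH-type structure on a neighbourhood of the union of all strata of depth $\ge k$, and the task at stage $k-1$ is to extend it across the depth-$(k-1)$ strata; the technical heart is verifying that this relative equivariant extension exists, which is a standard but somewhat delicate application of equivariant Moser theory (cf. McLean's construction of tubular neighbourhoods of SC divisors in \cite{McL12T}).
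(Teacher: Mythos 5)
Your construction is sound in outline, but it takes a much longer road than the paper, which disposes of the proposition in one line: it invokes McLean's construction of compatible symplectic disk bundles for orthogonal SC divisors (\cite[Lemma 5.14]{McL12T}) and simply declares the $r_i$ to be the radial coordinates of those disk bundles, all the defining properties of a scH being immediate from the compatibility built into that lemma. What you have written is, in effect, a reconstruction of the proof of the cited lemma: the induction over strata by decreasing depth, the symplectically orthogonal splitting of $N_M D_I$ furnished by orthogonality, the equivariant Weinstein neighbourhood theorem for the torus $(\mathbb{R}/\mathbb{Z})^I$, and the relative equivariant Moser step to reconcile the model near $D_I$ with the models already fixed near deeper strata --- this is precisely the architecture of \cite[Section 5]{McL12T}. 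So the two routes buy different things: the paper's citation is economical and offloads the delicate gluing to a reference; your version makes visible exactly where orthogonality enters (the commuting of the fibrewise rotations and the Poisson-commuting of their moment maps), which the paper's one-liner hides.

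One wrinkle you should fix: the phrase ``patching the local models with an invariant partition of unity'' is not the right mechanism. A partition of unity can average functions or metrics, but it cannot average circle actions or symplectic normal forms; a convex combination of two commuting Hamiltonian torus actions is not an action. The gluing in this argument is accomplished entirely by the preceding step --- the relative equivariant Moser isotopy makes the shrunken local models \emph{literally agree} on overlaps, after which one takes unions rather than weighted combinations. If you phrase the induction as producing, at stage $k$, honest equality of the structures on a neighbourhood of the union of strata of depth $\ge k$, the partition of unity becomes unnecessary and the argument closes correctly.
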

\begin{proof}
This is an immediate consequence of  \cite[Lemma 5.14]{McL12T}, where for each $i$, we use the well-defined radial coordinate of the symplectic disk bundle over $D_i$ in the statement as our $r_i$ (the domain is the symplectic disk bundle of course). It is trivial to see that this gives a scH near $D$. 
\end{proof}

\begin{rmk} It is natural to ask whether all systems of commuting Hamiltonians come from standard tubular neighborhoods in the sense of McLean. Even if this is the case,  the extra choice of a standard tubular neighborhood on top of a system of commuting Hamiltonians is not needed for our constructions and arguments. \end{rmk}

\subsection{Adapted Liouville one-forms}

\begin{defn}\label{def-adapted}
Let $D$ be an SC divisor in a closed symplectic manifold $(M,\omega)$ and let $\{r_i:UD_i\to[0,R)\}$ be an admissible scH near $D$. We call a one-form $\theta \in \Omega^1(M\setminus D)$ satisfying $d\theta = \w|_{M\setminus D}$ and with wrapping numbers $\kappa_i>0$ \emph{adapted} to $\{r_i:UD_i\to[0,R)\}$ if the Liouville vector field $Z$ of $\theta$ satisfies $$Z(r_i) = r_i - \kappa_i$$ over $UD_i \setminus D$, for all $i$.
\end{defn}

\begin{prop}Let $D$ be an orthogonal SC divisor in a closed symplectic manifold $(M,\omega)$. Assume that $$[\omega]=\sum_{i} \kappa_i \cdot \mathrm{PD}(D_i)\quad\text{ in $H_2(M,\mathbb{R})$,}$$ with $\kappa_i>0$. Then there exists $\{r_i:UD_i\to[0,R)\}$ a scH near $D$ for which there exists an adapted $\theta \in \Omega^1(M\setminus D)$ with wrapping numbers $\kappa_i$.
\end{prop}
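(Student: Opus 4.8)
The plan is to build the scH and the adapted one-form together, starting locally near $D$ and then gluing to a global primitive. First I would invoke Proposition \ref{prop-scH-exists} to obtain \emph{some} scH $\{r_i:UD_i\to[0,R)\}$ near the orthogonal SC divisor $D$; by the shrinking remark after the definition of scH we are free to pass to smaller radii as needed. The key local observation is that near each $D_i$ the function $r_i$ is (up to scale) the squared radial coordinate of a symplectic disk bundle, and in the fibre directions one has an explicit Liouville-type primitive: on the punctured disk bundle the one-form whose radial coordinate $r$ satisfies $d\theta=\omega$ and $Z(r)=r-\kappa_i$ is the standard rotationally symmetric primitive $\theta_i^{loc}$ obtained by integrating the fibrewise angular form against $(r-\kappa_i)$ (this is the local model behind Example \ref{eg:proj} and the toric computations in Section \ref{sss-toric-fano}). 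On the overlaps $UD_i\cap UD_j$, the Poisson commutativity of the $r_i$ means these local models are compatible in the appropriate sense, so one gets a well-defined primitive $\theta^{near}$ of $\omega$ on a punctured neighborhood $UD\setminus D$ of $D$ whose Liouville field satisfies $Z(r_i)=r_i-\kappa_i$ on each $UD_i\setminus D$.

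Next I would extend $\theta^{near}$ to a global primitive of $\omega|_{M\setminus D}$. The obstruction to doing this is cohomological: any two primitives of $\omega$ on an open set differ by a closed one-form, and we must check that the closed one-form measuring the discrepancy between $\theta^{near}$ (near $D$) and an arbitrary global primitive is exact, or can be corrected without destroying the equation $Z(r_i)=r_i-\kappa_i$. This is exactly where the hypothesis $[\omega]=\sum_i\kappa_i\,\mathrm{PD}(D_i)$ with $\kappa_i>0$ enters: by the discussion in Section \ref{ss=div-basic} (the Lefschetz duality computation \eqref{e:RelHom} and the relative de Rham isomorphism), the class $\bfkappa=\sum_i\kappa_i\,\mathrm{PD}^{rel}(D_i)\in H^2(M,X;\R)$ is a genuine lift of $[\omega]$, and the local primitive $\theta^{near}$ we built has precisely the prescribed wrapping numbers $\kappa_i$ around each $D_i$. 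Hence the relative de Rham class of $(\omega,\theta^{near})$ over the neighborhood of $D$ agrees with the restriction of $\bfkappa$, which means the periods obstruction vanishes and $\theta^{near}$ extends (after possibly shrinking $R$) to a global primitive $\theta$ of $\omega$ on $M\setminus D$ with wrapping numbers $\kappa_i$. Concretely, one picks any global primitive $\theta'$ with the right wrapping numbers (which exists since $\bfkappa$ lifts $[\omega]$), observes $\theta'-\theta^{near}$ is closed with vanishing periods near $D$, writes it as $df$ there, extends $f$ by a cutoff to all of $M\setminus D$, and sets $\theta=\theta'-d(\chi f)$ with $\chi$ supported near $D$ and equal to $1$ on a smaller neighborhood; on that smaller neighborhood $\theta=\theta^{near}$, so the equation $Z(r_i)=r_i-\kappa_i$ holds there.

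Finally, I would shrink each $UD_i$ to $\{r_i<R'\}$ for $R'$ small enough that the smaller neighborhood on which $\theta=\theta^{near}$ contains $\{r_i<R'\}$; the restricted collection $\{r_i:\{r_i<R'\}\to[0,R')\}$ is still a scH (shrinking preserves all the axioms), and $\theta$ is adapted to it by construction. This gives the statement. The main obstacle, and the step deserving the most care, is the local-to-global gluing: one must verify both that the fibrewise models genuinely agree on the multi-overlaps $UD_I$ (using that the torus action with moment map $r_I$ acts, so the angular forms are literally compatible, not just cohomologous), and that the correction $d(\chi f)$ used to globalize does not reintroduce any discrepancy in the region where we need $Z(r_i)=r_i-\kappa_i$ — which is why the cutoff must be arranged so that $\theta$ is \emph{exactly} the local model, not merely cohomologous to it, on a neighborhood of $D$. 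Everything else (existence of the rotationally symmetric local primitive, the period computation, the fact that shrinking preserves the scH axioms) is routine.
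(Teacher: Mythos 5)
Your overall outline matches the paper's: get a scH from Proposition \ref{prop-scH-exists}, use the cohomological hypothesis to produce a primitive with prescribed behaviour near $D$, and then read off $Z(r_i)=\theta(X_{r_i})=\theta(\partial_{\phi_i})=r_i-\kappa_i$. The paper, however, simply quotes \cite[Lemma 5.17]{McL12T} for the existence of a primitive $\theta$ whose restriction \emph{to the fibers} of $\pi_I:UD_I\to D_I$ equals $\sum_{i\in I}(r_i-\kappa_i)\,d\phi_i$, whereas you attempt to reprove that lemma by a local-to-global gluing, and this is where there is a genuine gap.

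The problem is your very first step: the ``rotationally symmetric local primitive'' $\theta^{near}$ with $\theta^{near}=\sum_i(r_i-\kappa_i)\,d\phi_i$ does not exist as a primitive of $\omega$ on a punctured neighbourhood of $D_i$. The exterior derivative of that expression only reproduces the vertical (fibrewise) part of $\omega$ in the disk-bundle model; it misses the horizontal part coming from $\omega|_{D_i}$, and $\pi^*(\omega|_{D_i})$ is not exact on $UD_i\setminus D_i$ in general, since $D_i$ is a closed symplectic submanifold. Indeed $UD_i\setminus D_i$ retracts onto the unit normal circle bundle of $D_i$, and by the Gysin sequence the pullback of $[\omega|_{D_i}]$ survives there unless it is proportional to the Euler class of $N_MD_i$ --- a condition that is not assumed. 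So $\omega$ need not even be exact on $UD_i\setminus D_i$, and no amount of patching of these nonexistent local primitives can get off the ground. (Exactness only holds on all of $M\setminus D$, using the full divisor.) The repair is to notice that adaptedness only constrains $\theta$ evaluated on the fibre-tangent vectors $\partial_{\phi_i}=X_{r_i}$, so one should start from a \emph{global} primitive $\theta'$ representing the relative class $\sum_i\kappa_i\,\mathrm{PD}^{rel}(D_i)$ and correct it so that its \emph{restriction to the fibers} $F_I^*$ has the normal form \eqref{e:NiceFormTheta} --- which is precisely the content of McLean's Lemma 5.17. Your closing insistence that $\theta$ agree with the local model on an entire neighbourhood of $D$ is therefore both unachievable and unnecessary.
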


\begin{proof}
We use a scH as in the proof of Proposition \ref{prop-scH-exists}. Then, a one-form $\theta$ on $M\setminus D$ produced by \cite[Lemma 5.17]{McL12T} is adapted in the sense of Definition \ref{def-adapted}, as we show below. Note that by the relative de Rham isomorphism, there is a primitive $\theta'$ defined on $M\setminus D$ such that the relative cohomology class in $H^2(M,M\setminus D)$ defined by $(\omega,\theta')$ is equal to $\sum \kappa_i \cdot \mathrm{PD}(D_i)$, which is why we can use McLean's lemma.

Using McLean's notation for the moment, on the fibers of the projections $\pi_I: UD_I \to D_I$ we have
\begin{equation}\label{e:NiceFormTheta}
		\theta|_{F_I^*} = \sum_{i \in I} (r_i - \kappa_i)\, d\phi_i
\end{equation}
where $F_I^* \cong \prod_{i \in I} (\bD_{R} \setminus 0)$ is the product of punctured disks. 
Using \eqref{e:NiceFormTheta}, we have
	$$
		Z(r_i) = \theta(X_{r_i}) = \theta(\partial_{\phi_i}) =r_i-\kappa_i,
	$$ 
as required. 
\end{proof}

\begin{remark}
Again one could ask whether every Liouville one-form adapted to a system of commuting Hamiltonians is adapted to some compatible standard tubular neighborhood in the sense of McLean. Whatever the answer might be, the flexibility that we achieved in these two sections already shows itself in the toric examples of Section \ref{sss-toric-fano}.
\end{remark}

\subsection{Admissibility}

\begin{defn}
Let $D=\cup_{i=1}^N D_i$ be an SC divisor in a closed symplectic manifold $(M,\omega)$, and let $\{r_i:UD_i\to[0,R)\}$ be a scH near $D$. Given $I \subset [N]$, a \emph{standard chart} $(U,\phi)$ in $UD_I$ is an $(\mathbb{R}/\mathbb{Z})^I$-invariant open subset $U\subset UD_I$ and a $(\mathbb{R}/\mathbb{Z})^I$-equivariant symplectic embedding $$\phi: U\to \mathbb{C}^I\times \mathbb{C}^{n-|I|},$$  where we use the action of $(\mathbb{R}/\mathbb{Z})^I$ on $\mathbb{C}^I\times \mathbb{C}^{n-|I|}$ given by $$\theta\cdot((z_i)_{i\in I}, w)=((e^{2\pi i\theta_i}z_i)_{i\in I},w)\text{ for all } \theta\in(\mathbb{R}/\mathbb{Z})^I\text{ and }((z_i)_{i\in I},w)\in\mathbb{C}^I\times \mathbb{C}^{n-|I|} .$$
\end{defn}

\begin{lem}\label{lem-SC-st-neigh}
Let $D=\cup_{i=1}^N D_i$ be an SC divisor in a closed symplectic manifold $(M,\omega)$ and let $\{r_i:UD_i\to[0,R)\}$ be a scH near $D$. For every $I\subset [N]$ and $x\in D_I$, there exists a standard chart $(U,\phi)$ in $UD_I$ containing $x$.
\end{lem}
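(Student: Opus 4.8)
The plan is to build the standard chart by successive equivariant symplectic neighborhood theorems, starting from the normal bundle of the deepest stratum $D_I$ and working outward. First I would recall that $D_I = \bigcap_{i\in I}D_i$ is a symplectic submanifold of $(M,\omega)$, that the $(\mathbb{R}/\mathbb{Z})^I$-action generated by the Poisson-commuting Hamiltonians $r_i$, $i\in I$, fixes $D_I$ pointwise, and that by the transversality isomorphism \eqref{e:Transverse} the normal bundle $N_M D_I$ splits $(\mathbb{R}/\mathbb{Z})^I$-equivariantly as $\bigoplus_{i\in I}N_M D_i|_{D_I}$, where the $i$-th summand carries the weight-one rotation action of the $i$-th circle factor (and the other circle factors act trivially on it). Each $N_M D_i|_{D_I}$ is an oriented rank-two real symplectic vector bundle, hence a complex line bundle. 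So, picking $x\in D_I$, I would choose a Darboux chart for $D_I$ near $x$ on which $N_M D_i|_{D_I}$ is trivialized as a Hermitian line bundle for each $i\in I$; this produces a local equivariant identification of a neighborhood of $x$ in the total space of $N_M D_I$ with an open subset of $\mathbb{C}^I\times\mathbb{C}^{n-|I|}$ carrying exactly the stated $(\mathbb{R}/\mathbb{Z})^I$-action.

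Next I would transport this to $M$ itself. The equivariant symplectic neighborhood theorem (the $(\mathbb{R}/\mathbb{Z})^I$-equivariant version of Weinstein's theorem, valid since the group is compact — one can average a chosen compatible almost complex structure and use the equivariant Moser trick) gives a $(\mathbb{R}/\mathbb{Z})^I$-equivariant symplectomorphism from a neighborhood of the zero section in $N_M D_I$ onto a neighborhood of $D_I$ in $M$, restricting to the identity on $D_I$. Composing with the linear chart of the previous paragraph and shrinking, I get an $(\mathbb{R}/\mathbb{Z})^I$-equivariant symplectic embedding $\phi$ of an invariant neighborhood $U$ of $x$ in $M$ into $\mathbb{C}^I\times\mathbb{C}^{n-|I|}$ with the prescribed action. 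One should then check $U$ can be taken inside $UD_I=\bigcap_{i\in I}UD_i$: since each $UD_i$ is open and contains $D_i\supset D_I$, and since the $(\mathbb{R}/\mathbb{Z})^I$-action preserves $UD_i\cap UD_j$ by the scH axioms, intersecting with $\bigcap_{i\in I}UD_i$ and taking the interior of the union of orbits keeps $U$ invariant and inside $UD_I$; finally replace $U$ by the interior of the saturation of a small invariant ball, which remains invariant.

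The main obstacle I anticipate is not the construction itself but verifying that the action produced by the abstract equivariant neighborhood theorem is \emph{literally} the weight-$(1,\dots,1)$ diagonal rotation action appearing in the definition of a standard chart — i.e.\ that the circle generated by $r_i$ acts on the $i$-th $\mathbb{C}$-factor with speed one and trivially on the others. This requires knowing that $r_i$, as a Hamiltonian, is (up to the additive constant and a positive rescaling that one can absorb) the standard moment map $\pi|z_i|^2$ of the rotation; more precisely one needs that $dr_i$ vanishes on $D_I$ to the right order and that the linearized action at $x$ has the correct weights. This follows from the scH axioms — $r_i$ generates an honest $\mathbb{R}/\mathbb{Z}$-action with fixed locus exactly $D_i$ and free action off $D_i$, forcing the linear model at $x$ to be the standard rotation — together with Bochner linearization to identify the action near the fixed point with its linearization, exactly as in the proof of Proposition \ref{prop-scH-exists}. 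Once this identification of weights is pinned down, the rest is the standard equivariant Darboux/Weinstein package, and one simply shrinks to land in $UD_I$.
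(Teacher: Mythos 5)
Your argument is correct and is essentially the paper's: the paper's entire proof is a one-line citation of the equivariant Darboux theorem \cite[Theorem 22.1]{Guillemin1984} applied at the fixed point $x$ of the $(\mathbb{R}/\mathbb{Z})^I$-action, which packages the equivariant Weinstein/Moser construction and the weight identification you spell out (the weights being $\pm 1$ because the action generated by $r_i$ fixes $D_i$ and is free off it, with the sign fixed since $r_i\ge 0$ attains its minimum on $D_i$). Your longer route through the normal bundle of $D_I$ is a valid expansion of the same idea, not a different proof.
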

\begin{proof}
This immediately follows from the equivariant Darboux theorem \cite[Theorem 22.1]{Guillemin1984}.
\end{proof}

We now choose an arbitrary Riemannian metric on $M$, and let $\mathrm{inj}(M)$ be the injectivity radius with respect to this metric. 
We call a standard chart $(U,\phi)$ in $UD_I$ \emph{admissible} if $U$ is contractible and has diameter $<\mathrm{inj}(M)/2$. 
The significance of admissibility for us is that it guarantees uniqueness of caps:

\begin{lem}\label{lem:adm-uniq}
If $\gamma:S^1\to M$ is a loop contained in some admissible standard chart, then there exists a disc bounding $\gamma$, whose image is contained inside an admissible chart. Moreover, such a disc is independent of the choice of admissible chart containing $\gamma$, up to homotopy rel. boundary in $M$.
\end{lem}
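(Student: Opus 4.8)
The plan is to reduce everything to the corresponding statement for convex open subsets of $\mathbb{C}^I \times \mathbb{C}^{n-|I|}$ via the embedding $\phi$, and to use the condition $\diam(U) < \mathrm{inj}(M)/2$ to control the dependence on the chart. For existence, suppose $\gamma$ is contained in the admissible standard chart $(U,\phi)$ with $\phi(U) \subset \mathbb{C}^I \times \mathbb{C}^{n-|I|}$. Since $U$ is contractible, $\gamma$ is nullhomotopic in $U$, hence bounds a disc $u$ with image in $U$; this is the desired disc, whose image lies in an admissible chart by construction.

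For the uniqueness statement, suppose $(U,\phi)$ and $(U',\phi')$ are two admissible standard charts, both containing $\gamma$, and let $u$ (resp.\ $u'$) be a disc bounding $\gamma$ with image in $U$ (resp.\ $U'$). We must show $u$ and $u'$ are homotopic rel boundary in $M$; equivalently, that the sphere $v := u \# (-u')$ obtained by gluing along $\gamma$ is nullhomotopic in $M$. First I would establish this when $U$ and $U'$ overlap in a contractible set containing $\gamma$: then $U \cup U'$ deformation retracts onto a contractible set, both discs lie inside it, and the claim is immediate from $\pi_1 = \pi_2 = 0$ there (more precisely, any two discs with the same boundary in a simply connected space with $\pi_2 = 0$ are homotopic rel boundary; a standard chart is a convex open subset of Euclidean space up to symplectomorphism, so $\pi_2(U) = 0$, and similarly for the union once one checks it is simply connected with vanishing $\pi_2$). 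The diameter bound is what makes this reduction legitimate: since both $U$ and $U'$ have diameter $<\mathrm{inj}(M)/2$ and they share the point $\gamma(0)$, their union has diameter $<\mathrm{inj}(M)$, hence is contained in a geodesically convex metric ball $B$, which is contractible. Then $v$ has image in $B$, so $v$ is nullhomotopic in $B$, a fortiori in $M$. This disposes of the dependence on the admissible chart.

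The one point requiring a little care, and the place I expect the main (minor) obstacle, is verifying that a standard chart is simply connected with $\pi_2 = 0$ — or rather, that the relevant disc-with-fixed-boundary homotopy statement holds inside it. A standard chart is an $(\R/\Z)^I$-invariant open subset $U$ of $\C^I \times \C^{n-|I|}$; such a $U$ need not itself be convex or even simply connected in general, which is why admissibility explicitly demands that $U$ be contractible. Given contractibility of $U$ (and of the enclosing geodesic ball $B$ in the comparison step), one has $\pi_1(U) = \pi_2(U) = 0$, and then the elementary fact that in a space with vanishing $\pi_1$ and $\pi_2$ any two maps $(\bD, \partial\bD) \to (\text{space}, \gamma)$ agreeing on the boundary are homotopic rel boundary finishes the argument. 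I would phrase this last fact as: the difference sphere is nullhomotopic because $\pi_2$ vanishes, and the homotopy of spheres can be converted to a homotopy of discs rel boundary because $\pi_1$ vanishes (so the boundary circle can be kept fixed throughout). Everything else is routine.
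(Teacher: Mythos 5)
Your proof is correct and follows essentially the same route as the paper: existence from contractibility of the admissible chart, and uniqueness by observing that the union of two admissible charts containing $\gamma$ has diameter $<\mathrm{inj}(M)$, hence lies in a contractible metric ball, inside which the two caps are homotopic rel boundary. (Minor remark: you do not need the ball to be geodesically convex, only contractible, which already holds for any ball of radius less than the injectivity radius via the exponential map.)
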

\begin{proof}
The existence is clear, as admissible standard charts are contractible. The uniqueness follows as the union of two admissible standard charts containing $\gamma$ has diameter $< \mathrm{inj}(M)$, hence is contained in a ball of radius $<\mathrm{inj}(M)$. As the ball is contractible, the caps in the two charts are homotopic rel. boundary in $M$.
\end{proof}

\begin{defn}
Let $D=\cup_{i=1}^N D_i$ be an SC divisor in a closed symplectic manifold $(M,\omega)$. We call a scH near $D$ \emph{admissible} if for every $I\subset [N]$ and $y\in UD_I$, there exists an admissible standard chart $(U,\phi)$ in $UD_I$ with $y \in U$.
\end{defn}

\begin{lem}
Let $D$ be an SC divisor in a closed symplectic manifold $(M,\omega)$, and $\{r_i:UD_i \to [0,R)\}$ a scH near $D$. 
Then any sufficiently small shrinking of the scH is admissible.
\end{lem}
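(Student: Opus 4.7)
The plan is to reduce to finitely many charts using compactness of $D$, and then show that these charts produce admissible standard charts after intersection with $UD_I^{R'}$ for small enough $R'$.

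First, for each $x \in D$, let $J(x) := \{j \in [N] : x \in D_j\}$, and apply Lemma \ref{lem-SC-st-neigh} to obtain a standard chart $(U_x,\phi_x)$ in $UD_{J(x)}$ containing $x$. By shrinking $U_x$ so that its image under $\phi_x$ is a small $(\mathbb{R}/\mathbb{Z})^{J(x)}$-invariant polydisk centered at the origin in $\mathbb{C}^{J(x)}\times\mathbb{C}^{n-|J(x)|}$, we may arrange that $U_x$ is contractible and has diameter less than $\mathrm{inj}(M)/2$. Since $D$ is compact, a finite sub-collection $U_{x_1},\ldots,U_{x_m}$ already covers $D$. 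For each nonempty $I \subset [N]$, set $V_I := \bigcup\{U_{x_k} : x_k \in D_I\}$, which is an open neighborhood of the compact set $D_I$.

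The next step is to find a uniform shrinking radius. For each nonempty $I$, the claim is that there exists $R'_I>0$ with $UD_I^{R'_I} \subset V_I$, where $UD_I^{R'} := \bigcap_{i \in I}\{r_i < R'\}$ denotes the shrunken neighborhood. Otherwise, a sequence $y_n \in UD_I^{1/n} \setminus V_I$ would, by compactness of $M$, subconverge to a point lying in $\bigcap_n \overline{UD_I^{1/n}} \subset \bigcap_n \{r_i \leq 1/n : i \in I\} = D_I \subset V_I$, contradicting the openness of $V_I$ (when $D_I$ is empty, the same argument shows $UD_I^{R'_I}$ is empty for $R'_I$ small). Setting $R' := \min_I R'_I$ over the finite collection of nonempty $I$ yields a uniform radius.

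Admissibility at radius $R'$ is then verified as follows. Given nonempty $I$ and $y \in UD_I^{R'} \subset V_I$, pick $x_k \in D_I$ with $y \in U_{x_k}$ and set $U := U_{x_k} \cap UD_I^{R'}$. Regarding $\mathbb{C}^{J(x_k)}\times\mathbb{C}^{n-|J(x_k)|}$ as $\mathbb{C}^I \times \mathbb{C}^{n-|I|}$ (absorbing the $\mathbb{C}^{J(x_k)\setminus I}$ factor into the right-hand side), the restriction $\phi_{x_k}|_U$ becomes an $(\mathbb{R}/\mathbb{Z})^I$-equivariant symplectic embedding, exhibiting $U$ as a standard chart in $UD_I$. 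The key observation is that in these Darboux coordinates each $r_i$ for $i \in J(x_k)$ must equal $\pi|z_i|^2$, being the unique moment map of the $i$-th $S^1$-rotation whose zero set matches $D_i = \{z_i = 0\}$; hence $U$ corresponds to the intersection of the original polydisk with $\{|z_i|^2 < R'/\pi : i \in I\}$, which is again a contractible polydisk-shaped region of diameter at most that of $U_{x_k}$, and so admissible. The case $I=\emptyset$ is handled separately by covering $M$ with contractible Darboux balls of diameter less than $\mathrm{inj}(M)/2$. The only nontrivial point — and the main thing to be careful about — is this verification that the standard-chart structure descends from the full torus $J(x_k)$ to the subtorus indexed by $I$ and survives intersection with $UD_I^{R'}$; once $r_i = \pi|z_i|^2$ is established in Darboux coordinates, the rest is immediate.
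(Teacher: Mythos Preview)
Your argument follows the same strategy as the paper's proof --- cover each $D_I$ by admissible standard charts, then shrink the scH so that each $UD_I$ lies in the resulting neighbourhood --- but you supply considerably more detail, in particular the verification that intersecting a polydisk chart with $UD_I^{R'}$ yields another contractible (hence admissible) chart.  That extra care is justified: the paper's two-line proof leaves implicit exactly the point you highlight at the end.

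There is one small gap.  You assert that $V_I := \bigcup\{U_{x_k} : x_k \in D_I\}$ is an open neighbourhood of $D_I$, but this need not follow from what you have arranged.  If $y \in D_I$, your finite cover of $D$ guarantees $y \in U_{x_k}$ for some $k$, but nothing forces $x_k \in D_I$: it could happen that $J(x_k) \not\supset I$, i.e.\ some $D_i$ with $i \in I$ passes through $U_{x_k}$ without passing through $x_k$.  The fix is to shrink each $U_x$ further so that $U_x \cap D_j = \varnothing$ for every $j \notin J(x)$ (possible since $x \notin D_j$ and $D_j$ is closed).  With this, $y \in D_I \cap U_{x_k}$ forces $I \subset J(x_k)$, hence $x_k \in D_I$, and your $V_I$ really is a neighbourhood of $D_I$.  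Once this is added, the proof is complete.
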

\begin{proof}
First note that any standard chart around $x \in D_I$ can be shrunk so that it is admissible. 
Therefore we have a neighbourhood of $D_I$ given by the union of all admissible standard charts. 
By shrinking the scH sufficiently, we may ensure that $UD_I$ is contained in the neighbourhood, for all $I$.
\end{proof}

\begin{rmk}\label{rmk-weakly-admissible}
In Section \ref{ss-ham-review}, we will define a cap for a loop $\gamma:S^1 \to M$ to be an equivalence class of discs $u$ bounding $\gamma$ under the equivalence relation $u_1\sim u_2$ if $\int u_1^*\omega= \int u_2^*\omega$. Therefore, we could get away with the following weaker notion of admissibility for the purposes of the present paper. 
We call a standard chart weakly admissible if it is simply connected. Assume that we have a loop $\gamma$ inside $UD_I$ that is the orbit of a point under the action of a one dimensional subgroup $S$ of $(\R/\Z)^I$. We claim that the symplectic area of a cap of $\gamma$ that is contained inside a weakly admissible standard chart $U$ (assuming such charts exist) only depends on $\gamma$, i.e. it is independent of $U$ and the cap chosen inside of $U$. The reason is because we can then compute the symplectic area by transporting everything into $\mathbb{C}^I\times \mathbb{C}^{n-|I|}$ and see that it is equal to $l(0)-l(p)$, where $l:\mathbb{R}^I\to \mathbb{R}$ is a function whose pre-composition with $r_I$ generates the action of $S$ and $p$ is the point of $\mathbb{R}_{\geq 0}^I$ above which $\gamma$ lives. Hence, for such $\gamma$ existence of a weakly admissible standard chart determines uniquely an equivalence class of caps. This would be enough for our purposes.
\end{rmk}

\section{Quantum, Hamiltonian Floer, and symplectic cohomology}\label{s-HF-conventions}

\subsection{Quantum and Hamiltonian Floer cohomology}\label{ss-ham-review}

In this section, $(M, \w)$ will be a closed symplectic manifold such that $
	2\kappa c_1(TM) = [\w]$ on $\pi_{2}(M)$ for some $\kappa > 0
$.

Let $A'$ be the subgroup $\{2c_1(TM)(B): B \in \pi_2(M)\} \subset \Z$ and set $\Nov' = \Bbbk[A']$, graded by $i(e^a) = a$.

Let $\gamma:S^1 \to M$ be a nullhomotopic loop in $M$. 
A \emph{cap} for $\gamma$ is an equivalence class of disks $u:\mathbb{D} \to M$ bounding $\gamma$, 
where $u \sim u'$ if and only if the Chern number of the spherical class $[u-u']$ vanishes: $c_1(TM)(u-u') = 0$. 
The set of caps for $\gamma$ is a torsor for $A'$, which acts via 
$$a \cdot (\gamma, u) = (\gamma, u\# C) \qquad \text{where} \qquad 2c_1(TM)(C) = a.$$

Given a non-degenerate Hamiltonian $F: S^1 \times M \to \R$, let $\cP_F$ denote the set of contractible one-periodic orbits of $F$, and let $\tilde{\cP}_F$ be the set of orbits equipped with a cap.
Elements $\tilde{\gamma} = (\gamma, u) \in \tilde{\cP}_F$ have a $\Z$-grading 
and an action
$$
	i(\gamma, u) = \CZ(\gamma, u) + \frac{\dim(M)}{2} \quad\mbox{and}\quad
	\cA_F(\gamma, u) := \int_{S^1} F(t, \gamma(t))\,dt + \int_\bD u^*\w, 
$$
and these are compatible with the action of $A'$ in that
$$
	i(a \cdot (\gamma, u)) = i(\gamma, u) + a
	\qquad\text{and}\qquad
	\cA(a \cdot (\gamma, u)) = \cA(\gamma, u) + \kappa a\,.
$$
Note that the `mixed index'
$$
	i_{mix}(\gamma) := i(\gamma,u) - \kappa^{-1}\cA(\gamma,u)
$$
is independent of the cap $u$.

Define $CF^*(M, F)$ to be the free $\Z$-graded $\Bbbk$-module generated by $\tilde{\cP}_F$.
It is naturally a graded $\Nov'$-module, via $e^a \cdot (\gamma,u) := a \cdot (\gamma,u)$. It also admits a Floer differential after the choice of a generic $S^1$-family of $\omega$-compatible almost complex structures (which we suppress from the notation). The differential is $\Nov'$-linear, increases the grading by $1$, does not decrease action, and squares to zero.

One can also define continuation maps $CF(M, F_0) \to CF(M, F_1)$ in the standard way by choosing a smooth function $\mathcal{F}: \R_s\times S^1 \times M \to \R$, which is equal to $F_0$ for $s\ll 0$ and to $F_1$ for $s \gg 0$, as well as an $\R\times S^1$ dependent family of $\omega$-compatible almost complex structures, which together satisfy a regularity condition. Continuation maps are $\Nov'$-linear chain maps. If the continuation maps are defined using monotone Floer data, which means $\frac{\partial \mathcal{F}}{\partial s}\geq 0$, then the continuation map $CF(M, F_0) \to CF(M, F_1)$ does not decrease action.

\begin{rmk}
We would like to stress that the discussion of Hamiltonian Floer theory that we gave here is slightly simpler than the general theory due to our positive monotonicity assumption. In particular, we did not need to complete $\Nov$ or our Hamiltonian Floer groups, which is necessary in general for the potential infinite sums to make sense. For details, we refer the reader to \cite{Salamon1999}. Apart from the ones that we have explicitly stated above, our conventions for Hamiltonian Floer theory agree with (1), (2), (3) and (5) of Section 3.1 in \cite{Varolgunes2018}.
\end{rmk}

Let $A \subset \Q$ be a subgroup such that $A' \subset A$. 
Let $\Nov = \Bbbk[A]$, with the same grading convention $i(e^a) = a \in \Q$; then we have an inclusion $\Nov' \subset \Nov$.  
(Eventually we will take $A$ and $\Nov$ to be as defined in the beginning of Section \ref{ss-defintro} but we choose to be more general for a while.)

Let us define the $\Nov$-cochain complex $$CF^*(M,F; \Nov):=CF^*(M, F)\otimes_{\Nov'}\Nov.$$
We denote the cohomology of this cochain complex by $HF^*(M,F; \Nov) := H^*(CF^*(M,F;\Nov),\partial)$. 
There exists a natural PSS chain map:
$$
	C^*(M; \Bbbk)\otimes_\Bbbk \Nov \to CF^*(M, F;\Nov),
$$ which is known to be a quasi-isomorphism \cite{PSS}. The PSS map is well-defined up to chain homotopy and compatible with chain level continuation maps up to chain homotopy.

We now introduce the notion of `fractional caps' of orbits.
A fractional cap for $\gamma$ is a formal expression $u+a$, where $u$ is a cap for $\gamma$ and $a \in \R$, and we declare $u+a \sim u'+a'$ if and only if $a-a' \in A'$ and $u' = (a-a') \cdot u$. 
There is a well-defined index and action associated to a fractional cap:
$$
	i(\gamma,u+a) := i(\gamma,u) + a,\qquad \cA(\gamma,u+a) := \cA(\gamma,u) + \kappa a.
$$
There is a natural bijection between the $\Bbbk$-basis $(\gamma,u) \otimes e^a$ of $CF^*(M,F;\Nov)$, and the set of fractionally-capped orbits $(\gamma,u+a)$ with $a \in A$.



\subsection{Relative symplectic cohomology}\label{ss-rel-SH}

Let $M, \omega ,\kappa, \Nov$ be as in Section \ref{ss-ham-review}. We now define relative symplectic cohomology for compact subsets of $M$ over $\Nov$, referring to \cite{Varolgunes2018} for the details. As briefly mentioned in the introduction (see Section \ref{ss-rigid-sk}, especially the footnote on pages 4-5), the construction below is slightly different than the one in \cite{Varolgunes2018}. Namely, here we use capped orbits (in particular we only consider contractible orbits) and keep track of the caps rather than weighting Floer solutions using a formal variable.

Let $K\subset M$ be compact. We call the following data a choice of acceleration data for $K$:
\begin{itemize}
\item $H_1\leq H_2\leq\ldots$ a monotone sequence of non-degenerate one-periodic Hamiltonians $H_i:  S^1\times M\to \mathbb{R}$ cofinal among functions satisfying $H\mid_{S^1\times K}<0$. In other words, for every $(t,x)\in  S^1\times M$,
$$
H_i(t,x)\xrightarrow[i\to+\infty]{}\begin{cases}
0,& x\in K,\\
+\infty,& x\notin K.
\end{cases}
$$
\item A monotone homotopy of Hamiltonians $H_{i,i+1}:[i,i+1]\times S^1\times M\to\mathbb{R}$, for all $i$, which is equal to $H_i$ and $H_{i+1}$ at the corresponding end points.
\item A $\mathbb{R}_{\geq 1}\times S^1$-family of $\omega$-compatible almost complex structures.
\end{itemize}

We denote the acceleration data as a single family of time-dependent Hamiltonians and almost complex structures $(H_\tau, J_\tau)$, $\tau\in \mathbb{R}_{\geq 1}$. We also fix an non-decreasing surjective smooth map $(-\infty,\infty)\to [0,1]$. Given a $[i,i+1]$-dependent family of Hamiltonians and almost complex structures, we use this map to write down a Floer equation for maps from $\mathbb{R}\times S^1$ to $M$. Let us call the resulting $\mathbb{R}\times S^1$-family of Hamiltonians and almost complex structures the associated Floer data.

We require the acceleration data $(H_\tau, J_\tau)$ to satisfy the following two assumptions:

\begin{enumerate}
\item For each $i\in\mathbb{N}$, $(H_i,J_i)$ is regular.
\item For each $i\in\mathbb{N}$, the Floer data associated to  $(H_\tau, J_\tau)_{\tau\in [i,i+1]}$ is regular.
\end{enumerate}

Given acceleration data $(H_\tau, J_\tau)$,
Hamiltonian Floer theory provides a $1$-ray of  Floer $\Nov$-cochain complexes, called a {\it Floer 1-ray}: \begin{align*}
\mathcal{C}(H_\tau,J_\tau):= CF^*(M, H_1; \Nov)\to CF^*(M, H_2; \Nov)\to\ldots 
\end{align*}

The horizontal arrows are Floer continuation maps defined using the monotone homotopies appearing in the acceleration data. Recall that a cylinder $u$ contributing to a Floer differential or a continuation map has non-negative topological energy
\begin{equation}
\label{eq:E_top}
E_{top}(u)=\int_{S^1}\gamma_{out}^* H_{out}\,dt-\int_{S^1}\gamma_{in}^* H_{in}\,dt+\int_{\R \times S^1} u^*\omega \geq 0,
\end{equation}
where $\gamma_{out}$, $\gamma_{in}$ are the asymptotic orbits of $u$, and $H_{out}$, $H_{in}$ are the Hamiltonians at the corresponding ends. (For Floer differentials, $H_{out}=H_{in}=H_i$ and for continuation maps, $H_{out}=H_{i+1}$, $H_{in}=H_i$ for some $i$.)

\begin{rmk}We also note that the inequality in (\ref{eq:E_top}) comes from the more general inequality
\begin{align}\label{ineqTopE}
E_{top}(u)\geq \int_{\mathbb{R}\times S^1}\left(\frac{\partial H}{\partial s}\right)(u(s,t),s,t)dsdt,
\end{align} where $u$ is a solution of the Floer equation for an arbitrary $H: \mathbb{R}\times S^1\times M\to \mathbb{R}$ which is $s$-independent at the ends.
\end{rmk}

From now on, we will use the terminology introduced in Section \ref{sscomplete} freely. We apologetically ask the reader to take a look at it before moving further. Using the grading and action considerations from Section \ref{ss-ham-review}, $\mathcal{C}(H_\tau,J_\tau)$ becomes a 1-ray in $FiltCh_\Nov$. We define the $\Nov$-cochain complexes $tel(\mathcal{C}(H_\tau,J_\tau))$ and $\widehat{tel}(\mathcal{C}(H_\tau,J_\tau))$ as in Section \ref{sscomplete}. We can now repeat Section 3.3.2 of \cite{Varolgunes2018} in this set-up.

\begin{prop}\label{prop:SHrelwelldef}
	For two different choices of acceleration data for $K$, $(H_\tau, J_\tau)$ and $(H_\tau', J_\tau')$,
there is a canonical isomorphism
$$H^*\left(\widehat{tel}(\mathcal{C}(H_\tau, J_\tau))\right) \cong H^*\left(\widehat{tel}(\mathcal{C}(H_\tau', J'_\tau))\right)$$
 of $\mathbb{Q}$-graded $\Nov$-modules.
\end{prop}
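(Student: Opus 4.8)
The plan is to follow the strategy of \cite[Section 3.3.2]{Varolgunes2018}, the only genuinely new feature being that we carry the caps — and hence the $\Nov$-module structure and the $\Q$-grading — along throughout; since the continuation maps are $\Nov'$-linear and grading-preserving (Section \ref{ss-ham-review}), this bookkeeping costs nothing. First I would reduce the statement to a functoriality property of the telescope: recall from Section \ref{sscomplete} that $tel$ and $\widehat{tel}$ are functors on $1$-rays in $FiltCh_\Nov$, and that a morphism of $1$-rays which is level-wise a \emph{filtered} quasi-isomorphism (a quasi-isomorphism on every action sublevel) induces a filtered quasi-isomorphism on telescopes, hence, after degreewise completion, a quasi-isomorphism on completed telescopes. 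So it suffices to manufacture, out of the two choices $(H_\tau,J_\tau)$ and $(H_\tau',J_\tau')$, a morphism of Floer $1$-rays with this property.

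The comparison morphism is built by interleaving. Since $H_1\le H_2\le\cdots$ and $H_1'\le H_2'\le\cdots$ are both cofinal among Hamiltonians that are negative on $K$, I may pass to subsequences and choose monotone homotopies realising $H_{n_1}\le H'_{m_1}\le H_{n_2}\le H'_{m_2}\le\cdots$, together with regular interpolating almost complex structures; the resulting continuation maps assemble into morphisms of $1$-rays $\mathcal{C}(H_\tau,J_\tau)\to\mathcal{C}(H_\tau',J_\tau')$ and $\mathcal{C}(H_\tau',J_\tau')\to\mathcal{C}(H_\tau,J_\tau)$ whose composites are chain-homotopic to the canonical reindexing morphisms of the two $1$-rays. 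Because every homotopy in sight is monotone, all of these maps respect the action filtration by \eqref{eq:E_top}. Moreover, the action sublevels $\cA_{\ge p}CF^*(M,H_n;\Nov)$ become cofinal as $n\to\infty$ (this is the same cofinality input used in the sketch of Proposition \ref{prop-qiso}, spelled out in Appendix \ref{ss-cofinal}), so the interleaving morphisms are in fact level-wise filtered quasi-isomorphisms; by the functoriality recalled above they induce the asserted isomorphism on $H^*(\widehat{tel}(-))$, which is automatically $\Nov$-linear and $\Q$-graded.

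It remains to see that this isomorphism is \emph{canonical}, i.e., independent of the interleaving, of the chosen subsequences, of the monotone homotopies, and of the interpolating almost complex structures. This is the standard second-order homotopy argument: any two choices of comparison data can themselves be joined by a one-parameter family (using contractibility of the relevant spaces of Floer data), producing a filtration-respecting chain homotopy between the two induced maps on telescopes, which descends to the completions; transitivity for three or more choices of acceleration data is checked the same way, and yields the coherent system of isomorphisms that makes $SH^*_M(K;\Nov)$ well-defined.

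The main obstacle is precisely the interaction of quasi-isomorphism with degreewise completion: a morphism of filtered complexes that is merely a quasi-isomorphism need not remain one after completion, so one cannot get away with checking a bare quasi-isomorphism on telescopes — one must verify the sharper \emph{filtered} statement (quasi-isomorphism on each action sublevel), and this is exactly the place where cofinality of the action sublevels is essential. Everything else — regularity and transversality for the interleaving Floer data, $\Nov$-linearity, grading preservation — is a routine adaptation of \cite{Varolgunes2018}.
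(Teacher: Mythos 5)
Your overall strategy (interleave the two cofinal families, compare telescopes via monotone continuation maps, and control the passage to the degreewise completion by proving a \emph{filtered} rather than bare quasi-isomorphism statement) is exactly the route the paper intends: the proof is omitted in the text, with a pointer to \cite[Section 3.3.2]{Varolgunes2018} and, in the appendix, to the first bullet point of Lemma \ref{lemcomplete} as the key new input. You also correctly identify the completion step as the place where the filtered statement is indispensable.

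However, the middle of your argument contains a genuine error. You reduce to producing a morphism of $1$-rays that is \emph{level-wise} a filtered quasi-isomorphism, i.e.\ such that each continuation map $CF^*(M,H_{n_i};\Nov)\to CF^*(M,H'_{m_i};\Nov)$ is a quasi-isomorphism on every action sublevel, and you assert this holds "because the action sublevels become cofinal as $n\to\infty$". This is false: a single monotone continuation map is essentially never a quasi-isomorphism on fixed action sublevels (already the naive map $CF^*(M,H)\to CF^*(M,H+c)$, $c>0$, fails this; if it were true, the action filtration would carry no information and relative symplectic cohomology would be trivial). Cofinality is not a property of an individual continuation map; it only becomes available after passing to the telescope. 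The correct argument runs at the level of telescopes: the composites of your interleaving morphisms are \emph{equivalent} (via filtered homotopies of maps of $1$-rays) to the reindexing/shift morphisms, so by Lemma \ref{lemstrongtel} the induced maps on telescopes are filtered homotopy inverse to the shift maps; one then shows the shift/subsequence comparison of telescopes is a \emph{strong} filtered quasi-isomorphism using Equation \eqref{eqnfiltrationtelescope} together with the identification $H^*(F_{\ge A_0}tel(\cC))\cong H^*(\varinjlim F_{\ge A_0}\cC_i)$ from diagram \eqref{diag-limits} applied at each action level $A_0$ (this is where cofinality enters); finally both bullet points of Lemma \ref{lemcomplete} transport the filtered homotopies and the strong filtered quasi-isomorphisms through the degreewise completion, and a two-out-of-six argument yields the isomorphism. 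Your canonicity paragraph is fine as stated. So the gap is localized but real: as written, the sufficient condition you reduce to cannot be verified, and the verification you offer applies cofinality at the wrong level.
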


Hence, we define \begin{align*} 
SH_M^*(K; \Nov):=H^*\left(\widehat{tel}(\mathcal{C}(H_s,J))\right).
\end{align*}

\begin{prop}
There are canonical restriction maps of $\mathbb{Q}$-graded $\Nov$-modules for $K\subset K'$:
$$SH_M^*(K'; \Nov)\to SH_M^*(K; \Nov).
$$
\qed
\end{prop}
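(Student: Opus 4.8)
The plan is to build the restriction map at the chain level from a carefully chosen pair of acceleration data, and then to verify that the induced map on cohomology is independent of all choices, so that the word ``canonical'' is justified.

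First I would choose comparable acceleration data for the two subsets. Since $K \subset K'$, every Hamiltonian that is negative on $S^1 \times K'$ is negative on $S^1 \times K$, so a monotone sequence cofinal among functions negative on $K'$ is dominated pointwise by one cofinal among functions negative on $K$. Concretely, fix acceleration data $(H_\tau, J_\tau)$ for $K'$; then for a generic choice there exists acceleration data $(H'_\tau, J'_\tau)$ for $K$ with $H_\tau \le H'_\tau$ pointwise for all $\tau \ge 1$ and with all the regularity assumptions of Section \ref{ss-rel-SH} satisfied. Interpolating in the $s$-direction by monotone homotopies from $H_\tau$ to $H'_\tau$, Hamiltonian Floer theory produces a morphism of Floer $1$-rays $\mathcal{C}(H_\tau, J_\tau) \to \mathcal{C}(H'_\tau, J'_\tau)$: degreewise it is given by continuation maps $CF^*(M,H_i;\Nov) \to CF^*(M,H'_i;\Nov)$, and it commutes, up to the prescribed homotopies, with the structure maps of the two $1$-rays.

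Next I would pass to telescopes and completions. Because the continuation maps are defined from monotone homotopies, they do not decrease action, so the above is a morphism of $1$-rays in $FiltCh_\Nov$. Applying the (functorial) telescope construction gives a filtered chain map $tel(\mathcal{C}(H_\tau, J_\tau)) \to tel(\mathcal{C}(H'_\tau, J'_\tau))$, which therefore extends to the degreewise action-completions, yielding $\widehat{tel}(\mathcal{C}(H_\tau, J_\tau)) \to \widehat{tel}(\mathcal{C}(H'_\tau, J'_\tau))$. Taking cohomology and invoking the definition of $SH^*_M(-;\Nov)$ produces the desired map $SH^*_M(K';\Nov) \to SH^*_M(K;\Nov)$; it is $\Nov$-linear and grading-preserving because the continuation maps and the telescope differentials are.

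Finally I would establish canonicity by repeating the argument of \cite[Section 3.3.2]{Varolgunes2018} in the present setup. Given two comparable pairs of choices, one compares them by a sandwiching argument — pick a third cofinal sequence for $K'$ dominating both, and one for $K$ dominating everything — using that monotone continuation maps are independent of the chosen monotone homotopies up to chain homotopy; combined with the canonical identifications of Proposition \ref{prop:SHrelwelldef} applied to $K$ and to $K'$, this shows that the map on cohomology is independent of all choices. The main point to be careful about, and the place where the action estimates are genuinely used, is the interaction with the degreewise completion: one must check that the chain-level continuation map is honestly filtered, so that it extends continuously to the completions, and that this extension is compatible with the identifications coming from Proposition \ref{prop:SHrelwelldef}.
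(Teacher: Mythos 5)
Your proposal is correct and follows essentially the same route the paper intends: the paper omits the proof precisely because it is the standard construction from Section 3.3.2 of \cite{Varolgunes2018}, namely comparing acceleration data via pointwise monotonicity, using action-nondecreasing monotone continuation maps to get a morphism of filtered $1$-rays, passing to completed telescopes, and checking independence of choices by a sandwiching argument. You correctly identify both the direction of the inequality on Hamiltonians forced by $K \subset K'$ and the one genuinely delicate point, namely that the chain-level map must be filtered so that it descends to the degreewise completions.
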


We finally list the three properties we will need of relative symplectic cohomology. Here is the first one.

\begin{thm}
	\label{analog_thesis1}
	Assume that $tel(\mathcal{C}(H_\tau,J_\tau))$ is degreewise complete. Then $SH_M^*(K; \Nov)=QH^*(M;\Nov).$
\end{thm}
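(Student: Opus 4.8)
The plan is to unwind the definition of $SH_M^*(K;\Nov)$ and of the telescope, and then reduce to two inputs already available in the paper: the identification of the cohomology of the telescope with a direct limit, and the PSS isomorphism together with its compatibility with continuation maps.

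First I would observe that the hypothesis that $tel(\mathcal{C}(H_\tau,J_\tau))$ is degreewise complete is precisely the condition under which its degreewise completion with respect to the action filtration does nothing, i.e. $\widehat{tel}(\mathcal{C}(H_\tau,J_\tau)) = tel(\mathcal{C}(H_\tau,J_\tau))$; this is immediate from the construction of the degreewise completion in Section \ref{sscomplete}, since completion is idempotent and a complete object is canonically its own completion. Hence, straight from the definition,
$$SH_M^*(K;\Nov) = H^*\big(\widehat{tel}(\mathcal{C}(H_\tau,J_\tau))\big) = H^*\big(tel(\mathcal{C}(H_\tau,J_\tau))\big).$$

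Next I would use that the telescope of a $1$-ray is built so that, by exactness of filtered colimits, its cohomology computes the direct limit of the cohomologies of the terms of the ray:
$$H^*\big(tel(\mathcal{C}(H_\tau,J_\tau))\big) \;\cong\; \varinjlim_i HF^*(M, H_i;\Nov),$$
the transition maps being those induced by the Floer continuation maps in the $1$-ray. Finally I would invoke the PSS quasi-isomorphisms $C^*(M;\Bbbk)\otimes_\Bbbk\Nov \to CF^*(M,H_i;\Nov)$ of Section \ref{ss-ham-review}: because these are compatible with continuation maps up to chain homotopy, on cohomology they assemble into an isomorphism of directed systems from $\{HF^*(M,H_i;\Nov)\}_i$ (with continuation maps) onto the constant system $\{QH^*(M;\Nov)\}_i$ (with identity transition maps). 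Therefore the continuation maps are isomorphisms, $\varinjlim_i HF^*(M,H_i;\Nov)\cong QH^*(M;\Nov)$, and chaining the displays yields $SH_M^*(K;\Nov)\cong QH^*(M;\Nov)$.

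I do not expect a genuine obstacle here: all of the analytic content is packaged inside the PSS theorem (cited in Section \ref{ss-ham-review}) and the construction of the telescope. The only point requiring a careful line is the verification that ``degreewise complete'' is exactly the hypothesis making $\widehat{tel} = tel$; once that is recorded, the rest of the argument is purely formal.
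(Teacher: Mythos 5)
Your proposal is correct and follows essentially the same route as the paper: the paper's (one-line) proof likewise combines the observation that degreewise completeness makes $\widehat{tel}=tel$, the identification of $H^*(tel(\mathcal{C}))$ with the direct limit via diagram \eqref{diag-limits}, and the PSS quasi-isomorphisms together with their compatibility with continuation maps (phrased there as ``a direct limit of quasi-isomorphisms is a quasi-isomorphism,'' whereas you pass to the colimit of cohomology groups — an equivalent formulation). No gaps.
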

\begin{proof} 
Follows from the basic properties of the PSS maps discussed at the end of Section \ref{ss-ham-review} along with the diagram  \eqref{diag-limits} and the fact that a direct limit of quasi-isomorphisms is a quasi-isomorphism.
\end{proof}

Before we state the second property, we note the following important statement from Hamiltonian Floer theory.

Let $H: S^1 \times M \to \R$ a non-degenerate Hamiltonian and $J$ an $S^1$-dependent almost complex structure compatible with $\omega$. Assume that $(H,J)$ is regular and fix $\Delta\geq 0$. 
\begin{itemize}
\item The Floer data $(H_s: = H+\Psi(s)\Delta,J_s:=J)$, where $\Psi:\mathbb{R}\to \mathbb{R}$ is a smooth function that is equal to $0$ for $s<-1$ and to $1$ for $s>1$, is regular. This is a standard fact in Floer theory noting that adding $\Psi(s)\Delta$ does not change the Floer equation.
\item The resulting continuation map $$c_\Psi: CF^*(M, H) \to CF^*(M,H + \Delta)$$ is the naive map which sends each capped orbit to itself. Yet, note that the action of the capped orbit for $H+\Delta$ is $\Delta$ more than its action for $H$.
\end{itemize}
Let us fix a non-decreasing $\Psi$ for the proof below. Let us denote the continuation map above for any $H$ and $\Delta$ by $c_\Psi$ by abuse of notation.

\begin{thm}
\label{analog_thesis2}
If $K$ is stably displaceable, then $SH_M^*(K; \Nov)=0$.
\end{thm}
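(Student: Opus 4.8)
By Proposition \ref{prop:SHrelwelldef} it is enough to produce a single choice of acceleration data $(H_\tau,J_\tau)$ for $K$ for which $\widehat{tel}(\mathcal{C}(H_\tau,J_\tau))$ is acyclic. Note that this is genuinely a statement about the \emph{completed} telescope: the ordinary telescope has $H^*(tel(\mathcal{C}(H_\tau,J_\tau)))=QH^*(M;\Nov)\neq 0$, so the vanishing must come from the degreewise completion with respect to the action filtration. Moreover $SH_M^*(K;\Nov)$ is a unital $QH^*(M;\Nov)$-module whose unit is the image of $1\in QH^*(M;\Nov)=H^*(tel)$ under the completion map, so it suffices to show that this unit becomes a coboundary in $\widehat{tel}$.

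\emph{Step 1 (stabilization).} By hypothesis $K\times Z\subset W:=M\times T^*S^1$ is displaced from itself by $\phi:=\phi^1_G$ for some compactly supported Hamiltonian $G$ on $W$. Since $T^*S^1$ is a Liouville manifold, Hamiltonian Floer theory and the relative symplectic cohomology of Section \ref{ss-rel-SH} extend to $W$ for Hamiltonians that are linear of generic slope in the $T^*S^1$-direction at infinity, the necessary compactness coming from an integrated maximum principle that confines all relevant Floer solutions to a compact part of $W$. A Künneth-type property of relative symplectic cohomology identifies $SH_W^*(K\times Z;\Nov)$ with $SH_M^*(K;\Nov)\otimes_\Bbbk SH^*(T^*S^1;\Nov)$; since $SH^*(T^*S^1;\Nov)$ contains $\Nov$ as a unital direct summand (the constant-loop component), the vanishing of the left-hand side forces $SH_M^*(K;\Nov)=0$. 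We are thus reduced to the genuinely displaceable case: $K'\subset W$ compact, displaced from itself by a Hamiltonian diffeomorphism $\phi=\phi^1_G$, with $e:=\|G\|$ its Hofer norm.

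\emph{Step 2 (the Fukaya trick and the action estimate).} I would choose acceleration data $(H_\tau,J_\tau)$ for $K'$ adapted to the displacement, and then conjugate the resulting Floer $1$-ray by the flow of $G$. Hamiltonian Floer cohomology of $W$ is natural under such conjugation, and the conjugation isomorphism changes the action filtration by a bounded amount (a multiple of $e$); the naive continuation maps $c_\Psi$ allow one to reabsorb the resulting constant shifts and to compare action levels before and after conjugation. After conjugation the $n$-th Hamiltonian is negative on $\phi(K')$, which is disjoint from $K'$; combining this with the topological energy inequality \eqref{eq:E_top} yields the crucial estimate: the structure maps of the telescope raise the action filtration by amounts $\delta_n$ with $\delta_n\to\infty$ (concretely, a Floer cylinder contributing to the $n$-th continuation map either remains in a neighbourhood of $K'$, where energy considerations pin its action, or enters the region swept out by the displacement, incurring an action cost that grows with $n$ and is bounded below in terms of $e$). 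This is the same mechanism as in the proof of \cite[Corollary 1.9 and Theorem 1.24]{Tonkonog2020}. Granting it, acyclicity is formal: writing $\kappa_n$ for the composite of the structure maps up to level $n$, the element $\eta:=-\sum_{n\geq 1}\kappa_n(\mathbf 1)\,t_n$ is a well-defined element of $\widehat{tel}$, since $\kappa_n(\mathbf 1)$ lies in filtration level $\geq \delta_1+\cdots+\delta_{n-1}\to\infty$, and $d\eta=\mathbf 1-\lim_n\kappa_n(\mathbf 1)=\mathbf 1$ in the completed filtration topology; so the unit is a coboundary and $SH_M^*(K;\Nov)=0$. (This is the step that uses completeness of $\widehat{tel}$, and it is the reason relative symplectic cohomology is defined via the completed telescope.)

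The main obstacle is the action estimate of Step 2: quantifying how the displacement energy $e$ controls the action discrepancy of the conjugated continuation maps, and thereby forcing the telescope's structure maps to raise the action filtration without bound. This is the only place where displaceability is really used --- merely adding constants to the $H_i$ shifts the whole filtered complex uniformly and changes nothing --- and it plays here the role that the index/action estimates (such as Propositions \ref{prop:imix-div} and \ref{prop:imix-pos}) play elsewhere in the paper, except that it now comes from Hofer geometry rather than from the geometry of $D$. Making it rigorous also requires checking that the maximum principle in the $T^*S^1$-direction is compatible with all the Floer data involved (the acceleration data, the conjugating homotopies, and the primitives implicit above). The remaining ingredients --- the Künneth reduction of Step 1, the $c_\Psi$ bookkeeping, and the homological computation --- are routine.
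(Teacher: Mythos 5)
Your proposal is correct in its overall architecture and matches the paper's strategy at every structural joint: reduce stable displaceability to genuine displaceability by a K\"unneth argument, show that displaceability forces the structure maps of the telescope to raise the action filtration without bound, and conclude acyclicity of the completed telescope (equivalently, that the unit becomes a coboundary of a convergent geometric-series-type primitive --- the paper packages this as Lemma \ref{unboundedcofinal}). The two places where you diverge in implementation are worth noting. First, for the stabilization you work in $M\times T^*S^1$, which forces you to extend relative symplectic cohomology to a non-compact total space, with linear-at-infinity Hamiltonians and a maximum principle in the fibre direction; the paper instead passes to $M\times T^2$ with $\tilde K=K\times\gamma$ for a meridian $\gamma$ in a sufficiently large torus, which keeps everything inside the closed, monotone (indeed aspherical-factor) framework already set up, and then proves an injection $SH_M^*(K;\Nov)\hookrightarrow SH_{M\times T^2}^*(\tilde K;\Nov)$ via a chain-level K\"unneth with an explicit four-orbit, zero-differential model for $\gamma\subset T^2$. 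Your route buys nothing here and costs a nontrivial amount of foundational work, so the paper's choice is strictly cleaner. Second, to exploit displaceability you conjugate the Floer $1$-ray by the displacing flow and invoke Hofer-norm bounds on the action discrepancy; the paper instead concatenates the displacing Hamiltonian with the acceleration data (the $H\upphi H_s$ construction), compares $SH_M^*(K,H;\Nov)$ with $SH_M^*(K,0;\Nov)$ by sandwiching between constant shifts and using the naive continuation maps $c_\Psi$ (which you also identified), and then derives the unbounded filtration increase from the geometric fact that every $1$-periodic orbit of $H\upphi H_s$ spends half its time outside the displaced domain, where $\partial H_s/\partial s=1$. The two mechanisms are close cousins (related by naturality of Floer theory under conjugation) and both defer the same hard analytic point --- replacing the heuristic cascade/parametrized-moduli argument by honest continuation maps --- to the reference; neither sketch is more complete than the other on that score.
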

\begin{proof}[Sketch of proof]The proof is identical to that in the Section 4.2 of \cite{VarolgunesThesis} up to minor modifications. We provide an overview of the proof for completeness.

Let us first prove the result when $K$ is displaceable. Let $(H_\tau, J_\tau)$ be a choice of acceleration data for $K$ and $H:[0,1]\times M\to \mathbb{R}$ be a function whose time-$1$ Hamiltonian flow $\phi:M\to M$ displaces $K$. In fact, $\phi$ displaces a domain neighborhood $D$ of $K$. Assume that $H_\tau$'s are so that $\partial D$ is a level set of $H_1$ for all $t\in S^1$, and $H_\tau=H_1+\tau-1$ on $M-int(D)$ for all $\tau$. 

We recall an elementary construction for reparametrizing Hamiltonian flows . Let $I=[0,T]$ and $I'=[0,T']$ be closed intervals, and $\psi: I'\to I$ be a smooth map which sends $0$ to $0$ and $T'$ to $T$. Then, the time $T$-flow of the time dependent Hamiltonian vector field $X_t$, $t\in I$ of $h: I\times M\to \mathbb{R}$ and the time-$T'$ flow of $X'_t$, $t\in I'$ of $(h\circ(\psi\times id))\cdot\frac{d\psi}{dt} : I'\times M\to \mathbb{R}$ are the same map $M\to M$.\footnote{We warn the reader that there is a typo in the relevant formula in  \cite{VarolgunesThesis}.} 

Let us fix a non-decreasing function $\psi: [0,1/2]\to [0,1],$ which is locally constant in a neighborhood of the endpoints of $[0,1/2]$.

Using the reparametrization construction with $\psi$, starting with $H_{L},H_{R}:M\times [0,1]\to \mathbb{R}$ we can cook up a new Hamiltonian $H_{L}\upphi H_R:M\times \mathbb{R}/\mathbb{Z}\to \mathbb{R}$, such that the $H_L$ and $H_R$ parts are supported in $(1/2,1)$ and $(0,1/2)$ respectively. The Hamiltonian flow of $H_{L}\upphi H_R$ is tangent to $X_{H_R}$ first. After not moving for a short period, it arrives at $\phi^1_{H_R}$ in less than 1/2-time, and stops for a while. At some point after time $1/2$, it starts moving again, this time being tangent to $X_{H_L}$, and reaches to $\phi^1_{H_L}\circ\phi^1_{H_R}$ before time-1. It then stops for a little until time 1, after which it repeats this flow.

We define $SH_M^*(K,H;\Nov)$ via the family $H\upphi H_s$ in the same way we defined $SH_M^*(K;\Nov)$. Note that this construction does not use that $H$ displaces $K$. In particular, we can define $SH_M^*(K,0;\Nov)$, and it follows from Lemma 4.2.1 of \cite{VarolgunesThesis} that $SH_M^*(K,0;\Nov)$ is isomorphic (as a graded $\Nov-$module) to $SH_M^*(K;\Nov)$. Here and in the future, by abuse of notation, we denote the constant function $M\times [0,1]\to \mathbb{R}$, sending everything to $\Delta\in\mathbb{R}$ by $\Delta.$

The next step is to show that $SH_M^*(K,H;\Nov)$ is isomorphic to  $SH_M^*(K,0;\Nov),$  which is true for arbitrary $H$. We can find a $\Delta\geq 0$ such that $$-\Delta\leq H(x,t)\leq \Delta, $$ for all $(x,t)\in M\times [0,1]$. This implies that for any $G: M\times [0,1]\to \mathbb{R},$ we have \begin{align*}
-c\Delta+0\upphi G\leq H\upphi G\leq c\Delta+0\upphi G\leq 2c\Delta+H\upphi G,
\end{align*} where $c>0$ is a constant that depends on our choice of $\psi$.

Hence we obtain filtered chain maps $$tel(\mathcal{C}(-c\Delta+0\upphi H_s))\to tel(\mathcal{C}(H\upphi H_s))\to tel(\mathcal{C}(c\Delta+0\upphi H_s))\to tel(\mathcal{C}(2c\Delta+H\upphi H_s)).$$The composition of the first two maps is filtered chain homotopic to the map obtained from $c_\Psi's$ as explained right before the theorem using a filling in $3$-slits argument. The same result is true for the composition of last two maps.

Using Lemma \ref{lemstrongtel}'s last statement and the second bullet point of Lemma \ref{lemcomplete}, we obtain that there is a chain of maps $$SH_M^*(K,0;\Nov)\to SH_M^*(K,H;\Nov)\to SH_M^*(K,0;\Nov)\to SH_M^*(K,H;\Nov),$$ where the composition of the first two and the last two maps are isomorphisms. This implies the result.

The main point of the proof is to show that $SH_M^*(K,H;\Nov)=0$ for the displacing Hamiltonian $H$ from the beginning of the argument. This uses Lemma \ref{unboundedcofinal}. The more detailed claim is that a slightly modified version of the family $H\upphi H_s$ gives rise to a $1$-ray that satisfies the conditions of Lemma \ref{unboundedcofinal}. The actual proof of this is too long to include here (see Section 4.2.3 of \cite{VarolgunesThesis}). Let us instead explain the intuition behind the proof. Let $\gamma:S^1\to M$ be a $1$-periodic orbit of $H\upphi H_s$ for some $s$. Because $\phi$ displaces $D$, either $\gamma(0)$ or $\gamma(1/2)=\phi^{-1}(\gamma(0))$ needs to lie outside of $D$. Then conservation of energy and that $\partial D$ is a level set of $H_s$ for all times shows that in fact we have $\gamma([0,1/2])\subset M\setminus D$. Now if we could use parametrized moduli spaces and cascades instead of continuation maps, we would have our proof. This relies in the fact that $\gamma([0,1/2])\subset M\setminus D$ holds for all $1$-periodic orbits of all $H\upphi H_s$ and that $\frac{\partial H_s}{\partial s}=1$ in $M\setminus D$: the actions increase with a constant rate as we follow the orbits and accidental solutions can only further increase the action. There are technical difficulties in making this work, so we refer the reader to \cite{VarolgunesThesis} for the actual proof. 

We move on to the case when $K$ is only stably displaceable. Let $T^2$ be a symplectic torus such that $$\tilde{K}:=K\times \gamma\subset M\times T^2$$ is displaceable inside  $M\times T^2$, where $\gamma$ is a meridian in $T^2.$ Note that $M\times T^2$ also satisfies the conditions of our construction of relative symplectic cohomology over $\Nov$ as $T^2$ is aspherical.

We will prove that $SH^*_M(K;  \Nov)$ naturally injects into $SH^*_{M\times T^2}(\tilde{K};  \Nov)$, which finishes the proof. It is easy to see that acceleration data can be chosen for $\gamma\subset T^2$ where each Hamiltonian in the cofinal family has exactly $4$ contractible orbits, and the differentials on each of the corresponding Hamiltonian Floer groups vanish. Using the the chain level K\"{u}nneth isomorphism for Hamiltonian Floer theory and that completion commutes with tensor product with a finite dimensional $\Nov$-module, we easily prove the desired claim.
\end{proof}

We come to the third and final property of relative symplectic cohomology that we will discuss in this section. Recall from the introduction that a compact set $K\subset M$ is called \emph{$SH$-invisible} if $SH_M^*(K;\Nov)= 0.$

\begin{thm}\label{analog_thesis3}
If a compact subset $K\subset M$ is $SH$-invisible, then any compact subset $K'\subset K$ is also $SH$-invisible.
\end{thm}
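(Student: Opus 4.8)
The plan is to deduce this from the existence of restriction maps together with a unitality argument, exactly as the introduction indicated (``by a unitality argument''). First I would recall that for any compact $K$ the relative symplectic cohomology $SH_M^*(K;\Nov)$ is a unital ring: the PSS map identifies $SH_M^*(M;\Nov)\cong QH^*(M;\Nov)$, which is unital, and the restriction map $SH_M^*(M;\Nov)\to SH_M^*(K;\Nov)$ is a ring homomorphism, so it sends the unit $1\in QH^0(M;\Nov)$ to a unit $1_K\in SH_M^0(K;\Nov)$ (and in fact $SH_M^*(K;\Nov)$ is a module over $QH^*(M;\Nov)$ via restriction, with $1\cdot(-)=\mathrm{id}$). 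The key fact is that these units are compatible with the further restriction maps: for $K'\subset K$, the triangle
\[
\begin{tikzcd}
SH_M^*(M;\Nov) \arrow[r] \arrow[rd] & SH_M^*(K;\Nov) \arrow[d] \\
& SH_M^*(K';\Nov)
\end{tikzcd}
\]
commutes (all three arrows are restriction maps, and they compose because restriction maps are functorial with respect to inclusion, being induced by the identity on the level of Floer $1$-rays after a cofinality comparison). Hence the unit $1_{K'}\in SH_M^*(K';\Nov)$ is the image of $1_K$ under the restriction $SH_M^*(K;\Nov)\to SH_M^*(K';\Nov)$.

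Now suppose $K$ is $SH$-invisible, i.e. $SH_M^*(K;\Nov)=0$. Then in particular $1_K=0$ in $SH_M^*(K;\Nov)$, so its image $1_{K'}$ under the restriction map to $SH_M^*(K';\Nov)$ is also $0$. But $1_{K'}$ is the unit of the ring $SH_M^*(K';\Nov)$ (more precisely it acts as the identity on the module $SH_M^*(K';\Nov)$ over $QH^*(M;\Nov)$). A unital ring (or a module on which the identity operator is zero) in which $1=0$ is the zero ring; hence $SH_M^*(K';\Nov)=0$, i.e. $K'$ is $SH$-invisible, as claimed.

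The technical content that needs to be pinned down — and the main obstacle — is establishing the ring/module structure on $SH_M^*(K;\Nov)$ and the compatibility of restriction maps with units at the level of the degreewise-completed telescopes. This requires knowing that the pair-of-pants product (or the $QH^*(M;\Nov)$-module structure) descends to the $\widehat{tel}$ construction and that the restriction maps, which are built from a cofinal comparison of acceleration data for $K'\subset K$, are unital ring/module maps. This is the relative-$SH$ analogue of the corresponding statement for Viterbo symplectic cohomology and for the relative theory of \cite{Varolgunes2018,Tonkonog2020}; I would either cite the construction of the product structure and restriction maps from those references (noting that our $\Nov$-linear, capped-orbit variant behaves identically, as already observed when setting up the theory in Section \ref{ss-rel-SH}), or spell out the minimal version needed: namely that $SH_M^*(K;\Nov)$ is a module over $QH^*(M;\Nov)$ via restriction, unitally, and that this module structure is natural in $K$. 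Granting that, the argument above is immediate.
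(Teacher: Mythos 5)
Your proposal is correct and follows essentially the same route as the paper: the paper likewise reduces the statement to the existence of a unit $1_K$ for a pair-of-pants product on $SH_M^*(K;\Nov)$ satisfying ``$SH_M^*(K;\Nov)=0$ iff $1_K=0$'' together with the fact that restriction maps send units to units, citing Proposition 2.5 and Section 5 of \cite{Tonkonog2020} for the technical construction, exactly as you propose to do.
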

\begin{proof}
The proof is identical to that of Theorem 1.2 (4) in \cite{Tonkonog2020}. The key point (Proposition 2.5 of \cite{Tonkonog2020}) is that there is a distinguished element $1_K\in SH_M(K,\Lambda)$, called the unit, with the following properties.
\begin{itemize}	
\item  $SH_M(K,\Lambda)=0$ if and only if $1_K=0$. 
\item Restriction maps send units to units.
\end{itemize} The element $1_K$ is constructed so that it is the unit of a pair-of-pants type product structure on $SH_M(K,\Lambda)$. The details are in Section 5 of \cite{Tonkonog2020}.
\end{proof}

\subsection{Towards the symplectic cohomology of the divisor complement}\label{subsec:SH}

We return to the geometric setup of Section \ref{subsec:setup}: $(M, \w)$ will be a closed symplectic manifold that is monotone
$$
	2\kappa c_1^M = [\w] \in H^{2}(M; \R) \quad\mbox{with}\quad \kappa > 0,
$$
$D = \cup_{i=1}^N D_i \subset (M, \w)$ will be a simple crossings divisor and $\lambda_1,\ldots,\lambda_n\in \mathbb{Q}_{>0}$ will be the weights. We will denote $X = M \setminus D$, $\bflambda \in H^2(M,X;\R)$ will be the associated lift of $2 c_1^M$, and $\theta \in \Omega^1(X)$ will be a primitive of $\omega|_X$ such that the relative de Rham cohomology class of $(\omega,\theta)$ is $\kappa\bflambda$. 


First we recall the action and index of orbits in the exact symplectic manifold $(X,\theta)$. 
Let $F: S^1 \times X \to \R$ be a Hamiltonian, and $\gamma:S^1 \to X$ a non-degenerate orbit of $F$.  
Its action is defined to be
\[ \cA_F(\gamma) := \int_{S^1} F(t, \gamma(t))\,dt + \int_{S^1} \gamma^*\theta.\]

In order to associate an index to orbits, we require an additional piece of data: a homotopy class of trivializations $\eta$ of $\Lambda^{top}_\C(TX)^{\otimes 2N}$, for some integer $N > 0$. 
To define the index $i_\eta(\gamma)$ of an orbit $\gamma$, we first choose a trivialization $\Phi$ of $\gamma^*TX$; we denote the Conley--Zehnder index with respect to this trivialization by $\CZ(\gamma,\Phi)$. 
The trivialization $\Phi$ induces a trivialization of $\Lambda^{top}_\C(\gamma^*TX)^{\otimes 2N}$, and we define $w(\Phi,\eta) \in \Z$ to be the winding number of
\[ \eta^{-1} \circ \Lambda^{top}_\C(\Phi)^{\otimes 2N}: S^1 \to \C^*.\]
We then define
\[ i_\eta(\gamma) = \CZ(\gamma,\Phi) + \frac{dim(X)}{2} - \frac{w(\Phi,\eta)}{N}. \]
One easily checks that the index is independent of the trivialization $\Phi$. 
Note that it is fractional: $i_\eta(\gamma) \in \frac{1}{N} \Z$. 

In our setting, the relevant choice of trivialization $\eta$ is determined by $\bflambda$. 
Let $N$ be an integer such that $N\lambda_i \in \Z$ for all $i$. 
Then $\sum_i N\lambda_i [D_i]$ is Poincar\'e dual to $c_1(\Lambda^{top}_\C(TM)^{\otimes 2N})$ by definition, so we may choose a section of $\Lambda^{top}_\C(TM)^{\otimes 2N}$ which is non-vanishing over $X$, and vanishes with multiplicity $N\lambda_i$ along $D_i$. Restricting this section to $X$ defines a homotopy class of trivializations of $\Lambda^{top}_\C(TX)^{\otimes 2N}$, which we denote by $\eta_{\bflambda}$. 
We will write $i(\gamma)$ for $i_{\eta_{\bflambda}}(\gamma)$.

Now let $F:S^1 \times M \to \R$ be a Hamiltonian, and $\gamma:S^1 \to X$ a non-degenerate orbit of $F$ which is contractible in $M$, and contained inside $X$. 
We define a canonical fractional cap $u_{in}$ for $\gamma$, by setting $u_{in}:= u - u\cdot \bflambda$ for an arbitrary cap $u$; the result is clearly independent of $u$. 
One should think of $u_{in}$ as a `cap inside $X$': indeed, if $u$ were a cap contained inside $X$, we would have $u_{in} = u$. 

\begin{lem}\label{lem:same_action_index}
We have
\[ i(\gamma) = i(\gamma,u_{in}) \qquad \text{and} \qquad \cA_F(\gamma) = \cA_F(\gamma,u_{in}).\]
\end{lem}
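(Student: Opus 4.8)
The plan is to compare the two definitions directly, exploiting the fact that both the action on $(X,\theta)$ and the action on $(M,F)$ are built from the same Hamiltonian term plus a `geometric' term, and similarly for the indices. The one-form contribution and the symplectic-area contribution differ by exactly $\kappa\bflambda\cdot u$ by the defining property of $\theta$, while the index shift $w(\Phi,\eta_\bflambda)/N$ is designed to match the cap-dependence of the Conley--Zehnder index; the fractional cap $u_{in}=u-u\cdot\bflambda$ is precisely the combination that cancels both discrepancies.

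First I would handle the action. For an arbitrary genuine cap $u$ of $\gamma$ we have, by the relative de Rham property of $(\omega,\theta)$ recorded in Section \ref{subsec:setup}, $\int_u\omega-\int_{\partial u}\theta=\kappa\bflambda\cdot u$, i.e. $\int_{S^1}\gamma^*\theta=\int_u\omega-\kappa\bflambda\cdot u$. Hence
$$\cA_F(\gamma)=\int_{S^1}F(t,\gamma(t))\,dt+\int_{S^1}\gamma^*\theta=\int_{S^1}F(t,\gamma(t))\,dt+\int_u u^*\omega-\kappa(\bflambda\cdot u)=\cA_F(\gamma,u)-\kappa(u\cdot\bflambda).$$
By the compatibility of the action with the $A'$-action, $\cA_F(a\cdot(\gamma,u))=\cA_F(\gamma,u)+\kappa a$, which extends to fractional caps as $\cA(\gamma,u+a)=\cA(\gamma,u)+\kappa a$; taking $a=-u\cdot\bflambda$ gives $\cA_F(\gamma,u_{in})=\cA_F(\gamma,u)-\kappa(u\cdot\bflambda)=\cA_F(\gamma)$.

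Next I would handle the index. Fix a trivialization $\Phi$ of $\gamma^*TX$. On the one hand $i(\gamma,u_{in})=i(\gamma,u)-u\cdot\bflambda=\CZ(\gamma,u)+\dim(M)/2-u\cdot\bflambda$. On the other hand $i(\gamma)=i_{\eta_\bflambda}(\gamma)=\CZ(\gamma,\Phi)+\dim(X)/2-w(\Phi,\eta_\bflambda)/N$, with $\dim X=\dim M$. So it suffices to show $\CZ(\gamma,u)-\CZ(\gamma,\Phi)=u\cdot\bflambda-w(\Phi,\eta_\bflambda)/N$. The left side is a standard change-of-trivialization formula: $\CZ(\gamma,u)-\CZ(\gamma,\Phi)$ equals $2$ times the winding number of the loop of complex-linear maps relating the cap trivialization $\Phi_u$ (the restriction to $\partial u$ of a trivialization of $u^*TM$) to $\Phi$; equivalently, using the $2N$-th power and the bundle $\Lambda^{top}_\C$, it equals $(2/N)$ times the winding number of $\Lambda^{top}_\C(\Phi_u)^{\otimes 2N}$ relative to $\Lambda^{top}_\C(\Phi)^{\otimes 2N}$. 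The right side: $w(\Phi,\eta_\bflambda)/N$ is by definition $(1/N)$ times the winding of $\eta_\bflambda^{-1}\circ\Lambda^{top}_\C(\Phi)^{\otimes 2N}$, and $\eta_\bflambda$ is the trivialization coming from a section of $\Lambda^{top}_\C(TM)^{\otimes 2N}$ vanishing to order $N\lambda_i$ along $D_i$; comparing that section's trivialization along $\partial u$ with the cap trivialization $\Lambda^{top}_\C(\Phi_u)^{\otimes 2N}$ picks up $2\sum_i N\lambda_i(u\cdot D_i)=2N(u\cdot\bflambda)$ in winding, since $\bflambda\cdot u=\sum_i\lambda_i(u\cdot D_i)$ by the definition of $\mathrm{PD}^{rel}(D_i)$ and \eqref{e:RelHom}. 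Assembling these winding-number identities gives the claim.

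\textbf{Main obstacle.} The routine parts are the action computation and bookkeeping of the $A'$-action; the delicate step is the index comparison, specifically getting all the factors of $2$, $N$, and signs right when relating the Conley--Zehnder change-of-trivialization formula to the winding number $w(\Phi,\eta_\bflambda)$, and making sure the multiplicity-$N\lambda_i$ vanishing of the chosen section of $\Lambda^{top}_\C(TM)^{\otimes 2N}$ along $D_i$ translates into exactly the intersection pairing $u\cdot\bflambda$. I would pin this down by choosing, near each point where $\partial u$ is swept, an explicit local holomorphic frame and computing the winding of the transition function directly, then using that $u\cdot D_i$ counts the algebraic number of times $u$ meets $D_i$. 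This is bookkeeping rather than a conceptual difficulty, but it is where an error is most likely to creep in.
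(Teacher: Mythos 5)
Your overall strategy is the same as the paper's: the action identity follows immediately from the relative de Rham condition $\int_u\omega-\int_{\partial u}\theta=\kappa\bflambda\cdot u$, and the index identity reduces to comparing Conley--Zehnder trivialization discrepancies with the winding number $w(\Phi,\eta_{\bflambda})$. The action part of your argument is correct and matches the paper. For the index part, the paper makes one simplifying choice you didn't: it takes $\Phi$ to be the trivialization of $\gamma^*TX$ \emph{induced by the cap} $u$, so that $\CZ(\gamma,\Phi)=\CZ(\gamma,u)$ and the change-of-trivialization formula for $\CZ$ is never needed; the whole lemma then reduces to the single computation $w(\Phi,\eta_{\bflambda})=N\,u\cdot\bflambda$, obtained by viewing $\eta_{\bflambda}$ as a section of $\Lambda^{top}_\C(u^*TM)^{\otimes 2N}$ and equating the boundary winding with the zero count $\sum_i N\lambda_i(u\cdot D_i)$.

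There is a concrete problem with your constants, and it is exactly in the step you flagged. First, $\CZ(\gamma,u)-\CZ(\gamma,\Phi)$ equals $2$ times the winding of the \emph{determinant} of the transition loop, hence $(1/N)$ times (not $(2/N)$ times) the winding of the $2N$-th power $\Lambda^{top}_\C(\Phi_u)^{\otimes 2N}$ relative to $\Lambda^{top}_\C(\Phi)^{\otimes 2N}$, since passing to the $2N$-th power multiplies windings by $2N$. Second, the section of $\Lambda^{top}_\C(TM)^{\otimes 2N}$ defining $\eta_{\bflambda}$ vanishes to order $N\lambda_i$ along $D_i$, so its total vanishing over the disc is $\sum_i N\lambda_i(u\cdot D_i)=N(u\cdot\bflambda)$, not $2N(u\cdot\bflambda)$. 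These two factor-of-$2$ errors do not cancel: with your stated values the two sides of $\CZ(\gamma,u)-\CZ(\gamma,\Phi)=u\cdot\bflambda-w(\Phi,\eta_{\bflambda})/N$ differ, whereas with the corrected values ($1/N$ and $N(u\cdot\bflambda)$) the cocycle identity $w(\Phi,\eta_{\bflambda})=w(\Phi_u,\eta_{\bflambda})-\mathrm{wind}\bigl(\Lambda^{top}_\C(\Phi_u)^{\otimes 2N}\ \text{rel}\ \Lambda^{top}_\C(\Phi)^{\otimes 2N}\bigr)$ makes everything close up. So the proof is repairable bookkeeping rather than a conceptual gap, but as written the index identity does not follow; adopting the paper's choice $\Phi=\Phi_u$ eliminates the first source of error entirely.
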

\begin{proof}
Let us choose an arbitrary $u: \mathbb{D}\to M$ capping $\gamma$. We start with the action. Directly from  the definitions: $$\cA_F(\gamma) = \int_{S^1} F(t, \gamma(t))\,dt + \int_{S^1} \gamma^*\theta \qquad \text{and} \qquad\cA_F(\gamma, u_{in}) = \int_{S^1} F(t, \gamma(t))\,dt + \int_\bD u^*\w-\kappa u\cdot \bflambda.$$ Therefore, the result follows from the assumption that the relative de Rham cohomology class of $(\omega,\theta)$ is $\kappa\bflambda$.

Recalling definitions for indices: $$i(\gamma) = \CZ(\gamma,\Phi) + \frac{\dim(X)}{2} - \frac{w(\Phi,\eta_{\bflambda})}{N},$$ where we choose $\Phi$ to be the trivialization of $\gamma^*TX$ induced by the cap $u$, and $$i(\gamma, u) = \CZ(\gamma, u) + \frac{\dim(M)}{2}-u\cdot\bflambda.$$ Therefore, we need to show that $$w(\Phi,\eta_{\bflambda})=N u\cdot\bflambda.$$ This follows because $\eta_{\bflambda}$ actually induces a section of $\Lambda^{top}_\C(u^*TM)^{\otimes 2N}.$ Using any trivialization of  $\Lambda^{top}_\C(u^*TM)^{\otimes 2N}$, we can think of this section as a map $\mathbb{D}\to \mathbb{C}$, which does not vanish along the boundary. The degree of this map at $0\in \mathbb{C}$ is easily computed to be $Nu\cdot\lambda$ using that $\eta_{\bflambda}$ vanishes with multiplicity $N\lambda_i$ along $D_i$. It is an elementary fact that the same degree is also equal to the winding number that we are interested in, so the result follows.
\end{proof}

\subsection{Positivity of intersection}

In this section we prove a result based on Abouzaid--Seidel's `integrated maximum principle'. 
We will later use it to prove Proposition \ref{prop-pos-int}, although the result is more broadly applicable.

Let $(W,\omega)$ be a symplectic manifold with a concave boundary modelled on the contact manifold $(Y,\theta)$. 
This means that $\partial W = Y$, and there is a symplectic embedding of the symplectization $(Y \times [c,c+\eps), d(\rho \cdot \theta))$ onto a neighbourhood of the boundary, where $\rho \in [c,c+\eps)$ is the Liouville coordinate. 
Note that as $\omega|_Y = cd\theta$, we have a relative de Rham cohomology class $[\omega;c\theta] \in H^2(W,Y)$. 
We will consider $u:(\Sigma,\partial \Sigma) \to (W,Y)$ satisfying the pseudoholomorphic curve equation for a certain class of almost-complex structures and Hamiltonian perturbations, and give a criterion guaranteeing that $[\omega;c\theta](u) \ge 0,$ with equality if and only if $u \subset Y$.

In order to define our pseudoholomorphic curve equation, we choose a complex structure $j$ on $\Sigma$, a family $\cJ$ of $\omega$-compatible almost-complex structures $J_z$ parametrized by $z \in \Sigma$, and a Hamiltonian-valued one-form $\mathcal{K} \in \Omega^1(\Sigma;C^\infty(W))$. 
Note that differential forms on $\Sigma \times W$ decompose into types: 
$$\Omega^\bullet(\Sigma \times W) = \bigoplus_{j+k = \bullet} \Omega^j(\Sigma,\Omega^k(W)),$$
so we may interpret $\cK$ as a one-form on $\Sigma \times W$. 
The de Rham differential decomposes as $d = d_\Sigma + d_W$, where 
$$d_\Sigma: \Omega^j(\Sigma,\Omega^k(W)) \to \Omega^{j+1}(\Sigma,\Omega^k(W)) \qquad \text{and}\qquad d_W:\Omega^j(\Sigma,\Omega^k(W)) \to \Omega^j(\Sigma,\Omega^{k+1}(W)).$$

The isomorphism $C^\infty(TW) \to \Omega^1(W)$ sending $v \mapsto \omega(v,-)$ allows us to turn $d_W \cK$ into a Hamiltonian-vector-field-valued one-form $X_\cK \in \Omega^1(\Sigma;C^\infty(TW))$. 
We will consider the pseudoholomorphic curve equation $$(du-X_\cK)^{0,1} = 0.$$ Note that the $(0,1)$-projection of $v\in \Omega^1(\Sigma;C^\infty(TW))$ is given by $\frac{1}{2}\left(v+\cJ\circ v \circ j\right)$.

We introduce the geometric energy of a pseudoholomorphic curve $u$:
$$E_{geom}(u) = \frac{1}{2}\int_\Sigma \left\| du - X_\cK \right\|^2.$$
It is manifestly non-negative. 
Let $\tilde{u}: \Sigma \to \Sigma \times W$ denote the graph of $u$.
We have the standard computation (e.g. Equation (8.12) of \cite{Seidel}):
$$E_{geom}(u) = \int_\Sigma u^* \omega + \tilde{u}^*\left( d_W \cK + \{\cK,\cK\}\right),$$
where the final term lives in $\Omega^2(\Sigma,C^\infty(W))$ and is defined by $\{\cK,\cK\}(v,w) := \{\cK(v),\cK(w)\}$, where $\{-,-\}$ is the Poisson bracket. 

We also introduce the topological energy
$$E_{top}(u) := \int_\Sigma u^*\omega + \tilde{u}^* d \cK. $$
Note that
$$E_{top}(u) = E_{geom}(u) + \int_\Sigma \tilde{u}^* \left(d_\Sigma \cK -\{\cK,\cK\}\right) .$$

\begin{prop}\label{prop-pre-pos-int}
Suppose that 
\begin{enumerate}
\item \label{it:cont-typ} $J_z$ is of contact type along $Y$, for all $z \in \partial \Sigma$: $$d\rho\circ J_z=-\rho\theta.$$
\item \label{it:neck-cond}There exist one-forms $\alpha, \beta \in \Omega^1(\Sigma)$ such that $\cK = \alpha \cdot \rho + \beta$ in a neighbourhood of $Y$
\item \label{it:pos-en} We have $d_\Sigma \cK - \{\cK,\cK\} - d\beta \ge 0$.\footnote{Given  $\xi \in \Omega^2(\Sigma,C^\infty(W))$, we say that $\xi \ge 0$ if for all $z \in \Sigma$, $v \in T_z\Sigma$, and $w \in W$, we have $\xi(v,jv)(w) \ge 0$.}
\end{enumerate}
Then any smooth map $u: (\Sigma,\partial \Sigma) \to (W,Y)$ satisfying $(du - X_\cK)^{0,1} = 0$,  with $\partial \Sigma \neq \emptyset$, will satisfy $[\omega;c\theta](u) \ge 0$, with equality if and only if $u \subset Y$.
\end{prop}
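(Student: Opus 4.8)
The plan is to prove the sharper identity
\[
[\omega;c\theta](u) \;=\; E_{geom}(u) \;+\; P(u) \;+\; c\,Q(u),
\]
where $P(u) := \int_\Sigma \tilde{u}^*\!\left(d_\Sigma\cK - \{\cK,\cK\} - d\beta\right)$ and $Q(u) := -\int_{\partial\Sigma}(\alpha + u^*\theta)$. Each term on the right is non-negative: $E_{geom}(u)$ manifestly, $P(u)$ by hypothesis \eqref{it:pos-en}, and $Q(u)$ by hypotheses \eqref{it:cont-typ}--\eqref{it:neck-cond}; this gives the inequality, and reading off when each term vanishes gives the equality case. (For simplicity I would take $\Sigma$ compact and connected, which is the situation relevant to our applications; more general finite-energy domains reduce to this.)

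First I would assemble the identity from the energy formulas already recorded above. Since $\cK = \alpha\cdot\rho + \beta$ near $Y$ and $\rho\equiv c$ on $Y$, the restriction of the one-form $\tilde{u}^*\cK$ to $\partial\Sigma$ equals $(c\alpha + \beta)|_{\partial\Sigma}$, so Stokes gives $\int_\Sigma \tilde{u}^* d\cK = \int_{\partial\Sigma}\tilde{u}^*\cK = c\int_{\partial\Sigma}\alpha + \int_{\partial\Sigma}\beta$ and hence $E_{top}(u) = \int_\Sigma u^*\omega + c\int_{\partial\Sigma}\alpha + \int_{\partial\Sigma}\beta$. On the other hand, the identity $E_{top}(u) = E_{geom}(u) + \int_\Sigma \tilde{u}^*(d_\Sigma\cK - \{\cK,\cK\})$ together with $\int_\Sigma \tilde{u}^* d\beta = \int_\Sigma d\beta = \int_{\partial\Sigma}\beta$ (note $\beta$ is pulled back from $\Sigma$) gives $E_{top}(u) = E_{geom}(u) + P(u) + \int_{\partial\Sigma}\beta$. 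Equating the two expressions for $E_{top}(u)$, the $\int_{\partial\Sigma}\beta$ terms cancel, yielding $\int_\Sigma u^*\omega = E_{geom}(u) + P(u) - c\int_{\partial\Sigma}\alpha$, and therefore $[\omega;c\theta](u) = \int_\Sigma u^*\omega - c\int_{\partial\Sigma}u^*\theta = E_{geom}(u) + P(u) + c\,Q(u)$.

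The remaining input is that $Q(u)\ge 0$, and in fact that $(\alpha + u^*\theta)(\tau)\le 0$ pointwise on $\partial\Sigma$ for $\tau$ a positively-oriented tangent vector; this is where hypotheses \eqref{it:cont-typ}--\eqref{it:neck-cond} enter. Fix $z\in\partial\Sigma$ and let $\nu := j\tau$ be the inward normal. Because $u(\partial\Sigma)\subset Y$ and, near $Y$, $X_\cK(\tau) = \alpha(\tau)X_\rho$ is a multiple of the Reeb vector field (so $d\rho(X_\cK(\tau)) = 0$), both $du(\tau)$ and $X_\cK(\tau)$ lie in $T_{u(z)}Y$. The equation $(du - X_\cK)^{0,1}=0$ reads $(du - X_\cK)(\nu) = J_z\!\left((du - X_\cK)(\tau)\right)$; applying $d\rho$, using $d\rho(X_\cK(\nu)) = \alpha(\nu)\,d\rho(X_\rho) = 0$ on the left and the contact-type identity $d\rho\circ J_z = -\rho\theta = -c\theta$ along $Y$ on the right, gives $d(\rho\circ u)(\nu) = -c\,(u^*\theta + \alpha)(\tau)$. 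Since $\rho\circ u\ge c$ on a neighbourhood of $\partial\Sigma$ with equality on $\partial\Sigma$, the inward derivative $d(\rho\circ u)(\nu)$ is $\ge 0$; as $c>0$ we conclude $(u^*\theta + \alpha)(\tau)\le 0$, hence $Q(u)\ge 0$.

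For the equality case: if $u(\Sigma)\subset Y$ then $u^*\omega = c\,d(u^*\theta)$ and Stokes gives $[\omega;c\theta](u) = 0$. Conversely, if $[\omega;c\theta](u) = 0$ then $E_{geom}(u) = 0$, so $du = X_\cK$ everywhere; on the part of $\Sigma$ mapping into the collar this says $du = \alpha X_\rho$, hence $d(\rho\circ u) = \alpha\,d\rho(X_\rho) = 0$ and $\rho\circ u$ is locally constant there. It follows that $u^{-1}(Y)$ is closed and open in $\Sigma$ — openness because near any point mapping into $Y$ the map $u$ takes values in the collar with $\rho\circ u$ locally constant, necessarily $\equiv c$ — and it is non-empty since $\partial\Sigma\neq\emptyset$; connectedness of $\Sigma$ then forces $u(\Sigma)\subset Y$. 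I expect the main technical point to be the sign bookkeeping — the orientation of $\partial\Sigma$ and of $\nu=j\tau$ as the inward (rather than outward) normal, and the sign of $X_\rho$ relative to the Reeb flow — together with verifying that hypotheses \eqref{it:cont-typ}--\eqref{it:pos-en} are precisely what is needed to make the $\beta$-terms cancel and to control the residual boundary integral $\int_{\partial\Sigma}(\alpha+u^*\theta)$; these are exactly the places where the `integrated maximum principle' mechanism does its work.
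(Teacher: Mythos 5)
Your proposal is correct and is essentially the paper's proof reorganized: the exact identity $[\omega;c\theta](u)=E_{geom}(u)+P(u)+cQ(u)$ is just the paper's chain of (in)equalities assembled with Stokes applied once at the start, and each hypothesis is used in the same place. Your boundary estimate (applying $d\rho$ to the Floer equation and using that $\rho\circ u$ attains its minimum $c$ on $\partial\Sigma$) is equivalent, via the contact-type identity $d\rho\circ J=-\rho\theta$, to the paper's step showing $\theta(Ju_*j(v))\ge 0$ for the inward-pointing vector $u_*j(v)$, and your signs come out consistent with the paper's convention $X_\cK(v)=-\alpha(v)\mathcal{R}$.
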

\begin{proof}
We have
\begin{align*} 
[\omega;c\theta](u) &=\int_\Sigma u^*\omega - \int_{\partial \Sigma} u^*c\theta \\
&= E_{geom}(u) - \int_{\Sigma} \tilde{u}^* \left(d_W\cK +\{\cK,\cK\}\right) - c\int_{\partial \Sigma} u^* \theta \\
&\ge \int_\Sigma \tilde{u}^* \left(-d\cK + d_\Sigma \cK - \{\cK,\cK\}\right) - c\int_{\partial \Sigma} u^* \theta \qquad\text{ as $E_{geom}(u) \ge 0$}\\
&\ge \int_\Sigma \tilde{u}^* \left(-d\cK + d\beta\right) - c\int_{\partial \Sigma} u^* \theta \qquad \text{ by hypothesis \eqref{it:pos-en}}\\ 
&= \int_{\partial \Sigma} -\tilde{u}^* \cK + \beta -c\cdot u^* \theta.
\end{align*}
By hypothesis \eqref{it:neck-cond}, the first and second terms combine to give
$$\int_{\partial \Sigma} -\tilde{u}^* (\alpha \cdot \rho + \beta) + \beta = - \int_{\partial \Sigma} c \cdot \alpha,$$
as $\rho = c$ along $Y$.

We can analyse the remaining term using the argument in \cite[Lemma 7.2]{Abouzaid2007}. Let $v \in T_z \partial\Sigma$ be a positively-oriented boundary vector. Using the Floer equation $$(du-X_\cK)^{0,1}=0,$$ we obtain $$u_*(v)=-Ju_*j(v)+X_\cK\left(v\right)+JX_\cK j(v),$$
so
$$u^*\theta(v) = -\theta\left(Ju_*j(v)\right)+\theta\left(X_\cK\left(v\right)\right)+\theta\left(JX_\cK j(v)\right).$$

We analyse each term on the RHS. For the first, we note that $j(v)$ points into $\Sigma$. Therefore $u_*(j(v))$ points into $W$. Such vectors can be written as the sum of a non-negative multiple of the Liouville vector and a vector that is tangent to $Y$. Because $J$ is of contact type, this implies that $$\theta\left(Ju_*j(v)\right)\ge 0.$$
For the second, we note that hypothesis \eqref{it:neck-cond} ensures that $X_\cK(v) = -\alpha(v) \cdot \mathcal{R}$, where $\mathcal{R}$ is the Reeb vector field on $Y$. 
Thus $\theta(X_\cK(v)) = -\alpha(v)$.
For the third, hypothesis \eqref{it:neck-cond} again ensures that $X_\cK(j(v))$ is a multiple of the Reeb vector field; because $J$ is of contact type, $\theta(JX_\cK j(v)) = 0$.
Putting it all together, we have
$$u^*\theta(v) \le -\alpha(v).$$

Combining, we finally obtain 
$$[\omega;c\theta](u) \ge \int_{\partial \Sigma} -c \cdot \alpha +c \cdot \alpha = 0$$
as required. 

If equality holds then we have $E_{geom}(u) = 0$, which implies that $du = X_\cK$. 
Hypothesis \eqref{it:neck-cond} then implies that $u_*(v) = X_\cK(v)$ is a multiple of the Reeb vector field $\mathcal{R}$ in a neighbourhood of $Y$, for all $v$; as $\mathcal{R}$ is tangent to $Y$, this implies that $u$ is contained in $Y$.
\end{proof}

\begin{rmk}
Note that if $\cK' = \cK+\xi$ where $\xi \in \Omega^1(\Sigma)$, then $X_\cK = X_{\cK'}$, so the associated pseudoholomorphic curve equations are identical. Thus we would expect that if the hypotheses of Proposition \ref{prop-pre-pos-int} hold for $\cK$, then they should also hold for $\cK'$. 
Indeed, Hypothesis \eqref{it:neck-cond} holds, as $\cK' = \alpha \cdot \rho + \beta'$, where $\beta' = \beta + \xi$; and Hypothesis \eqref{it:pos-en} also holds, because $\cK' - \beta' = \cK - \beta$.
\end{rmk}

Proposition \ref{prop-pre-pos-int} is designed to prove Proposition \ref{prop-pos-int} (= Proposition \ref{prop-pos-inta}), but there are other natural situations where Hypotheses \eqref{it:neck-cond} and \eqref{it:pos-en} can be made to hold. 
The simplest, of course, is if $\cK$ vanishes in a neighbourhood of $Y$. 
Alternatively, similarly to \cite{Abouzaid2007}, we may have $\cK = H \cdot \gamma$ where $H$ is independent of $z \in \Sigma$, $H = a \rho + b$ in a neighbourhood of $Y$, $H \ge b$ over $W$, and $d \gamma \ge 0$.


\section{Special Hamiltonian}\label{s-H}

Our goal in this section is to construct the special functions $\rho^R: M \to \R$, defined for $R>0$ sufficiently small, as mentioned Section \ref{ss-proofs}. 
Recall their key properties: 
\begin{itemize}
\item $\rho^R$ is continuous on $M$, and smooth on the complement of the skeleton $\mathbb{L}$; 
\item $\rho^R|_{\mathbb{L}} = 0$ and $\rho^R|_D \approx 1$;
\item we have $Z(\rho^R) = \rho^R$ on $X \setminus \mathbb{L}$, where $Z$ is the Liouville vector field on $(X,\theta)$;
\item $\rho^R \to \rho^0$ as $R \to 0$.
\end{itemize}
Having constructed the functions $\rho^R$, we use them to construct the Hamiltonians on $M$ which we use in our main arguments; and we compute the action and index of the orbits of these Hamiltonians. 
The results are expressed in Lemmas \ref{lem-act-final} and \ref{lem-ind-final}.

We use the geometric setup of Section \ref{subsec:setup} with slight modifications in light of Section \ref{s-basics-of-SC}. Let us spell this out fully. We have  a closed symplectic manifold $(M, \w)$ which is monotone
$$
	2\kappa c_1^M = [\w] \in H^{2}(M; \R) \quad\mbox{with}\quad \kappa > 0,
$$
$D = \cup_{i=1}^N D_i \subset (M, \w)$ is an orthogonal simple crossings divisor and $\lambda_1,\ldots,\lambda_N\in \mathbb{Q}_{>0}$ is a choice of weights. We denote $X = M \setminus D$ and $\bflambda \in \R^N \cong H^2(M,X;\R)$ is the associated lift of $2 c_1^M$. We also choose an admissible system of commuting Hamiltonians $\{r_i:UD_i \to [0,R_0)\}$ near $D$ and  a primitive $\theta \in \Omega^1(X)$ of $\omega|_X$ such that the relative de Rham cohomology class of $(\omega,\theta)$ is $\kappa\bflambda$. 
We assume that $\theta$ is adapted to $\{r_i:UD_i \to [0,R_0)\}$ and that \begin{equation}\label{eq-small-rad} R_0<\kappa\lambda_i,\text{ for all } i.\end{equation} The last condition can be achieved by shrinking the ascH  (as explained in Section \ref{ss:scH}).

In fact we will consider the $(0, R_0)$-family of such data obtained by shrinking the ascH to radius $R\in (0,R_0)$, while keeping all else fixed. The parameter $R$ will also\footnote{We note that this is for notational convenience only.} be used as the `smoothing parameter' for $\rho^R$. In Section \ref{s-proofs}, we will want $R$ to be sufficiently small for certain arguments to work. The approximations in this section  (such as $\rho^R|_D \approx 1$) will be more and more accurate as $R$ tends to $0$. The dependence on $R$ of our constructions below should be understood in this light.


\subsection{Overview of the construction of $\rho^R$}\label{s:HamOverview}

The first step in the construction is to enlarge the sets $UD_i$ via the Liouville flow. 
This gives us open sets $UD_i^{max}$, together with toric moment maps $r_i^{max}: UD^{max}_i \to [0,\kappa\lambda_i)$, such that $\cup_i UD_i^{max} = M \setminus \mathbb{L}$. 
For $I \subset [N]$ we define $UD_I^{max} = \cap_{i \in I} UD_i^{max}$, and we have toric moment maps $r_I^{max} : UD_I^{max} \to \prod_{i \in I} [0,\kappa\lambda_i)$.

Now for each non-empty $I \subset [N]$, we define open subsets $\mathring{U}D^{max}_I \subset UD^{max}_I$ so that $\cup_I \mathring{U}D^{max}_I = M \setminus \mathbb{L}$. 
We will define $\rho^R|_{\mathring{U} D^{max}_I} = \tilde{\rho}^R_I \circ r^{max}_I$, for smooth functions $$\tilde{\rho}^R_I: \prod_{i \in I} [0,\kappa\lambda_i) \to \R$$ carefully chosen so that the definition agrees on the overlaps and $\rho^R$ satisfies the desired key properties. In fact, $\tilde{\rho}^R_I$ will be well defined on the larger region
$$V_I := \R^I \setminus \prod_{i \in I} [\kappa \lambda_i,\infty).$$

Let us briefly discuss how we will ensure that $\rho^R$ thus defined satisfies $Z(\rho^R) = \rho^R$. 
We translate this into a property of the functions $\tilde{\rho}^R_I$. 
We denote the standard projection by $\mathrm{pr}_I:\R^N \to \R^I$, and set $\bflambda_I := \mathrm{pr}_I(\bflambda)$. 
We consider the (Euler-type) vector field $\tilde{Z}_I$ on $\R^I$ defined by
$$\left(\tilde{Z}_I\right)\!_r : = \sum_{i \in I} (r_i - \kappa \lambda_i) \frac{\partial}{\partial r_i}.$$

\begin{lem}\label{lem:LiouvV}
For all $x \in UD_I \setminus D$,
	\[ (r_I)_* Z_x = \left(\tilde{Z}_I\right)_{r_I(x)}.\]
\end{lem}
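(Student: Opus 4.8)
The plan is to unwind the definitions. Since the assertion is pointwise in $x$, there is no issue about pushing forward a vector field along the non-injective map $r_I$: the quantity $(r_I)_* Z_x$ is simply a well-defined tangent vector in $T_{r_I(x)}\R^I$, and to identify it it suffices to compute its components with respect to the coordinate frame $\{\partial/\partial r_j\}_{j\in I}$, i.e.\ to evaluate $dr_j$ on it for each $j \in I$.

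First I would observe that, since $UD_I = \bigcap_{i\in I} UD_i$ while $D = \bigcup_{j} D_j$, any point of $UD_I \setminus D$ lies in $UD_i \setminus D_i$ for every $i \in I$; hence the hypothesis that $\theta$ is adapted to the scH applies there, and by Definition \ref{def-adapted} it gives $Z(r_i) = r_i - \kappa_i$ on $UD_I \setminus D$. In the geometric setup of Section \ref{subsec:setup} the relative de Rham class of $(\omega,\theta)$ is $\kappa\bflambda = \sum_i \kappa\lambda_i\,\mathrm{PD}^{rel}(D_i)$, so the wrapping numbers are $\kappa_i = \kappa\lambda_i$, and therefore $Z(r_i) = r_i - \kappa\lambda_i$ on $UD_I \setminus D$.

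Then, writing $r_j$ both for the function $UD_j \to [0,R)$ and for the $j$-th coordinate on $\R^I$, we have $r_j \circ r_I = r_j$, so for $x \in UD_I \setminus D$ and each $j \in I$, $dr_j\big((r_I)_* Z_x\big) = d(r_j\circ r_I)(Z_x) = dr_j(Z_x) = \big(Z(r_j)\big)(x) = r_j(x) - \kappa\lambda_j$, which is exactly the $j$-th component of $(\tilde Z_I)_{r_I(x)}$ by the definition of $\tilde Z_I$. Since the components agree for all $j \in I$, the tangent vectors $(r_I)_* Z_x$ and $(\tilde Z_I)_{r_I(x)}$ coincide, which is the claim. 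The main obstacle is essentially nonexistent: this is a direct bookkeeping computation translating the defining property of an adapted Liouville one-form into the language of the toric moment maps $r_I$, the only point of care being to verify that $Z$ (which lives only on $X = M\setminus D$) is indeed defined on all of $UD_I \setminus D$ so that the adapted condition may be invoked there.
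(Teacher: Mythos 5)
Your proof is correct and is exactly the paper's argument: the paper's proof is the one-line observation that the claim "follows from the fact that $Z(r_i) = r_i - \kappa\lambda_i$, as $\theta$ is adapted to the scH," and your write-up simply spells out the componentwise computation behind that sentence. No gaps.
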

\begin{proof}
Follows from the fact that $Z(r_i) = r_i - \kappa\lambda_i$, as $\theta$ is adapted to the scH.
\end{proof}

In fact, $UD_I^{max}$ and $r_I^{max}$ are constructed so that Lemma \ref{lem:LiouvV} also holds if we put $max$ superscripts on the $r_I$ and $UD_I$ (Lemma \ref{lem:maxversion}). 
This gives us:

\begin{cor}\label{cor:rhoLiouv}
The function $\rho^R:= \tilde{\rho}^R_I \circ r^{max}_I$ satisfies $Z(\rho^R) = \rho^R$ if and only if $\tilde{Z}_I(\tilde{\rho}^R_I) = \tilde{\rho}^R_I$.
\end{cor}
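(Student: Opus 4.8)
The plan is to reduce the identity $Z(\rho^R) = \rho^R$ to a statement purely about the vector field $\tilde Z_I$ on $\R^I$, using that $\rho^R$ is by construction pulled back from $\R^I$ via the toric moment map $r_I^{max}$. Concretely, write $\rho^R|_{\mathring U D_I^{max}} = \tilde\rho^R_I \circ r_I^{max}$. For any $x$ in the relevant open set, the chain rule gives
\[
Z(\rho^R)(x) \;=\; d(\tilde\rho^R_I)_{r_I^{max}(x)}\bigl((r_I^{max})_* Z_x\bigr).
\]
By the $max$-version of Lemma \ref{lem:LiouvV} (that is, Lemma \ref{lem:maxversion}), we have $(r_I^{max})_* Z_x = (\tilde Z_I)_{r_I^{max}(x)}$, and hence
\[
Z(\rho^R)(x) \;=\; d(\tilde\rho^R_I)_{r_I^{max}(x)}\bigl((\tilde Z_I)_{r_I^{max}(x)}\bigr) \;=\; \tilde Z_I(\tilde\rho^R_I)\bigl(r_I^{max}(x)\bigr).
\]
On the other hand $\rho^R(x) = \tilde\rho^R_I(r_I^{max}(x))$. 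So the equation $Z(\rho^R)(x) = \rho^R(x)$ holds at $x$ if and only if $\tilde Z_I(\tilde\rho^R_I) = \tilde\rho^R_I$ at the point $r_I^{max}(x)$.

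To get the full biconditional as stated, I would then note two things. First, ranging over all $x \in \mathring U D_I^{max}$, the point $r_I^{max}(x)$ sweeps out the full open subset of $V_I$ on which $\tilde\rho^R_I$ is being used, so the equivalence ``$Z(\rho^R) = \rho^R$ on $\mathring U D_I^{max}$'' $\Longleftrightarrow$ ``$\tilde Z_I(\tilde\rho^R_I) = \tilde\rho^R_I$ on that subset'' is immediate. Second, since the $\mathring U D_I^{max}$ cover $M \setminus \mathbb{L}$, and since $Z(\rho^R) = \rho^R$ is trivially true (or rather vacuous) on $\mathbb{L}$ where $\rho^R \equiv 0$ — indeed $\mathbb{L}$ is where $Z$ itself is not being used — the global identity on $X \setminus \mathbb{L}$ is equivalent to its validity chart by chart, which is the displayed condition on each $\tilde\rho^R_I$.

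The only genuine input here is the $max$-decorated version of Lemma \ref{lem:LiouvV}, i.e. that enlarging $UD_I$ by the Liouville flow to $UD_I^{max}$ and extending the moment map to $r_I^{max}$ preserves the push-forward formula $(r_I^{max})_* Z = \tilde Z_I$; this is exactly the content of Lemma \ref{lem:maxversion}, which the construction of $UD_I^{max}$ and $r_I^{max}$ is designed to guarantee, so I would simply invoke it. Everything else is the chain rule. I therefore expect no real obstacle: the corollary is a formal consequence of Lemma \ref{lem:maxversion} together with the defining formula $\rho^R = \tilde\rho^R_I \circ r_I^{max}$. If anything needs care, it is making sure the overlap regions are handled — but that is subsumed in the assertion (to be established when the $\tilde\rho^R_I$ are actually constructed) that the local definitions glue, and is logically independent of the present statement.
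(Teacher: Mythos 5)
Your proposal is correct and matches the paper's (implicit) argument: the corollary is presented there as an immediate consequence of Lemma \ref{lem:maxversion} via exactly the chain-rule computation you write down, with the local-to-global step handled by the covering of $M \setminus \mathbb{L}$ by the sets $\mathring{U}D_I^{max}$. Nothing further is needed.
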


Note that a function $f:V_I \to \R$ satisfies $\tilde{Z}_I(f) = f$ if and only if it is linear along the rays emanating from $\kappa\bflambda_I$, converging to $0$ at that point.

The functions $\tilde{\rho}^R_I$ will be constructed roughly  as follows. We will choose a hypersurface $\tilde{Y}^R_I\subset V_I\cap \R^I_{\geq 0}$ which is a smoothing of $\tilde{Y}^0_I := \partial \R^I_{\ge 0}$, satisfying certain properties (see Lemma \ref{lem:construct-Y}). Then, we will define $\tilde{\rho}^R_I$ as the function that is linear along the rays emanating from $\kappa\bflambda_I$, converging to zero at that point, and takes the value $1$ on $\tilde{Y}^R_I$.

For the other key properties of $\rho^R$ let us mention the following slightly sketchy point to orient the reader. Recall that $\rho^R$ is supposed to be a smoothing of the continuous function $\rho^0: M \to \R$ introduced in Section \ref{ss-rigid-sk}, which has all of the properties we need (e.g., it satisfies $\rho^0|_{\mathbb{L}} = 0$, $\rho^0|_{D} = 1$ and $Z(\rho^0) = \rho^0$), except it is not smooth. 
We now give an alternative description of the function $\rho^0$, which is parallel with the construction of $\rho^R$. 
We extend the function $\frac{\kappa\lambda_i - r_i^{max}}{\kappa\lambda_i}: UD_i^{max}\to \R$ to $M$ by defining it to be $0$ everywhere outside of its original domain of definition. Let us momentarily denote this extension with the same notation. Then we have $$\rho^0 = \max_i \frac{\kappa\lambda_i - r_i^{max}}{\kappa\lambda_i}.$$ 
In particular, on $UD_I^{max}$, we have $\rho^0 = \tilde{\rho}^0_I \circ r_I^{max}$, where $\tilde{\rho}^0_I(r) = \max_{i \in I} \frac{\kappa\lambda_i - r_i}{\kappa\lambda_i}$. 
Note that $\tilde{\rho}^0_I$ is equal to 0 at $\kappa\bflambda_I$, linear along the rays emanating from this point, and equal to $1$ along $\tilde{Y}^0_I$. The functions $\tilde{\rho}_I^R$ mentioned above will be consistently-chosen smoothings of the functions $\tilde{\rho}^0_I$.

\begin{rmk}
We would like to warn the reader of an abuse of notation we already committed a couple of times above and will continue with below. We will use $r_i$ both as the function $r_i: UD_i\to [0,R)$ and also the $i^{th}$ coordinate function on $\mathbb{R}^I$ with $i\in I.$ We believe that this will not cause too much confusion, partly because often we will actually need to be using $r_i^{max}: UD_i^{max}\to [0,\kappa\lambda_i)$  in place of the former anyway.
\end{rmk}

\subsection{Construction of $UD_I^{max}$, $r_I^{max}$, $\mathring{U}D_I^{max}$}\label{s:UDImax}
 
Note that $UD_i \setminus D_i$ is closed under the positive Liouville flow as long as the flow is defined, by Lemma \ref{lem:LiouvV} and Equation \eqref{eq-small-rad}. Let us define $UD_i^{max}\subset M$ as the union of $UD_i$ with the set of points in $X$ that enter into $UD_i$ under the positive Liouville flow in finite time. Of course we have $UD_i\subset UD_i^{max}$. Note that $UD_i^{max}$ depends on $R$ just as $UD_i$ does (unless $D=D_i$ is smooth); nevertheless we suppress $R$ from the notation.

We  extend $r_i$ to $$r_i^{max}: UD_i^{max}\to \mathbb{R}_{\geq 0}$$ by first flowing into $UD_i$ with the Liouville flow in some time $T\geq 0$, applying $r_i$, and then flowing with $\tilde{Z}_{\{i\}}$ for time $-T$. This is well-defined and smooth by Lemma \ref{lem:LiouvV}. 

Let us also define $UD_I^{max} := \cap_{i \in I} UD_i^{max}$ and $$r_I^{max}:UD_I^{max} \to \R_{\ge 0}^I.$$
Then the following is true by construction:

\begin{lem}\label{lem:maxversion}
 Lemma \ref{lem:LiouvV} holds if we put $max$ superscripts on the $r_I$ and $UD_I$.
\end{lem}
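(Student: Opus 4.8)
The plan is to unwind the definitions of $UD_I^{max}$ and $r_I^{max}$ and reduce the claim to Lemma \ref{lem:LiouvV} together with the naturality of push-forward under the Liouville flow. Fix a non-empty $I \subset [N]$ and a point $x \in UD_I^{max} \setminus D$. By definition of $UD_I^{max} = \cap_{i\in I} UD_i^{max}$, for each $i \in I$ there is a time $T_i \ge 0$ so that $\phi^{T_i}_Z(x) \in UD_i$, where $\phi^t_Z$ denotes the Liouville flow. The first thing I would check is that a \emph{single} time $T$ works for all $i \in I$ simultaneously: since each $UD_i$ is open and contains $\phi^{T_i}_Z(x)$, and $UD_i \setminus D_i$ is forward-invariant under $\phi_Z$ as long as the flow is defined (noted just before the construction of $UD_i^{max}$, via Lemma \ref{lem:LiouvV} and \eqref{eq-small-rad}), one may take $T = \max_{i\in I} T_i$ and conclude $\phi^T_Z(x) \in UD_i$ for all $i \in I$, i.e.\ $\phi^T_Z(x) \in UD_I$. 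Set $y := \phi^T_Z(x) \in UD_I \setminus D$.

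Next I would use the definition of $r_i^{max}$: by construction $r_i^{max} = \phi^{-T}_{\tilde Z_{\{i\}}} \circ r_i \circ \phi^T_Z$ near $x$ (with the same $T$ for every $i\in I$ by the previous paragraph), so on a neighbourhood of $x$ in $UD_I^{max}$ we have $r_I^{max} = \Phi^{-T} \circ r_I \circ \phi^T_Z$, where $\Phi^t := \phi^t_{\tilde Z_I}$ is the flow of the Euler-type field $\tilde Z_I = \sum_{i\in I}(r_i - \kappa\lambda_i)\partial/\partial r_i$ on $V_I$ (this flow decomposes as the product of the flows of the $\tilde Z_{\{i\}}$, which is exactly what makes the componentwise definition consistent). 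Differentiating at $x$,
\[
(r_I^{max})_* Z_x \;=\; (\Phi^{-T})_* \,(r_I)_* \,(\phi^T_Z)_* Z_x \;=\; (\Phi^{-T})_* \,(r_I)_* Z_y,
\]
using that $Z$ is invariant under its own flow, $(\phi^T_Z)_* Z_x = Z_y$. Now apply the already-established Lemma \ref{lem:LiouvV} at the point $y \in UD_I \setminus D$ to get $(r_I)_* Z_y = (\tilde Z_I)_{r_I(y)}$. Finally, since $\tilde Z_I$ is invariant under its own flow $\Phi^t$, we have $(\Phi^{-T})_* (\tilde Z_I)_{r_I(y)} = (\tilde Z_I)_{\Phi^{-T}(r_I(y))} = (\tilde Z_I)_{r_I^{max}(x)}$, which is precisely the claim.

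The only genuine subtlety — and the step I would be most careful about — is the uniformization of the flow time $T$ over $i \in I$ and the verification that $r_I^{max}$ really is given by the single formula $\Phi^{-T}\circ r_I \circ \phi^T_Z$ on a neighbourhood of $x$; this rests on the forward-invariance of $UD_i \setminus D_i$ under $\phi_Z$ (so that flowing $x$ forward past $T_i$ keeps it inside $UD_i$) and on the fact that $r_i^{max}$ is well-defined independently of the chosen flow time, which is itself guaranteed by Lemma \ref{lem:LiouvV}. Everything else is a formal chain-rule computation using the elementary fact that a vector field is invariant under its own flow. No new geometric input beyond Lemma \ref{lem:LiouvV} and \eqref{eq-small-rad} is needed.
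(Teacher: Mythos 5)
Your proposal is correct and supplies exactly the argument the paper leaves implicit (the paper simply declares the lemma ``true by construction''): you uniformize the flow time over $i\in I$ using forward-invariance of $UD_i\setminus D_i$, write $r_I^{max}=\Phi^{-T}\circ r_I\circ \phi^T_Z$ near $x$, and then combine Lemma \ref{lem:LiouvV} at $\phi^T_Z(x)$ with the invariance of $Z$ and of $\tilde Z_I$ under their own flows. This is the intended proof, just written out in full.
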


Now, recall that $\tilde{Y}^0_I := \partial \R_{\ge 0}^I$. 
Define the projection-from-$\kappa\bflambda_I$ map
$$P_I: V_I \to \tilde{Y}^0_I,$$
which flows a point along $\tilde{Z}_I$ until it intersects $\tilde{Y}^0_I$. 

Now, let us define $UD_i^{1/2} := \{r_i \le R/2\} \subset UD_i = \{r_i < R\}$. 
Define $UD_i^{1/2,max}$ to be the union of $UD_i^{1/2}$ with the set of points in $X$ that enter into $UD^{1/2}_i$ under the positive Liouville flow in finite time. 

\begin{defn}
For $I \subset [N]$, define 
$$\mathring{U}D_I^{max} := UD_I^{max} \setminus \bigcup_{j \notin I} UD_j^{1/2,max}.$$
\end{defn}

(Figure \ref{fig:Y} may help the reader visualize these sets.)

\begin{lem}\label{lem:Uringcover}
The sets $\left\{\mathring{U}D_I^{max}\right\}_{\emptyset \neq I \subset [N]}$ form an open cover of $M \setminus \mathbb{L}$.
\end{lem}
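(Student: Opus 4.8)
The statement to prove is Lemma \ref{lem:Uringcover}: the sets $\mathring{U}D_I^{max}$ for $\emptyset \neq I \subset [N]$ cover $M \setminus \mathbb{L}$. The natural strategy is to take an arbitrary point $x \in M \setminus \mathbb{L}$, identify the set $I(x) := \{i : x \in UD_i^{max}\}$, show that $I(x)$ is non-empty, and then show that $x$ actually lies in $\mathring{U}D^{max}_{I(x)}$, i.e. that $x$ is \emph{not} in $UD_j^{1/2,max}$ for any $j \notin I(x)$.

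**Key steps in order.** First I would recall that $\cup_i UD_i^{max} = M \setminus \mathbb{L}$ — this is asserted in Section \ref{s:HamOverview} as part of the construction (and is essentially the definition of $\mathbb{L}$ as the set of points that never flow into any $UD_i$ under the positive Liouville flow). This immediately gives $I(x) \neq \emptyset$ for every $x \in M \setminus \mathbb{L}$, so the candidate index set $I := I(x)$ is a legitimate non-empty subset of $[N]$. Second, I would observe the monotonicity of the filtration in the radius parameter: since $UD_j^{1/2} = \{r_j \le R/2\} \subset \{r_j < R\} = UD_j$, and since $UD_j^{1/2,max}$ is obtained from $UD_j^{1/2}$ by the exact same positive-Liouville-flow saturation procedure used to build $UD_j^{max}$ from $UD_j$, we get the inclusion $UD_j^{1/2,max} \subset UD_j^{max}$. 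Third — the crux — for $j \notin I(x)$ we have by definition $x \notin UD_j^{max}$, hence a fortiori $x \notin UD_j^{1/2,max}$ by the inclusion just noted. Therefore
\[
x \in UD_I^{max} \setminus \bigcup_{j \notin I} UD_j^{1/2,max} = \mathring{U}D_I^{max},
\]
where $x \in UD_I^{max} = \cap_{i \in I} UD_i^{max}$ holds precisely because $I = I(x)$. Since $x$ was arbitrary in $M \setminus \mathbb{L}$, and each $\mathring{U}D_I^{max}$ is open (it is $UD_I^{max}$, an intersection of open sets, minus a union of open sets — here one should note $UD_j^{1/2,max}$ is open, being a saturation of the open set $\{r_j < R/2\}$... wait, $\{r_j \le R/2\}$ is closed; so I would instead take $UD_j^{1/2}$ to be the open set $\{r_j < R/2\}$ if the paper permits, or argue that the saturation of $\{r_j \le R/2\}$ under the open-ended flow condition is still open, or simply note openness follows from the paper's running conventions and Figure \ref{fig:Y}), we conclude the $\mathring{U}D_I^{max}$ form an open cover of $M \setminus \mathbb{L}$.

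**Main obstacle.** The only genuinely non-formal point is the claim $\cup_i UD_i^{max} = M \setminus \mathbb{L}$, i.e. that every point of $M$ outside the skeleton eventually flows into some $UD_i$ under the positive Liouville flow. This is really a statement about the dynamics of $Z$ on $X$ and the definition of $\mathbb{L} = \{\rho^0 = 0\}$: a point $x$ flows into $UD_i$ in finite positive time iff $r_i^{max}(x)$ is defined iff (loosely) $\rho^0(x) > 0$ along the $i$-direction, so $x \notin \mathbb{L}$ iff $x$ flows into \emph{some} $UD_i$. I expect this to be handled either by the construction in Section \ref{s:UDImax} (which defines $UD_i^{max}$ precisely as this saturation) together with the description of $\rho^0$ as $\max_i \frac{\kappa\lambda_i - r_i^{max}}{\kappa\lambda_i}$ given in Section \ref{s:HamOverview}, from which $\mathbb{L} = \{\rho^0 = 0\} = M \setminus \cup_i UD_i^{max}$ is immediate. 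So in the write-up I would simply cite that description and the definition of $\mathbb{L}$, making the proof of Lemma \ref{lem:Uringcover} quite short; the remaining bookkeeping (the inclusion $UD_j^{1/2,max} \subset UD_j^{max}$ and openness) is routine.
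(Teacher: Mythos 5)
Your covering argument is correct and is a mild variant of the paper's: you fix $x \in M \setminus \mathbb{L}$, take the \emph{maximal} index set $I(x) = \{i : x \in UD_i^{max}\}$ (non-empty since $\cup_i UD_i^{max} = M \setminus \mathbb{L}$), and use $UD_j^{1/2,max} \subset UD_j^{max}$ to see $x \in \mathring{U}D^{max}_{I(x)}$. The paper instead intersects, over all $i$, the binary open covers $\left\{UD_i^{max}, \left(UD_i^{1/2,max}\right)^c\right\}$ of $M\setminus\mathbb{L}$, which produces exactly the sets $\mathring{U}D_I^{max}$ for all $I \subset [N]$, and then rules out $I = \emptyset$ via the (slightly stronger) fact that $\cup_i UD_i^{1/2,max} = M \setminus \mathbb{L}$. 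Your pointwise version only needs the weaker saturation statement $\cup_i UD_i^{max} = M\setminus\mathbb{L}$, which is indeed asserted in Section \ref{s:HamOverview}; both versions reduce to the dynamical fact that every Liouville flowline in $X \setminus \mathbb{L}$ eventually enters a neighbourhood of $D$.

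However, your treatment of openness has the logic backwards, and this is a genuine error rather than a stylistic issue. You write that $\mathring{U}D_I^{max}$ is ``an intersection of open sets minus a union of open sets'' and then try to repair the definition so that $UD_j^{1/2,max}$ becomes open (e.g.\ by replacing $\{r_j \le R/2\}$ with $\{r_j < R/2\}$). But an open set minus an open set is in general not open; what you need is that each $UD_j^{1/2,max}$ is \emph{closed} in $M\setminus\mathbb{L}$, so that $UD_I^{max} \setminus \bigcup_{j\notin I} UD_j^{1/2,max}$ is an open set minus a closed set. This is precisely why the paper defines $UD_i^{1/2}$ with the non-strict inequality and records at the start of its proof that ``$UD_i^{1/2,max}$ is closed in $M\setminus\mathbb{L}$'' (the point being that $\{r_i \le R/2\}$ is closed and forward-invariant under the Liouville flow, which meets $\{r_i = R/2\}$ transversely since $Z(r_i) = r_i - \kappa\lambda_i < 0$ there, so the locus of points that never reach it is open in $M\setminus\mathbb{L}$). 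Your proposed fix of making $UD_j^{1/2}$ open would make the openness of $\mathring{U}D_I^{max}$ \emph{harder}, not easier, to establish. With this corrected, the rest of your argument stands.
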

\begin{proof}
Because $UD_i^{1/2,max}$ is closed in $M \setminus \mathbb{L}$, and contained in $UD_i^{max}$, the sets $UD_i^{max}$ and $\left(UD_i^{1/2,max}\right)^c$ form an open cover of $M \setminus \mathbb{L}$ for all $i$. 
Taking the intersection of these open covers over all $i$ gives us an open cover by the sets
$$ \bigcap_{i \in I} UD_i^{max} \cap \bigcap_{ i \notin I} \left(UD_i^{1/2,max}\right)^c = \mathring{U}D_I^{max}$$
for $I \subset [N]$. It remains to check that $\mathring{U}D_{\emptyset}^{max} = \emptyset$. 
This follows from the fact that $\cup_i UD_i^{1/2,max} = M \setminus \mathbb{L}$ (because every flowline of the Liouville vector field in $X \setminus \mathbb{L}$ ultimately enters $\cup_i UD_i^{1/2}$).
\end{proof}

The following is an easy consequence of Lemma \ref{lem:LiouvV} and the construction of $UD_I^{max}$ and $UD_i^{1/2,max}$:

\begin{lem}\label{lem:Umaxhalfim}
If $i \in I$, then
$$ UD_I^{max} \setminus UD_i^{1/2,max} = \left(P_I \circ r_I^{max}\right)^{-1}\left(\{r_i > R/2\}\right).$$
\end{lem}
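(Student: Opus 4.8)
The plan is to unwind the definitions of both sides and show they describe the same subset of $UD_I^{max}$, working point by point.

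First I would recall that for $i \in I$, the set $UD_i^{1/2,max}$ consists of $UD_i^{1/2} = \{r_i \le R/2\}$ together with all points of $X$ that flow into $UD_i^{1/2}$ under the positive Liouville flow in finite time. The key observation is that, restricted to $UD_I^{max}$, membership in $UD_i^{1/2,max}$ is detected by the coordinate function after projecting to the boundary hypersurface: a point $x \in UD_I^{max}$ lies in $UD_i^{1/2,max}$ if and only if the flowline of $\tilde{Z}_I$ through $r_I^{max}(x)$ meets $\tilde{Y}^0_I = \partial \R^I_{\ge 0}$ at a point with $i$th coordinate $\le R/2$. This is exactly the condition $\left(P_I \circ r_I^{max}\right)(x) \in \{r_i \le R/2\}$, so its complement in $UD_I^{max}$ is $\left(P_I \circ r_I^{max}\right)^{-1}(\{r_i > R/2\})$, which is the claimed description.

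To justify this observation, I would use Lemma \ref{lem:LiouvV} with $max$ superscripts (that is, Lemma \ref{lem:maxversion}), which says $r_I^{max}$ intertwines the Liouville vector field $Z$ on $UD_I^{max}\setminus D$ with $\tilde{Z}_I$ on $\R^I$. Hence the positive Liouville flow of $x$ corresponds under $r_I^{max}$ to the flow of $\tilde{Z}_I$ starting at $r_I^{max}(x)$, and $P_I$ is by definition the endpoint map of this latter flow onto $\tilde{Y}^0_I$. Since the $i$th coordinate $r_i$ is nonincreasing along $\tilde{Z}_I$ until it hits $0$ (because $\tilde{Z}_I(r_i) = r_i - \kappa\lambda_i < 0$ for $r_i < \kappa\lambda_i$), the flowline of $x$ enters $\{r_i \le R/2\}$ in finite positive time exactly when $P_I(r_I^{max}(x))$ has $i$th coordinate $\le R/2$; and the forward flowline stays in $UD_i$ once it enters (by Lemma \ref{lem:LiouvV} and \eqref{eq-small-rad}), so the intermediate point where $r_i$ first drops to $R/2$ is genuinely a point of $UD_i^{1/2} \subset UD_i$. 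Combining these: $x \in UD_i^{1/2,max}$ iff $P_I(r_I^{max}(x)) \in \{r_i \le R/2\}$, which is the content of the lemma after taking complements in $UD_I^{max}$.

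The main obstacle is the bookkeeping around the boundary behaviour: one must be careful that $P_I$ is well-defined and smooth on all of $V_I$ (so in particular along flowlines that asymptote toward faces of $\tilde{Y}^0_I$), and that the correspondence between ``enters $\{r_i \le R/2\}$ in finite forward time'' and ``the projected point has small $i$th coordinate'' is not spoiled by a flowline leaving $UD_I^{max}$ before reaching $\tilde{Y}^0_I$. The latter cannot happen precisely because $UD_I^{max}$ was constructed to be forward-invariant under the Liouville flow up to the point it enters $\cup_{i} UD_i$, which is exactly the regime $P_I$ records; I would state this carefully but expect it to follow formally from the construction in Section \ref{s:UDImax}. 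Everything else is a direct translation through $r_I^{max}$ using Lemma \ref{lem:maxversion}.
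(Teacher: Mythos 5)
Your strategy is the intended one --- the paper offers no proof of Lemma \ref{lem:Umaxhalfim} beyond calling it an easy consequence of Lemma \ref{lem:LiouvV} and the constructions --- and the inclusion $\left(P_I \circ r_I^{max}\right)^{-1}\left(\{r_i > R/2\}\right) \subseteq UD_I^{max} \setminus UD_i^{1/2,max}$ does follow from your translation: if the forward Liouville flowline of $x$ enters $\{r_i \le R/2\}$ at a finite time $t_1$, the point $\phi_{t_1}(x)$ lies in $X$, so $r_j^{max}(\phi_{t_1}(x))>0$ for all $j \in I$, the $\tilde{Z}_I$-flowline has not yet reached $\tilde{Y}^0_I$, and since $r_i$ is decreasing along it the $i$th coordinate of $P_I(r_I^{max}(x))$ is $\le R/2$. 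The genuine gap is in the reverse inclusion, precisely at the step you label the main obstacle and then dismiss. The assertion that the flowline cannot ``leave $UD_I^{max}$ before reaching $\tilde{Y}^0_I$'' is false: a point $x \in UD_I^{max}$ may also lie in $UD_k^{max}$ for some $k \notin I$, and its forward Liouville flowline can terminate on $D_k$ (the flow simply ceases to be defined there) strictly before the event that $P_I$ records. Forward-invariance of the $UD$'s does not exclude this, because hitting $D_k$ is not an exit through a boundary in $X$ but the flow running out of time. Concretely, take $I=\{i\}$ and $x \in UD_i \cap UD_k$ with $r_i(x)$ close to $R$ and $r_k(x)$ very small (such points exist whenever $D_i \cap D_k \neq \emptyset$). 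Then $P_{\{i\}} \circ r_{\{i\}}^{max} \equiv 0$, so the right-hand side of the lemma is empty; but the flowline of $x$ reaches $D_k$ while $r_i^{max}$ is still close to $R$, so $x$ never enters $UD_i^{1/2}$ under the positive Liouville flow, i.e.\ $x \in UD_i^{max}\setminus UD_i^{1/2,max}$.

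To be fair, this is as much an imprecision in the statement (with the literal definition of $UD_i^{1/2,max}$) as in your argument, and there is a second, milder mismatch on the measure-zero locus where $r_i$ reaches exactly $R/2$ at the instant the flowline hits $D$, coming from the closed condition $r_i \le R/2$ defining $UD_i^{1/2}$ versus the open condition $r_i>R/2$ on the right-hand side. But a correct write-up cannot simply assert that the obstruction does not occur: you must either restrict to points whose forward flowlines meet only divisor components indexed by $I$ --- which is what effectively happens in the sole application, Lemma \ref{lem:rho-patch}, since on $\mathring{U}D_I^{max}$ one has removed $UD_j^{1/2,max}$ for all $j \notin I$, and any flowline hitting $D_j$ must first pass through $\{r_j \le R/2\}$ --- or reformulate $UD_i^{1/2,max}$ in terms of $r_i^{max}$ and the projection $P$ rather than the honest Liouville flow. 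You should also record the small properness argument showing $\{r_i^{max} < R\} = UD_i$ inside $UD_i^{max}$, which you use implicitly when claiming the point where $r_i$ first drops to $R/2$ lies in $UD_i^{1/2}$.
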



\subsection{Construction of $\tilde{Y}^R_I$}\label{ss:tildeY}

\begin{figure}
\centering
\includegraphics[width=0.5\linewidth]{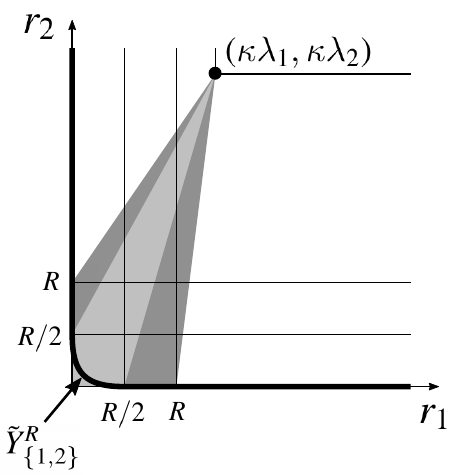}
\caption{The hypersurface $\tilde{Y}^R_{\{1,2\}}$. The image of $r_{\{1,2\}}^{max}$ is shaded. 
The images of the regions $\mathring{U}D_{\{1\}}^{max}\cap  \mathring{U}D_{\{1,2\}}^{max}$ and $\mathring{U}D_{\{2\}}^{max} \cap \mathring{U}D_{\{1,2\}}^{max}$ are shaded darker.}
\label{fig:Y}
\end{figure}

For any $R_0>R>0$ (as always in this section), let $q^R:\R \to \R$ be a function satisfying:
\begin{itemize}
\item $q^R(r) = 0$ for $r \ge R/2$;
\item $(q^R)'(r)<0$ for $r < R/2$;
\item $q^R(0) = 1$.
\end{itemize}
Consider 
\begin{align*}
Q^R_I: \R^I &\to \R\\
Q^R_I(r) &:= \sum_{i \in I} q^R(r_i).
\end{align*}

Now define $$\tilde{Y}^R_I := \{Q^R_I = 1\}$$
(see Figure \ref{fig:Y}).

\begin{lem}\label{lem:construct-Y}
The hypersurfaces $\tilde{Y}^R_I \subset \R^I$ have the following properties:
\begin{enumerate}
\item \label{it:Y-V} $\tilde{Y}^R_I$ is contained in the region $V_{I,\ge 0} := V_I \cap \R_{\ge 0}^I$.
\item \label{it:Y-Z} Every flowline of $\tilde{Z}_I$ in $V_I$ crosses $\tilde{Y}^R_I$ transversely at a unique point. 
\item \label{it:Y-patch1} If $\tilde{\nu}^R_I:\tilde{Y}^R_I \to \R^I$ is a normal vector field (pointing towards the component containing $\kappa\bflambda$), then $\tilde{\nu}^R_{I,i} \ge 0$ for all $i$. (Here $\tilde{\nu}^R_{I,i}$ is the $i$th component of $\tilde{\nu}^R_I$.)
\item \label{it:Y-patch2} For any $J \subset I$, $\tilde{Y}^R_I$ coincides with $\tilde{Y}^R_J \times \R^{I \setminus J}$ over the region $\cap_{i \in I \setminus J} \{\tilde{\nu}^R_{I,i} = 0\}$. 
\item \label{it:Y-patch3} The region $\{\tilde{\nu}^R_{I,i} = 0\}$ contains $P_I^{-1}\left(\{r_i>R/2\}\right)$.
\end{enumerate}
\end{lem}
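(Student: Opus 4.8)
The plan is to verify the five properties directly from the explicit formula $\tilde{Y}^R_I = \{Q^R_I = 1\}$ with $Q^R_I(r) = \sum_{i \in I} q^R(r_i)$, exploiting the monotonicity and support properties of the one-variable function $q^R$.

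\textbf{Properties \eqref{it:Y-V} and \eqref{it:Y-Z}.} First I would observe that since $q^R \ge 0$ everywhere, $q^R(0) = 1$, and $q^R$ vanishes on $[R/2,\infty)$, the level set $\{Q^R_I = 1\}$ forces each coordinate to satisfy $r_i \ge 0$: if some $r_i < 0$ then $q^R(r_i) > q^R(0) = 1$ already (using $(q^R)' < 0$ on $(-\infty, R/2)$), and all other terms are $\ge 0$, so $Q^R_I > 1$. It also forces not all $r_i$ to be $\ge \kappa\lambda_i$; in fact since $R_0 < \kappa\lambda_i$ and $q^R$ vanishes past $R/2 < R_0/2 < \kappa\lambda_i$, if every $r_i \ge \kappa\lambda_i$ then $Q^R_I = 0 \ne 1$. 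Hence $\tilde{Y}^R_I \subset V_{I,\ge 0}$. For \eqref{it:Y-Z}, along a flowline $t \mapsto \gamma(t)$ of $\tilde{Z}_I$ inside $V_I$, each coordinate $\gamma_i(t) - \kappa\lambda_i$ scales like $e^t$; starting in $V_I$ means at least one coordinate stays below $\kappa\lambda_i$, hence $\gamma_i(t) \to -\infty$ as $t \to -\infty$ for that index, so $Q^R_I(\gamma(t)) \to +\infty$, while as $t \to +\infty$ all coordinates that started below $\kappa\lambda_i$ increase past $R/2$ (and those that started above stay above), so eventually $Q^R_I(\gamma(t)) = 0$. Since $\frac{d}{dt} Q^R_I(\gamma(t)) = \sum_i (q^R)'(\gamma_i(t))(\gamma_i(t) - \kappa\lambda_i)$, and on the relevant range each term is $\le 0$ — when $\gamma_i < R/2 < \kappa\lambda_i$, $(q^R)'(\gamma_i) < 0$ and $\gamma_i - \kappa\lambda_i < 0$; when $\gamma_i \ge R/2$ the term vanishes — with strict negativity somewhere near the crossing, $Q^R_I \circ \gamma$ is strictly decreasing through the value $1$, giving a unique transverse crossing.

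\textbf{Properties \eqref{it:Y-patch1}, \eqref{it:Y-patch2}, \eqref{it:Y-patch3}.} The gradient of $Q^R_I$ is $\nabla Q^R_I(r) = ((q^R)'(r_i))_{i \in I}$, each component $\le 0$; a normal vector field pointing toward the side containing $\kappa\bflambda$ (where $Q^R_I < 1$, since $Q^R_I(\kappa\bflambda_I) = 0$ as each $\kappa\lambda_i > R/2$) is a positive multiple of $-\nabla Q^R_I$, hence has all components $\ge 0$: this is \eqref{it:Y-patch1}, and moreover $\tilde{\nu}^R_{I,i} = 0$ exactly when $(q^R)'(r_i) = 0$, i.e. when $r_i \ge R/2$. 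For \eqref{it:Y-patch2}: over the region where $\tilde{\nu}^R_{I,i} = 0$ for all $i \in I \setminus J$, we have $r_i \ge R/2$ so $q^R(r_i) = 0$ for those indices, hence $Q^R_I(r) = Q^R_J(\mathrm{pr}_J r)$, so the equation $Q^R_I = 1$ is exactly $Q^R_J = 1$ on the $J$-coordinates with the $I \setminus J$-coordinates free; thus $\tilde{Y}^R_I$ coincides with $\tilde{Y}^R_J \times \R^{I\setminus J}$ there. For \eqref{it:Y-patch3}: I need that if a point of $\tilde{Y}^R_I$ lies in $P_I^{-1}(\{r_i > R/2\})$ — meaning its $\tilde{Z}_I$-projection to $\tilde{Y}^0_I = \partial\R^I_{\ge 0}$ has $i$th coordinate $> R/2$ — then $r_i > R/2$ at the original point too. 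Since $\tilde{Z}_I$ scales $r_i - \kappa\lambda_i$ by a positive factor and $\kappa\lambda_i > R/2$, on the segment of the flowline from the point of $\tilde{Y}^R_I$ to its landing point on $\partial\R^I_{\ge 0}$, the $i$th coordinate is monotone; it equals some value $> R/2$ at the $\tilde Y^0_I$-end, and I claim it is $> R/2$ throughout — if it dropped below $R/2$ it could only do so moving away from $\kappa\lambda_i$, i.e. decreasing, but the $\tilde Y^0_I$ landing point is the one with smallest coordinates on that ray among points of $V_{I,\ge 0}$... I need to be a little careful about the direction of flow here, and the cleanest argument is: $P_I$ moves along $\tilde Z_I$ decreasing all coordinates that are below $\kappa\lambda_i$ (the flow direction toward $\partial \R^I_{\ge 0}$ is the one decreasing such coordinates since they must hit $0$), so the $i$th coordinate of the preimage point is $\ge$ that of $P_I$ applied to it, which is $> R/2$; hence $q^R(r_i) = 0$, confirming $\tilde\nu^R_{I,i}=0$.

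\textbf{Main obstacle.} The routine parts are the one-variable monotonicity bookkeeping; the one step requiring genuine care is \eqref{it:Y-patch3}, precisely nailing down the direction of the $\tilde{Z}_I$-flow used to define $P_I$ and checking that projecting along it cannot increase the $i$th coordinate past a point where it was $\le R/2$ — i.e. that $P_I$ (flow toward $\partial\R^I_{\ge 0}$) is coordinatewise non-increasing on the coordinates below $\kappa\lambda_i$. Once that monotonicity of $P_I$ is pinned down, \eqref{it:Y-patch3} follows immediately, and with it the compatibility of the $\tilde{\rho}^R_I$ across the charts $\mathring{U}D^{max}_I$ that this lemma is designed to support.
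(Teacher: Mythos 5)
Your overall approach is the same as the paper's: everything is read off directly from the formula $Q^R_I(r)=\sum_{i\in I}q^R(r_i)$ and the monotonicity and support of $q^R$. Your arguments for properties (1), (3), (4) and (5) match the paper's (your careful point in (5), that the projection $P_I$ toward $\partial\R^I_{\ge 0}$ does not increase any coordinate lying below $\kappa\lambda_i$, is exactly the inclusion $P_I^{-1}(\{r_i>R/2\})\cap V_{I,\ge 0}\subset\{r_i>R/2\}$ that the paper uses).

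However, your treatment of property (2) has the flow of $\tilde{Z}_I$ running backwards, and this produces a self-contradictory sign claim. Since $\dot r_i=r_i-\kappa\lambda_i$, a coordinate with $r_i(0)<\kappa\lambda_i$ satisfies $r_i(t)-\kappa\lambda_i=(r_i(0)-\kappa\lambda_i)e^t$, so it converges to $\kappa\lambda_i$ as $t\to-\infty$ and tends to $-\infty$ as $t\to+\infty$ --- the opposite of what you wrote. Consequently the flowline emanates from $\kappa\bflambda_I$ (where $Q^R_I=0$) and exits $\R^I_{\ge 0}$ in forward time (where $Q^R_I>1$), and $Q^R_I$ is \emph{non-decreasing} along the flow: each term $(q^R)'(\gamma_i)\cdot(\gamma_i-\kappa\lambda_i)$ is a product of two non-positive factors wherever $(q^R)'\neq 0$, hence is $\ge 0$, not $\le 0$ as you assert (your own justification, ``both factors are negative,'' gives a positive product). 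You also need strict positivity of $\tilde{Z}_I(Q^R_I)$ \emph{on} $\tilde{Y}^R_I$ for transversality, which follows because $Q^R_I=1$ forces $q^R(r_i)>0$ and hence $(q^R)'(r_i)<0$ for some $i$. Note too that your direction claim in (2) (``coordinates increase past $R/2$ as $t\to+\infty$'') contradicts the correct direction you use in (5); once the orientation is fixed, the argument goes through exactly as in the paper.
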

\begin{proof}
Property \eqref{it:Y-V} follows from the fact that $Q^R_I \ge 1$ if any $r_i \le 0$ and $Q^R_I = 0$ if all $r_i\geq R/2$.

To prove property \eqref{it:Y-Z}, we first observe any flowline of $\tilde{Z}_I$ in $V_I$ starts at $\kappa\bflambda_I$, where $Q^R_I = 0$, and ends up outside $V_{I,\ge 0}$, where $Q^R_I \ge 1$, so it must cross $\tilde{Y}^R_I$ somewhere. 
Furthermore, we have that $\tilde{Z}_I(Q^R_I) \ge 0$ for any $i$: we have $$\tilde{Z}_I(q^R(r_i)) = (r_i-\kappa\lambda_i) \cdot (q^R)'(r_i),$$ 
where $(q^R)'(r_i) \le 0$, and $r_i - \kappa\lambda_i < R/2 - \kappa\lambda_i <0$ wherever $(q^R)'(r_i) \neq 0$. 
Finally, we have $\tilde{Z}_I(Q^R_I) > 0$ along $\tilde{Y}^R_I$, because at any point on $\tilde{Y}^R_I$ we have $q^R(r_i) > 0$ and hence $(q^R)'(r_i)<0$ for some $i$.

Property \eqref{it:Y-patch1} follows from the fact that $\partial Q^R_I/\partial r_i \le 0$ for all $i$.
Property \eqref{it:Y-patch2} follows from the fact that $(q^R)'(r_i) = 0$ if and only if $q^R(r_i) = 0$. 
Property \eqref{it:Y-patch3} follows from the fact that $$P_I^{-1}(\{r_i > R/2\}) \cap \tilde{Y}^R_I \quad  \subset  \quad P_I^{-1}(\{r_i > R/2\})  \cap V_{I,\ge 0} \quad \subset \quad \{r_i>R/2\},$$ and $(q^R)'(r_i) = 0$ for $r_i > R/2$. 
\end{proof}

\begin{rmk}
The hypersurface $\tilde{Y}^R_I$ has the additional property (which we will not use, but which may help the reader to visualize the construction) that it coincides with $\tilde{Y}^0_I$ away from a neighbourhood of the singular locus of the latter. We can also choose $q^a$ to be a convex function, which would imply that the component of $\R^I-\tilde{Y}^R_I$ that does not contain $0$ is convex (which would in turn imply that $\tilde{\rho}^R_I$ is convex). Again, we do not need this property.
\end{rmk}

\subsection{Construction of $\tilde{\rho}^R_I$}\label{ss:BasicRho}

By property \eqref{it:Y-Z} of $\tilde{Y}^R_I$, there is a unique smooth function $\tilde{\rho}^R_I:V_I \to \R$ satisfying
$$ \tilde{\rho}^R_I|_{\tilde{Y}^R_I} = 1 \qquad \text{and}\qquad \tilde{Z}_I(\tilde{\rho}^R_I) = \tilde{\rho}^R_I.$$
Recall that the second condition means that $\tilde{\rho}^R_I$ is linear along the rays emanating from $\kappa\bflambda_I$, converging to zero at $\kappa\bflambda_I$. 
In particular, the level sets of $\tilde{\rho}^R_I$ are scalings of $\tilde{Y}^R_I$ centred at $\kappa \bflambda_I$.

\begin{lem}\label{lem:rhoI-patch}
If $J \subset I$, then
$$\tilde{\rho}^R_I = \tilde{\rho}^R_J \circ \mathrm{pr}_{IJ} \qquad \text{over the region }\qquad \bigcap_{i \in I \setminus J} P_I^{-1}\left(\{r_i > R/2\}\right),$$
where $\mathrm{pr}_{IJ}:\R^I \to \R^J$ is the natural projection.
\end{lem}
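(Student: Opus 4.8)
The plan is to reduce the statement to a pointwise comparison of one-dimensional dynamics. Recall the defining properties: $\tilde{\rho}^R_I$ is the unique function with $\tilde{\rho}^R_I|_{\tilde{Y}^R_I}=1$ and $\tilde{Z}_I(\tilde{\rho}^R_I)=\tilde{\rho}^R_I$, i.e. it is linear along the rays emanating from $\kappa\bflambda_I$ and vanishing there, and similarly for $\tilde{\rho}^R_J$ on $V_J$. First I would observe that $\mathrm{pr}_{IJ}$ intertwines the Euler-type fields $\tilde{Z}_I$ and $\tilde{Z}_J$: since $(\tilde{Z}_I)_r=\sum_{i\in I}(r_i-\kappa\lambda_i)\partial/\partial r_i$, the components indexed by $J$ form exactly $(\tilde{Z}_J)_{\mathrm{pr}_{IJ}(r)}$, so $(\mathrm{pr}_{IJ})_*\tilde{Z}_I = \tilde{Z}_J$ (pulled back). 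Consequently $\tilde{\rho}^R_J\circ\mathrm{pr}_{IJ}$ also satisfies $\tilde{Z}_I(\tilde{\rho}^R_J\circ\mathrm{pr}_{IJ}) = \tilde{\rho}^R_J\circ\mathrm{pr}_{IJ}$, so it is again linear along $\tilde{Z}_I$-rays emanating from $\kappa\bflambda_I$ and vanishing there. Both $\tilde{\rho}^R_I$ and $\tilde{\rho}^R_J\circ\mathrm{pr}_{IJ}$ are thus determined on any $\tilde{Z}_I$-flowline by their value at the unique crossing point of $\tilde{Y}^R_I$ (using Lemma \ref{lem:construct-Y}\eqref{it:Y-Z}); hence the two functions agree at a point $x$ in $V_I$ if and only if they agree at the point $P_I(x)\in\tilde{Y}^R_I$ on the same flowline, provided that flowline actually meets $\tilde{Y}^R_I$ and $\mathrm{pr}_{IJ}$ maps it into $V_J$ — both of which need checking over the relevant region.

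Next I would localize to the region $\Omega := \bigcap_{i\in I\setminus J}P_I^{-1}(\{r_i>R/2\})$. The point is to show that on $\Omega$, the hypersurface $\tilde{Y}^R_I$ looks like a product $\tilde{Y}^R_J\times\R^{I\setminus J}$ in the $r_i$-directions for $i\in I\setminus J$, so that the crossing point $P_I(x)$ has $\tilde{\rho}^R_J$-value equal to $1$ and projects onto the corresponding crossing point of $\tilde{Y}^R_J$. Concretely: by Lemma \ref{lem:construct-Y}\eqref{it:Y-patch3}, $\Omega\subset\bigcap_{i\in I\setminus J}\{\tilde{\nu}^R_{I,i}=0\}$, and then by Lemma \ref{lem:construct-Y}\eqref{it:Y-patch2}, over this region $\tilde{Y}^R_I = \tilde{Y}^R_J\times\R^{I\setminus J}$. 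I also need that $\Omega$ is saturated under the forward $\tilde{Z}_I$-flow up to the crossing with $\tilde{Y}^R_I$ — equivalently, that $P_I(\Omega)\subset\Omega$. This should follow because $P_I$ is idempotent ($P_I\circ P_I=P_I$ on $V_I$, as it projects to $\tilde{Y}^0_I$ along $\tilde{Z}_I$-rays), so $P_I^{-1}(\{r_i>R/2\})$ is automatically invariant in the required sense; I would spell this out carefully. Given this, for $x\in\Omega$, writing $y=P_I(x)\in\tilde{Y}^R_I\cap\Omega$, the product structure gives $y = (y_J, y_{I\setminus J})$ with $y_J\in\tilde{Y}^R_J$, so $\tilde{\rho}^R_J(\mathrm{pr}_{IJ}(y)) = \tilde{\rho}^R_J(y_J) = 1 = \tilde{\rho}^R_I(y)$.

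Then I would propagate this equality along $\tilde{Z}_I$-flowlines using the linearity (the $\tilde{Z}_I$-invariance of both sides established in the first paragraph): for $x\in\Omega$ lying at "time $t$" along the ray from $\kappa\bflambda_I$ through $y=P_I(x)$, both $\tilde{\rho}^R_I(x)$ and $\tilde{\rho}^R_J(\mathrm{pr}_{IJ}(x))$ equal $e^{t}$ times their common value $1$ at $y$ (with the convention that $\tilde{\rho}^R_I(\kappa\bflambda_I)=0$ corresponds to $t=-\infty$; more precisely $\tilde{\rho}^R_I$ scales linearly under the flow, $\tilde{\rho}^R_I(\phi^t_{\tilde{Z}_I}(y)) = e^t$). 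Here I use that $\mathrm{pr}_{IJ}$ sends the $\tilde{Z}_I$-flow to the $\tilde{Z}_J$-flow and that $\mathrm{pr}_{IJ}(y)\in V_J$, so the relevant flowline of $\tilde{Z}_J$ is well-defined and crosses $\tilde{Y}^R_J$ at $\mathrm{pr}_{IJ}(y)$. Therefore $\tilde{\rho}^R_I = \tilde{\rho}^R_J\circ\mathrm{pr}_{IJ}$ on $\Omega$, as claimed.

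The main obstacle, I expect, is the bookkeeping around $\Omega$: verifying that the product structure from Lemma \ref{lem:construct-Y}\eqref{it:Y-patch2}--\eqref{it:Y-patch3} persists along the entire $\tilde{Z}_I$-flowline from $\kappa\bflambda_I$ to the crossing point (equivalently, that $\Omega$ is a union of partial $\tilde{Z}_I$-rays terminating at $\tilde{Y}^R_I$), and that the projection $\mathrm{pr}_{IJ}$ genuinely lands in $V_J$ over $\Omega$ so that $\tilde{\rho}^R_J$ is even defined there. Both are essentially consequences of the idempotency/equivariance of $P_I$ and the explicit form $V_I = \R^I\setminus\prod_{i\in I}[\kappa\lambda_i,\infty)$, but they require a careful argument rather than a one-line appeal; the rest of the proof is formal once these are in place.
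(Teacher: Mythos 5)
Your proposal is correct and takes essentially the same route as the paper, whose entire proof is the observation that $\tilde{Y}^R_I$ coincides with $\tilde{Y}^R_J \times \R^{I\setminus J}$ over the given region by properties \eqref{it:Y-patch2} and \eqref{it:Y-patch3} of Lemma \ref{lem:construct-Y}; you have merely filled in the routine details (the intertwining $(\mathrm{pr}_{IJ})_*\tilde{Z}_I=\tilde{Z}_J$ and the propagation along rays), and the saturation worry is vacuous since $P_I^{-1}(S)$ is by construction a union of flowlines. One small slip: $P_I(x)$ lies in $\tilde{Y}^0_I$, not $\tilde{Y}^R_I$, so the point at which you evaluate both functions to get the value $1$ should be the unique crossing of the flowline with $\tilde{Y}^R_I$ furnished by property \eqref{it:Y-Z}, not $P_I(x)$ itself.
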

\begin{proof}
Follows from the fact that $\tilde{Y}^R_I$ coincides with $\tilde{Y}^R_J \times \R^{I \setminus J}$ in the given region, by properties \eqref{it:Y-patch2} and \eqref{it:Y-patch3} of $\tilde{Y}^R_I$.
\end{proof}

\subsection{Construction of $\rho^R$}

\begin{lem}\label{lem:rho-patch}
For any $\emptyset \neq I,J \subset [N]$, we have
$$ \tilde{\rho}^R_I \circ r_I^{max} = \tilde{\rho}^R_J \circ r_J^{max} \qquad \text{over} \qquad \mathring{U}D_I^{max} \cap \mathring{U}D_J^{max}.$$
\end{lem}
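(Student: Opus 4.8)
The plan is to reduce everything to the single index set $K := I \cup J$, and then apply Lemma \ref{lem:rhoI-patch} twice (once for the inclusion $I \subset K$, once for $J \subset K$). First I would observe that, since $UD^{max}_I = \bigcap_{i \in I} UD^{max}_i$ by definition, we have
\[ \mathring{U}D_I^{max} \cap \mathring{U}D_J^{max} \subset UD^{max}_I \cap UD^{max}_J = UD^{max}_K, \]
so $r^{max}_K$ is defined on this intersection (and takes values in $\prod_{i\in K}[0,\kappa\lambda_i) \subset V_K$, so that $P_K$ applies to it); and by the pointwise construction of the functions $r^{max}_i$ in Section \ref{s:UDImax} we have $\mathrm{pr}_{KI} \circ r^{max}_K = r^{max}_I$ and $\mathrm{pr}_{KJ} \circ r^{max}_K = r^{max}_J$ there. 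Hence it suffices to prove that, over $\mathring{U}D_I^{max} \cap \mathring{U}D_J^{max}$,
\[ \tilde{\rho}^R_K \circ r^{max}_K = \left(\tilde{\rho}^R_I \circ \mathrm{pr}_{KI}\right) \circ r^{max}_K, \]
together with the analogous identity for $J$; the right-hand sides then equal $\tilde{\rho}^R_I \circ r^{max}_I$ and $\tilde{\rho}^R_J \circ r^{max}_J$. By the symmetry in $I$ and $J$, I would only prove the identity for $I$.

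Applying Lemma \ref{lem:rhoI-patch} with index set $K$ and subset $I$ (so that $I \subset K$), we get $\tilde{\rho}^R_K = \tilde{\rho}^R_I \circ \mathrm{pr}_{KI}$ over the region $\bigcap_{i \in K \setminus I} P_K^{-1}\left(\{r_i > R/2\}\right)$. So the task reduces to showing: for every $x \in \mathring{U}D_I^{max} \cap \mathring{U}D_J^{max}$ and every $i \in K \setminus I$, the $i$-th coordinate of $P_K\left(r^{max}_K(x)\right)$ exceeds $R/2$. Fix such an $i$. Since $K = I \cup J$, the condition $i \notin I$ forces $i \in J \subset K$. Because $x \in \mathring{U}D_I^{max} = UD_I^{max} \setminus \bigcup_{j \notin I} UD_j^{1/2,max}$ and $i \notin I$, we get $x \notin UD_i^{1/2,max}$; together with $x \in UD^{max}_K$ this gives $x \in UD^{max}_K \setminus UD_i^{1/2,max}$. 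Now Lemma \ref{lem:Umaxhalfim}, applied with index set $K$ and the element $i \in K$, yields
\[ x \in UD^{max}_K \setminus UD_i^{1/2,max} = \left(P_K \circ r^{max}_K\right)^{-1}\left(\{r_i > R/2\}\right), \]
i.e. the $i$-th coordinate of $P_K\left(r^{max}_K(x)\right)$ is indeed $> R/2$. Since $i$ was an arbitrary element of $K \setminus I$, the point $r^{max}_K(x)$ lies in $\bigcap_{i \in K \setminus I} P_K^{-1}\left(\{r_i > R/2\}\right)$, and the identity for $I$ follows. Exchanging the roles of $I$ and $J$ gives the identity for $J$, and hence the Lemma.

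I do not expect a genuine obstacle here: the argument is pure bookkeeping with subsets of $[N]$. The two points requiring a little care are (a) that $r^{max}_K$ really does restrict, via the coordinate projections, to $r^{max}_I$ and $r^{max}_J$ — which is immediate from the way each $r^{max}_i$ is defined — and (b) correctly substituting the index-set variables when invoking Lemmas \ref{lem:rhoI-patch} and \ref{lem:Umaxhalfim}. The degenerate cases $I \subset J$ and $J \subset I$ are already subsumed in the argument above.
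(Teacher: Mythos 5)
Your proof is correct and follows essentially the same route as the paper's: reduce to $K=I\cup J$, use Lemma \ref{lem:Umaxhalfim} to show the intersection lands in $\bigcap_{i\in K\setminus I}P_K^{-1}(\{r_i>R/2\})$, and then apply Lemma \ref{lem:rhoI-patch} to each of $I\subset K$ and $J\subset K$. The only difference is presentational: the paper states the set identity for $\mathring{U}D_I^{max}\cap\mathring{U}D_J^{max}$ wholesale, whereas you verify the needed inclusion pointwise, which is if anything slightly more careful.
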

\begin{proof}
First note that $\mathring{U}D_I^{max} \cap \mathring{U}D_J^{max} \subset UD_{I \cup J}^{max}$. 
We have
$$ \mathring{U}D_I^{max} \cap \mathring{U}D_J^{max} = \bigcap_{k \notin I \cap J} \left(P_{I \cup J} \circ r_{I \cup J}^{max}\right)^{-1} \left(\{r_k > R/2\}\right),$$
as an immediate consequence of Lemma \ref{lem:Umaxhalfim}. 
Over this set, we have
$$ \tilde{\rho}^R_I \circ r_I^{max} = \tilde{\rho}^R_I \circ \mathrm{pr}_{I \cup J,I} \circ r_{I \cup J}^{max} = \tilde{\rho}^R_{I \cup J} \circ r_{I \cup J}^{max} $$
by Lemma \ref{lem:rhoI-patch}. The result now follows by applying the same argument to $\tilde{\rho}^R_J \circ r_J^{max}$.
\end{proof}

Lemma \ref{lem:Uringcover} and Lemma \ref{lem:rho-patch} allow us to define:

\begin{defn}
We define $\rho^R:M \to \R$ to be equal to $\tilde{\rho}^R_I \circ r_I^{max}$ over each $\mathring{U}D_I^{max}$, and equal to $0$ over $\mathbb{L}$.
\end{defn}

To check that $\rho^R$ is continuous along $\mathbb{L}$, we use the fact that $Z(\rho^R) = \rho^R$ on $M \setminus \mathbb{L}$ by Corollary \ref{cor:rhoLiouv}, and the level sets of $\rho^R$ are compact submanifolds disjoint from $\mathbb{L}$.
It follows that $\rho^R \to 0$ as we go towards $\mathbb{L}$, so $\rho^R$ is continuous along $\mathbb{L}$.

\begin{defn}\label{def:KRs}
Because $Z(\rho^R) = \rho^R$, and $\rho^R|_D \ge 1$, the subset $K^R_\s := \{\rho^R \le \s\}$ is a Liouville subdomain of $X$ for any $\s \in (0,1)$. 
The contact manifold $Y^R_\s = \partial K^R_\s$ with contact form $\sigma^{-1}\iota_{Y^R_\s}^*\theta$ is independent of $\s$.
\end{defn}

For the remainder of this section, we will drop $R$ from the notation: so we write $\rho$ instead of $\rho^R$, etc.

\subsection{The Hamiltonian and its orbits}\label{ss-Horbs}

Let $h:\R \to \R$ be a smooth function which is constant on a neighbourhood of $0$. 
It is clear that the function $h \circ \rho$ is smooth on $M$. 
We denote its Hamiltonian flow by $\Phi^{h \circ \rho}_t$.
In order to describe the orbits of $h \circ \rho$, we first compute $d\rho$. 

\begin{lem}\label{lem-nu-exist}
There exist smooth functions $\nu_i:M \setminus \mathbb{L} \to \R_{\ge 0}$, supported in $UD_i^{max}$, such that
$$ (d\rho)_m = -\sum_i \nu_i(m) \cdot \left(dr_i^{max}\right)_m. $$
(Here the LHS denotes the value of the one-form $d\rho$ at the point $m$. The RHS is well-defined, even though $dr_i^{max}$ is only defined over $UD_i^{max}$, because $\nu_i$ vanishes outside $UD_i^{max}$.)
\end{lem}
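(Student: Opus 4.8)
The plan is to prove Lemma \ref{lem-nu-exist} by working locally on each piece $\mathring{U}D_I^{max}$ of the open cover from Lemma \ref{lem:Uringcover} and then patching. On $\mathring{U}D_I^{max}$ we have $\rho = \tilde{\rho}^R_I \circ r_I^{max}$, so by the chain rule $d\rho = \sum_{i \in I} \left(\partial_{r_i} \tilde{\rho}^R_I\right)(r_I^{max}) \cdot dr_i^{max}$. Thus the natural candidate is $\nu_i := -\left(\partial_{r_i}\tilde{\rho}^R_I\right) \circ r_I^{max}$ on $\mathring{U}D_I^{max}$ for $i \in I$, and $\nu_i := 0$ for $i \notin I$. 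The content of the lemma is then (a) that this is consistent on overlaps $\mathring{U}D_I^{max} \cap \mathring{U}D_J^{max}$, so that the $\nu_i$ glue to globally defined smooth functions on $M \setminus \mathbb{L}$; (b) that each $\nu_i$ is non-negative; and (c) that each $\nu_i$ extends by $0$ to a smooth function supported in $UD_i^{max}$.

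For the non-negativity (b): the level sets of $\tilde{\rho}^R_I$ are the rescalings of $\tilde{Y}^R_I$ centred at $\kappa\bflambda_I$, and $\tilde{\rho}^R_I$ increases away from $\kappa\bflambda_I$; the inward normal direction to these level sets (pointing towards $\kappa\bflambda_I$) is $-\nabla \tilde{\rho}^R_I$, which points in the same direction as the normal field $\tilde{\nu}^R_I$ of property \eqref{it:Y-patch1} of Lemma \ref{lem:construct-Y}. Hence $-\partial_{r_i}\tilde{\rho}^R_I = (\text{positive scalar}) \cdot \tilde{\nu}^R_{I,i} \ge 0$ by that property, giving $\nu_i \ge 0$. (Concretely one can also just note that $\tilde{\rho}^R_I$ is obtained from $Q^R_I$, and $\partial_{r_i} Q^R_I = (q^R)'(r_i) \le 0$, so that $\tilde{\rho}^R_I$ is non-increasing in each coordinate $r_i$; combined with $\tilde Z_I(\tilde\rho^R_I)=\tilde\rho^R_I\ge 0$ this pins down the sign.)

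For the consistency (a) and the support/extension claim (c), I would use the patching lemmas already in hand. On $\mathring{U}D_I^{max} \cap \mathring{U}D_J^{max}$ the proof of Lemma \ref{lem:rho-patch} shows we are inside $UD_{I\cup J}^{max}$ and both $\tilde{\rho}^R_I\circ r_I^{max}$ and $\tilde{\rho}^R_J \circ r_J^{max}$ agree with $\tilde{\rho}^R_{I\cup J}\circ r_{I\cup J}^{max}$; differentiating the identities $\tilde{\rho}^R_{I\cup J} = \tilde{\rho}^R_I \circ \mathrm{pr}_{I\cup J, I}$ from Lemma \ref{lem:rhoI-patch} shows that $\partial_{r_i}\tilde{\rho}^R_{I\cup J} = 0$ for $i \in (I\cup J)\setminus I$ on the relevant region, i.e. the candidate $\nu_i$ defined using $I\cup J$ vanishes exactly for the indices not already in $I$ — which is precisely what we need for the two local definitions to agree, and simultaneously shows $\nu_i$ vanishes as one approaches the boundary of $UD_i^{max}$ inside $M \setminus \mathbb{L}$. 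More precisely, for smoothness of the extension by zero: a point $m \in M\setminus\mathbb{L}$ outside $\overline{UD_i^{max}}$ has a neighbourhood contained in some $\mathring{U}D_I^{max}$ with $i \notin I$, where $\nu_i \equiv 0$; and near $\partial UD_i^{max}$ one uses that $\tilde\rho^R_I$ factors through $\mathrm{pr}_{IJ}$ forgetting the $i$-th coordinate (Lemma \ref{lem:rhoI-patch} applied with $J = I\setminus\{i\}$), via property \eqref{it:Y-patch3}, so $\partial_{r_i}\tilde\rho^R_I$ is identically zero there — hence $\nu_i$ is smooth on all of $M \setminus \mathbb{L}$ and supported in $UD_i^{max}$.

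The main obstacle I expect is the bookkeeping in (a)/(c): making sure the candidate $\nu_i$ defined on the various $\mathring{U}D_I^{max}$ really do agree (not just up to the ambiguity of $dr_i^{max}$ being undefined off $UD_i^{max}$) and that the resulting global function is genuinely smooth across $\partial UD_i^{max}$ rather than merely continuous. This is entirely handled by carefully combining Lemmas \ref{lem:Umaxhalfim}, \ref{lem:rhoI-patch}, \ref{lem:rho-patch} and property \eqref{it:Y-patch3} of Lemma \ref{lem:construct-Y} — the key geometric input being that $\tilde\rho^R_I$ is locally independent of the coordinate $r_i$ precisely on the region $P_I^{-1}(\{r_i > R/2\})$ where $r_i^{max}$ is about to leave the chart — so there is no essential difficulty, only care.
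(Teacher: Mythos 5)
Your proposal is correct and follows essentially the same route as the paper's proof: define $\tilde{\nu}_{I,i} = -\partial\tilde{\rho}^R_I/\partial r_i$ for $i\in I$ (and $0$ otherwise), get non-negativity from property \eqref{it:Y-patch1} of Lemma \ref{lem:construct-Y}, verify agreement on overlaps by differentiating the identities of Lemma \ref{lem:rhoI-patch}, and glue over the cover $\{\mathring{U}D_I^{max}\}$ exactly as in the construction of $\rho^R$ itself. The only minor quibble is that your parenthetical shortcut via $\partial_{r_i}Q^R_I\le 0$ is not a direct deduction (since $\tilde{\rho}^R_I$ is not a function of $Q^R_I$), but your main argument via the normal field does not rely on it.
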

\begin{proof}
For any $i,I$, we define the following function on $V_I$:
$$\tilde{\nu}_{I,i}:=\left\{ \begin{array}{ll}
												 -\partial \tilde{\rho}_I/r_i & \text{if $i \in I$} \\
												 0 & \text{else}
									 \end{array} \right.$$
Note that it is non-negative by property \eqref{it:Y-patch1} of $\tilde{Y}_I$. 
We claim that for any $i$, and any $J \subset I$, we have 
$$\tilde{\nu}_{I,i} = \tilde{\nu}_{J,i} \circ \mathrm{pr}_{IJ} \qquad \text{over the region }\qquad \bigcap_{i \in I \setminus J} P_I^{-1}\left(\{r_i > R/2\}\right).$$
If $i \in I$, this follows by Lemma \ref{lem:rhoI-patch} (there are two cases: $i \in J$ and $i \in I \setminus J$). 
If $i \notin I$, it is obvious as both functions are $0$. 
This allows us to mimic the construction of $\rho$: we set $\nu_i = \tilde{\nu}_{I,i} \circ r_I^{max}$ over $\mathring{U}D_I^{max}$. 
We finally observe that $d\tilde{\rho}_I = -\sum_i \tilde{\nu}_{I,i} dr_i$, which completes the proof.
\end{proof}

For any $m \in M-\mathbb{L}$, we define $I(m) := \{i: \nu_i(m) \neq 0\}$. 
We have $m \in UD_{I(m)}^{max}$. 

We define $\nu: M \setminus \mathbb{L} \to \R^N$ to be the smooth function with coordinates $(\nu_1,\ldots,\nu_N)$. 
We note that the function $h'(\rho) \cdot \nu: M \setminus \mathbb{L} \to \R^N$ extends smoothly to $M$, and we denote this extension by $\nu^h: M \to \R^N$. 
Note that $\nu^h$ is constant along orbits of $h \circ \rho$, so we have a well-defined $\nu^h(\gamma) \in \R^N$ associated to such an orbit $\gamma$. 
We can interpret $\nu^h_i(\gamma)$ as `the number of times $\gamma$ wraps around $D_i$' (it is an integer unless $\gamma$ is contained in $D_i$, see Lemma \ref{lem:HamflowH} below). 
We define  $$I(\gamma):= \{i:\nu^h_i(\gamma) \neq 0\} \subset [N].$$ 
Note that if $h'(\rho) \ge 0$, then $\nu^h(\gamma) \in \R_{\ge 0}^N$.

\begin{cor}\label{cor:PhiHact}
For any $m \in M \setminus \mathbb{L}$, we have $\Phi^{h \circ \rho}_1(m) = \nu^h(m) \cdot m$. 
To explain the notation, $\nu^h(m) \in \R^{I(m)}$ gets projected to $(\R/\Z)^{I(m)}$, which then acts on $m \in UD_{I(m)}^{max}$ by the Hamiltonian torus action.
\end{cor}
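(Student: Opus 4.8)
Corollary \ref{cor:PhiHact} asserts that the time-1 Hamiltonian flow of $h\circ\rho$ acts on $m\in M\setminus\mathbb{L}$ by the torus action via $\nu^h(m)$.

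The plan is to reduce the computation to the toric model $UD_{I(m)}^{max}$ and use Lemma \ref{lem-nu-exist} together with the fact that $r_i^{max}$ is a moment map for a Hamiltonian $(\mathbb{R}/\mathbb{Z})$-action. First I would fix $m\in M\setminus\mathbb{L}$, set $I=I(m)$, and note that $m\in UD_I^{max}$ and that the Hamiltonian $h\circ\rho = h\circ(\tilde\rho_I^R\circ r_I^{max})$ near $m$ depends only on the moment map $r_I^{max}$. The key algebraic point is that for a function of the form $G\circ r_I^{max}$, where $G:\R^I\to\R$, the Hamiltonian vector field is $X_{G\circ r_I^{max}} = \sum_{i\in I}(\partial G/\partial r_i)\circ r_I^{max}\cdot X_{r_i^{max}}$, because $\{r_i^{max},r_j^{max}\}=0$ (they Poisson-commute, being moment map components) and each $X_{r_i^{max}}$ preserves every $r_j^{max}$. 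By Lemma \ref{lem-nu-exist} we have $-\partial\tilde\rho_I^R/\partial r_i = \tilde\nu_{I,i}$, so $d_W$-chasing gives
$$ X_{h\circ\rho}\big|_m = \sum_{i\in I} h'(\rho(m))\,\nu_i(m)\cdot X_{r_i^{max}}\big|_m = \sum_{i} \nu^h_i(m)\cdot X_{r_i^{max}}\big|_m.$$

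Next I would integrate this. Since the vector fields $X_{r_i^{max}}$ are the infinitesimal generators of commuting, $2\pi\mathbb{Z}$-periodic (equivalently, $\mathbb{R}/\mathbb{Z}$ when suitably normalized) Hamiltonian circle actions on $UD_I^{max}$, and since $\nu^h(m)$ is constant along the flow line through $m$ (as $\nu^h$ is constant along orbits of $h\circ\rho$, which was observed just before the statement), the flow $\Phi^{h\circ\rho}_t(m)$ is exactly the orbit of $m$ under the one-parameter subgroup $t\mapsto t\cdot\nu^h(m)$ of the torus $(\mathbb{R}/\mathbb{Z})^{I}$. Evaluating at $t=1$ gives $\Phi^{h\circ\rho}_1(m) = \nu^h(m)\cdot m$, with $\nu^h(m)$ read modulo $\mathbb{Z}^I$ as an element of $(\mathbb{R}/\mathbb{Z})^{I(m)}$, which is precisely the claimed formula. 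I should also check the edge case where the flow line leaves one of the charts $\mathring{U}D_I^{max}$: this cannot happen within the torus orbit because $r_I^{max}$ (hence $\rho$, hence the set $I(m)$) is preserved, so the whole orbit stays in $UD_{I(m)}^{max}$ and in the same stratum.

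The main obstacle is bookkeeping rather than conceptual: one must be careful that the identification of $\nu^h(m)$ as an element of $\R^{I(m)}$ and its reduction mod $\Z^{I(m)}$ is consistent with the normalization of the $r_i$-generated actions (which are $\mathbb{R}/\mathbb{Z}$-actions by the scH axioms, so period $1$), and that the extension $\nu^h$ of $h'(\rho)\cdot\nu$ across $\mathbb{L}$ is genuinely smooth so that the flow is well-defined and the constancy-along-orbits statement makes sense near $\mathbb{L}$ — but for $m\in M\setminus\mathbb{L}$, which is all the statement concerns, this is automatic from Lemma \ref{lem-nu-exist}. Thus the proof is essentially: (1) compute $X_{h\circ\rho}$ in the toric chart via Poisson-commutativity of the $r_i^{max}$; (2) recognize the result as a constant-coefficient combination of the torus-action generators; (3) integrate.
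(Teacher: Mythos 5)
Your argument is exactly the intended one: the paper offers no written proof of this Corollary, treating it as immediate from Lemma \ref{lem-nu-exist} (which gives $d(h\circ\rho)=-\sum_i \nu^h_i\, dr_i^{max}$) together with the fact that the $r_i^{max}$ are moment maps for commuting period-$1$ circle actions on $UD_{I}^{max}$, so that $X_{h\circ\rho}$ is a constant-coefficient combination of the action generators along each orbit and integrates to the torus action by $\nu^h(m)$. The only caveat is the sign/orientation bookkeeping you already flag, which depends on the paper's (partly implicit) convention for Hamiltonian vector fields; the substance of the proof is correct and matches the paper.
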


\begin{lem}\label{lem:HamflowH}
We have $\Phi^{h \circ \rho}_1(m) = m$ if and only if for all $i$, either $m \in D_i$ or $\nu^h_i(m) \in \Z$. 
\end{lem}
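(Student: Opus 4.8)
\textbf{Proof plan for Lemma \ref{lem:HamflowH}.}

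The plan is to deduce the statement directly from Corollary \ref{cor:PhiHact} together with the structure of the Hamiltonian torus action near $D$. First I would dispose of the case $m \in \mathbb{L}$: here $\nu^h(m)=0$ by construction, so $\Phi^{h\circ\rho}_1(m)=m$ trivially, and the right-hand condition holds vacuously (no $i$ has $\nu^h_i(m)\neq 0$, and $\nu^h_i(m)=0\in\Z$). So from now on assume $m\in M\setminus\mathbb{L}$, in which case $m\in UD_{I(m)}^{max}$ and Corollary \ref{cor:PhiHact} applies: $\Phi^{h\circ\rho}_1(m) = \nu^h(m)\cdot m$, where $\nu^h(m)\in\R^{I(m)}$ is projected to $(\R/\Z)^{I(m)}$ and acts by the Hamiltonian torus action on $UD_{I(m)}^{max}$.

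The key point is then to understand when a torus action element fixes a given point. On $UD_I^{max}$ the $(\R/\Z)^I$-action is the one induced from the standard rotation action, with moment map $r_I^{max}$, whose $i$-th circle factor acts on a standard chart $\mathbb{C}^I\times\mathbb{C}^{n-|I|}$ by rotating the $i$-th $\mathbb{C}$-coordinate; and its fixed locus is exactly $\{z_i=0\}$, which corresponds to (the closure-under-Liouville-flow of) $D_i$, i.e.\ the zero locus $\{r_i^{max}=0\}\supset D_i$. More precisely, recalling that the fixed point set of the $\R/\Z$-action generated by $r_i$ on $UD_i$ is $D_i$ and the action on $UD_i\setminus D_i$ is free (from the scH axioms), and that $r_i^{max}$ is obtained by Liouville-transporting $r_i$, the fixed locus of the $i$-th circle in $UD_i^{max}$ is exactly $(r_i^{max})^{-1}(0)$, and the action on its complement is free. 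Therefore, for a point $m$ and a vector $t = \nu^h(m) \in \R^{I(m)}$, the element $\bar t \in (\R/\Z)^{I(m)}$ fixes $m$ if and only if, for each $i\in I(m)$, either $m$ lies on the fixed locus of the $i$-th circle — i.e.\ $r_i^{max}(m)=0$, which since $\nu_i(m)\neq 0$ forces $m\in D_i$ (here one uses that $\nu_i$ is supported in $UD_i^{max}$ and that the only way to have $\nu_i(m)\neq 0$ with $r_i^{max}(m)=0$ is $m\in D_i$ — this is the one slightly delicate bookkeeping point, and should follow from the explicit description of $\tilde\nu_{I,i}$ in the proof of Lemma \ref{lem-nu-exist}) — or $t_i = \nu^h_i(m)\in\Z$.

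Assembling: $\Phi^{h\circ\rho}_1(m)=m$ iff $\bar{\nu^h(m)}$ fixes $m$ iff for every $i\in I(m)$ we have $m\in D_i$ or $\nu^h_i(m)\in\Z$; and for $i\notin I(m)$ the condition $\nu^h_i(m)\in\Z$ holds automatically since $\nu^h_i(m)=0$. Hence the stated equivalence, ``for all $i$, either $m\in D_i$ or $\nu^h_i(m)\in\Z$.'' I expect the main obstacle to be the careful identification of the fixed locus of each circle factor inside $UD_I^{max}$ (as opposed to inside $UD_I$, where it is given directly by the scH axioms): one must check that Liouville transport carries the fixed locus $D_i$ of $r_i$ to $(r_i^{max})^{-1}(0)$ and preserves freeness of the action off it, which is where Lemma \ref{lem:maxversion} and the construction of $r_i^{max}$ in Section \ref{s:UDImax} are used; and, relatedly, the compatibility claim that $\nu_i(m)\neq 0$ with $r_i^{max}(m)=0$ forces $m\in D_i$.
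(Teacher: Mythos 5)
Your proof is correct and follows the paper's own route: the paper's proof likewise disposes of $m \in \mathbb{L}$ as trivial and then simply says the rest "follows from Corollary \ref{cor:PhiHact}", leaving the stabilizer analysis of the $(\R/\Z)^{I(m)}$-action implicit. The details you supply — that the stabilizer of $m$ is the product of the circle factors whose fixed loci $D_i$ contain $m$, checked via admissible standard charts and Liouville transport — are exactly the ones the paper omits, and your flagged "delicate point" is handled by the construction of $r_i^{max}$ in Section \ref{s:UDImax} just as you suspect.
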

\begin{proof}
For $m \in \mathbb{L}$, the claim is obvious, as $h \circ \rho$ is constant and $\nu^h$ vanishes. 
For $m \notin \mathbb{L}$, the claim follows from Corollary \ref{cor:PhiHact}.
\end{proof}

\subsection{Perturbing to achieve nondegeneracy}\label{ss-pert}

Now let us suppose that for some $\eps>0$, we have that
\begin{itemize}
\item $h(\rho)$ is constant for $\rho \le \eps$;
\item $h(\rho)$ is linear for $\rho \ge 1-\eps$;
\item On any interval on which $h(\rho)$ is linear, except $(-\infty,\eps]$, the slope is not a Reeb period of $Y$.
\end{itemize} 
Then the orbits of $h \circ \rho$ come in families parametrized by manifolds with corners. 

The families are indexed by a set 
$$P = \coprod_{I \subset [N]} P_I,$$
where $P_I$ consists of families of orbits $\gamma$ with $I(\gamma) = I$. 
The two cases $I=\varnothing,I\neq\varnothing$ must be treated differently. 
In order to describe $P_\varnothing$, let us suppose that $\eps'$ is maximal so that $h$ is constant on $(-\infty,\eps']$. Then
$$P_\varnothing = \{0\} \cup \{\rho>\eps': h'(\rho) = 0\}.$$
Associated to $p \in P_\varnothing$ is a set of constant orbits $C_p$, which can be identified with a subset of $M$:
$$C_0 = \{\rho \le \eps'\},\qquad C_p = \{\rho=p\}\quad \text{for $p \in P_\varnothing \setminus \{0\}$}.$$
On the other hand, for $I \neq \varnothing$ we have
$$P_I = \left\{p \in \im(r_I^{max}):\text{ for each $i \in I$ we either have $p_i = 0$ or $h'(\tilde{\rho}_{I}(p)) \cdot \tilde{\nu}_{I,i}(p) \in \Z\setminus\{0\}$}\right\}.
$$

Associated to each $p\in P_I$, we define a subset of $M$:
$$C_p := \{m \in UD^{max}_{I}: r_{I}^{max}(m) = p, \nu_k(m)=0 \text{ for $k \notin I$}\}.$$
For each $p \in P$, $C_p$ is a manifold-with-corners on which the flow of $h \circ \rho$ is $1$-periodic, yielding a manifold-with-corners of orbits which is diffeomorphic to $C_p$.

We now perturb $h \circ \rho$, in such a way as to make the orbits nondegenerate.

\begin{lem}\label{lem:pertH}
Given $\eps>0$, there exists a perturbation $H$ of $h \circ \rho$ with nondegenerate orbits, such that for any capped orbit $(\gamma,u)$ of $H$, there exists a capped orbit $(\bar{\gamma},\bar{u})$ of $h \circ \rho$, such that:
\begin{enumerate}
\item \label{it:act-approx}$|\cA(\gamma,u) - \cA(\bar{\gamma},\bar{u})|<\eps$;
\item \label{it:ind-approx}$|\CZ(\gamma,u) - \CZ(\bar{\gamma},\bar{u})| \le k(\bar{\gamma})/2$, where $\CZ$ denotes the Conley--Zehnder index,\footnote{The definition is due to Robbin--Salamon in the case of the possibly-degenerate orbit $\gamma$.} and $k(\bar{\gamma}) := \dim \ker \left(D\Phi^1_{h \circ \rho}  - \id\right)_{\bar{\gamma}(0)}$.
\end{enumerate}
\end{lem}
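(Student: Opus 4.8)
The plan is to perturb $h\circ\rho$ in two stages: first a Morse--Bott type perturbation supported near each family $C_p$, and then a small generic further perturbation to achieve full nondegeneracy. I will explain the scheme and then verify the two estimates.

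\textbf{Setup of the perturbation.} Recall that the orbits of $h\circ\rho$ come in families indexed by $p\in P=\coprod_I P_I$, with each family diffeomorphic to the manifold-with-corners $C_p$. First I would observe that the $C_p$ are pairwise disjoint compact subsets of $M$, so after shrinking we may choose pairwise disjoint open neighbourhoods $\mathcal{N}_p\supset C_p$. On each $\mathcal{N}_p$ I choose a Morse function $f_p:\mathcal{N}_p\to\R$, which I may take to restrict to a Morse function (in the manifold-with-corners sense, with outward-pointing gradient on boundary strata, or after a preliminary perturbation pushing orbits into the interior) on the orbit-manifold identified with $C_p$. Concretely, using the $(\R/\Z)^{I}$-invariance in the $UD_I^{max}$-directions and the fact that the orbits sweep out $C_p$, I would take $f_p$ to depend only on the base directions transverse to the orbit and to the level set $\{r_I^{max}=p\}$, i.e.\ pulled back from $D_I$ and from the complementary coordinates. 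Then set
$$ H := h\circ\rho + \delta\sum_p \chi_p\cdot(f_p\circ\Phi^{h\circ\rho}_{t})$$
with $\chi_p$ a cutoff equal to $1$ near $C_p$ and $0$ outside $\mathcal{N}_p$, and $\delta>0$ small; here the time-dependence $f_p\circ\Phi^{h\circ\rho}_t$ is the standard trick making the perturbed Hamiltonian's orbits correspond to critical points of $f_p$ restricted to the family. Standard Morse--Bott-to-Morse theory in Hamiltonian Floer theory (e.g.\ the construction in Bourgeois--Oancea, or Cieliebak--Floer--Hofer--Wysocki) shows that for $\delta$ small the orbits of $H$ in $\mathcal{N}_p$ are nondegenerate and are in bijection with $\mathrm{Crit}(f_p|_{C_p})$, each $C^1$-close to an orbit in the family $C_p$. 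If genericity of this first perturbation is not enough to kill all degeneracies (it should be, by the invariance argument above), a final arbitrarily $C^\infty$-small perturbation supported away from the divisor achieves nondegeneracy, keeping orbits $C^1$-close to those we already have.

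\textbf{The action estimate \eqref{it:act-approx}.} Given a capped orbit $(\gamma,u)$ of $H$, let $(\bar\gamma,\bar u)$ be the capped orbit of $h\circ\rho$ it limits to as $\delta\to 0$ (caps chosen to agree as $\delta\to 0$, which is possible since $\gamma$ is $C^1$-close to $\bar\gamma$). The action $\cA_{H}(\gamma,u)=\int_{S^1}H(t,\gamma(t))\,dt+\int u^*\omega$ differs from $\cA_{h\circ\rho}(\bar\gamma,\bar u)$ by (i) the term $\delta\int_{S^1}\chi_p f_p(\cdots)\,dt$, which is $O(\delta)$, (ii) the difference $\int_{S^1}(h\circ\rho)(\gamma(t))-(h\circ\rho)(\bar\gamma(t))\,dt$, which is $O(\|\gamma-\bar\gamma\|_{C^0})=O(\delta)$ since $h\circ\rho$ is Lipschitz on $M$, and (iii) the difference $\int u^*\omega-\int\bar u^*\omega$, which is $O(\delta)$ because $u$ and $\bar u$ bound $C^1$-close loops and can be chosen $C^1$-close. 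Hence for $\delta$ small enough, $|\cA(\gamma,u)-\cA(\bar\gamma,\bar u)|<\eps$. This is routine.

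\textbf{The index estimate \eqref{it:ind-approx}, the main obstacle.} This is the genuinely nontrivial point. The bound $|\CZ(\gamma,u)-\CZ(\bar\gamma,\bar u)|\le k(\bar\gamma)/2$ with $k(\bar\gamma)=\dim\ker(D\Phi^1_{h\circ\rho}-\mathrm{id})_{\bar\gamma(0)}$ is precisely the Robbin--Salamon spectral-flow estimate for how much the Conley--Zehnder index of a degenerate orbit can change under perturbation: the eigenvalue $1$ of the linearized return map has an associated generalized eigenspace of dimension $k(\bar\gamma)$, and perturbing splits these eigenvalues off the unit circle, each contributing at most $\pm 1/2$ to the $\CZ$ index (Robbin--Salamon, \emph{The Maslov index for paths}, Theorem 2.3 / the "iteration and homotopy" properties, together with the estimate $|\mathrm{Mas}(\Psi_1)-\mathrm{Mas}(\Psi_0)|\le \tfrac12(\dim+\mathrm{Sig})$ differences for paths with the same endpoints up to the crossing form at $t=1$). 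Concretely I would: choose a trivialization $\Phi$ of $\bar\gamma^*TX$ and carry it to $\gamma$; express $\CZ(\bar\gamma,\Phi)=\mathrm{Mas}(\bar\Psi)$ and $\CZ(\gamma,\Phi)=\mathrm{Mas}(\Psi)$ where $\bar\Psi,\Psi$ are the paths of symplectic matrices solving the linearized flow; note that $\Psi\to\bar\Psi$ uniformly as $\delta\to 0$; and invoke the Robbin--Salamon lower semicontinuity/jump estimate, whose jump at the terminal time is controlled by the dimension $k(\bar\gamma)$ of the $1$-eigenspace of $\bar\Psi(1)$, giving the claimed bound $k(\bar\gamma)/2$. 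The one subtlety to handle carefully is that $\bar\gamma$ lies in a family, so the return map $D\Phi^1_{h\circ\rho}$ has the eigenvalue $1$ with multiplicity at least $\dim C_p$ in the directions tangent to the family; the Morse perturbation is designed to resolve exactly this degeneracy, and the bound $k(\bar\gamma)$ correctly accounts for all of it (both the "obvious" tangent directions and any accidental extra $1$-eigenvectors). I do not expect the other steps to present difficulty, so I would concentrate the writeup on making the Robbin--Salamon estimate precise, including checking that the cap contributes the same integer $u\cdot\bflambda$ correction to both $i(\gamma,u)$ and $i(\bar\gamma,\bar u)$ (so it cancels in the difference, which is why only $\CZ$ appears in the statement).
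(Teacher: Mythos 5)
Your proposal is correct in substance, but it takes a genuinely different route to constructing the perturbation than the paper does, and it is worth comparing the two. The paper's proof is deliberately crude: it chooses disjoint neighbourhoods $N_p$ of the orbit families $C_p$, takes a \emph{generic} $C^2$-small perturbation $\delta = \sum_p \delta_p$ supported in these neighbourhoods, and argues (i) that all orbits of $h\circ\rho + \delta$ lie in some $N_p$ and are $C^0$-close to orbits of $h\circ\rho$, (ii) that a geodesic cylinder between close orbits gives the cap bijection and the action bound, and (iii) that $C^1$-closeness of the linearized flows makes the associated paths of symplectic matrices $C^0$-close, whence the index bound follows from McLean's Corollary 4.9 --- which is exactly the Robbin--Salamon stability estimate you invoke, so your treatment of item (2) is the same argument under a different citation, and your action argument is likewise identical to the paper's. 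Where you diverge is in using an explicit Morse--Bott-to-Morse perturbation ($H = h\circ\rho + \delta\sum_p \chi_p\, f_p\circ\Phi^{h\circ\rho}_t$) rather than a generic one. This buys a more precise conclusion --- orbits of $H$ in bijection with $\mathrm{Crit}(f_p|_{C_p})$ --- which the lemma does not need; and it costs you the manifold-with-corners issue for the families $C_p$, which you acknowledge only parenthetically. The paper explicitly flags this trade-off in the remark following the proof: the Morse--Bott refinement is possible ``with more effort,'' the corner case being treated in the reference [Ganatra--Pomerleano] cited there, and the authors chose the generic perturbation precisely to avoid it. Since your scheme falls back on a final generic $C^\infty$-small perturbation anyway, and since both estimates only use $C^0$/$C^1$-closeness of orbits and linearized flows (not the Morse--Bott structure), your argument does go through; but if you keep the Morse--Bott construction you should either supply the boundary-strata analysis or cite it, rather than leaving it as an aside.
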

\begin{proof}
Note that the subsets $C_p\subset M$, $p\in P$ are closed, disjoint, preserved by the flow of $h \circ \rho$, and the flow is one-periodic on them. 
We will choose disjoint neighbourhoods $N_p$ of $C_p$, and perturb in each $N_p$ separately: i.e., $H = h \circ \rho+\delta$ where $\delta = \sum_p \delta_p$ with $\delta_p$ supported in $N_p$. 

We fix a Riemannian metric on $M$ for the duration of this proof. 
In particular, whenever we say that a function is `$C^k$-small', we mean with respect to this metric.

Note that $d(\Phi_1^{h \circ \rho}(m), m) > \eta$ for some $\eta>0$ over the compact set $M \setminus \cup_p N_p$. 
By making $\delta$ $C^1$-small, we can make $\Phi_t^{h \circ \rho+\delta}$ $C^0$-close to $\Phi_t^{h \circ \rho}$ for all $t\in [0,1]$; in particular we can ensure that all fixed points of $\Phi_1^{h \circ \rho+\delta}$ lie in some $N_p$. 
By taking a generic such $\delta$, we can ensure that all orbits of $h \circ \rho+\delta$ are nondegenerate. 
By taking $N_p$ small, we may ensure that any orbit $\gamma$ of $h \circ \rho+\delta$ is $C^0$-close to an orbit $\bar{\gamma}$ of $h \circ \rho$. 
When the orbits are sufficiently $C^0$-close, we can construct a cylinder $v: S^1 \times [0,1] \to M$ stretching between $\gamma$ and $\bar{\gamma}$, so that $v(\cdot,t)$ is the unique geodesic from $\gamma(t)$ to $\bar{\gamma}(t)$; concatenating with this cylinder defines a natural bijection between caps for $\gamma$ and $\bar{\gamma}$.  
In order to arrange \eqref{it:act-approx} we must bound the symplectic area of the cylinder. 
This is achieved by observing that
$$\int_{S^1 \times [0,1]} v^* \omega = \int_{S^1 \times [0,1]} \omega\left(\frac{\partial v}{\partial s},\frac{\partial v}{\partial t} \right),$$
and $\partial v/\partial s$ can be made arbitrarily small while $\partial v / \partial t$ is bounded.

Now we arrange \eqref{it:ind-approx}. 
Recall that $\CZ(\bar{\gamma},\bar{u})$ is by definition that Conley--Zehnder index of the path of symplectic matrices $\Psi_t (D\Phi_t^{h \circ \rho}) \Psi_t^{-1}$, where $\Psi_t$ is a trivialization of $\bar{\gamma}^* TM$ induced by the cap $\bar{u}$ and $\CZ(\gamma,u)$ is the Conley--Zehnder index of the corresponding path of symplectic matrices. 
By making $\delta$ $C^2$-small, we can make $\Phi_t^{h \circ \rho+\delta}$ $C^1$-close to $\Phi_t^{h \circ \rho}$ for all $t \in [0,1]$;\footnote{This means that given $\eta>0$, we may choose $\delta$ so that for all $(m,v) \in TM$ with $|v| \le 1$, we have 
$$d\left(D\Phi_t^{h \circ \rho+\delta}(m,v),D\Phi_t^{h \circ \rho}(m,v)\right) < \eta$$ for an a priori fixed Riemannian metric on $TM$.} this implies that the aforementioned paths of symplectic matrices can be made $C^0$-close; the result now follows by \cite[Corollary 4.9]{McLean2016}.
\end{proof}

\begin{rmk}
Our approach to perturbing degenerate orbits follows \cite{McLean2016}.
With more effort one can prove a more precise result: one can find a Morse--Bott perturbation $H$, whose orbits are precisely the orbits of $h \circ \rho$ corresponding to critical points of a Morse function defined on the manifold with corners (and increasing at the boundary), and are nondegenerate.  
The technique for doing this goes back to \cite[Proposition 2.2]{CFHW}, see also \cite[Section 3.3]{Oancea2004} and \cite{KvK}. 
These references all deal with closed manifolds of orbits; the case of manifolds with corners is addressed in \cite{Ganatra2020}, in a setting closely related to ours.
\end{rmk}

\subsection{Action computation}

We start with a preliminary lemma which will be used in our action computation below. We state this lemma in a much more general setup than we need and after the proof make some comments to explain how we will specialize it.

\begin{lem}\label{lem:prelim-act}
Let $(M,\omega)$ be a symplectic manifold and $\pi: M\to \R^k$ be a smooth map. Let $f:\R^k\to \R$ be a smooth function. Let $\phi_t$ be the Hamiltonian flow of $\tilde{f}:=\pi^*f$. Consider a map $$u: [0,1]\times [0,1]\to M$$ such that for all $(t,s)\in [0,1]\times [0,1],$ $$u(t,s)=\phi_t( u(0,s)).$$ Moreover, we assume that $u([0,1]\times \{0\}) = \{A\}$ and $u([0,1]\times \{1\}) = \{B\}$, where $A$ and $B$ are points in $\R^k$. We orient $[0,1]\times [0,1]$ is so  that $\partial_t,\partial_s$ is a positive basis.

Then, the symplectic area of $u$ is equal to $f(B)-f(A)$. 
\end{lem}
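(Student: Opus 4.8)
The plan is to compute the symplectic area $\int_{[0,1]^2} u^*\omega$ directly by expanding $u^*\omega(\partial_t, \partial_s)$ and using the defining relation $u(t,s) = \phi_t(u(0,s))$. Write $\partial_t u = X_{\tilde f}(u(t,s))$, the Hamiltonian vector field of $\tilde f = \pi^* f$. Then $u^*\omega(\partial_t,\partial_s) = \omega(X_{\tilde f}, \partial_s u) = -d\tilde f(\partial_s u) = -\partial_s(\tilde f \circ u)$, using the definition $\iota_{X_{\tilde f}}\omega = -d\tilde f$ (one must match the sign to the paper's Hamiltonian conventions; from the action formula $\cA_F(\gamma) = \int F + \int\gamma^*\theta$ and the later uses, the consistent sign should be checked, but the computation is the same up to an overall sign). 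The key point is that this expression is independent of $t$: since $\phi_t$ preserves $\tilde f$ (as $\tilde f$ is autonomous), $\tilde f(u(t,s)) = \tilde f(u(0,s))$, so $\partial_s(\tilde f \circ u)$ does not depend on $t$ either.

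First I would therefore integrate over $t \in [0,1]$ trivially to get
\[
\int_{[0,1]^2} u^*\omega = -\int_0^1 \partial_s\big(\tilde f(u(0,s))\big)\, ds = \tilde f(u(0,0)) - \tilde f(u(0,1)).
\]
Then I would use the hypotheses $u([0,1]\times\{0\}) = \{A'\}$ (a single point mapping under $\pi$ to $A \in \R^k$) and $u([0,1]\times\{1\}) = \{B'\}$ with $\pi(B') = B$. Actually, re-reading the statement, $u([0,1]\times\{0\})$ is asserted to be a single point whose $\pi$-image is $A$; so $\tilde f(u(0,0)) = f(\pi(u(0,0))) = f(A)$ and $\tilde f(u(0,1)) = f(B)$. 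Hence the area equals $f(A) - f(B)$ or $f(B) - f(A)$ depending on the sign convention; matching the paper's stated conclusion fixes the convention, and I would simply note which convention for $X_{\tilde f}$ and which orientation of $[0,1]^2$ makes it come out as $f(B) - f(A)$.

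The only genuine subtlety — and the step I would be most careful about — is verifying that $\partial_t u = X_{\tilde f}(u)$ with the correct sign and that $\partial_s u$ is genuinely the $s$-derivative of a smooth map (so that the fundamental theorem of calculus applies and boundary terms are exactly point evaluations). Since $u(t,s) = \phi_t(u(0,s))$ is defined as a composition of the flow with a smooth path $s \mapsto u(0,s)$, smoothness is automatic, and $\partial_s u(t,s) = (d\phi_t)\big(\partial_s u(0,s)\big)$. I would not even need this last identity in the final computation, since after integrating in $t$ I only evaluate at $t=0$; the $t$-independence of the integrand is the real content. There is no serious obstacle here: it is a one-line Stokes-type argument once the conventions are pinned down. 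I would also remark that the hypothesis that $\pi$ is a map to $\R^k$ and $f$ a function on $\R^k$ is used only through $\tilde f = \pi^* f$ being autonomous and taking the values $f(A), f(B)$ on the two constant boundary edges; the actual target dimension $k$ plays no role, which is why the lemma is stated in this generality and will later be specialized with $\pi = r_I^{\max}$ and $f = h \circ \tilde\rho_I$ (or similar).
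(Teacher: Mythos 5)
Your proposal is correct and follows essentially the same route as the paper: expand $u^*\omega(\partial_t,\partial_s)=\omega(X_{\tilde f},u_*\partial_s)=\pm\, d\tilde f(u_*\partial_s)$ and apply the fundamental theorem of calculus in the $s$-direction, with the constant boundary edges supplying the point evaluations $f(A)$ and $f(B)$ (the paper integrates in $s$ first for each fixed $t$, while you first observe the integrand is $t$-independent via conservation of $\tilde f$ under its own flow — an immaterial difference). Your handling of the sign ambiguity and of the mild abuse of notation in the statement ($A,B$ are really the $\pi$-images of the constant edges) is consistent with how the paper's own computation resolves them.
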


\begin{proof}
This is an elementary computation.\begin{align*}
\int_{[0,1] \times [0,1]} u^*\omega&=\int^1_0\int^1_0\omega(u_*\partial_t,u_*\partial_s)dsdt\\
&=\int^1_0\int^1_0\omega(X_{\tilde{f}},u_*\partial_s)dsdt\\
&=\int^1_0\left(\int^1_0df(\pi_*u_*\partial_s)ds\right)dt\\
&=\int^1_0\left(\int_{\{t\}\times [0,1]}(\pi\circ u\circ\iota_t)^*df\right)dt\\
&=\int^1_0\left(f(B)-f(A)\right)dt\\
&= f(B) - f(A)
\end{align*}as required.
\end{proof}

Note that the assumption on the boundary of $u$ is automatic if $\pi$ is involutive; even more specifically, when $\pi$ is a moment map for a Hamiltonian torus action. Also note that if $f$ is an affine function, then $f(B)-f(A)$ is equal to the linear part of $f$ evaluated at the vector $\overrightarrow{AB}$ considered as an element of $\mathbb{R}^k$. If $\pi$ is a moment map for a Hamiltonian $(\R/\Z)^k$-action, and $f$ is integral affine, then $u$ as in the statement of the lemma satisfies $$u(0,s)=u(1,s),\text{ for all }s\in [0,1].$$We will only use this special case of the lemma below, where $u$ can also be thought of as a map $\R/\Z\times [0,1] \to M$. As a final remark that will be relevant, note that the blow down map $$\R/\Z\times [0,1]\to \mathbb{D}\subset \C\text{, where }(t,s)\mapsto se^{2\pi i t}$$ is orientation reversing, where we use the standard orientation of $\C$.

Let us now get back to the action computation that we wanted to undertake, continuing the notation used in the previous section.

There is a canonical cap $u_{out}$ associated to any orbit $\gamma$ of $h \circ \rho$, which we now describe. 
If $I(\gamma) = \emptyset$, then $\gamma$ is a constant orbit. 
We define $u_{out}$ to be the constant cap in this case. 
Otherwise, $\gamma$ is contained in $UD_{I(\gamma)}^{max}$. 
If $\gamma$ is contained in $UD_{I(\gamma)}$ then it is contained in an admissible standard chart, and we define $u_{out}$ to be the cap contained in that chart. 
Note that $u_{out}$ is well-defined by Lemma \ref{lem:adm-uniq}.

Note that if $\gamma$ is an orbit on $D$, it is contained in $UD_{I(\gamma)}$. 
For an orbit $\gamma$ not contained in $D$, we define $u_{out}$ to be the union of the cylinder swept by $\gamma$ along the Liouville flow taking it into $UD_{I(\gamma)}$, with the canonical cap in an admissible chart. 

At this point the reader might also benefit from looking at Remark \ref{rmk-weakly-admissible}, which gives a simpler version of admissibility and suffices for the purposes of this paper. It works because of the following Lemma.

\begin{lem}\label{lem:act}
The action of the $1$-periodic orbit $\gamma$ of $h \circ \rho$ with respect to the outer cap is given by
\[ \cA(\gamma,u_{out}) = h(\rho(\gamma)) + \sum_i \nu^h_i(\gamma)\cdot r_i^{max}(\gamma).\]
\end{lem}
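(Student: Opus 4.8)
The action of a capped orbit $(\gamma,u_{out})$ is $\cA(\gamma,u_{out}) = \int_{S^1} (h\circ\rho)(\gamma(t))\,dt + \int_\bD u_{out}^*\w$. Since $h\circ\rho$ is constant along $\gamma$, the first term is simply $h(\rho(\gamma))$. So the whole content of the Lemma is the computation of the symplectic area $\int u_{out}^*\w = \sum_i \nu^h_i(\gamma)\cdot r_i^{max}(\gamma)$. The plan is to split into the two cases already distinguished in the construction of $u_{out}$.

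\emph{Case 1: $\gamma$ is contained in $UD_{I(\gamma)}$ (in particular all orbits on $D$).} Here $u_{out}$ is the canonical cap inside an admissible standard chart, and by Corollary \ref{cor:PhiHact} the orbit $\gamma$ is the orbit of the point $m=\gamma(0)$ under the $(\R/\Z)^{I(\gamma)}$-action with parameters $\nu^h(\gamma)$, applied inside $UD_{I(\gamma)}^{max}$, but in fact inside $UD_{I(\gamma)}$. I would set $\pi = r_{I(\gamma)}: UD_{I(\gamma)}\to \R^{I(\gamma)}$ (the moment map of the torus action) and choose $f$ to be the integral-affine function $r\mapsto \sum_{i\in I(\gamma)} \nu^h_i(\gamma)\cdot r_i$ on $\R^{I(\gamma)}$, so that $\tilde f = \pi^*f$ is exactly the Hamiltonian generating the time-$1$ flow $\Phi^{h\circ\rho}_1$ restricted to $\gamma$ (this matches Corollary \ref{cor:PhiHact}). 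The cap $u_{out}$, transported via the blow-down map $\R/\Z\times[0,1]\to\bD$, $(t,s)\mapsto se^{2\pi i t}$, is precisely a map $u$ of the form in Lemma \ref{lem:prelim-act}, with $A = r_{I(\gamma)}(\text{center of the chart, i.e. the appropriate stratum point})$ and $B = r_{I(\gamma)}^{max}(\gamma)$; the relevant value of $A$ is the image of the fixed locus of the torus action, which is the origin of $\R^{I(\gamma)}$. Applying Lemma \ref{lem:prelim-act} gives symplectic area $f(B)-f(A)=\sum_i \nu^h_i(\gamma) r_i^{max}(\gamma)$, after accounting for the orientation-reversal of the blow-down map (which cancels the sign in the orientation convention of Lemma \ref{lem:prelim-act}, as flagged in the paragraph following that lemma). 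One must also check $f$ is honestly integral affine so that $u(0,s)=u(1,s)$, which holds since $\nu^h_i(\gamma)\in\Z$ for $i$ with $\gamma\not\subset D_i$, and the $r_i=0$ coordinates (where $\gamma\subset D_i$) are constant.

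\emph{Case 2: $\gamma$ is not contained in $D$ and not in $UD_{I(\gamma)}$.} Then $u_{out}$ is the concatenation of (a) the Liouville-flow cylinder $C$ carrying $\gamma$ into $UD_{I(\gamma)}$, and (b) the canonical cap of Case 1 for the resulting orbit $\gamma'$ inside $UD_{I(\gamma)}$. The area of piece (b) is computed exactly as in Case 1, noting that $\nu^h(\gamma)$ is constant along Liouville flowlines and that, by the construction of $r_i^{max}$ via flowing in and out with $\tilde Z_{\{i\}}$, the quantity $\sum_i \nu^h_i(\gamma') r_i^{max}(\gamma')$ is unchanged if we use $\gamma$ rather than $\gamma'$ — wait, this is the subtle point, so I should be careful: $r_i^{max}$ is \emph{not} constant along the Liouville flow. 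Instead I would argue directly that the total area $\int u_{out}^*\omega$ equals the area of piece (b) evaluated at $\gamma'$ plus the area of the cylinder $C$, and that this sum equals $\sum_i\nu^h_i(\gamma) r_i^{max}(\gamma)$ by again invoking Lemma \ref{lem:prelim-act}, this time with $\pi = r_{I(\gamma)}^{max}$ on $UD_{I(\gamma)}^{max}$ and the same $f$; the cylinder plus cap together still form a map $u$ of the Lemma's form (the Liouville flow direction and the Hamiltonian rotation direction commute because $r_i^{max}$ Poisson-commutes with $\tilde Z$-type flows, by Lemma \ref{lem:maxversion} and adaptedness), with $A$ still the origin (the limit point on $\mathbb{L}$ side, where $r_i^{max}\to$ the relevant stratum value) and $B=r_{I(\gamma)}^{max}(\gamma)$. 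So in fact Case 2 is handled by the \emph{same} application of Lemma \ref{lem:prelim-act} as Case 1, with $\pi$ and the domain enlarged to the ``max'' versions.

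\textbf{Main obstacle.} The delicate point is the choice of the endpoints $A, B$ in Lemma \ref{lem:prelim-act} and the verification that $u_{out}$ — after the blow-down identification — genuinely has the product form $u(t,s)=\phi_t(u(0,s))$ required by that lemma, with the two boundary circles collapsing to points in $\R^{I(\gamma)}$. This requires knowing that the Hamiltonian $h\circ\rho$ restricted near $\gamma$ factors through $r_{I(\gamma)}^{max}$ (true by Lemma \ref{lem-nu-exist} and Corollary \ref{cor:PhiHact}) and that the cap $u_{out}$ was built by a combination of torus-orbit discs and Liouville cylinders, both of which are invariant under the flow of $\tilde f$; checking the orientation bookkeeping through the blow-down map $(t,s)\mapsto se^{2\pi i t}$ (orientation-reversing) against the orientation convention of Lemma \ref{lem:prelim-act} is the other place where a sign could go wrong. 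Once these identifications are set up correctly the computation is immediate.
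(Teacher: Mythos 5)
Your proposal is correct and follows essentially the same route as the paper's proof: both reduce the Lemma to the area of $u_{out}$, split that cap into the in-chart disc and the Liouville cylinder, and evaluate via Lemma \ref{lem:prelim-act} with the linear function determined by $\nu^h(\gamma)$ (the paper sums the two contributions $\sum_i \nu^h_i(\gamma)\, a_i$ and $\sum_i \nu^h_i(\gamma)\,(r_i^{max}(\gamma)-a_i)$, whereas you package them into a single application with $\pi=r_I^{max}$, which is legitimate since the Liouville flow commutes with the torus action). The only discrepancy is that your $f$ carries the opposite sign to the paper's $f(r)=\sum_{i\in I}-\nu^h_i(\gamma)\, r_i$, which is the actual local generator of the flow because $d\rho=-\sum_i \nu_i\, dr_i^{max}$; this is absorbed by the orientation bookkeeping you already flag.
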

\begin{proof}
The action is
\begin{align*}
\cA(\gamma,u_{out}) &= \int_{S^1} h (\rho(\gamma(t)) + \int_{u_{out}} \omega.
\end{align*}
The first term is $ h(\rho(\gamma))$, because $h(\rho(\gamma(t)) = h(\rho(\gamma))$ is constant along $\gamma$. 
We claim that the second term is
$$ \omega(u_{out}) = \sum_i \nu^h_i(\gamma)\cdot r_i^{max}(\gamma). $$

Consider the map 
\begin{align*}
f:\mathbb{R}^I&\to\mathbb{R}\\
f(r) &= \sum_{i\in I}-\nu^h_i(\gamma)\cdot r_i.
\end{align*} Notice that $\gamma$ is a one periodic orbit of the Hamiltonian vector field of $\tilde{f} :=  f\circ r^{max}_I$ (see Lemma \ref{lem-nu-exist}).

We break $u_{out}$ into two pieces: the piece $u_{out,1}$ lying in an admissible chart, and the piece $u_{out,2} = \cup_{t \in [0,T]} \varphi_t(\gamma)$ swept out by the Liouville flow. Assume that the boundary of $u_{out,1}$ is contained in $r_I^{-1}((a_i)_{i\in I})$.

Using the symplectic embedding of the admissible chart into $\C^{I}\times\C^{n-|I|}$, we see that $\int_{u_{out,1}} \omega$ is equal to the symplectic area of an arbitrary cap of a $1$-periodic orbit of  $X_{\tilde{f}}$ contained inside the fiber above $(a_i)_{i\in I}$ of the moment map $\C^{I}\times\C^{n-|I|}\to \mathbb{R}^I$. Choosing the cap obtained by radially scaling the loop to the origin inside the slice $\C^{I}\times\{c\}$ that it is contained in, we immediately obtain (e.g. using Lemma \ref{lem:prelim-act}): $$\int_{u_{out,1}} \omega = -\left(\sum_{i\in I} -\nu_i^h(\gamma) \cdot a_i\right).$$

For the area of the second piece, we use Lemma \ref{lem:prelim-act} for the map $r_I^{max}$, function $f$ and map $u_{out,2}$  to obtain:
$$\int_{u_{out,2}} \omega = \sum_{i\in I} \nu_i^h(\gamma) \cdot (r_i^{max}(\gamma)-a_i).$$
Note that here we used the $r_I^{max}$-relatedness of the Liouville vector field and the Euler vector field (i.e., Lemma \ref{lem:maxversion}).

Putting the computations together, we get the desired result.
\end{proof}

We define the fractional inner cap $u_{in} := u_{out} - \nu^h(\gamma) \cdot \bflambda$ as in Section \ref{subsec:SH}. Strictly speaking we do not need the following result for our argument, but we thought it was informative. 
Note that it is a slight generalization of the well-known formula in \cite[Section 1.2]{Vit}, which gives the result for SH-type orbits.

\begin{lem}\label{lem-act-final}
The action of the orbit $\gamma$ of $h \circ \rho$ with respect to the inner cap is given by
\[ \cA(\gamma,u_{in}) = h(\rho(\gamma)) - h'(\rho(\gamma))\cdot \rho(\gamma).\]
\end{lem}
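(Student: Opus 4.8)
The plan is to derive the formula for $\cA(\gamma,u_{in})$ directly from Lemma \ref{lem:act}, together with the defining property $\kappa\bflambda = [\omega;\kappa\bflambda]$ relating areas of caps to intersection numbers. First I would recall that $u_{in} = u_{out} - \nu^h(\gamma)\cdot\bflambda$ as a fractional cap, so by the $A'$-equivariance of the action (Section \ref{ss-ham-review}) we have
\[
\cA(\gamma,u_{in}) = \cA(\gamma,u_{out}) - \kappa\,\nu^h(\gamma)\cdot\bflambda.
\]
Since $\bflambda = \sum_i\lambda_i\,\mathrm{PD}^{rel}(D_i)$ and $\nu^h(\gamma) = (\nu^h_i(\gamma))_i$ records the intersection numbers $u_{out}\cdot D_i = \nu^h_i(\gamma)$ (this is the content of the interpretation of $\nu^h_i$ as wrapping numbers, cf.\ Lemma \ref{lem:HamflowH} and Corollary \ref{cor:PhiHact}), we get $\nu^h(\gamma)\cdot\bflambda = \sum_i \lambda_i\,\nu^h_i(\gamma)$.

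Next I would substitute the formula of Lemma \ref{lem:act}:
\[
\cA(\gamma,u_{in}) = h(\rho(\gamma)) + \sum_i \nu^h_i(\gamma)\,r_i^{max}(\gamma) - \kappa\sum_i \lambda_i\,\nu^h_i(\gamma).
\]
The key algebraic input is then the identity $h'(\rho(\gamma))\cdot\rho(\gamma) = -\sum_i \nu^h_i(\gamma)\big(r_i^{max}(\gamma) - \kappa\lambda_i\big)$. This follows from two facts already in hand: on the one hand $d\rho = -\sum_i \nu_i\,dr_i^{max}$ by Lemma \ref{lem-nu-exist}, so that along an orbit $\nu^h_i(\gamma) = h'(\rho(\gamma))\nu_i(\gamma)$; on the other hand, the homogeneity relation $Z(\rho) = \rho$ (Corollary \ref{cor:rhoLiouv}) translates under Lemma \ref{lem:maxversion} into $\tilde{Z}_I(\tilde\rho_I) = \tilde\rho_I$, i.e.\ $\sum_{i\in I}(r_i - \kappa\lambda_i)\,\partial\tilde\rho_I/\partial r_i = \tilde\rho_I$. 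Evaluating at $r = r_I^{max}(\gamma)$ and using $\partial\tilde\rho_I/\partial r_i = -\tilde\nu_{I,i}$, this reads $-\sum_i \nu_i(\gamma)\big(r_i^{max}(\gamma) - \kappa\lambda_i\big) = \rho(\gamma)$; multiplying by $h'(\rho(\gamma))$ gives exactly the claimed identity. Plugging this in collapses the three terms to $h(\rho(\gamma)) - h'(\rho(\gamma))\,\rho(\gamma)$, as desired.

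The only subtlety to handle carefully is the case $I(\gamma) = \emptyset$, i.e.\ $\gamma$ a constant orbit with $u_{out}$ the constant cap: then $\nu^h(\gamma) = 0$ and $h'(\rho(\gamma)) = 0$ (constant orbits occur where $h'$ vanishes, or on the region where $h$ is locally constant), so both sides reduce to $h(\rho(\gamma))$ and the identity holds trivially. One should also note that $\nu^h_i(\gamma)$ need not be an integer when $\gamma\subset D_i$, but this causes no problem since the computation above is purely about the real-valued quantities $\nu^h_i$, $r_i^{max}$, and the formula of Lemma \ref{lem:act} is already established in that generality. The main (and really only) obstacle is bookkeeping: matching the index sets $I$, $I(\gamma)$, and $I(m)$ correctly and making sure the homogeneity relation is applied at the right point of $\mathbb{R}^I$; there is no analytic content beyond what Lemmas \ref{lem-nu-exist}, \ref{lem:maxversion}, and \ref{lem:act} already supply.
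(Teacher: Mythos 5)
Your proof is correct and follows essentially the same route as the paper's: substitute the outer-cap action formula of Lemma \ref{lem:act}, shift by $\kappa\,\nu^h(\gamma)\cdot\bflambda$ to pass to the inner cap, and collapse the sum using $\nu^h_i = h'(\rho)\nu_i$ together with the Euler relation $\tilde{Z}_I(\tilde\rho_I)=\tilde\rho_I$ evaluated at $r_I^{max}(\gamma)$. Your bookkeeping of the factor of $\kappa$ (in $\kappa\,\nu^h(\gamma)\cdot\bflambda$ and in $r_i-\kappa\lambda_i$) is in fact more careful than the intermediate lines of the paper's displayed computation, and your explicit treatment of the constant-orbit case is a harmless addition.
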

\begin{proof}
By Lemma \ref{lem:act}, setting $\rho = \rho(\gamma)$, we have
\begin{align*}
\cA(\gamma,u_{in}) & = h(\rho) + \sum_{i} \nu^h_i(\gamma)\cdot r_i^{max}(\gamma) - \nu^h(\gamma) \cdot \bflambda\\
&= h(\rho) - h'(\rho) \sum_{i} \nu_i(\gamma) \cdot (r_i(\gamma) - \lambda_i) \\
&= h(\rho) - h'(\rho) \cdot \tilde{Z}_{I}\left(\tilde{\rho}_{I} \right)_{r_{I}^{max}(\gamma)}\\
&= h(\rho) - h'(\rho) \cdot \rho 
\end{align*}
where the last step follows as $\tilde{Z}_I(\tilde{\rho}_I) = \tilde{\rho}_I$ and $\tilde{\rho}_I \circ r_I^{max} = \rho$.
\end{proof}

\subsection{Index computation}

\begin{lem}\label{lem:ind-deg}
Let $\gamma$ be an orbit of $h \circ \rho$, with $J:= \{j \in I(\gamma): r_j^{max}(\gamma) \neq 0\}$. 
Define the $|J| \times |J|$ matrix
$$Hess_\gamma := \left(\frac{\partial^2 (h \circ \tilde{\rho}_I)}{\partial r_i \partial r_j} (r_I(\gamma))\right)_{i,j \in J}.$$
Then the Conley--Zehnder index of the orbit $\gamma$ of $h \circ \rho$ with respect to the outer cap is given by
$$ \CZ(\gamma,u_{out}) = 2\sum_i \left\lceil \nu^h_i(\gamma) \right\rceil + \frac{1}{2} \mathrm{sign}\left(Hess_\gamma\right).$$
\end{lem}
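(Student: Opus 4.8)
\textbf{Proof proposal for Lemma \ref{lem:ind-deg}.}

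The plan is to compute the Conley--Zehnder index of the (possibly degenerate) orbit $\gamma$ of $h\circ\rho$ by splitting the linearized flow along the orbit into a ``toric'' part, coming from the directions tangent to the torus orbit and its symplectic complement inside the toric normal directions, and a remaining ``constant'' part. Concretely, since $\gamma\subset UD_I^{max}$ with $I=I(\gamma)$, I would work in a standard chart (via Lemma \ref{lem-SC-st-neigh}), using the moment map $r_I^{max}$ to put the neighbourhood in the toric normal form $\mathbb{C}^I\times\mathbb{C}^{n-|I|}$, and observe that $h\circ\rho = h\circ\tilde\rho_I\circ r_I^{max}$ is a function of the moment map only. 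For such a Hamiltonian the flow is a product: in each $\mathbb{C}$-factor indexed by $i\in I$ it rotates at angular speed $2\pi\,\partial(h\circ\tilde\rho_I)/\partial r_i$, evaluated along the orbit, which is exactly $2\pi\nu^h_i(\gamma)$ (this is why the integer/non-integer dichotomy of Lemma \ref{lem:HamflowH} is the relevant one), while the directions transverse to the torus and to the $\mathbb{C}^I$-block contribute nothing. The Robbin--Salamon index is additive under such splittings, so $\CZ(\gamma,u_{out})$ is a sum of three contributions: the rotation numbers in the $\mathbb{C}^i$ directions with $i\in I$, the ``fixed'' directions, and a correction term coming from the degeneracy along the level set $C_p$ of the moment map — i.e. along the directions $j\in J$ where $r_j^{max}(\gamma)\neq 0$, in which the linearized return map has $1$ as an eigenvalue and the crossing form is governed by the Hessian of $h\circ\tilde\rho_I$ restricted to those coordinates.

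First I would set up the linearization carefully with respect to the trivialization $\Phi$ induced by $u_{out}$. In the standard chart the cap $u_{out}$ is (by construction in Section \ref{ss-Horbs}) the radial cap inside $\mathbb{C}^I\times\{c\}$, possibly preceded by a Liouville cylinder; the Liouville cylinder is a homotopy rel boundary so it does not affect the index, and the radial trivialization is the standard one on each $\mathbb{C}$-factor. Relative to this trivialization the path of symplectic matrices is block-diagonal: (a) for each $i\in I$ a rotation $t\mapsto e^{2\pi i \nu^h_i(\gamma) t}$ in $\mathbb{C}$; (b) the identity in the remaining $\mathbb{C}^{n-|I|}$-block, up to a path staying in the subgroup fixing $1$; and (c) along the base directions of $C_p$ (the $j\in J$) a shear generated by $Hess_\gamma$. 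The standard computation of the Robbin--Salamon/Conley--Zehnder index of a rotation path $e^{2\pi i \theta t}$ on $[0,1]$ gives $2\lceil\theta\rceil$ when $\theta\notin\mathbb{Z}$ and $2\theta$ when $\theta\in\mathbb{Z}$; in either case this is $2\lceil\nu^h_i(\gamma)\rceil$ for the non-zero directions, and it is $0$ (consistent with $\lceil 0\rceil = 0$) in the directions where $\nu^h_i(\gamma)=0$. For the degenerate block (c), the Robbin--Salamon index of $t\mapsto \exp(tJ_0 Hess_\gamma)$ on $[0,1]$ is $\tfrac12\,\mathrm{sign}(Hess_\gamma)$ by the crossing-form formula of Robbin--Salamon (the only crossing is at $t=0$, with crossing form $Hess_\gamma$, contributing half its signature). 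Summing the three blocks and using additivity of the index under direct sums yields the claimed formula.

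The main obstacle I anticipate is bookkeeping rather than anything deep: making precise the direct-sum decomposition of $\gamma^*TM$ (with the $u_{out}$-trivialization) into the $\mathbb{C}^I$-block, the symplectic complement, and the ``Morse--Bott base'' directions $J$, and checking that this decomposition is preserved by the linearized flow so that additivity of the Robbin--Salamon index applies. One has to be careful that the $\mathbb{C}^i$-block with $i\in I\setminus J$ (where $r_i^{max}(\gamma)=0$, i.e. $\gamma$ lies on $D_i$) still contributes $2\lceil\nu^h_i(\gamma)\rceil$ and does \emph{not} contribute to the Hessian term — the orbit is a fixed point of the circle action in that factor, so there is no base direction there, only the rotational normal direction, which is exactly why the signature is taken over the smaller index set $J$ rather than all of $I(\gamma)$. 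I would also invoke the independence of $\CZ$ on the choice of auxiliary data within the standard chart, guaranteed by Lemma \ref{lem:adm-uniq} (uniqueness of $u_{out}$ up to homotopy rel boundary). Once the decomposition is in place, each block is a textbook computation, so the lemma follows.
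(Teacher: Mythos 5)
Your overall strategy is the same as the paper's: pass to a standard chart, observe that $h\circ\rho$ is a function of the moment map so the linearized flow is explicit, and identify the index as a sum of rotation contributions $2\lceil\nu^h_i\rceil$ plus a shear contribution $\tfrac12\mathrm{sign}(Hess_\gamma)$ coming from the degenerate directions, with the restriction to $J$ explained by the vanishing of the relevant derivatives along $D_i$ for $i\in I\setminus J$. All of that matches the paper's proof.

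There is, however, one genuine gap in how you justify the additivity. Your claimed decomposition into three \emph{blocks} --- (a) a rotation in each $\C$-factor for $i\in I$, (b) the identity on $\C^{n-|I|}$, and (c) a shear ``along the base directions $j\in J$'' --- cannot be a direct-sum decomposition: it has total dimension $2|I|+2(n-|I|)+|J|>2n$. The point is that for $j\in J$ the rotation $e^{2\pi i\nu^h_j t}$ and the shear generated by $Hess_\gamma$ act on the \emph{same} symplectic $2$-planes $(r_j,\theta_j)$; they are composed, not direct-summed. The linearization is genuinely of the form
$$D\varphi_t=\mathrm{diag}\!\left(e^{2\pi i t\,\nu^h}\right)\cdot\left(\mathbf{1}+2\pi i t\left[\tfrac{\partial\nu^h_j}{\partial z_i}\right]\right),$$
and to split its index into $\CZ$ of the rotation plus $\CZ$ of the shear you need the loop/catenation property of the Conley--Zehnder index, $\CZ(L_t\Psi_t)=\CZ(L_t)+\CZ(L_1\Psi_t)$ for a loop $L_t$ --- which applies here precisely because $\nu^h_j\in\Z$ for $j\in J$ (Lemma \ref{lem:HamflowH}), so the rotation restricted to $\C^J$ is a genuine loop. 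This is the ``standard argument'' the paper cites from Oancea; direct-sum additivity of the Robbin--Salamon index, which is all you invoke, does not by itself give the splitting within the $\C^J$-block. Once that property is invoked the rest of your computation (the value $2\lceil\theta\rceil$ for the rotation paths, the normalization/crossing-form value $\tfrac12\mathrm{sign}$ for the shear, and the homotopy-invariance under the Liouville cylinder and choice of admissible chart) goes through as in the paper.
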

\begin{proof}
For constant orbits the result is easy, so we assume that $\gamma$ is nonconstant. 
We may assume that $\gamma$ and $u_{out}$ lie in an admissible chart $\C^{I(\gamma)} \times \C^{n-|I(\gamma)|}$, as the index does not change as we flow along the Liouville flow. 
The flow of $h \circ \rho$ in the admissible chart decomposes as a product of the flow
\[ \varphi_t(r,\theta) = (r,\theta +2\pi t\tilde{\nu}^h(r))\]
on $\C^{I(\gamma)}$ (written in action-angle coordinates) with the trivial flow on $\C^{n-|I(\gamma)|}$. 
Thus $\CZ(\gamma,u^{out}) = \CZ(D\varphi_t)$.  
We have
\begin{align*}
\CZ(D\varphi_t) &= \CZ\left(diag\left(e^{2 \pi i t \cdot \nu^h(z)}\right) \cdot  \left( \mathbf{1}  +2\pi i t \cdot \left[\frac{\partial \nu^h_j(z)}{\partial z_i}\right] \right)\right) \\
&= \CZ\left(diag\left(e^{2 \pi i t \cdot \nu^h(z)}\right)\right) + \CZ\left(diag\left(e^{2 \pi i  \cdot \nu^h(z)}\right) \cdot  \left( \mathbf{1}  +2\pi i t \cdot \left[\frac{\partial \nu^h_j(z)}{\partial z_i}\right] \right)\right)
\end{align*}
by a standard argument (c.f. \cite[Section 3.3]{Oancea2004}). 
The first term is equal to $2 \sum_i \left\lceil \nu^h_i(\gamma) \right\rceil$ (see \cite[Section 3.2]{Oancea2004}). 
For the second, we decompose $\C^{I(\gamma)} = \C^J \oplus \C^{I(\gamma) \setminus J}$. 
Note that $\partial \nu^h_j/\partial z_i = 0$ for $i \notin J$, because $r_i$ has vanishing derivative along $\{z_i=0\}$, where our orbit is contained. 
Also note that $e^{2\pi i \cdot \nu^h_i(z)} = 1$ for $i \in J$. 
Putting these together, one finds that the second term is equal to the Conley--Zehnder index of the path $\mathbf{1}_{J} + 2\pi i t \cdot [\partial \nu^h_j/\partial z_i]_{i,j \in J}$. 
Writing this in the basis given by action-angle coordinates (i.e., $\left(r_i\partial/\partial r_i,\partial/\partial \theta_i\right)_{i \in J}$), we see that it takes the form of a symplectic shear, whose Conley--Zehnder index is equal to 
\[ \CZ\left( \begin{array}{cc}
				\mathbf{1} & -2\pi t \cdot Hess_\gamma \\
				0 & \mathbf{1} 
		\end{array} \right) = \frac{1}{2} \mathrm{sign}\left(Hess_\gamma\right)
		\]
by the `normalization' property of the Conley--Zehnder index, see \cite[Theorem 4.1]{Robbin1993}.\footnote{The signature of a symmetric matrix is the number of positive eigenvalues minus the number of negative eigenvalues.}
\end{proof}

\begin{lem}\label{lem:ind}
Let $\gamma$ be an orbit of $H$ which corresponds to an orbit $\bar{\gamma}$ of $h \circ \rho$ as in Lemma \ref{lem:pertH}. 
Then we have
$$ i(\gamma,u_{out}) = 2\sum_i \left\lceil \nu^h_i(\bar{\gamma}) \right\rceil + \delta(\gamma),$$
where $0 \le \delta(\gamma) \le 2n$.
\end{lem}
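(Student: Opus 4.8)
The strategy is to combine the Conley--Zehnder index computation of Lemma \ref{lem:ind-deg} (applied to the underlying degenerate orbit $\bar\gamma$ of $h\circ\rho$) with the index-perturbation estimate of Lemma \ref{lem:pertH}\eqref{it:ind-approx}, and then convert from $\CZ$ to $i$ by adding $\dim(M)/2 = n$. First I would recall that by definition
$$ i(\gamma,u_{out}) = \CZ(\gamma,u_{out}) + n,$$
so it suffices to bound $\CZ(\gamma,u_{out})$. By Lemma \ref{lem:pertH}\eqref{it:ind-approx} we have
$$\left|\CZ(\gamma,u_{out}) - \CZ(\bar\gamma,\bar u_{out})\right| \le \tfrac12 k(\bar\gamma),$$
where $\bar u_{out}$ is the outer cap for $\bar\gamma$ obtained from $u_{out}$ under the canonical bijection of caps set up in the proof of Lemma \ref{lem:pertH} (using that $u_{out}$ for $\gamma$ and $\bar u_{out}$ for $\bar\gamma$ are built the same way, flowing along the Liouville flow and capping in an admissible chart, so they correspond under that bijection). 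Then by Lemma \ref{lem:ind-deg},
$$\CZ(\bar\gamma,\bar u_{out}) = 2\sum_i \left\lceil \nu^h_i(\bar\gamma)\right\rceil + \tfrac12\mathrm{sign}(Hess_{\bar\gamma}).$$

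Combining, I would set
$$\delta(\gamma) := i(\gamma,u_{out}) - 2\sum_i \left\lceil \nu^h_i(\bar\gamma)\right\rceil = n + \tfrac12\mathrm{sign}(Hess_{\bar\gamma}) + \left(\CZ(\gamma,u_{out}) - \CZ(\bar\gamma,\bar u_{out})\right),$$
and bound each piece. The signature term satisfies $|\mathrm{sign}(Hess_{\bar\gamma})| \le |J| \le n$, so $\tfrac12\mathrm{sign}(Hess_{\bar\gamma}) \in [-n/2, n/2]$. The perturbation term lies in $[-\tfrac12 k(\bar\gamma), \tfrac12 k(\bar\gamma)]$, and $k(\bar\gamma) = \dim\ker(D\Phi^1_{h\circ\rho} - \mathrm{id})_{\bar\gamma(0)} \le \dim M = 2n$, so this term lies in $[-n, n]$. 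Adding the constant $n$ gives
$$\delta(\gamma) \in \left[ n - \tfrac{n}{2} - n,\ n + \tfrac{n}{2} + n\right] = \left[-\tfrac{n}{2},\ \tfrac{5n}{2}\right],$$
which is not quite the claimed $[0, 2n]$. So the main work — and the main obstacle — is to obtain the sharp lower bound $\delta(\gamma) \ge 0$ and upper bound $\delta(\gamma)\le 2n$: the crude triangle-inequality bounds above are too lossy because the signature term and the perturbation term are correlated. The right approach is to argue more carefully about the Conley--Zehnder index of the perturbed linearized flow directly. Concretely: in the admissible chart, $D\Phi^1_{h\circ\rho}$ at $\bar\gamma(0)$ splits as a rotation block (the $e^{2\pi i\nu^h_i}$ factors with $\nu^h_i \notin \Z$, contributing exactly $2\sum_i\lceil\nu^h_i\rceil$ to $\CZ$ and being nondegenerate already), a symplectic-shear block coming from the Hessian directions $J$, and an identity block coming from the remaining $2n - 2|I(\gamma)| - |J|$ real dimensions on which the flow is trivial. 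The perturbation $\delta$ only needs to break degeneracy in the shear-and-identity part; after perturbation, a symplectic shear $\begin{pmatrix}\mathbf 1 & -tA\\ 0 & \mathbf 1\end{pmatrix}$ together with a small generic perturbation of a degenerate block of real dimension $2m$ (here $2m = |J| + (2n - 2|I(\gamma)| - |J|) = 2n - 2|I(\gamma)|$, a $2m$-dimensional symplectic subspace) has Conley--Zehnder index lying between $-m$ and $m$ by the general estimate on the $\CZ$-index of a perturbation of a degenerate path (cf. the Robbin--Salamon inequality $|\CZ(\Psi) - \CZ(\Psi')| \le \tfrac12\dim$ of the space on which they differ, together with $|\CZ| \le m$ for any path of symplectic matrices in dimension $2m$ which is close to the identity at time $0$). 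This gives $\CZ(\gamma,u_{out}) - 2\sum_i\lceil\nu^h_i\rceil \in [-m, m]$ with $m = n - |I(\gamma)| \le n$, hence $\CZ(\gamma,u_{out}) - 2\sum_i\lceil\nu^h_i\rceil \in [-n,n]$, and adding $n$ yields $\delta(\gamma)\in[0,2n]$.

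\textbf{Where the difficulty lies.} The genuinely delicate point is justifying that the contribution to the Conley--Zehnder index from the degenerate directions, \emph{after} perturbing to achieve nondegeneracy, stays within $[-m,m]$ where $2m$ is the dimension of the degenerate symplectic subspace — rather than within the weaker $[-m - \tfrac12 \cdot 2m, m + \tfrac12\cdot 2m]$ one would get from naively combining "$|\CZ|\le m$ before perturbing" with "perturbation changes $\CZ$ by at most $\tfrac12\cdot 2m$". The resolution is that before perturbation the degenerate block is literally the identity path, whose $\CZ$-index (Robbin--Salamon) is $0$, not merely bounded by $m$; and the Robbin--Salamon inequality $|\CZ(\Psi_1) - \CZ(\Psi_0)| \le \tfrac12\,\mathrm{(relevant\ dimension)}$ applied to a $C^0$-small perturbation of the identity path then directly gives $|\CZ| \le m$ for the perturbed block. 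I would therefore structure the proof as: (i) decompose the linearized time-one flow of $h\circ\rho$ along $\bar\gamma$ in action-angle coordinates in an admissible chart into the nondegenerate rotation part (handled exactly as in Lemma \ref{lem:ind-deg}) plus a degenerate part on a symplectic subspace of dimension $2(n - |I(\gamma)|)$; (ii) observe the perturbation $\delta$ from Lemma \ref{lem:pertH} can be taken to be supported near $C_p$ and to only affect the degenerate part, so the perturbed flow still decomposes as (nondegenerate rotation) $\oplus$ (perturbed degenerate block); (iii) use additivity of $\CZ$ under such direct sums and the Robbin--Salamon estimate to bound the degenerate block's index in $[-(n - |I(\gamma)|),\ n - |I(\gamma)|]$; (iv) add $n$ and conclude $0 \le \delta(\gamma) \le 2n$. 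Steps (i)–(iii) are where care is needed; step (iv) is bookkeeping.
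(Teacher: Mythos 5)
Your diagnosis of the difficulty is exactly right: naively combining Lemma \ref{lem:ind-deg} with Lemma \ref{lem:pertH}\,\eqref{it:ind-approx} only gives $\delta(\gamma)\in[-n/2,5n/2]$, and one must exploit the fact that the signature term and the perturbation term cannot both be maximal. But your proposed resolution contains a genuine error. You assert that ``before perturbation the degenerate block is literally the identity path, whose $\CZ$-index is $0$.'' It is not: in the trivialization induced by $u_{out}$, the restriction of the linearized flow to the directions on which the time-one map is (close to) the identity is the loop of rotations $e^{2\pi i t\nu^h_j}$ for $j\in J$ (with $\nu^h_j\in\Z\setminus\{0\}$) composed with the symplectic shear, so its Robbin--Salamon index is $2\sum_{j\in J}\nu^h_j+\tfrac12\mathrm{sign}(Hess_{\bar{\gamma}})$ --- a large part of the \emph{main} term, not $0$; the index depends on the whole path, not on its endpoint. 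Relatedly, your dimension count $2m=2n-2|I(\gamma)|$ drops the $2|J|$ real dimensions of the shear block, and your step (ii) --- that the perturbation of Lemma \ref{lem:pertH} can be taken to respect the block decomposition, so that $\CZ$ is additive over blocks after perturbing --- is not something that Lemma provides: the perturbation there is a generic $C^2$-small function, which mixes the blocks. Making (ii) true would require the Morse--Bott-type perturbation alluded to in the remark following Lemma \ref{lem:pertH}, i.e.\ a different and harder construction.

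The paper captures the correlation you identified with a single inequality and no new analysis of the perturbed flow. From the explicit form of the linearized time-one flow in the admissible chart one reads off
$$\ker\left(D\varphi_1-\mathrm{id}\right)=\C^{n-|I(\gamma)|}\oplus\C^{I(\gamma)\setminus J}\oplus\langle\partial/\partial\theta_j\rangle_{j\in J}\oplus\ker\left(Hess_{\bar{\gamma}}\right),$$
so $k(\bar{\gamma})$ exceeds $|J|+\dim\ker(Hess_{\bar{\gamma}})$ only by even-dimensional summands of total dimension $2(n-|J|)$ at most, while $|\mathrm{sign}(Hess_{\bar{\gamma}})|\le|J|-\dim\ker(Hess_{\bar{\gamma}})$; adding these gives $k(\bar{\gamma})+|\mathrm{sign}(Hess_{\bar{\gamma}})|\le 2n$. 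Feeding this into the triangle inequality
$$\left|\CZ(\gamma,u_{out})-2\sum_i\left\lceil\nu^h_i(\bar{\gamma})\right\rceil\right|\le\tfrac12 k(\bar{\gamma})+\tfrac12\left|\mathrm{sign}(Hess_{\bar{\gamma}})\right|\le n$$
and adding $n$ yields $0\le\delta(\gamma)\le 2n$ at once. If you insist on a block-by-block argument you must (a) keep the shear's index $\tfrac12\mathrm{sign}(Hess_{\bar{\gamma}})$ in the bookkeeping rather than setting it to zero, and (b) justify compatibility of the perturbation with the splitting --- at which point you will in effect have re-derived the displayed inequality.
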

\begin{proof}
We apply Lemmas \ref{lem:pertH} and \ref{lem:ind-deg}. 
Continuing the notation from the proof of the latter, we have 
$$\ker \left( D\varphi_1 - \id\right) = \C^{n - |I(\gamma)|} \oplus \C^{I(\gamma) \setminus J} \oplus \langle \partial/\partial \theta_j \rangle_{j \in J} \oplus \ker \left(Hess_{\bar{\gamma}}\right).$$
Recall that $k(\bar{\gamma})$ is, by definition, the dimension of this space. 
Thus we have
$$k(\bar{\gamma}) + \left|\mathrm{sign}\left(Hess_{\bar{\gamma}} \right)\right| \le 2n.$$
Combining the stated Lemmas, we have
\begin{align*}
\left| \CZ(\gamma,u^{out}) - 2\sum_i \left\lceil \nu^h_i(\bar{\gamma})\right\rceil -  \frac{1}{2} \mathrm{sign}\left(Hess_{\bar{\gamma}}\right)\right| & \le \frac{k(\bar{\gamma})}{2} \\
\Rightarrow \left| \CZ(\gamma,u^{out}) - 2\sum_i \left\lceil \nu^h_i(\bar{\gamma}) \right\rceil  \right| &\le \frac{2n}{2} = n.
\end{align*}
Recalling that $i(\gamma,u^{out}) := n + \CZ(\gamma,u^{out})$, the result is immediate.
\end{proof}

\begin{lem}\label{lem-ind-final}
Let $\gamma$ be an orbit of $H$ which corresponds to an orbit $\bar{\gamma}$ of $h \circ \rho$ as in Lemma \ref{lem:pertH}, and suppose that $h'(\rho) \ge 0$ everywhere, so that $\nu^h_i(\bar{\gamma}) \ge 0$ for all $i$. 
Then we have
$$i(\gamma,u_{in}) \ge \sum_i (2-\lambda_i) \cdot \nu^h_i(\bar{\gamma}).$$
In particular, when Hypothesis \ref{hyp:gr} is satisfied, we have $i(\gamma,u_{in}) \ge 0$.
\end{lem}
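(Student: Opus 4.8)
The plan is to deduce the estimate directly from the index computation of Lemma \ref{lem:ind} together with the definition of the inner cap. First I would recall that, by construction, $u_{in} = u_{out} - \nu^h(\bar{\gamma})\cdot\bflambda$, so the fractional-cap index formula of Section \ref{ss-ham-review} gives
\[ i(\gamma,u_{in}) = i(\gamma,u_{out}) - \nu^h(\bar{\gamma})\cdot\bflambda = i(\gamma,u_{out}) - \sum_i \lambda_i\,\nu^h_i(\bar{\gamma}). \]
Thus everything is reduced to a lower bound for $i(\gamma,u_{out})$, which is exactly what Lemma \ref{lem:ind} supplies.

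Next I would feed in Lemma \ref{lem:ind}: it gives $i(\gamma,u_{out}) = 2\sum_i \lceil \nu^h_i(\bar{\gamma})\rceil + \delta(\gamma)$ with $\delta(\gamma)\ge 0$, and since $\lceil x\rceil \ge x$ for every real $x$ we obtain $i(\gamma,u_{out}) \ge 2\sum_i \nu^h_i(\bar{\gamma})$. Substituting this into the displayed identity yields
\[ i(\gamma,u_{in}) \ge 2\sum_i \nu^h_i(\bar{\gamma}) - \sum_i \lambda_i\,\nu^h_i(\bar{\gamma}) = \sum_i (2-\lambda_i)\,\nu^h_i(\bar{\gamma}), \]
which is the first assertion. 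For the last sentence, the standing hypothesis $h'(\rho)\ge 0$ forces $\nu^h_i(\bar{\gamma})\ge 0$ for all $i$, while Hypothesis \ref{hyp:gr} gives $2-\lambda_i\ge 0$; hence every summand on the right-hand side is non-negative and $i(\gamma,u_{in})\ge 0$.

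I do not expect any genuine obstacle in this step: the substantive work has already been done in Lemma \ref{lem:ind} (and, beneath it, in the Conley--Zehnder computation of Lemma \ref{lem:ind-deg}). The only point worth flagging is that $\nu^h_i(\bar{\gamma})$ need not be an integer when $\bar{\gamma}$ lies on $D_i$; this is harmless, since the rounding inequality $\lceil x\rceil \ge x$ holds for all reals and is the only property of the ceiling used. So the proof is a two-line computation assembling the pieces already in place.
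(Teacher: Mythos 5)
Your proof is correct and is essentially identical to the paper's: both combine Lemma \ref{lem:ind} with the definition $u_{in}=u_{out}-\nu^h(\bar\gamma)\cdot\bflambda$, discard the non-negative $\delta(\gamma)$, and use $\lceil x\rceil\ge x$ together with $\nu^h_i(\bar\gamma)\ge 0$ and $2-\lambda_i\ge 0$. Your remark about non-integer $\nu^h_i(\bar\gamma)$ on $D_i$ is a fair point and does not affect the argument.
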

\begin{proof}
By Lemma \ref{lem:ind}, we have
\begin{align*}
i(\gamma,u_{in}) &\ge \sum_i 2\left\lceil \nu^h_i(\bar{\gamma})\right\rceil - \lambda_i \cdot \nu^h_i(\bar{\gamma}) \\
&\ge \sum_i (2-\lambda_i) \cdot \nu^h_i(\bar{\gamma})
\end{align*}
as required.
\end{proof}

\begin{lem}\label{lem:mindex}
Let $\gamma$ be an orbit of $H$ which corresponds to an orbit $\bar{\gamma}$ of $h \circ \rho$ as in Lemma \ref{lem:pertH}. 
Then we have
$$ i_{mix}(\gamma) = \sum_i (2- \kappa^{-1} r_i^{max}(\bar{\gamma})) \cdot \nu^h_i(\bar{\gamma}) - \kappa^{-1} h(\rho(\bar{\gamma}))+D(\gamma),$$
where $D(\gamma)$ is bounded: in particular, the lower bound is $D(\gamma) \ge -\kappa^{-1} \eps(\gamma)$, where $\eps(\gamma)$ is as in Lemma \ref{lem:pertH}.
\end{lem}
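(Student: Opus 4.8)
The plan is to compute $i_{mix}(\gamma) = i(\gamma,u_{in}) - \kappa^{-1}\cA(\gamma,u_{in})$ by combining the index and action formulas already established, using the inner cap throughout so that no divisor-intersection corrections appear. First I would recall from Lemma~\ref{lem:ind} that $i(\gamma,u_{out}) = 2\sum_i\lceil\nu^h_i(\bar\gamma)\rceil + \delta(\gamma)$ with $0\le\delta(\gamma)\le 2n$, hence $i(\gamma,u_{in}) = i(\gamma,u_{out}) - \nu^h(\gamma)\cdot\bflambda = 2\sum_i\lceil\nu^h_i(\bar\gamma)\rceil + \delta(\gamma) - \sum_i\lambda_i\nu^h_i(\bar\gamma)$. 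Separately, from Lemma~\ref{lem:act} we have $\cA(\gamma,u_{out}) = h(\rho(\bar\gamma)) + \sum_i\nu^h_i(\bar\gamma)\,r_i^{max}(\bar\gamma)$ up to the error $\eps(\gamma)$ coming from the perturbation in Lemma~\ref{lem:pertH}, item~\eqref{it:act-approx}; and $\cA(\gamma,u_{in}) = \cA(\gamma,u_{out}) - \kappa\,\nu^h(\gamma)\cdot\bflambda = h(\rho(\bar\gamma)) + \sum_i\nu^h_i(\bar\gamma)r_i^{max}(\bar\gamma) - \kappa\sum_i\lambda_i\nu^h_i(\bar\gamma)$, again modulo $\eps(\gamma)$.

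Next I would subtract:
\[
i_{mix}(\gamma) = i(\gamma,u_{in}) - \kappa^{-1}\cA(\gamma,u_{in}).
\]
The $-\sum_i\lambda_i\nu^h_i(\bar\gamma)$ in the index and the $+\sum_i\lambda_i\nu^h_i(\bar\gamma)$ coming from $-\kappa^{-1}\cdot(-\kappa\sum_i\lambda_i\nu^h_i(\bar\gamma))$ cancel, leaving
\[
i_{mix}(\gamma) = 2\sum_i\lceil\nu^h_i(\bar\gamma)\rceil - \kappa^{-1}\sum_i\nu^h_i(\bar\gamma)r_i^{max}(\bar\gamma) - \kappa^{-1}h(\rho(\bar\gamma)) + \delta(\gamma) + \kappa^{-1}(\text{action error}).
\]
I would then write $2\sum_i\lceil\nu^h_i(\bar\gamma)\rceil = \sum_i 2\nu^h_i(\bar\gamma) + 2\sum_i(\lceil\nu^h_i(\bar\gamma)\rceil - \nu^h_i(\bar\gamma))$, so that the first sum combines with $-\kappa^{-1}\sum_i\nu^h_i(\bar\gamma)r_i^{max}(\bar\gamma)$ to give exactly $\sum_i(2 - \kappa^{-1}r_i^{max}(\bar\gamma))\nu^h_i(\bar\gamma)$, which is the leading term in the statement. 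Everything else gets absorbed into $D(\gamma) := \delta(\gamma) + 2\sum_i(\lceil\nu^h_i(\bar\gamma)\rceil - \nu^h_i(\bar\gamma)) + \kappa^{-1}(\text{action error})$.

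Finally I would verify that $D(\gamma)$ is bounded with the claimed lower bound. The term $\delta(\gamma)\in[0,2n]$ is bounded. The term $2\sum_i(\lceil\nu^h_i(\bar\gamma)\rceil - \nu^h_i(\bar\gamma))$ is bounded: each summand lies in $[0,1)$, is nonzero for at most $\#I(\bar\gamma)\le N$ values of $i$, and is in fact zero whenever $\bar\gamma$ lies on $D_i$ (where $\nu^h_i$ can be non-integral) --- wait, rather the point is simply that there are finitely many components so this is bounded by $2N$; and it is nonnegative. The action error is bounded by $\eps(\gamma)$ in absolute value by Lemma~\ref{lem:pertH}\eqref{it:act-approx}, contributing at least $-\kappa^{-1}\eps(\gamma)$. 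Hence $D(\gamma) \ge -\kappa^{-1}\eps(\gamma)$ and $D(\gamma)$ is bounded above, as claimed. The main (very minor) obstacle is bookkeeping the perturbation errors: one must be careful that the $\eps(\gamma)$ appearing here is the same quantity controlled in Lemma~\ref{lem:pertH}, and that only the action estimate (not the index estimate, whose error is already folded into $\delta(\gamma)$) contributes an $\eps$-sized term; the index contribution was already accounted for via the $0\le\delta(\gamma)\le 2n$ bound. There is no real analytic content here --- it is a direct consequence of Lemmas~\ref{lem:act}, \ref{lem:ind} and the definition of $u_{in}$.
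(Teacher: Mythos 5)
Your proposal is correct and is essentially the paper's own (very terse) proof: combine the action formula of Lemma~\ref{lem:act}, the index formula of Lemma~\ref{lem:ind}, and the perturbation bounds of Lemma~\ref{lem:pertH}, absorbing $\delta(\gamma)$, the ceiling defects $2\sum_i(\lceil\nu^h_i(\bar\gamma)\rceil-\nu^h_i(\bar\gamma))\in[0,2N)$, and the action error into $D(\gamma)$. Your detour through $u_{in}$ is harmless but unnecessary, since $i_{mix}$ is cap-independent the paper just computes directly with $u_{out}$ and the $\bflambda$-terms you introduce cancel exactly as you observe.
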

\begin{proof}
The equality follows by using the outer cap to compute the mixed index, via Lemmas \ref{lem:pertH}, \ref{lem:act}, and \ref{lem:ind}.
\end{proof}


\section{Proofs}\label{s-proofs}

In this section we prove Theorems \ref{thm:aa}, \ref{thm:specseqa}, and \ref{thm-superheavy}. 
We will assume throughout that the divisor $D$ is orthogonal, although that is not a hypothesis of Theorems \ref{thm:aa} and \ref{thm:specseqa}; the general results follow using Remark \ref{rmk:Dorth}.

Because $D$ is orthogonal (and in particular admits an admissible system of commuting Hamiltonians), we can make all of the constructions from the previous section, whose notation and assumptions (e.g. Equation \ref{eq-small-rad}) we continue. Right before Section \ref{ss-Horbs} we had started omitting the dependence on $R\in (0,R_0)$ from the notation for brevity, now we bring it back.

\subsection{Properties of $\tilde{\rho}^R_I$}\label{ss-rhoprop}

When we talk about a property ($n$) of $\tilde{Y}^R_I$ below, we mean the properties from Lemma \ref{lem:construct-Y}.

\begin{lem}\label{lem:rho-D-est}
There is a continuous function $\eps_1:[0,, R_0) \to \R_{\geq 0}$, with $\eps_1(0) = 0$, such that for all $R \in (0,R_0)$, all $I$, and all $r \in \tilde{Y}^0_I$, we have
$$1 \le \tilde{\rho}^R_I(r) \le 1+\eps_1(R).$$
\end{lem}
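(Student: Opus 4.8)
The statement compares the smoothed function $\tilde{\rho}^R_I$ with the piecewise-linear model $\tilde{\rho}^0_I$ on the latter's singular hypersurface $\tilde{Y}^0_I = \partial\R^I_{\ge 0}$. Recall that $\tilde{\rho}^R_I$ is defined to be linear along the rays from $\kappa\bflambda_I$, equal to $0$ there, and equal to $1$ on $\tilde{Y}^R_I = \{Q^R_I = 1\}$, while $\tilde{\rho}^0_I = \max_{i\in I}\frac{\kappa\lambda_i - r_i}{\kappa\lambda_i}$, which equals $1$ exactly on $\tilde{Y}^0_I$. So for $r \in \tilde{Y}^0_I$, we want to show that the point $r$ lies on (or just beyond) the smoothed level-$1$ surface $\tilde{Y}^R_I$, quantitatively: $\tilde{\rho}^R_I(r)$ is at least $1$ and at most $1+\eps_1(R)$ with $\eps_1(R) \to 0$ as $R \to 0$.

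The lower bound $\tilde{\rho}^R_I(r) \ge 1$ for $r \in \tilde{Y}^0_I$ is the easier half. By property \eqref{it:Y-V} of Lemma \ref{lem:construct-Y}, $\tilde{Y}^R_I \subset V_{I,\ge 0}$, and in fact $q^R$ is supported in $[0,R/2)$, so $\tilde{Y}^R_I$ lies within distance $O(R)$ of $\tilde{Y}^0_I$ but on the side of $\tilde{Y}^0_I$ \emph{away} from $\kappa\bflambda_I$ (since on $\tilde{Y}^0_I$ one has $Q^R_I \ge q^R(0) = 1$ with equality only when all but one coordinate exceeds $R/2$; generically $Q^R_I \ge 1$ there, so $\tilde{Y}^0_I$ is in the closure of the far component). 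More precisely: the far component of $\R^I \setminus \tilde{Y}^R_I$ is $\{Q^R_I < 1\}$; wait — I should check orientation. Since $Q^R_I(\kappa\bflambda_I) = 0$ (as $\kappa\lambda_i > R/2$ for all $i$ by Equation \eqref{eq-small-rad}, assuming $R < R_0 < 2\kappa\lambda_i$) and $\tilde{Z}_I(Q^R_I) \ge 0$ along rays (shown in the proof of Lemma \ref{lem:construct-Y}\eqref{it:Y-Z}), the function $Q^R_I$ increases from $0$ along each ray, hitting $1$ on $\tilde{Y}^R_I$. On $\tilde{Y}^0_I$ we have $Q^R_I \ge 1$ since at least one coordinate is $0$. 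Hence along the ray through $r\in\tilde{Y}^0_I$, the crossing of $\tilde{Y}^R_I$ happens at parameter $\le 1$ (measured so that $r$ is at parameter giving $\tilde{\rho}^0_I(r) = 1$), which translates to $\tilde{\rho}^R_I(r) \ge \tilde{\rho}^0_I(r) = 1$. I would make this precise by noting $\tilde{\rho}^R_I$ is, along each ray, the reciprocal-normalized ray parameter at which $\tilde{Y}^R_I$ is crossed, and comparing.

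For the upper bound, the key observation is a uniform estimate: for $r \in \tilde{Y}^0_I$, the value $Q^R_I(r) = \sum_{i\in I} q^R(r_i)$ is bounded above by $|I| \cdot \sup q^R = |I|$ (using $q^R(0)=1$ and $q^R$ decreasing); but more usefully, the \emph{ray from $\kappa\bflambda_I$ through $r$} must travel from parameter-value where $Q^R_I = 1$ (i.e. $\tilde{Y}^R_I$) to $r$, and the extra distance is controlled because $\tilde{Y}^R_I$ and $\tilde{Y}^0_I$ differ only in the region where some $r_i < R/2$. Concretely: a point $r \in \tilde{Y}^0_I$ has some coordinate $r_{i_0} = 0$; scaling $r$ by a factor $t \le 1$ towards $\kappa\bflambda_I$ moves the $i_0$-coordinate to $\kappa\lambda_{i_0}(1-t) + 0\cdot t$... hmm, the ray is $s \mapsto \kappa\bflambda_I + s(r - \kappa\bflambda_I)$, at $s=1$ giving $r$. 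I want the $s$ at which $Q^R_I = 1$; call it $s_*(r) \le 1$. Then $\tilde{\rho}^R_I(r) = 1/s_*(r)$ (normalizing so $\tilde{\rho}^R_I = 1$ at $s = s_*$ and $=0$ at $s=0$, linear in between — so $\tilde{\rho}^R_I(r) = s_{\text{eval}}/s_* $ where $s_{\text{eval}} = 1$). So I need $s_*(r) \ge 1/(1+\eps_1(R))$, i.e. $s_*(r) \ge 1 - O(R)$. Along the ray, the coordinates at parameter $s$ are $r_i(s) = \kappa\lambda_i(1-s) + s r_i$; for $Q^R_I(r(s)) = 1$ we need enough coordinates to have dropped below $R/2$. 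Since at $s=1$ some coordinate equals $0$, and coordinates move at speed $|r_i - \kappa\lambda_i| \le \kappa\lambda_i \le \kappa\max_j\lambda_j =: c$, a coordinate starting (at the crossing) at value $\ge 0$ and reaching some value $< R/2$ at $s=1$ requires $1 - s_* \le (R/2)/(\text{speed lower bound})$. The speed of the $i_0$-coordinate is exactly $\kappa\lambda_{i_0} \ge \kappa\min_j\lambda_j > 0$, uniformly. Hence $1 - s_* \le \frac{R/2}{\kappa\min_j\lambda_j}$, giving $\tilde{\rho}^R_I(r) = 1/s_* \le 1/(1 - \frac{R/2}{\kappa\min_j\lambda_j}) =: 1 + \eps_1(R)$, which is continuous in $R$ with $\eps_1(0) = 0$. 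I'd need to double-check that at $s = s_*$ indeed $Q^R_I \ge 1$ only because $q^R(r_{i_0}(s_*)) \ge$ something — actually the cleanest route: at $s_*$, $Q^R_I = 1 = q^R(0)$, and since $q^R \le 1$ with $q^R(r) < 1$ for $r > 0$, we get... this needs care when $|I| > 1$ because other terms contribute.

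\textbf{Anticipated main obstacle.} The subtle point — and the one I'd spend the most care on — is making the upper bound \emph{uniform over all $I$} and handling the case $|I| \ge 2$, where $Q^R_I$ is a sum of several bumps and the location of $\tilde{Y}^R_I$ relative to a given face of $\tilde{Y}^0_I$ is a genuinely two-dimensional (or higher) geometric question: near a corner of $\tilde{Y}^0_I$ the smoothed surface $\tilde{Y}^R_I$ bulges inward, so a point $r$ on a face near the corner could be relatively far from $\tilde{Y}^R_I$. The resolution should be that this "bulging" is still confined to the $O(R)$-neighbourhood where $Q^R_I$ is genuinely supported (all relevant $r_i < R/2$), and the ray-speed lower bound $\kappa\min_j\lambda_j$ still applies to whichever coordinate vanishes at $r$; but I would want to verify that the crossing parameter $s_*$ can't be pushed down further by the accumulation of several bump contributions. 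I expect a clean argument using that each $q^R \le 1$, so $Q^R_I(r(s)) \le 1$ forces at least $|I| - 1$ of the coordinates to be $\ge R/2$ (where $q^R$ vanishes) — wait, that would force $Q^R_I(r(s)) = q^R$ of the one remaining coordinate $\le 1$, consistent — so the constraint is really about a single coordinate after all, and the uniform bound goes through. If that fails I'd fall back on a compactness argument: the family $\{q^R\}$ can be chosen depending continuously on $R$, the surfaces $\tilde{Y}^R_I$ converge in $C^0$ to $\tilde{Y}^0_I$ uniformly in $I$ (finitely many $I$), hence $\tilde{\rho}^R_I \to \tilde{\rho}^0_I$ uniformly on compacts, which gives existence of \emph{some} $\eps_1(R) \to 0$ without an explicit formula — and that is all the statement requires.
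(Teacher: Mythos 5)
Your argument is correct and is essentially the paper's: both proofs rest on the observation that the smoothing is confined to the slab where some coordinate is $<R/2$, so that along each ray from $\kappa\bflambda_I$ the level set $\tilde{Y}^R_I$ is crossed at a parameter within $O(R)$ of $\tilde{Y}^0_I$; the paper packages this as the sandwiching of $\tilde{Y}^R_I$ between $\tilde{Y}^0_I$ and its translate by $\alpha\kappa\bflambda$ with $\alpha = R/(2\kappa\min_i\lambda_i)$, giving exactly your bound $1/(1-\alpha)$. Your worry about accumulation of several bump contributions for $|I|\ge 2$ is harmless: one only needs that \emph{some} coordinate is $<R/2$ at the crossing parameter $s_*$ (hence also at $s=1$, since that coordinate is decreasing), and the linear-speed estimate for that single coordinate already forces $s_*\ge 1-\alpha$.
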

\begin{proof}
Note that $\tilde{Y}^R_I$ is sandwiched between $\tilde{Y}^0_I$ and $(R/2,\ldots,R/2) + \tilde{Y}^0_I$; hence it is also sandwiched between $\tilde{Y}^0_I$ and $\alpha \cdot \kappa \bflambda + \tilde{Y}^0_I$, where $\alpha = R/ (2\kappa\min \lambda_i)$. 
It follows that 
$$1 \le \tilde{\rho}^R_I(r) \le \frac{1}{1-\alpha}$$
for $r \in \tilde{Y}^0_I$, which gives the desired result.
\end{proof}

\begin{lem}
There is a continuous function $\eps_2:[0,R_0) \to \R_{\geq 0}$, with $\eps_2(0) = 0$, such that for all $R \in (0,R_0)$, all $I$, and all $r \in \tilde{Y}^0_I$, we have
\begin{equation}
\label{eq:bound-nu-1}
1 \le \sum_i \kappa\lambda_i \cdot \tilde{\nu}^R_{I,i}(r) \le 1+\eps_2(R).
\end{equation}
\end{lem}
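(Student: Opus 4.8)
The quantity to be controlled is $S(r) := \sum_i \kappa\lambda_i \cdot \tilde{\nu}^R_{I,i}(r)$ for $r \in \tilde{Y}^0_I = \partial \R^I_{\ge 0}$, where $\tilde{\nu}^R_{I,i} = -\partial \tilde{\rho}^R_I/\partial r_i$ for $i \in I$ and vanishes otherwise. The natural idea is to rewrite $S$ as an Euler-type directional derivative of $\tilde{\rho}^R_I$ and exploit the defining equation $\tilde{Z}_I(\tilde{\rho}^R_I) = \tilde{\rho}^R_I$. Concretely, since $(\tilde{Z}_I)_r = \sum_{i\in I}(r_i - \kappa\lambda_i)\partial/\partial r_i$, we have
$$ \tilde{\rho}^R_I(r) = \tilde{Z}_I(\tilde{\rho}^R_I)(r) = \sum_{i\in I}(r_i - \kappa\lambda_i)\,\frac{\partial \tilde{\rho}^R_I}{\partial r_i}(r) = -\sum_{i\in I}(r_i-\kappa\lambda_i)\,\tilde{\nu}^R_{I,i}(r) = \sum_{i\in I}\kappa\lambda_i\,\tilde{\nu}^R_{I,i}(r) - \sum_{i\in I} r_i\,\tilde{\nu}^R_{I,i}(r).$$
Thus $S(r) = \tilde{\rho}^R_I(r) + \sum_{i\in I} r_i\,\tilde{\nu}^R_{I,i}(r)$. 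On $\tilde{Y}^0_I$ we already know $1 \le \tilde{\rho}^R_I(r) \le 1+\eps_1(R)$ by Lemma \ref{lem:rho-D-est}, so the plan reduces to bounding the correction term $E(r) := \sum_{i\in I} r_i\,\tilde{\nu}^R_{I,i}(r)$, which is manifestly $\ge 0$ since each $\tilde{\nu}^R_{I,i} \ge 0$ by property (3) of $\tilde{Y}^R_I$ and $r_i \ge 0$ on $\tilde{Y}^0_I$. This immediately gives the lower bound $S(r) \ge \tilde{\rho}^R_I(r) \ge 1$.

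For the upper bound I would argue that $E(r)$ is small when $R$ is small, because $\tilde{\nu}^R_{I,i}(r)$ is supported (as a function of where the level set sits) near the locus $r_i \le R/2$. More precisely: by property (5), $\tilde{\nu}^R_{I,i}$ vanishes on $P_I^{-1}(\{r_j > R/2\})$ for $j$ in any index one excludes; applied with $j=i$, one gets that $\tilde{\nu}^R_{I,i}(r) = 0$ whenever the projection $P_I(r)$ has $i$-th coordinate $> R/2$. Equivalently, writing $r = t \cdot (P_I(r) - \kappa\bflambda_I) + \kappa\bflambda_I$ along its $\tilde Z_I$-ray, the $i$-th coordinate of $r$ equals $\kappa\lambda_i + t(P_I(r)_i - \kappa\lambda_i)$; on $\tilde Y^0_I$ one has $r_i \ge 0$, and when $\tilde{\nu}^R_{I,i}(r)\ne 0$ one has $P_I(r)_i \le R/2$, which (together with $\tilde\rho^R_I(r)\le 1+\eps_1$, i.e. a bound on $t$) forces $r_i \le$ a quantity of size $O(R)$. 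Hence $E(r) = \sum_{i\in I} r_i \tilde{\nu}^R_{I,i}(r) \le O(R)\cdot \sum_{i\in I}\tilde{\nu}^R_{I,i}(r)$. Finally $\sum_{i\in I}\tilde\nu^R_{I,i}(r)$ is itself controlled: it is $\le (\kappa\min_i\lambda_i)^{-1} S(r)$, and combining with $S(r) = \tilde\rho^R_I(r) + E(r) \le 1+\eps_1(R) + O(R)(\kappa\min\lambda_i)^{-1}S(r)$ gives, for $R$ small enough that the coefficient of $S(r)$ on the right is $<1$, a uniform bound $S(r) \le 1 + \eps_2(R)$ with $\eps_2$ continuous and $\eps_2(0)=0$. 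Continuity of $\eps_2$ in $R$ follows from the explicit form of the estimate, exactly as for $\eps_1$ in Lemma \ref{lem:rho-D-est}.

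\textbf{Main obstacle.} The identity $S(r) = \tilde\rho^R_I(r) + E(r)$ is immediate and gives the lower bound for free; the real work is the upper bound on $E(r)$, i.e. making precise the assertion that ``$\tilde{\nu}^R_{I,i}$ is supported near $r_i = O(R)$ on $\tilde Y^0_I$.'' This requires carefully tracking how a point of $\tilde Y^0_I$ sits on its $\tilde Z_I$-ray relative to $\tilde Y^R_I$ and using properties (1), (3), (5) of $\tilde Y^R_I$ together with Lemma \ref{lem:rho-D-est} to turn ``$P_I(r)_i \le R/2$'' into ``$r_i \le O(R)$.'' An alternative, possibly cleaner route: bound $S$ directly by estimating $\tilde\rho^R_I$ along the boundary faces using the sandwiching of $\tilde Y^R_I$ between $\tilde Y^0_I$ and $(R/2,\dots,R/2)+\tilde Y^0_I$ as in Lemma \ref{lem:rho-D-est}, differentiating the explicit ray-linear formula for $\tilde\rho^R_I$. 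Either way the estimate is elementary but the bookkeeping with the projection $P_I$ and the scaling parameter along rays is where care is needed.
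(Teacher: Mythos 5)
Your proof is correct and is essentially the paper's argument: both rest on the Euler identity $\sum_i(\kappa\lambda_i-r_i)\tilde{\nu}^R_{I,i}(r)=\tilde{\rho}^R_I(r)$, the bounds of Lemma \ref{lem:rho-D-est}, and property \eqref{it:Y-patch3} forcing $r_i\le R/2$ wherever $\tilde{\nu}^R_{I,i}(r)\neq 0$ (note $P_I(r)=r$ for $r\in\tilde{Y}^0_I$, so your worry about tracking the ray parameter is unnecessary). The only difference is cosmetic: the paper multiplies the bound $\sum_i(\kappa\lambda_i-R/2)\tilde{\nu}^R_{I,i}(r)\le 1+\eps_1(R)$ by $\max_i\kappa\lambda_i/(\kappa\lambda_i-R/2)$, whereas you solve a self-referential inequality for $S(r)$; both yield the same $\eps_2$.
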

\begin{proof}
Because $\tilde{Z}_I\left(\tilde{\rho}^R_I\right) = \tilde{\rho}^R_I$ by construction, we have
\begin{equation}
\label{eq:Zrho-gives}
\sum_i (\kappa \lambda_i - r_i) \cdot \tilde{\nu}^R_{I,i}(r) = \tilde{\rho}^R_I(r).
 \end{equation}
Thus Lemma \ref{lem:rho-D-est} gives
$$1 \le \tilde{\rho}^R_I(r) = \sum_i (\kappa \lambda_i - r_i) \cdot \tilde{\nu}^R_{I,i}(r) \le \sum_i \kappa \lambda_i \cdot \tilde{\nu}^R_{I,i}(r),$$
where the last step uses the fact that $\tilde{\nu}^R_{I,i}(r) \ge 0$ by property \eqref{it:Y-patch1} of $\tilde{Y}^R_I$, and $r_i \ge 0$ for all $i$.

For the right-hand bound, observe that $\tilde{\nu}^R_{I,i}(r) = 0$ whenever $r_i > R/2$, by property \eqref{it:Y-patch3} of $\tilde{Y}^R_I$; as $\tilde{\nu}^R_{I,i} \ge 0$ this implies that 
$$\sum_i (\kappa \lambda_i - R/2) \cdot  \tilde{\nu}^R_{I,i}(r)\le \sum_i (\kappa \lambda_i - r_i) \cdot \tilde{\nu}^R_{I,i}(r)  = \tilde{\rho}^R_I(r) \le 1+\eps_1(R).$$
Thus we have
$$ \sum_i \kappa\lambda_i \cdot \tilde{\nu}^R_{I,i}(r) \le \left(\max_i \frac{\kappa \lambda_i}{\kappa \lambda_i - R/2}\right) \cdot (1+\eps_1(R)) $$
where the RHS converges to $1$ as $R \to 0$, as required.
\end{proof}

The following Lemma will be used in the proof of Theorem \ref{thm:aa}:

\begin{lem}\label{lem-B}
There exists a continuous function $\s^B_{crit}:[0,R_0) \to \R_{\ge 0}$, with $\s^B_{crit}(0) = \s_{crit}$ (recall Definition \ref{def-crit}), such that for all $R \in (0,R_0)$, all $I$, and all $r \in \tilde{Y}^0_I$, we have
$$\sum_i \left(2-\kappa^{-1}r_i\right) \cdot \tilde{\nu}^R_{I,i}(r) - \kappa^{-1} \left(\tilde{\rho}^R_I(r)  - \s^B_{crit}(R)\right) > 0.$$
\end{lem}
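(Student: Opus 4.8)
The plan is to estimate the left-hand side from below using the two quantitative inputs we have already established: the sandwiching bound on $\tilde\rho^R_I$ from Lemma \ref{lem:rho-D-est}, and the bound \eqref{eq:bound-nu-1} on $\sum_i \kappa\lambda_i\cdot\tilde\nu^R_{I,i}(r)$. The key algebraic identity is \eqref{eq:Zrho-gives}, which says $\sum_i(\kappa\lambda_i - r_i)\tilde\nu^R_{I,i}(r) = \tilde\rho^R_I(r)$. First I would rewrite the quantity to be bounded. Since all the terms $\tilde\nu^R_{I,i}(r)$ are non-negative (property \eqref{it:Y-patch1} of $\tilde Y^R_I$) and $r_i\ge 0$, we have $\sum_i(2-\kappa^{-1}r_i)\tilde\nu^R_{I,i}(r) \ge \sum_i(2 - \lambda_i)\tilde\nu^R_{I,i}(r)$ when $\lambda_i\le 2$, but in general (no Hypothesis \ref{hyp:gr}) I should instead split off the $r_i$-dependent part directly: write $2 - \kappa^{-1}r_i = \kappa^{-1}(2\kappa\lambda_i - r_i)/\lambda_i = \kappa^{-1}\lambda_i^{-1}\big((\kappa\lambda_i - r_i) + \kappa\lambda_i\big)$, so that
$$\sum_i(2-\kappa^{-1}r_i)\tilde\nu^R_{I,i}(r) = \kappa^{-1}\sum_i \lambda_i^{-1}(\kappa\lambda_i - r_i)\tilde\nu^R_{I,i}(r) + \kappa^{-1}\sum_i \lambda_i^{-1}\kappa\lambda_i\,\tilde\nu^R_{I,i}(r).$$
Hmm, this introduces $\lambda_i^{-1}$ weights that don't match the clean bounds. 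The cleaner route is: let $\lambda_{\max} = \max_i\lambda_i$ and note $2 - \kappa^{-1}r_i \ge \tfrac{2}{\lambda_{\max}}(\lambda_i - \kappa^{-1}r_i)\cdot\tfrac{\lambda_{\max}}{2}\cdot\ldots$ — again messy. So the most robust approach: bound $\sum_i(2-\kappa^{-1}r_i)\tilde\nu^R_{I,i}(r) \ge \sum_i \tfrac{2}{\lambda_{\max}}(\kappa\lambda_i - r_i)\kappa^{-1}\tilde\nu^R_{I,i}(r) \cdot \tfrac{\lambda_{\max}}{\kappa\lambda_i}\cdot\kappa$; I will work out the correct scalar in the writeup, but the point is that $(2 - \kappa^{-1}r_i) = \tfrac{2}{\kappa\lambda_i}(\kappa\lambda_i - r_i) \ge \tfrac{2}{\kappa\lambda_{\max}}(\kappa\lambda_i - r_i)$, the last step using $(\kappa\lambda_i - r_i) > 0$ on $\tilde Y^R_I \subset V_{I,\ge 0}$ together with $\lambda_i \le \lambda_{\max}$.

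With that inequality in hand, $\sum_i(2-\kappa^{-1}r_i)\tilde\nu^R_{I,i}(r) \ge \tfrac{2}{\kappa\lambda_{\max}}\sum_i(\kappa\lambda_i - r_i)\tilde\nu^R_{I,i}(r) = \tfrac{2}{\kappa\lambda_{\max}}\tilde\rho^R_I(r)$ by \eqref{eq:Zrho-gives}. Therefore the left-hand side of the desired inequality is
$$\ge \frac{2}{\kappa\lambda_{\max}}\tilde\rho^R_I(r) - \kappa^{-1}\tilde\rho^R_I(r) + \kappa^{-1}\s^B_{crit}(R) = \kappa^{-1}\tilde\rho^R_I(r)\Big(\frac{2}{\lambda_{\max}} - 1\Big) + \kappa^{-1}\s^B_{crit}(R) = -\kappa^{-1}\tilde\rho^R_I(r)\,\tilde\s_{crit} + \kappa^{-1}\s^B_{crit}(R),$$
recalling $\tilde\s_{crit} = 1 - \tfrac{2}{\lambda_{\max}}$. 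Now I would use $1 \le \tilde\rho^R_I(r) \le 1 + \eps_1(R)$ from Lemma \ref{lem:rho-D-est}. If $\tilde\s_{crit} \le 0$ (i.e. Hypothesis \ref{hyp:gr}, $\s_{crit} = 0$), then $-\kappa^{-1}\tilde\rho^R_I(r)\tilde\s_{crit} \ge -\kappa^{-1}(1+\eps_1(R))\tilde\s_{crit} \ge 0$ and choosing $\s^B_{crit}(R)$ to be, say, a small positive quantity tending to $0$ suffices. If $\tilde\s_{crit} > 0$, then $-\kappa^{-1}\tilde\rho^R_I(r)\tilde\s_{crit} \ge -\kappa^{-1}(1+\eps_1(R))\tilde\s_{crit}$, so the left-hand side is $\ge \kappa^{-1}\big(\s^B_{crit}(R) - (1+\eps_1(R))\tilde\s_{crit}\big)$, and it suffices to take $\s^B_{crit}(R) := (1+\eps_1(R))\tilde\s_{crit} + \eta(R)$ for any continuous $\eta > 0$ with $\eta(0) = 0$ (for instance $\eta(R) = \eps_1(R) + R$, or simply absorb it). Then $\s^B_{crit}(0) = \tilde\s_{crit} = \s_{crit}$ (since $\tilde\s_{crit} > 0$ here), and continuity is clear; and the strict inequality holds because of the $+\eta(R) > 0$. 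In both cases one unifies the choice by setting $\s^B_{crit}(R) := \max(0,\tilde\s_{crit})(1+\eps_1(R)) + \eps_1(R) + R$ or similar — I will pick the precise formula in the final writeup so that $\s^B_{crit}(0) = \s_{crit}$ holds on the nose.

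The only real subtlety, and the step I expect to require a little care rather than being entirely routine, is verifying the inequality $(2 - \kappa^{-1}r_i) \ge \tfrac{2}{\kappa\lambda_{\max}}(\kappa\lambda_i - r_i)$ uniformly: this rests on $r_i \in [0,\kappa\lambda_i)$ on $\tilde Y^R_I$ (from property \eqref{it:Y-V}, $\tilde Y^R_I \subset V_{I,\ge 0}$, so $0 \le r_i < \kappa\lambda_i$) and on $\lambda_i \le \lambda_{\max}$, and one should double-check the edge behaviour when $\lambda_i$ is much smaller than $\lambda_{\max}$ — but since both sides scale correctly and $(2 - \kappa^{-1}r_i) = \tfrac{2}{\kappa\lambda_i}(\kappa\lambda_i - r_i)$ is an exact identity, the inequality is immediate from $\tfrac{1}{\lambda_i} \ge \tfrac{1}{\lambda_{\max}}$ and positivity of $(\kappa\lambda_i - r_i)$. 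No genuine obstacle; the whole argument is a short direct estimate using the already-established lemmas.
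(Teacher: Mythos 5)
There is a genuine gap: the identity you lean on, $2-\kappa^{-1}r_i=\tfrac{2}{\kappa\lambda_i}(\kappa\lambda_i-r_i)$, is false. The right-hand side equals $2-\tfrac{2r_i}{\kappa\lambda_i}$, which agrees with $2-\kappa^{-1}r_i$ only when $\lambda_i=2$ or $r_i=0$. Consequently the inequality you derive from it, $2-\kappa^{-1}r_i\ge\tfrac{2}{\kappa\lambda_{\max}}(\kappa\lambda_i-r_i)$, fails exactly in the regime the lemma exists to handle: for $\lambda_i=\lambda_{\max}>2$ it reads $0\ge\tfrac{r_i}{\kappa}\bigl(1-\tfrac{2}{\lambda_{\max}}\bigr)$, which is wrong whenever $r_i>0$. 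A two-divisor example with $\lambda_1=\lambda_2=\lambda>2$ and $r=(0,r_2)$, $r_2>0$, shows your intermediate bound $\sum_i(2-\kappa^{-1}r_i)\tilde{\nu}^R_{I,i}(r)\ge\tfrac{2}{\kappa\lambda_{\max}}\tilde{\rho}^R_I(r)$ is strictly false. (A secondary issue: on $\tilde{Y}^0_I=\partial\R^I_{\ge 0}$ a coordinate $r_i$ can exceed $\kappa\lambda_i$, so even the positivity of $\kappa\lambda_i-r_i$ that you invoke is not automatic there.) The fact that your final formula lands on $\s^B_{crit}(0)=\tilde{\s}_{crit}$ is a coincidence of the broken algebra, not a verification.

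The missing ingredient — and the one the paper's proof is built on — is that $\tilde{\nu}^R_{I,i}$ is supported where $r_i\le R/2$: by properties \eqref{it:Y-patch2} and \eqref{it:Y-patch3} of Lemma \ref{lem:construct-Y}, if $\tilde{\nu}^R_{I,i}(r)\neq 0$ and $r\in\tilde{Y}^0_I$ then $r_i\le R/2$. This gives $2-\kappa^{-1}r_i\ge 2-\tfrac{R}{2\kappa}$ on the support of each $\tilde{\nu}^R_{I,i}$, with no reference to $\lambda_i$ at all; combined with the left-hand inequality of \eqref{eq:bound-nu-1} (which yields $\sum_i\tilde{\nu}^R_{I,i}(r)\ge 1/\max_i\kappa\lambda_i$) and the upper bound $\tilde{\rho}^R_I(r)\le 1+\eps_1(R)$ of Lemma \ref{lem:rho-D-est}, one gets
$$\sum_i\bigl(2-\kappa^{-1}r_i\bigr)\tilde{\nu}^R_{I,i}(r)-\kappa^{-1}\tilde{\rho}^R_I(r)\ \ge\ \frac{2-\frac{R}{2\kappa}}{\max_i\kappa\lambda_i}-\kappa^{-1}\bigl(1+\eps_1(R)\bigr),$$
and one then takes $\s^B_{crit}(R)=\max\bigl(0,\,1+\eps_1(R)-\tfrac{2-\frac{R}{2\kappa}}{\max_i\lambda_i}\bigr)$. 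Your argument could be repaired by inserting the $r_i\le R/2$ observation to control the $O(R)$ discrepancy in your inequality and absorbing it into the slack $\eta(R)$, but as written the proof does not use this fact anywhere, and without it there is no control over $r_i$ on the support of $\tilde{\nu}^R_{I,i}$ inside $\tilde{Y}^0_I$.
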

\begin{proof}
Note that by property \eqref{it:Y-patch2} of $\tilde{Y}^R_I$, if $\tilde{\nu}^R_{I,i}(r) \neq 0$ and $r \in \tilde{Y}^0_I$ then $r_i\le R/2$. 
Combining this observation with Lemma \ref{lem:rho-D-est}, we have
$$\sum_i \left(2-\kappa^{-1}r_i\right) \cdot \tilde{\nu}^R_{I,i}(r) - \kappa^{-1} \tilde{\rho}^R_I(r) \ge \sum_i \left(2-\frac{R}{2\kappa}\right) \cdot \tilde{\nu}^R_{I,i}(r) - \kappa^{-1} \cdot (1+\eps_1(R)).$$
Dividing the left-hand bound in \eqref{eq:bound-nu-1} by $\max_i \kappa \lambda_i$ immediately gives
$$ \sum_i \left(2-\frac{R}{2\kappa}\right) \cdot \tilde{\nu}^R_{I,i}(r) \ge \frac{2-\frac{R}{2\kappa}}{\max_i \kappa \lambda_i}.$$
Thus we may take
$$\tilde{\s}^B_{crit}(R) = 1+\eps_1(R) - \frac{2-\frac{R}{2\kappa}}{\max_i \lambda_i},\qquad \s^B_{crit}(R) = \max\left(0,\tilde{\s}^B_{crit}(R)\right),$$
which clearly has the desired properties.
\end{proof}

The following Lemma will be used in the proof of Theorem \ref{thm-superheavy}:

\begin{lem}\label{lem-D}
There exists a continuous function $\s^D_{crit}:[0,R_0) \to \R_{\ge 0}$, with $\s^D_{crit}(0) = \s_{crit}$, and a positive function $\eta: (0,R_0) \to \R_{>0}$, such that for all $R \in (0,R_0)$, all $I$, all $i \in I$, and all $r \in V_I$ satisfying $\tilde{\rho}^R_I(r) > \s^D_{crit}(R)$ and $\tilde{\nu}^R_{I,i}(r) \neq 0$, we have 
$$2-\kappa^{-1}r_i \ge\color{black} \eta(R) \cdot \left(\tilde{\rho}^R_I(r) - \s^D_{crit}(R)\right).$$
\end{lem}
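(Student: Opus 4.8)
The statement of Lemma \ref{lem-D} is the ``$D$-type'' analogue of Lemma \ref{lem-B}, valid on the whole region $V_I$ rather than just on $\tilde Y^0_I$; the extra difficulty is that we can no longer use property \eqref{it:Y-patch3} of $\tilde Y^R_I$ to bound $r_i$ on the support of $\tilde\nu^R_{I,i}$. The key structural fact to exploit is that everything in sight is homogeneous (degree $0$ or $1$) along the rays emanating from $\kappa\bflambda_I$, so it suffices to prove the inequality on the hypersurface $\tilde Y^R_I$ and then propagate it outward along $\tilde Z_I$. Concretely, $\tilde\rho^R_I$ is degree $1$ and $\tilde\nu^R_{I,i}=-\partial\tilde\rho^R_I/\partial r_i$ is degree $0$ under scaling centred at $\kappa\bflambda_I$, and the locus $\{\tilde\nu^R_{I,i}\neq 0\}$ is a cone with vertex $\kappa\bflambda_I$.

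\textbf{Step 1: reduce to $\tilde Y^R_I$.} Fix a point $r\in V_I$ with $\tilde\rho^R_I(r)=:\sigma>\s^D_{crit}(R)$ and $\tilde\nu^R_{I,i}(r)\neq 0$. Let $r_0$ be the (unique, by property \eqref{it:Y-Z}) intersection of the $\tilde Z_I$-flowline through $r$ with $\tilde Y^R_I$; then $\tilde\rho^R_I(r_0)=1$ and $r = \kappa\bflambda_{I} + \sigma\,(r_0-\kappa\bflambda_I)$, so $r_i - \kappa\lambda_i = \sigma(r_{0,i}-\kappa\lambda_i)$, i.e. $2-\kappa^{-1}r_i = 2 - \lambda_i - \kappa^{-1}\sigma(r_{0,i}-\kappa\lambda_i) = (2-\lambda_i)(1-\sigma) + \sigma(2-\kappa^{-1}r_{0,i})$. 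Also $\tilde\nu^R_{I,i}(r_0)\neq 0$ since the locus is a cone. Thus the desired bound $2-\kappa^{-1}r_i \ge \eta(R)(\sigma-\s^D_{crit}(R))$ will follow once we control $2-\kappa^{-1}r_{0,i}$ from below, uniformly over $r_0\in\tilde Y^R_I\cap\{\tilde\nu^R_{I,i}\neq 0\}$, together with the sign of $(2-\lambda_i)$. Note the term $\sigma(2-\kappa^{-1}r_{0,i})$ is the ``good'' one and $(2-\lambda_i)(1-\sigma)$ may be negative when $\lambda_i>2$ and $\sigma<1$, but is bounded (since $\sigma>\s^D_{crit}(R)\ge 0$ the factor $(1-\sigma)$ is bounded above, and for $\sigma>1$ it is negative which only helps when $\lambda_i<2$ and is harmless when $\lambda_i>2$ because then it is positive).

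\textbf{Step 2: a uniform bound on $\tilde Y^R_I$.} By property \eqref{it:Y-patch3}, $\tilde\nu^R_{I,i}$ vanishes on $P_I^{-1}(\{r_i>R/2\})$, so on the cone $\{\tilde\nu^R_{I,i}\neq 0\}$ we have, after projecting to $\tilde Y^0_I$ by $P_I$, that the $i$-th coordinate is $\le R/2$; but this is exactly a statement about $P_I(r_0)\in\tilde Y^0_I$, not about $r_{0,i}$ itself. What is true instead is that $\tilde Y^R_I$ lies between $\tilde Y^0_I$ and $(R/2,\ldots,R/2)+\tilde Y^0_I$ (as in the proof of Lemma \ref{lem:rho-D-est}), so any $r_0\in\tilde Y^R_I$ has $r_{0,i}\le \kappa\lambda_i$; combined with $\{\tilde\nu^R_{I,i}\neq 0\}\cap\tilde Y^R_I$ being relatively compact, the continuous function $2-\kappa^{-1}r_{0,i}$ attains a minimum there. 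The crucial claim is that this minimum, call it $2m_i(R)$, satisfies $m_i(R)\to 1-\kappa^{-1}\cdot\kappa\lambda_i/2\cdot 0$... — more carefully: as $R\to 0$, $\tilde Y^R_I\to\tilde Y^0_I=\partial\R^I_{\ge0}$ and the support cone of $\tilde\nu^R_{I,i}$ concentrates near $\{r_i=0\}$, so $r_{0,i}\to 0$ and hence $2-\kappa^{-1}r_{0,i}\to 2$. Picking a large $R_0$-dependent neighbourhood, we get $2-\kappa^{-1}r_{0,i} \ge 2 - \kappa^{-1}\varepsilon_3(R)$ for a continuous $\varepsilon_3$ with $\varepsilon_3(0)=0$; this is the analogue of the ``$r_i\le R/2$'' estimate used in Lemma \ref{lem-B}, now valid on all of $\tilde Y^R_I$ rather than just $\tilde Y^0_I$.

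\textbf{Step 3: assemble and choose $\s^D_{crit}$, $\eta$.} Plugging Step 2 into the identity from Step 1 and splitting into the cases $\lambda_i\le 2$ and $\lambda_i>2$: when $\lambda_i\le 2$ the term $(2-\lambda_i)(1-\sigma)$ is $\ge 0$ for $\sigma\le 1$ and bounded below by $(2-\lambda_i)(1-\sigma)\ge (2-\lambda_i)\cdot(-(\sigma-1))$, which is absorbed by the good term; when $\lambda_i>2$ the term $(2-\lambda_i)(1-\sigma)$ is $\ge(2-\lambda_i)$ for $\sigma\le 1$... — the clean way is to mimic Lemma \ref{lem-B} exactly: set
$$ \tilde\s^D_{crit}(R) := 1 + \kappa^{-1}\varepsilon_3(R) - \frac{2 - \kappa^{-1}\varepsilon_3(R)}{\max_i\lambda_i},\qquad \s^D_{crit}(R):=\max(0,\tilde\s^D_{crit}(R)),$$
so that $\s^D_{crit}(0)=\s_{crit}$, and show directly from Step 2 plus the bound $\sum_i\kappa\lambda_i\tilde\nu^R_{I,i}\ge 1$ that on the whole of $V_I$ (now exploiting homogeneity one more time: both sides of the desired inequality, viewed as functions of $r$ with $\sigma=\tilde\rho^R_I(r)$, are affine in $\sigma$, so it suffices to check the two ``endpoint'' slopes — the constant-in-$\sigma$ part and the coefficient of $\sigma$ — which reduces precisely to the $\tilde Y^R_I$ estimate of Step 2 and a mild strengthening of \eqref{eq:bound-nu-1}). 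Finally $\eta(R)$ is produced as the (positive, continuous in $R$) gap in the resulting strict inequality, divided by $\sup(\sigma - \s^D_{crit}(R))$ over the relevant compact range of $\sigma$; positivity of $\eta$ on $(0,R_0)$ follows because the inequality is strict and all quantities depend continuously on $R$.

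\textbf{Main obstacle.} The delicate point is Step 2: transferring the ``$r_i$ small on the support of $\tilde\nu^R_{I,i}$'' phenomenon — which property \eqref{it:Y-patch3} gives after applying the projection $P_I$, i.e. on $\tilde Y^0_I$ — to a uniform-in-$R$ bound on $r_{0,i}$ for $r_0$ on the actual smoothed hypersurface $\tilde Y^R_I$, and controlling how the constant degrades as $R$ grows. This is exactly the extra work that distinguishes Lemma \ref{lem-D} (needed for Theorem \ref{thm-superheavy}, which runs over all $R>0$ and $\sigma>\s^D_{crit}(R)$) from Lemma \ref{lem-B} (needed only for Theorem \ref{thm:aa}, where one gets to choose $R$ small and $\sigma$ restricted to $\tilde Y^0_I$). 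Once this uniform estimate and the homogeneity reduction are in hand, the rest is the same bookkeeping as in Lemma \ref{lem-B}.
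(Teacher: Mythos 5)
Your overall strategy --- exploit linearity of $\tilde\rho^R_I$ and of $r_i-\kappa\lambda_i$ along the flowlines of $\tilde Z_I$ to reduce to an estimate on a reference hypersurface, then solve for $\s^D_{crit}$ and $\eta$ --- is the right one and is essentially the paper's. But the step you yourself flag as the ``main obstacle'' (your Step 2: a uniform lower bound for $2-\kappa^{-1}r_{0,i}$ over $\tilde Y^R_I\cap\{\tilde\nu^R_{I,i}\neq 0\}$) is only asserted, not proved: you appeal to the limit ``as $R\to 0$'' and posit a function $\varepsilon_3(R)$ without constructing it, whereas the lemma needs a quantitative bound for every fixed $R\in(0,R_0)$ with controlled dependence on $R$ (so that $\s^D_{crit}$ is continuous with the right value at $0$ and $\eta(R)>0$). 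As written, the central step of the proposal is a genuine gap.

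The gap disappears if you project along $\tilde Z_I$ to $\tilde Y^0_I=\partial\R^I_{\ge 0}$ rather than to the smoothed hypersurface $\tilde Y^R_I$; this is what the paper does. Writing $r'=P_I(r)\in\tilde Y^0_I$, property \eqref{it:Y-patch3} gives $r'_i\le R/2$ \emph{directly} whenever $\tilde\nu^R_{I,i}(r)\neq 0$ --- no transfer is needed, since $P_I$ is by definition the projection to $\tilde Y^0_I$ --- and Lemma \ref{lem:rho-D-est} supplies the normalization $1\le\tilde\rho^R_I(r')\le 1+\eps_1(R)$. Linearity along the flowline then yields
$$2-\kappa^{-1}r_i \;=\; 2-\lambda_i+\frac{\tilde\rho^R_I(r)}{\tilde\rho^R_I(r')}\left(\lambda_i-\kappa^{-1}r'_i\right)\;\ge\;2-\lambda_i+\frac{\tilde\rho^R_I(r)\left(\lambda_i-\tfrac{R}{2\kappa}\right)}{1+\eps_1(R)},$$
(using $R_0<\kappa\lambda_i$ for positivity), from which one reads off $\s^D_{crit}(R)=\max\bigl(0,\max_i(\lambda_i-2)(1+\eps_1(R))/(\lambda_i-\tfrac{R}{2\kappa})\bigr)$ and $\eta(R)=\min_i(\lambda_i-\tfrac{R}{2\kappa})/(1+\eps_1(R))$. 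Two smaller points: the bound $\sum_i\kappa\lambda_i\tilde\nu^R_{I,i}\ge 1$ you invoke in Step 3 is what Lemma \ref{lem-B} needs (a weighted sum over $i$) but is irrelevant here, where the estimate is pointwise in a single $i$ and the $\tilde\nu$'s enter only through the support condition; and your Step 3 case analysis is abandoned mid-sentence, so the derivation of $\s^D_{crit}$ and $\eta$ from Steps 1--2 is never actually carried out.
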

\begin{proof}
Suppose that the flowline of $\tilde{Z}_I$ passing through $r$ exits $\tilde{Y}_I^0$ at $r'$. 
Because both $\tilde{\rho}_I^R$ and $r_i - \kappa \lambda_i$ vary linearly along flowlines of $\tilde{Z}_I$, we have
$$ \frac{\tilde{\rho}_I^R(r)}{\tilde{\rho}^R_I(r')} = \frac{r_i - \kappa \lambda_i}{r'_i - \kappa \lambda_i},$$ 
and therefore
$$ 2 -\kappa^{-1} r_i = 2 - \lambda_i + \frac{\tilde{\rho}^R_I(r)}{\tilde{\rho}^R_I(r')} \cdot \left(\lambda_i - \frac{r'_i}{\kappa}\right).$$
Now by property \eqref{it:Y-patch3} of $\tilde{Y}^I_R$, if $\tilde{\nu}^R_{I,i}(r) \neq 0$ then $r$ lies in the region $P_I^{-1}(\{r_i \le R/2\})$, and therefore $r'_i \le R/2$.  
We also have $\tilde{\rho}_I^R(r') \le 1+\eps_1(R)$ by Lemma \ref{lem:rho-D-est}. 
It follows that 
$$2-\kappa^{-1}r_i \ge\color{black} 2-\lambda_i + \frac{\tilde{\rho}^R_I(r) \cdot \left(\lambda_i - \frac{R}{2\kappa} \right)}{1+\eps_1(R)}.$$
Now let us set
$$\tilde{\s}^D_{crit}(R) = \max_i \frac{(\lambda_i - 2)\cdot (1+\eps_1(R))}{\lambda_i - \frac{R}{2\kappa}};$$
then we find that the functions
\begin{align*}
\s^D_{crit}(R) &= \max\left(0,\tilde{\s}^D_{crit}(R)\right),\\
\eta(R) &= \min_i \frac{\lambda_i - \frac{R}{2\kappa}}{1+\eps_1(R)}
\end{align*} 
have the desired properties.
\end{proof}

\subsection{Proof of Theorem \ref{thm:aa}}\label{ss-thmaproof}

Let $R\in (0,R_0)$ be sufficiently small that $\s_{crit}^B(R)<1$.
Let $\s = \s_{crit}^B(R) + 2\delta < 1$, for some $\delta>0$.  
The proof will rely on a special choice of acceleration data for $K^R_\s$ (see Definition \ref{def:KRs}) which we now describe. 
Fix $0<\ell_1< \ell_2<\ldots$ such that the Reeb flow on $Y^R_\s=\partial K^R_\s$  has no $\ell_n$-periodic orbits for all $n$, and $\ell_n \to \infty$ as $n \to \infty$. (Here we take the contact form from Definition \ref{def:KRs}.)

\begin{figure}
\begin{center}
\includegraphics[width=0.7\textwidth]{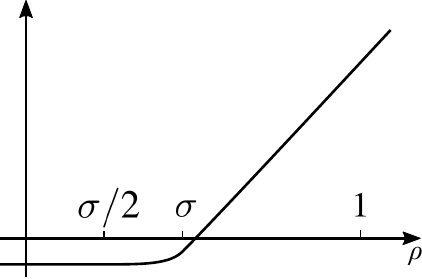}
\end{center}
\caption{The function $h_n$.}
\label{fig:B}
\end{figure}

We now choose smooth functions $h_n : \R \to \R$ approximating $\max(0,\ell_n(\rho - \s))$. 
We require that they each satisfy the conditions from Section \ref{ss-pert}, and furthermore:
\begin{itemize}
\item $h_1<h_2<\ldots$ (pointwise);
\item $h_n'(\rho) \ge 0$;
\item $h_n(\rho) = 2\delta_n$ for $\rho \le \s/2$;
\item $h_n(\rho) = \ell_n (\rho-\s) + \delta_n$ for $\rho \ge \s$,
\end{itemize}
where $\delta_n < 0$ converges monotonically to $0$ as $n \to \infty$, and furthermore $\ell_n \s - \delta_n < \ell_{n+1}\s - \delta_{n+1}$ for all $n$ (the latter condition will be used in the proof of Proposition \ref{prop-pos-inta}). See Figure \ref{fig:B}. 
Note that $h_n$ converges monotonically to $0$ on $(-\infty, \s]$ and $+\infty$ outside it. 
We extend $(h_n)_{n \in \Z_{\ge 1}}$ to $(h_\tau)_{\tau \in [1,\infty)}$ by convex interpolation: $h_\tau = (n+1-\tau)h_n + (\tau-n)h_{n+1}$ for $ \tau \in [n,n+1]$.  
We choose our acceleration data $(H_\tau,J_\tau)$ for $K^R_\s \subset M$, where $H_n$ is a perturbation of $h_n \circ \rho^R$ as in Lemma \ref{lem:pertH}, where the parameter $\eps$ in the Lemma is chosen smaller than $\ell_n \delta$, and $H_\tau$ is a corresponding perturbation of $h_\tau \circ \rho^R$. 
We further require that, over a `neck region' $\{\s \le \rho^R \le \s+\eps\}$ (where $\s<\s+\eps < 1$), we have $H_\tau = h_\tau \circ \rho^R$ and $J_\tau$ is of contact type. 

We denote by $\cC = \cC(H_\tau,J_\tau)$ the corresponding Floer 1-ray
$$CF^*(M,H_1;\Nov) \to CF^*(M,H_2;\Nov) \to \ldots,$$
so that $SC^*_M(K^R_\s;\Nov) = \widehat{tel}(\cC)$. 
By restricting $(H_\tau,J_\tau)$ to $K^R_\s$, we obtain acceleration data appropriate for defining the symplectic cochain complex of $K^R_\s$.
We denote by $\cC_{SH} := \cC(H_\tau|_{K^R_\s},J_\tau|_{K^R_\s})$ the corresponding Floer 1-ray
$$CF^*(K^R_\s,H_1|_{K^R_\s};\Bbbk) \to CF^*(K^R_\s,H_2|_{K^R_\s};\Bbbk) \to \ldots,$$
so that $SC^*(K^R_\s;\Bbbk) = tel(\cC_{SH})$.

By construction, the orbits of $H_n$ are either contained in $K^R_\s$ (in which case we say they are of $SH$-type), or contained in $M \setminus K^R_\s$ (in which case we say they are of $D$-type). 
We have a corresponding direct sum decomposition of $\Nov$-modules:
$$tel(\cC) = tel(\cC)_{SH} \oplus tel(\cC)_D.$$

Let us denote $SC_\Nov := (SC^*(K^R_\s;\Bbbk) \otimes_\Bbbk \Nov, d \otimes id_\Nov)$. 
We have the isomorphism 
\begin{align*}
\iota: SC_\Nov & \to tel(\mathcal{C})_{SH},\\ \iota(\gamma\otimes e^a) & = (\gamma,a\cdot u_{in}).
\end{align*}
By Lemma \ref{lem:same_action_index}, this map respects action and index.

\begin{lem}\label{lem:Dtypebound}
If $\gamma$ is a $D$-type orbit of $H_n$, then $i_{mix}(\gamma) \ge \kappa^{-1} \delta  \ell_n$.
\end{lem}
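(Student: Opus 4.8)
\textbf{Plan for the proof of Lemma \ref{lem:Dtypebound}.} The statement is essentially Proposition \ref{prop:imix-div} specialized to the current acceleration data, so the plan is to derive it from the index and action estimates established in Section \ref{s-H}, in particular Lemma \ref{lem:mindex}. First I would recall that a $D$-type orbit $\gamma$ of $H_n$ lies in $M \setminus K^R_\s$, hence $\rho^R(\bar\gamma) \ge \s$, and it corresponds to an orbit $\bar\gamma$ of $h_n \circ \rho^R$ as in Lemma \ref{lem:pertH}. Since $h_n$ satisfies $h_n'(\rho) \ge 0$, we have $\nu^{h_n}_i(\bar\gamma) \ge 0$ for all $i$, and for at least one $i$ with $\tilde\nu^R_{I,i}(r_I(\bar\gamma)) \neq 0$ we have $\nu^{h_n}_i(\bar\gamma) = h_n'(\rho^R(\bar\gamma)) \cdot \tilde\nu^R_{I,i} \neq 0$ (an orbit of $D$-type is necessarily nonconstant, as $h_n$ has no critical points with $\rho \ge \s$ except where it is constant, which is inside $K^R_\s$).

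Next I would plug into the formula of Lemma \ref{lem:mindex}:
\[ i_{mix}(\gamma) = \sum_i \left(2 - \kappa^{-1} r_i^{max}(\bar\gamma)\right) \cdot \nu^{h_n}_i(\bar\gamma) - \kappa^{-1} h_n(\rho^R(\bar\gamma)) + D(\gamma), \]
with $D(\gamma) \ge -\kappa^{-1}\eps(\gamma) \ge -\kappa^{-1}\ell_n\delta \cdot(\text{something small})$; more precisely, recall we chose the perturbation parameter $\eps$ in Lemma \ref{lem:pertH} to be smaller than $\ell_n\delta$, so $D(\gamma) \ge -\kappa^{-1}\ell_n\delta/2$ after a harmless adjustment of constants, or one simply carries $-\kappa^{-1}\eps(\gamma)$ along. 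Since $h_n(\rho) = \ell_n(\rho - \s) + \delta_n$ for $\rho \ge \s$ and $\delta_n < 0$, we get $h_n(\rho^R(\bar\gamma)) \le \ell_n(\rho^R(\bar\gamma) - \s)$. Writing $\rho^R(\bar\gamma) = \tilde\rho^R_I(r_I(\bar\gamma))$ and recalling $\tilde Z_I(\tilde\rho^R_I) = \tilde\rho^R_I$, the identity \eqref{eq:Zrho-gives} from the proof of the bound on $\sum \kappa\lambda_i \tilde\nu^R_{I,i}$ gives $\sum_i (\kappa\lambda_i - r_i^{max}(\bar\gamma)) \tilde\nu^R_{I,i} = \tilde\rho^R_I$, which after multiplying by $h_n'(\rho^R(\bar\gamma))\ge 0$ lets me rewrite the first sum in the $i_{mix}$ formula in terms of $\rho^R(\bar\gamma)$ alone.

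The heart of the estimate is then an application of Lemma \ref{lem-B}: since $r_I(\bar\gamma)$ projects to a point on a ray through $\kappa\bflambda_I$ whose base point lies on $\tilde Y^0_I$, and $\tilde\nu^R_{I,i}$ and $2 - \kappa^{-1}r_i$ scale appropriately, Lemma \ref{lem-B} gives a strictly positive lower bound on $\sum_i(2 - \kappa^{-1}r_i)\tilde\nu^R_{I,i} - \kappa^{-1}(\tilde\rho^R_I - \s^B_{crit}(R))$ at the base point; scaling this out along the ray (the whole expression is homogeneous of degree one in the radial coordinate, which is $\ge \rho^R(\bar\gamma)\cdot(1+\eps_1(R))^{-1}$ by Lemma \ref{lem:rho-D-est}) together with $h_n'(\rho^R(\bar\gamma)) = \ell_n$ when $\rho^R(\bar\gamma) \ge \s$ produces a bound of the shape $i_{mix}(\gamma) \ge \kappa^{-1}\ell_n \cdot c \cdot (\rho^R(\bar\gamma) - \s^B_{crit}(R)) - \kappa^{-1}\eps(\gamma)$ for some fixed $c > 0$ depending only on $R$. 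Since $\rho^R(\bar\gamma) \ge \s = \s^B_{crit}(R) + 2\delta$, the right side is at least $\kappa^{-1}\ell_n(2c\delta - \eps(\gamma)/\ell_n) \ge \kappa^{-1}\delta\ell_n$ once $\eps(\gamma)$ is chosen small enough relative to $\delta$ (which we arranged: $\eps < \ell_n\delta$), possibly after shrinking $\delta$ or absorbing $c$ by rechoosing the constant. The main obstacle is bookkeeping: matching the homogeneity degrees correctly when passing from the base point on $\tilde Y^0_I$ to the actual point $r_I(\bar\gamma)$ further out along the ray, and tracking that the perturbation error $D(\gamma)$ really is dominated by $\kappa^{-1}\delta\ell_n$ — both are routine given the machinery of Section \ref{s-H}, but require care to state the constants uniformly in $n$.
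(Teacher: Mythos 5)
Your overall route is the paper's: combine the mixed-index formula of Lemma \ref{lem:mindex} with the a priori estimate of Lemma \ref{lem-B} and the choice $\eps(\gamma)<\ell_n\delta$. But there is a genuine gap at the point where you invoke Lemma \ref{lem-B}. That lemma is stated only for $r\in\tilde{Y}^0_I$, and the crucial observation you are missing is that for a $D$-type orbit one automatically has $r_I^{max}(\bar\gamma)\in\tilde{Y}^0_I$: since $\ell_n$ is not a Reeb period of $Y^R_\s$ and $h_n$ has slope $\ell_n$ on $\{\rho^R\ge\s\}$, every nonconstant orbit in $M\setminus K^R_\s$ must lie on the divisor itself, i.e.\ some coordinate $r_i^{max}(\bar\gamma)$ vanishes (this is visible from the description of $P_I$ in Section \ref{ss-pert}). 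With that in hand, Lemma \ref{lem-B} applies verbatim at $r=r_I^{max}(\bar\gamma)$, one multiplies by $h_n'=\ell_n$, uses $h_n(\rho^R(\bar\gamma))\le\ell_n(\rho^R(\bar\gamma)-\s)$ (valid since $\delta_n<0$ and $\rho^R(\bar\gamma)\ge 1$ by Lemma \ref{lem:rho-D-est}), and gets exactly $i_{mix}(\gamma)\ge\kappa^{-1}\ell_n(\s-\s^B_{crit}(R)-\delta)=\kappa^{-1}\delta\ell_n$ with no slack. Your substitute --- projecting to a base point on $\tilde{Y}^0_I$ and ``scaling out along the ray'' --- is both unnecessary and incorrectly justified: the expression $\sum_i(2-\kappa^{-1}r_i)\tilde{\nu}^R_{I,i}-\kappa^{-1}\tilde{\rho}^R_I$ is not homogeneous of degree one along rays from $\kappa\bflambda_I$ (the coefficients $2-\kappa^{-1}r_i$ are affine, not linear, in the radial variable; in fact the combination is \emph{constant} along rays, by \eqref{eq:Zrho-gives}, but you would need to prove that).

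Two further points. First, your claim that \eqref{eq:Zrho-gives} lets you ``rewrite the first sum in the $i_{mix}$ formula in terms of $\rho^R(\bar\gamma)$ alone'' conflates the coefficient $2-\kappa^{-1}r_i$ with $\lambda_i-\kappa^{-1}r_i$; the discrepancy $\sum_i(2-\lambda_i)\tilde{\nu}^R_{I,i}$ is precisely the term that Lemma \ref{lem-B} is designed to control without Hypothesis \ref{hyp:gr}. Second, your final bound carries an undetermined constant $c$, and your fallback of ``shrinking $\delta$ or rechoosing the constant'' is not available: $\delta$ is fixed by $\s=\s^B_{crit}(R)+2\delta$ and the perturbation size $\eps<\ell_n\delta$ before the lemma is stated, and the conclusion must hold with that $\delta$. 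The direct application of Lemma \ref{lem-B} at $r\in\tilde{Y}^0_I$ gives $c=1$ and closes the argument.
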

\begin{proof}
By Lemma \ref{lem:mindex}, we have
$$ i_{mix}(\gamma) \ge \sum_i \left(2- \kappa^{-1} r_i^{max}(\bar{\gamma})\right) \cdot \nu^h_i(\bar{\gamma}) - \kappa^{-1} \left(h\left(\rho^R(\bar{\gamma})\right) + \eps(\gamma)\right).$$
Note that as $\gamma$ is a $D$-type orbit, we have $\rho^R(\bar{\gamma}) \ge 1$ by Lemma \ref{lem:rho-D-est}, and therefore 
\begin{align*}
h\left(\rho^R(\bar{\gamma})\right) &\le \ell_n\left(\rho^R(\bar{\gamma}) - \s\right) \qquad \text{and}\\
h'\left(\rho^R(\bar{\gamma})\right) & = \ell_n.
\end{align*}
Thus we have $\nu^h_i(\bar{\gamma}) = \ell_n \cdot \tilde{\nu}^R_{I,i}(r_I^{max}(\bar{\gamma}))$. 
Setting $r=r_I^{max}(\bar{\gamma})$ (which lies in $\tilde{Y}^0_I$ because $\bar{\gamma}$ is a $D$-type orbit), and recalling that we chose $\eps(\gamma)<\ell_n\delta$, we obtain
\begin{align*}
 i_{mix}(\gamma) &\ge \ell_n \cdot \sum_i (2-\kappa^{-1} r_i) \cdot \tilde{\nu}^R_{I,i}(r) - \kappa^{-1} \ell_n (\tilde{\rho}_I^R(r) - \s)- \kappa^{-1} \ell_n \delta \\
 &\ge \kappa^{-1} \ell_n \cdot \left(\s - \s_{crit}^B(R) - \delta\right) =  \kappa^{-1}\delta\ell_n,
\end{align*} 
where the second inequality follows from Lemma \ref{lem-B}.
\end{proof}

We now consider the filtration on $tel(\mathcal{C})$ associated to the filtration map 
$$\cF'(\gamma,u):=\frac{\cA({\gamma},u)+\delta\ell_n}{\kappa}, $$ 
if $\gamma$ is a $1$-periodic orbit of $H_n$. 
It is clear that this is a filtration map, because the differential increases action, and it also increases $n$ and hence $\ell_n$ by the definition of the telescope complex. 
We define the corresponding filtration on $SC_\Nov$, associated to the filtration map
$$\cF(\gamma \otimes e^a) := \frac{\cA(\gamma) + \kappa a + \delta\ell_n}{\kappa}.$$
(Note that because $\iota$ respects index and action, we have $\cF' \circ \iota = \cF$.) 
For any cochain complex $C^*$, we define the quotient complex $\sigma_{<p} C^* := \oplus_{*<p} C^*$ with the induced differential. 

\begin{lem}\label{lem:cleanDorbits}
For any $p$, $\iota$ induces an isomorphism of graded $\cQ_{\ge 0} \Nov$-modules 
$$\iota: \sigma_{<p} \cF_{\ge p}SC_\Nov \xrightarrow{\sim} \sigma_{<p} \cF'_{\ge p}tel(\cC).$$
\end{lem}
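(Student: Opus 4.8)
\textbf{Plan for proving Lemma \ref{lem:cleanDorbits}.} The plan is to show that the isomorphism $\iota: SC_\Nov \xrightarrow{\sim} tel(\cC)_{SH}$ of $\Nov$-modules (which respects index and action by Lemma \ref{lem:same_action_index}, hence satisfies $\cF' \circ \iota = \cF$) restricts to an isomorphism on the relevant subquotients. Since $\iota$ is already an isomorphism onto the $SH$-type summand $tel(\cC)_{SH} \subset tel(\cC)$, and since $\iota$ intertwines $\cF$ with $\cF'$, it automatically induces an isomorphism
$$\iota: \sigma_{<p}\cF_{\ge p} SC_\Nov \xrightarrow{\sim} \sigma_{<p}\cF'_{\ge p} tel(\cC)_{SH}.$$
So the only thing to prove is that the inclusion $tel(\cC)_{SH} \hookrightarrow tel(\cC)$ induces an \emph{equality} of submodules $\sigma_{<p}\cF'_{\ge p} tel(\cC)_{SH} = \sigma_{<p}\cF'_{\ge p} tel(\cC)$; equivalently, that every $D$-type generator $(\gamma,u)$ of $tel(\cC)$ fails to lie in $\sigma_{<p}\cF'_{\ge p} tel(\cC)$, i.e. satisfies either $i(\gamma,u) \ge p$ or $\cF'(\gamma,u) < p$.

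\textbf{Key step.} Suppose $(\gamma,u)$ is a $D$-type generator with $i(\gamma,u) < p$; I must show $\cF'(\gamma,u) < p$, i.e. $\tfrac{1}{\kappa}(\cA(\gamma,u) + \delta\ell_n) < p$. This is exactly where Lemma \ref{lem:Dtypebound} enters: for a $D$-type orbit of $H_n$ we have $i_{mix}(\gamma) = i(\gamma,u) - \kappa^{-1}\cA(\gamma,u) \ge \kappa^{-1}\delta\ell_n$, which rearranges to $\kappa^{-1}\cA(\gamma,u) \le i(\gamma,u) - \kappa^{-1}\delta\ell_n$. Combining with $i(\gamma,u) < p$ gives
$$\cF'(\gamma,u) = \kappa^{-1}\cA(\gamma,u) + \kappa^{-1}\delta\ell_n \le i(\gamma,u) < p,$$
so $(\gamma,u) \notin \cF'_{\ge p} tel(\cC)$, hence its image in $\sigma_{<p}\cF'_{\ge p} tel(\cC)$ vanishes. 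Thus no $D$-type generator survives in $\sigma_{<p}\cF'_{\ge p} tel(\cC)$, giving the desired equality of submodules, and composing with the isomorphism of the previous paragraph completes the proof. One should also record that $\cF'$ is genuinely a filtration map on $tel(\cC)$ — the differential increases action and (weakly) increases $n$, hence $\ell_n$ — so that $\cF'_{\ge p} tel(\cC)$ is a subcomplex and $\sigma_{<p}$ of it a subquotient complex; these are routine.

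\textbf{Main obstacle.} There is no serious analytic obstacle here: the real content has already been extracted into Lemma \ref{lem:Dtypebound} (the mixed-index estimate for $D$-type orbits, itself a consequence of Lemma \ref{lem:mindex} and Lemma \ref{lem-B}) and into Lemma \ref{lem:same_action_index} (that $\iota$ preserves index and action). The only thing requiring a little care is purely formal: making sure that \emph{taking the subquotient $\sigma_{<p}\cF'_{\ge p}$ commutes with restricting to the direct summand} $tel(\cC)_{SH}$, which holds precisely because $D$-type generators are killed in the subquotient by the inequality above, so that the $\cQ_{\ge 0}\Nov$-module structures match up on the nose. I would state and check this compatibility explicitly, since the differential mixes the $SH$- and $D$-type summands and so the decomposition is \emph{not} a decomposition of complexes — the point being that after passing to $\sigma_{<p}\cF'_{\ge p}$ the $D$-type part is literally zero, so no subtlety about the differential remains.
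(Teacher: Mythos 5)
Your proposal is correct and follows essentially the same argument as the paper: both reduce the lemma to showing that no $D$-type generator survives in $\sigma_{<p}\cF'_{\ge p}tel(\cC)$, and both derive this from Lemma \ref{lem:Dtypebound} via the inequality $i(\gamma,u) < p \le \cF'(\gamma,u)$ being incompatible with $i_{mix}(\gamma)\ge \kappa^{-1}\delta\ell_n$. Your version merely spells out more explicitly the formal bookkeeping (that $\cF'\circ\iota=\cF$ and that the subquotient of the $SH$-summand equals the subquotient of the whole telescope), which the paper leaves implicit.
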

\begin{proof}
It suffices to show that $\sigma_{<p} \cF'_{\ge p} tel(\cC)$ does not include any $D$-type orbits. 
Indeed, for any generator $(\gamma,u)$ of this complex, we have
$$i(\gamma,u) < p \le \frac{\cA({\gamma},u)+\delta\ell_n}{\kappa},$$
which means $\gamma$ cannot be of $D$-type, by Lemma \ref{lem:Dtypebound}.
\end{proof}

We now recall that $SC_\Nov$ comes equipped with the $\cQ$-filtration, induced by the $\cQ$-filtration on $\Nov$ (c.f. Equation \eqref{eqdef-SCNov}).

\begin{lem}\label{lem:Fpqiso}
For any $p$, the inclusions of the following subcomplexes are quasi-isomorphisms:
$$ \cF'_{\ge p} tel(\cC) \subset tel(\cC), \qquad \text{and} \qquad \cF_{\ge p} \cQ_{\ge q}SC_\Nov \subset \cQ_{\ge q} SC_\Nov$$
(the latter for any $q \in \Z \cup \{-\infty\}$).
\end{lem}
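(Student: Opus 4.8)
The plan is to recognize both inclusions as special cases of the general ``cofinality'' statement for telescope complexes (the one invoked in Appendix~\ref{ss-cofinal} and in the sketch of Proposition~\ref{prop-qiso}): a telescope complex of a $1$-ray computes the direct limit of the cohomologies, and restricting to a subcomplex built from the pieces $\cA_{\ge c_n}CF^*(M,H_n;\Nov)$ with $c_n \to -\infty$ is a quasi-isomorphism because the action filtration is exhaustive, so ``the limit eventually catches everything''. The two assertions differ only in which $1$-ray and which shifting sequence $c_n$ one plugs in.

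First I would deal with $\cF'_{\ge p}tel(\cC) \subset tel(\cC)$. By definition of the telescope complex, a generator of $tel(\cC)$ in the $n$-th block is a capped orbit $(\gamma,u)$ of $H_n$ (or an element of the mapping-cylinder copy, landing between blocks $n$ and $n+1$). The condition $\cF'(\gamma,u) \ge p$, i.e. $\cA(\gamma,u) \ge \kappa p - \delta\ell_n$, says precisely that $(\gamma,u) \in \cA_{\ge \kappa p - \delta\ell_n}CF^*(M,H_n;\Nov)$. Since the continuation maps and the Floer differential do not decrease action, and increasing $n$ increases $\ell_n$ (hence decreases $\kappa p - \delta\ell_n$), these action-truncated subgroups assemble into a sub-$1$-ray of $\cC$, and $\cF'_{\ge p}tel(\cC)$ is exactly its telescope complex. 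Now $\kappa p - \delta\ell_n \to -\infty$ as $n \to \infty$ because $\delta>0$ and $\ell_n \to \infty$; combined with the fact that the action filtration on each $CF^*(M,H_n;\Nov)$ is exhaustive (all relevant groups are finitely generated over $\Nov$ in each degree, so only finitely many action values appear below any bound — this uses positive monotonicity, as noted in Section~\ref{ss-ham-review}), the inclusion of the truncated $1$-ray into $\cC$ induces an isomorphism on direct limits of cohomology in each degree. Applying the telescope quasi-isomorphism criterion from Section~\ref{sscomplete}/Appendix~\ref{ss-cofinal} — which says a levelwise quasi-isomorphism of $1$-rays induces a quasi-isomorphism of telescope complexes, and more precisely that a cofinal sub-$1$-ray has quasi-isomorphic telescope — gives the first claim.

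The second claim, $\cF_{\ge p}\cQ_{\ge q}SC_\Nov \subset \cQ_{\ge q}SC_\Nov$, is handled identically on the $\Bbbk$-linear side. Recall $SC_\Nov = tel(\cC_{SH})\otimes$-nothing, i.e. $SC^*(K^R_\s;\Bbbk)\otimes_\Bbbk \Nov$ with $SC^*(K^R_\s;\Bbbk) = tel(\cC_{SH})$; the $\cQ$-filtration only constrains the $\Nov$-exponent, so $\cQ_{\ge q}SC_\Nov$ is itself the base change to $\cQ_{\ge q}\Nov$ of the telescope complex $tel(\cC_{SH})$, and it suffices to prove the $q=-\infty$ case (i.e. over all of $\Nov$) and then tensor with the flat-enough $\cQ_{\ge q}\Nov$, or simply observe the argument is uniform in $q$ since the $\cQ$-degree is preserved by $d$. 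On $tel(\cC_{SH})$ the filtration map $\cF$ restricts (under $\iota$, which respects action and index by Lemma~\ref{lem:same_action_index}) to $\cF' $, so again $\cF_{\ge p}tel(\cC_{SH})$ is the telescope of the sub-$1$-ray with $n$-th term $\cA_{\ge \kappa p - \delta\ell_n}CF^*(K^R_\s,H_n|_{K^R_\s};\Bbbk)$; as $\kappa p - \delta\ell_n \to -\infty$ and the action filtration on symplectic cochains of the Liouville domain $K^R_\s$ is exhaustive, the same cofinality argument applies verbatim.

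I expect the only genuine subtlety — and hence the step to write with some care — to be the exhaustiveness/finiteness input: one must check that for each fixed degree and each fixed bound, only finitely many action values occur (so that ``action $\ge \kappa p - \delta\ell_n$'' is genuinely cofinal rather than merely monotone), and that the telescope-quasi-isomorphism lemma from Section~\ref{sscomplete} is being applied to a bona fide sub-$1$-ray in $FiltCh_\Nov$. Everything else is bookkeeping: identifying $\cF'_{\ge p}tel$ and $\cF_{\ge p}tel$ with telescopes of action-truncated $1$-rays, and transporting the identification across $\iota$. No new geometry is needed; this is purely homological-algebra repackaging of the cofinality principle already used for Proposition~\ref{prop-qiso}.
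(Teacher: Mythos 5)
Your proposal is correct and follows essentially the same route as the paper: identify $\cF'_{\ge p}tel(\cC)$ (resp.\ $\cF_{\ge p}$ of the $SH$-side complex) with the telescope of the action-truncated sub-$1$-ray $\cA_{\ge \kappa p - \delta\ell_n}CF^*(M,H_n;\Nov)$, note that $\kappa p - \delta\ell_n \to -\infty$ while the action filtration is exhaustive and continuation maps do not decrease action, and conclude by the cofinality lemma for telescopes (Lemma~\ref{lem:limit_filt_qiso}). The paper's proof is exactly this, stated in three lines, with the second inclusion handled ``identically'' just as you do.
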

\begin{proof}
Observe that $\cF'_{\ge p} tel(\cC)$ is the telescope complex of the sub-1-ray of Floer groups 
$$\cA_{\ge \kappa p - \delta\ell_n} CF^*(M,H_n;\Nov) \subset CF^*(M,H_n;\Nov).$$
As the action filtration on each $CF^*(M,H_n;\Nov)$ is exhaustive, continuation maps increase action, and $\kappa p - \delta\ell_n \to - \infty$ as $n \to \infty$, the result follows by Lemma \ref{lem:limit_filt_qiso}. 
The argument for $\cF_{\ge p} \left(tel(\cC_{SH}) \otimes \cQ_{\ge q} \Nov\right) \subset tel(\cC_{SH}) \otimes \cQ_{\ge q} \Nov$ is identical.
\end{proof}

\begin{lem}\label{lem:trunc-PSS}
For any $p$, we have
$$H^j\left(\sigma_{<p}\cF'_{\ge p} tel(\cC)\right) \cong QH^j(M;\Nov) \qquad \text{ for $j<p-1$.}$$
\end{lem}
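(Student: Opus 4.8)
The plan is to combine the two previous lemmas with the general PSS isomorphism. First recall that by Lemma \ref{lem:Fpqiso}, the inclusion $\cF'_{\ge p} tel(\cC) \subset tel(\cC)$ is a quasi-isomorphism, so $H^*(\cF'_{\ge p} tel(\cC)) \cong H^*(tel(\cC))$. Now $tel(\cC)$ computes the direct limit $\varinjlim_n HF^*(M,H_n;\Nov)$, and by the properties of the PSS maps recalled at the end of Section \ref{ss-ham-review} (together with the diagram of limits and the fact that a direct limit of quasi-isomorphisms is a quasi-isomorphism, exactly as in the proof of Theorem \ref{analog_thesis1}), this direct limit is isomorphic to $QH^*(M;\Nov)$. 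Hence $H^j(\cF'_{\ge p} tel(\cC)) \cong QH^j(M;\Nov)$ for all $j$.

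Next I would pass to the degree-truncated quotient complex. The short exact sequence of cochain complexes
$$0 \to \sigma_{\ge p} \cF'_{\ge p}tel(\cC) \to \cF'_{\ge p}tel(\cC) \to \sigma_{<p}\cF'_{\ge p}tel(\cC) \to 0,$$
where $\sigma_{\ge p}C^\bullet := \bigoplus_{*\ge p} C^*$ is the (brutal) subcomplex in degrees $\ge p$, yields a long exact sequence in cohomology. Since $\sigma_{\ge p}C^\bullet$ is concentrated in degrees $\ge p$, its cohomology vanishes in degrees $< p$ and in degree $p$ it only receives a boundary from degree $p-1$ of the quotient; chasing the long exact sequence shows that the map $H^j(\cF'_{\ge p}tel(\cC)) \to H^j(\sigma_{<p}\cF'_{\ge p}tel(\cC))$ is an isomorphism for $j < p-1$ (and injective for $j=p-1$). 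Combining with the previous paragraph gives $H^j(\sigma_{<p}\cF'_{\ge p}tel(\cC)) \cong QH^j(M;\Nov)$ for $j<p-1$, as desired.

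The only genuinely substantive point — and the one I'd expect to be the main obstacle to spell out carefully — is the identification $H^*(tel(\cC)) \cong QH^*(M;\Nov)$ via PSS. One must check that the PSS quasi-isomorphisms $C^*(M;\Bbbk)\otimes\Nov \to CF^*(M,H_n;\Nov)$ are compatible (up to chain homotopy) with the continuation maps $CF^*(M,H_n;\Nov)\to CF^*(M,H_{n+1};\Nov)$ appearing in the $1$-ray, so that they assemble into a map of $1$-rays inducing an isomorphism on $\varinjlim$, hence on the telescope homology. This is exactly the content already used in the proof of Theorem \ref{analog_thesis1}, so it can simply be invoked; the rest of the argument above is formal homological algebra about truncations and the long exact sequence. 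Note that here we use $\sigma_{<p}$ as a quotient and the complementary brutal truncation $\sigma_{\ge p}$ as a subcomplex, which is the source of the degree shift making the isomorphism hold only in the range $j<p-1$ rather than $j<p$.
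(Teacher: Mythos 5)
Your proof is correct and follows essentially the same route as the paper's: Lemma \ref{lem:Fpqiso} plus the PSS identification of $H^*(tel(\cC))$ with $QH^*(M;\Nov)$, followed by the observation that the brutal truncation $\sigma_{<p}$ leaves cohomology unchanged in degrees $j<p-1$. You merely spell out the last step via the long exact sequence for $0 \to \sigma_{\ge p} \to \mathrm{id} \to \sigma_{<p} \to 0$, which the paper leaves implicit.
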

\begin{proof}
Applying Lemma \ref{lem:Fpqiso} and the PSS isomorphism, we have
$$H^j(\cF_{\ge p}tel(\cC)) = H^j(tel(\cC)) = \varinjlim_n HF^j(M,H_n;\Nov) = \varinjlim_n QH^j(M;\Nov) = QH^j(M;\Nov).$$
The result now follows as the degree truncation $\sigma_{<p}$ does not affect cohomology in degrees $< p-1$.
\end{proof}

We now denote
$$\left(SC^{(p)}_\Nov,d^{(p)}\right) := \sigma_{<p}\cF_{\ge p} (SC_\Nov,d \otimes id_\Nov).$$
For any $p>q$, we have a natural chain map $SC^{(p)}_\Nov \to SC^{(q)}_\Nov$, induced by the inclusion $\cF_{\ge p} \subset \cF_{\ge q}$ and the projection $\sigma_{<p} \twoheadrightarrow \sigma_{<q}$.
In particular we obtain an inverse system $\mathcal{SC_\Nov}$ of graded filtered $\cQ_{\ge  0}\Nov$-modules. 
We consider the `homotopy inverse limit' 
$$\wt{SC}_\Nov := \underset{\leftarrow}{tel}(\mathcal{SC_\Nov}),$$
(see Section \ref{ss-invtel} for the notation). 
We denote the differential by $\wt{d}$, and equip it with the filtration $\wt{\cQ}$ induced by $\cQ_{\ge \bullet}$ (see Remark \ref{rmk:filtinvlim}). 

We now make precise the notion of `filtered quasi-isomorphism' appearing in Theorem \ref{thm:aa} \eqref{it:1a}. 
We consider the category of $\Q$-graded filtered $\cQ_{\ge 0}\Nov$-cochain complexes $(M,d,\cQ_{\ge \bullet})$, where multiplication by $e^a$ increases the degree and the filtration level by $a$. 
Morphisms are $\cQ_{\ge 0}\Nov$-linear filtered chain maps. 
A morphism in this category is called a filtered quasi-isomorphism if it induces a quasi-isomorphism on each associated graded. 
Objects $M$ and $N$ are said to be filtered quasi-isomorphic if there exists a zigzag of filtered quasi-isomorphisms between them. 
This implies, in particular, that we have isomorphisms $H^j(\mathrm{Gr}_k^\cQ M) \cong H^j(\mathrm{Gr}_k^\cQ N)$ for all $j,k$.

\begin{lem}\label{lem:SCRfiltqi}
The filtered complex $(\wt{SC}_\Nov,\wt{d},\wt{\cQ}_{\ge \bullet})$ is filtered quasi-isomorphic to $(SC_\Nov,d \otimes id_\Nov,\cQ_{\ge \bullet})$ in the above sense.
\end{lem}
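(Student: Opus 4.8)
The plan is to exhibit a zigzag of filtered quasi-isomorphisms
$$\left(SC_\Nov,\, d\otimes id_\Nov,\, \cQ_{\ge\bullet}\right)\ \xleftarrow{\ \sim\ }\ \cF_{\ge p_0}SC_\Nov\ \xleftarrow{\ \sim\ }\ \underset{\leftarrow}{tel}\left(\cF_{\ge p}SC_\Nov\right)_p\ \xrightarrow{\ \sim\ }\ \underset{\leftarrow}{tel}\left(\mathcal{SC_\Nov}\right)=\wt{SC}_\Nov,$$
for any fixed $p_0$. Here $\left(\cF_{\ge p}SC_\Nov\right)_p$ is the inverse system whose transition map for $p>q$ is the inclusion $\cF_{\ge p}SC_\Nov\hookrightarrow\cF_{\ge q}SC_\Nov$, equipped with the induced $\cQ$-filtration; the first arrow is the inclusion $\cF_{\ge p_0}SC_\Nov\hookrightarrow SC_\Nov$; the second is the structure map of the homotopy inverse limit onto the term $p_0$; and the third is induced by the morphism of inverse systems given termwise by the quotient projections $\cF_{\ge p}SC_\Nov\twoheadrightarrow SC^{(p)}_\Nov=\sigma_{<p}\cF_{\ge p}SC_\Nov$ (these commute with the transition maps precisely by the definition of the maps $SC^{(p)}_\Nov\to SC^{(q)}_\Nov$). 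All arrows are $\cQ_{\ge 0}\Nov$-linear and filtration-preserving, so the only thing to verify is that each induces a quasi-isomorphism on every associated graded $\mathrm{Gr}_k^\cQ$.

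For the first arrow, Lemma \ref{lem:Fpqiso} says that $\cF_{\ge p_0}\cQ_{\ge q}SC_\Nov\subset\cQ_{\ge q}SC_\Nov$ is a quasi-isomorphism for every $q$; applying the five lemma to the long exact sequences of the pairs $\left(\cQ_{\ge k}SC_\Nov,\cQ_{\ge k+1}SC_\Nov\right)$ upgrades this to a quasi-isomorphism on $\mathrm{Gr}_k^\cQ$. The same argument, applied to $\cF_{\ge p}SC_\Nov\hookrightarrow\cF_{\ge q}SC_\Nov$, shows every transition map of the middle system is a filtered quasi-isomorphism; hence the inverse system $\left(H^j\bigl(\mathrm{Gr}_k^\cQ\cF_{\ge p}SC_\Nov\bigr)\right)_p$ is the constant system $SH^j(X;\Bbbk)\cdot q^k$, which satisfies Mittag--Leffler with vanishing $\varprojlim^1$, so the Milnor sequence makes the second arrow a quasi-isomorphism on $\mathrm{Gr}_k^\cQ$. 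For the third arrow, I would first record that $\underset{\leftarrow}{tel}$ commutes with $\mathrm{Gr}_k^\cQ$: by construction (Section \ref{ss-invtel}, Remark \ref{rmk:filtinvlim}) the homotopy inverse limit is built from shifts and countable products, the $\wt\cQ$-filtration is the product filtration, and $\mathrm{Gr}_k^\cQ\prod_p X_p=\prod_p\mathrm{Gr}_k^\cQ X_p$ for a product filtration; and since degree truncation $\sigma_{<p}$ is by cohomological degree it also commutes with $\mathrm{Gr}_k^\cQ$, using $\mathrm{Gr}_k^\cQ SC^{(p)}_\Nov=\sigma_{<p}\,\mathrm{Gr}_k^\cQ\cF_{\ge p}SC_\Nov$. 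Thus the third arrow is identified, on $\mathrm{Gr}_k^\cQ$, with the map of telescopes induced by $\left(\mathrm{Gr}_k^\cQ\cF_{\ge p}SC_\Nov\right)_p\to\left(\sigma_{<p}\,\mathrm{Gr}_k^\cQ\cF_{\ge p}SC_\Nov\right)_p$.

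This last map is handled by the Milnor sequence once more. For a fixed cohomological degree $j$, the quotient map $\mathrm{Gr}_k^\cQ\cF_{\ge p}SC_\Nov\to\sigma_{<p}\,\mathrm{Gr}_k^\cQ\cF_{\ge p}SC_\Nov$ is an isomorphism on $H^j$ whenever $p>j+1$, since truncation at $p$ does not alter cohomology in degrees $<p-1$; hence the target inverse system $\left(H^j(\sigma_{<p}\,\mathrm{Gr}_k^\cQ\cF_{\ge p}SC_\Nov)\right)_p$ is eventually the constant system $SH^j(X;\Bbbk)\cdot q^k$ with eventually-identity transition maps, and it again satisfies Mittag--Leffler with vanishing $\varprojlim^1$. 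Since the map of systems is an isomorphism of $\varprojlim$-terms and of $\varprojlim^1$-terms, it induces an isomorphism on $H^j$ of the two telescopes. Chaining the three arrows yields the filtered quasi-isomorphism claimed. The genuinely delicate point, which I would set up carefully with reference to Section \ref{ss-invtel}, is exactly the interaction of $\underset{\leftarrow}{tel}$ with the associated-graded functor — verifying that the $\wt\cQ$-filtration on $\wt{SC}_\Nov$ really is the levelwise product filtration, so that $\mathrm{Gr}_k^\cQ$ may be commuted past the infinite product; granting that bookkeeping, everything else is a routine application of the five lemma and the Mittag--Leffler criterion.
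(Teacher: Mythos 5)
Your proof is correct and follows essentially the same route as the paper's: both pass through the homotopy inverse limit of the system $(\cF_{\ge p}SC_\Nov)_p$, use Lemma \ref{lem:Fpqiso} (upgraded to the associated gradeds via the five lemma) for the comparison with $SC_\Nov$, and use the Mittag--Leffler/Milnor-sequence argument together with the observation that degree truncation does not change low-degree cohomology for the comparison with $\wt{SC}_\Nov$. The only cosmetic difference is at the $SC_\Nov$ end, where you use the structure map onto the single term $\cF_{\ge p_0}SC_\Nov$ followed by its inclusion, whereas the paper routes through the inverse telescope of the constant system and then projects onto one factor; these are interchangeable.
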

\begin{proof}
We have maps of inverse systems 
$$ \xymatrix{ SC_\Nov & SC_\Nov \ar[l]^-{id}& SC_\Nov \ar[l]^-{id} & \ldots \ar[l]\\
\cF_{\ge 0} SC_\Nov \ar[u]\ar[d] & \cF_{\ge 1}SC_\Nov \ar[u] \ar[d]\ar[l] & \cF_{\ge 2} SC_\Nov \ar[u] \ar[d]\ar[l] & \ldots \ar[l] \\
\sigma_{<0}\cF_{\ge 0} SC_\Nov  & \sigma_{<1}\cF_{\ge 1}SC_\Nov  \ar[l] & \sigma_{<2}\cF_{\ge 2} SC_\Nov \ar[l]&\ldots, \ar[l]
}$$
both of which induce a filtered quasi-isomorphism on the corresponding inverse telescope complex. For the upper map, this follows from Lemma \ref{lem:Fpqiso}. The lower map requires a little more argument. We first observe that $H^j(\mathrm{Gr}_k \sigma_{<p}\cF_{\ge p} SC_\Nov) \cong H^j(\mathrm{Gr}_k SC_\Nov)$ for $j<p-1$. It follows easily that for each $j$, the inverse system $H^j(\mathrm{Gr}_k \sigma_{<p}\cF_{\ge p}SC_\Nov)$ satisfies the Mittag-Leffler condition, so its $\varprojlim^1$ vanishes. 
Therefore, the cohomology of the $k$th associated graded of the inverse telescope of the bottom inverse system is 
$$ \varprojlim H^j(\mathrm{Gr}_k \sigma_{<p}\cF_{\ge p} SC_\Nov) = H^j(\mathrm{Gr}_k SC_\Nov),$$
by Lemma \ref{lem-invtelses}. This completes the argument. 

Finally, we observe that there is a filtered quasi-isomorphism from the inverse telescope of the top inverse system to $SC_\Nov$. 
Indeed, we take the composition
$$\underset{\leftarrow}{tel}\left(SC_\Nov \xleftarrow{id} SC_\Nov \xleftarrow{id} \ldots\right) \to \prod_{p \in \mathbb{N}} SC_\Nov \to SC_\Nov$$
where the first map is the natural one (i.e., the one appearing in the proof of Lemma \ref{lem-invtelses}), and the second map is given by projecting to any of the identical factors. 
Because this inverse system clearly satisfies the Mittag-Leffler condition, the proof of Lemma \ref{lem-invtelses} shows that the induced map on cohomology is the obvious isomorphism
$$\varprojlim_p H^*(SC_\Nov) \cong H^*(SC_\Nov).$$ 
Therefore the chain map is a quasi-isomorphism, and applying the same argument to the associated graded pieces shows that it is a filtered quasi-isomorphism.
This completes the necessary zig-zag of filtered quasi-isomorphisms.
\end{proof}

\begin{prop}[= Proposition \ref{prop-pos-int}]\label{prop-pos-inta}
For any Floer solution $u$ that contributes to $\mathcal{C}(H_\tau,J_\tau)$ with both ends asymptotic to $SH$-type orbits, we have $u\cdot\bflambda\geq 0.$ In case of equality, $u$ is contained in $K^R_\s$.
\end{prop}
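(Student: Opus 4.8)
The strategy is to apply the ``positivity of intersection'' result, Proposition \ref{prop-pre-pos-int}, to the Floer solution $u$, after chopping it at the neck region $\{\rho^R = \s+\eps/2\}$ and reinterpreting the piece outside $K^R_\s$ as a pseudoholomorphic curve with concave boundary on the contact manifold $Y^R_{\s+\eps/2}$. First I would dispose of the easy case: if $u$ is entirely contained in $K^R_\s$, then $u$ is a Floer cylinder inside the Liouville domain $K^R_\s$, which is disjoint from $D$, so $u \cdot \bflambda = 0$ and there is nothing to prove. So assume $u$ leaves $K^R_\s$. Since both asymptotes are of $SH$-type, hence contained in $K^R_\s$, and since on the neck region the Floer data is $H_\tau = h_\tau\circ\rho^R$ with $J_\tau$ of contact type, the restriction of $u$ to the region $\{\rho^R \ge \s+\eps/2\}$ is a (perturbed) pseudoholomorphic curve $v$ with $\partial$-boundary mapping to $W := \overline{\{\rho^R \ge \s + \eps/2\}}$, whose boundary lies on $Y := Y^R_{\s+\eps/2}$, which is concave as a boundary of $W$. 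The class $[\omega;c\theta](v)$ for the appropriate $c = (\s+\eps/2)$ is, up to the sign conventions relating $\theta$ to $\bflambda$ and the fact that $v$ agrees with $u$ outside $K^R_\s$, exactly (a positive multiple of) the contribution of the exterior part of $u$ to $u \cdot \bflambda$; and the part of $u$ inside $K^R_\s$ contributes $0$ to $u\cdot\bflambda$ since that region is disjoint from $D$. Hence $u\cdot\bflambda = \kappa^{-1}[\omega;c\theta](v) \cdot(\text{positive constant})$ — here one must be careful: $u\cdot\bflambda$ is defined topologically via the intersection numbers $u\cdot D_i$ weighted by $\lambda_i$, and one checks that the relative de Rham class $(\omega,\theta)$ pairs with the exterior piece $v$ to give precisely $\kappa$ times this, because the relative de Rham class of $(\omega,\theta)$ is $\kappa\bflambda$ and the asymptotes lie in $K^R_\s$ where the inner cap and the form agree.

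The bulk of the work is verifying the three hypotheses of Proposition \ref{prop-pre-pos-int} for $v$. Hypothesis \eqref{it:cont-typ} holds by our choice that $J_\tau$ is of contact type on the neck. For Hypotheses \eqref{it:neck-cond} and \eqref{it:pos-en}, I would unwind the Hamiltonian term: the Floer solution $u$ contributing to $\mathcal C(H_\tau,J_\tau)$ solves an equation whose Hamiltonian-valued one-form on the domain cylinder is $\cK = H_\tau\, dt$ (together with the continuation reparametrization), and near $Y$ we have $H_\tau = h_\tau(\rho^R)$, which on the linear region $\rho^R \ge \s$ equals $\ell_n\rho^R - (\ell_n\s - \delta_n)$ — an affine function of $\rho^R$, hence of the Liouville coordinate $\rho$ on the symplectization collar of $Y$. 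Thus $\cK = \alpha\cdot\rho + \beta$ near $Y$ with $\alpha = \ell_n\, dt$ (or the continuation-reparametrized version) and $\beta = -(\ell_n\s-\delta_n)\,dt$, giving \eqref{it:neck-cond}. For \eqref{it:pos-en} we need $d_\Sigma\cK - \{\cK,\cK\} - d\beta \ge 0$: since $\cK$ and $\beta$ are multiples of $dt$ by functions depending only on the base coordinate $s$ (via the continuation profile) and on $\rho$, the Poisson bracket term $\{\cK,\cK\}$ vanishes on the neck (the Hamiltonians at different points commute there, being functions of $\rho$), and $d_\Sigma\cK - d\beta = \partial_s(\text{slope})\cdot \rho\, ds\wedge dt \ge 0$ because the telescope construction forces the slope (essentially $\ell_\tau$) to be non-decreasing in $s$, and $\rho \ge 0$; the monotonicity condition $\ell_n\s - \delta_n < \ell_{n+1}\s - \delta_{n+1}$ that was built into the construction is exactly what makes $\beta$ behave correctly across the interpolation from $H_n$ to $H_{n+1}$. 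This is the step I expect to be the main obstacle: carefully checking, across the continuation regions and the interpolating family $h_\tau$, that the relevant one-forms have the stated shape $\alpha\rho+\beta$ with $\alpha$ a multiple of $dt$ and that the curvature-type term $d_\Sigma\cK - \{\cK,\cK\} - d\beta$ is genuinely non-negative, rather than merely non-negative up to controlled error from the perturbation making $H_n$ nondegenerate. Since on the neck region we arranged $H_\tau = h_\tau\circ\rho^R$ exactly (no perturbation there), this error issue does not actually arise — the perturbation is supported away from the neck — but one should state this explicitly.

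With the hypotheses verified, Proposition \ref{prop-pre-pos-int} gives $[\omega;c\theta](v) \ge 0$, hence $u\cdot\bflambda \ge 0$; and in the equality case it gives that $v$ is contained in $Y = Y^R_{\s+\eps/2}$, so in particular $u$ never goes further out than the neck. To upgrade ``$u$ doesn't leave a neighborhood of $K^R_\s$'' to ``$u \subset K^R_\s$'', I would apply the same argument with the cutting hypersurface $Y^R_{\s'}$ for any $\s' \in (\s,\s+\eps)$: positivity of $[\omega; \s'\theta](v_{\s'})$ combined with $u\cdot\bflambda = 0$ forces $v_{\s'} \subset Y^R_{\s'}$ for every such $\s'$, and since these hypersurfaces foliate the neck, $u$ must actually be contained in $K^R_\s$ (its image cannot meet any $\{\rho^R > \s\}$). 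Alternatively, once one knows $u\cdot\bflambda=0$, a cleaner finish is to note $u\cdot D_i \ge 0$ for each $i$ by positivity of intersection of the (perturbed) holomorphic curve with the symplectic divisor $D_i$ — these are all non-negative and sum (with positive weights $\lambda_i$) to zero, forcing each $u\cdot D_i = 0$, i.e.\ $u$ is disjoint from $D$; then a standard integrated-maximum-principle / no-escape argument using that $H_\tau$ is convex in $\rho^R$ and increasing outside $K^R_\s$ confines $u$ to $K^R_\s$. I would present the first (self-contained, via re-cutting) version as the main line since it reuses Proposition \ref{prop-pre-pos-int} verbatim and avoids invoking positivity of intersection with the possibly-singular $D$.
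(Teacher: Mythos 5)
Your proposal is correct and follows essentially the same route as the paper's proof: cut the cylinder along a level set $Y^R_{\s+\eps}$ where the data is unperturbed and of contact type, verify the three hypotheses of Proposition \ref{prop-pre-pos-int} (linearity of $h_\tau$ in $\rho$ giving the $\alpha\rho+\beta$ form, monotonicity of $H_\tau$ and the decreasing constant terms $\ell_n\s-\delta_n$ giving the curvature positivity), and in the equality case let the cutting level tend to $\s$ to conclude $u\subset K^R_\s$.
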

\begin{proof}
Let $u:\R \times S^1 \to M$ be a pseudoholomorphic curve contributing to $\cC(H_\tau,J_\tau)$, with both ends asymptotic to $SH$-type orbits. 
We choose $\eps>0$ so that $u$ is transverse to $\partial K^R_{\s + \eps}$, and in a neighbourhood of $\partial K^R_{\s+\eps}$ we have that $H_\tau = h_\tau \circ \rho^R$ and $J_\tau$ is of contact type. 
We will apply Proposition \ref{prop-pre-pos-int} to the part of $u$ that lies in $\{\rho^R \ge \s+\eps\}$, to show that $u$ is contained in $K^R_{\s+\eps}$; applying the same argument to a sequence of such $\eps$ converging to $0$ will show that $u \subset K^R_\s$ as required.

We check the hypotheses of Proposition \ref{prop-pre-pos-int} one by one. 
First recall that we chose $J_\tau$ to be of contact type along $\partial K^R_{\s+\eps}$, so hypothesis \eqref{it:cont-typ} is satisfied. 

Now we check hypothesis \eqref{it:neck-cond}. 
We have $H_\tau = h_\tau \circ \rho^R$ in a neighbourhood of $\partial K^R_{\s+\eps}$. 
Thus $\cK = \left(h_{\psi(s)} \circ \rho^R\right) dt$ in this region, where $\psi(s)$ is either constant in the case of a Floer differential, or $\psi(s) = n$ for $s \ll 0$ and $\psi(s) = n+1$ for $s \gg 0$, in the case of a continuation map. 
Now observe that $h_n(\rho)$ is a linear function of $\rho$ for $\rho \ge \s$, and $h_\tau$ is obtained by linear interpolation from the $h_n$, hence is also linear in $\rho$; this establishes hypothesis \eqref{it:neck-cond}.

Finally we check hypothesis \eqref{it:pos-en}. 
We have $\cK = H_{\psi(s)}(t) dt$, so $d_\Sigma \cK = \partial_s H_{\psi(s)}(t)dt \ge 0$ as $H_\tau$ is increasing. 
Furthermore we have $\{\cK,\cK\}(\partial_s,\partial_t) = \{0,H(s,t)\} = 0$. 
It remains to address the term $d\beta$ appearing in the hypothesis. 
Observe that $h_n(\rho) = \ell_n(\rho - \s) + \delta_n = \alpha_n \rho+ \beta_n$, where we have arranged that the `constant terms' $\beta_n = -\ell_n \s + \delta_n$ are decreasing. 
We can extend $\beta_n$ to $\beta_\tau$ by linear interpolation, just as we did for $h_\tau$; this will clearly be a decreasing function of $\tau$. We then have $\beta = \beta_{\psi(s)}dt$, and it is clear that $d\beta \le 0$. Putting the three terms together,
$$d_\Sigma \cK - \{\cK,\cK\} - d\beta \ge 0,$$
verifying hyothesis \eqref{it:pos-en}.  
The result now follows by Proposition \ref{prop-pre-pos-int}.
\end{proof}

\begin{proof}[Proof of Theorem \ref{thm:aa}]
Item \eqref{it:1a} holds by Lemma \ref{lem:SCRfiltqi}. 
For item \eqref{it:2a}, we observe that $SC_\Nov^{(p)}$ comes equipped with another differential, namely the pullback of the differential on $\sigma_{<p} \cF'_{\ge p} tel(\cC)$ under the isomorphism of Lemma \ref{lem:cleanDorbits}, which we denote by $\partial^{(p)}$. 
The difference $d^{(p)} - \partial^{(p)}$ does not decrease the $\cQ$-filtration, by Proposition \ref{prop-pos-inta}. 
Any Floer solution $u$ contributing to the part of $\partial^{(p)}$ which preserves the $\cQ$-filtration must satisfy $u \cdot \bflambda = 0$, and hence be contained in $K^R_\s$ by Proposition \ref{prop-pos-inta}. These are precisely the Floer solutions contributing to $d^{(p)}$, so in fact $d^{(p)}-\partial^{(p)}$ strictly increases the $\cQ$-filtration. 
The maps in the inverse system are clearly chain maps for the differentials $\partial^{(p)}$, so $\wt{SC}_\Nov$ admits a corresponding differential, which we denote by $\partial$; and $\wt{d} - \partial$ strictly increases the $\wt{\cQ}$-filtration, by the corresponding property of $d^{(p)} - \partial^{(p)}$. 

For item \eqref{it:3a}, we observe that Lemma \ref{lem:trunc-PSS} implies that the inverse system $H^j(\sigma_{<p}\cF'_{\ge p} tel(\cC))$ has the Mittag-Leffler property, and in particular has $\varprojlim^1 = 0$. 
Therefore we have
\begin{align*}
H^j(\wt{SC}_\Nov,\partial) & \cong \varprojlim H^j(\sigma_{<p}\cF'_{\ge p} tel(\cC)) \qquad\text{by Lemma \ref{lem-invtelses}}\\
&\cong QH^j(M;\Nov)\qquad \text{by Lemma \ref{lem:trunc-PSS} again.}
\end{align*}
\end{proof}

\subsection{Proof of Theorem \ref{thm:specseqa}}

In order to fit with the standard terminology for spectral sequences, in which filtrations are assumed to be increasing (see \cite[Chapter 5]{Weibel}), we turn the decreasing filtrations $\wt{\cQ}_{\ge \bullet}$ into increasing ones by setting $\bar{\cQ}_{j} = \wt{\cQ}_{\ge -j}$.

\begin{lem}\label{lem-Q-bb}
Suppose that Hypothesis \ref{hyp:gr} holds. 
Then the $\bar{\cQ}$-filtration on $\wt{SC}_\Nov$ is bounded below. (Recall that this means that for each $i$, there exists $q(i)$ such that $\bar{\cQ}_{q(i)} \wt{SC}^i_\Nov = 0$.)\end{lem}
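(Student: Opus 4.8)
The strategy is the one already outlined in Section \ref{sss-C}: the statement concerns the inverse‑telescope complex $\wt{SC}_\Nov$, but I would reduce it to a single $\cQ$‑filtration estimate on $SC_\Nov$ that holds uniformly across the subquotients $SC^{(p)}_\Nov$ from which $\wt{SC}_\Nov$ is built. The only geometric input is Lemma \ref{lem-ind-final}, and this is precisely where Hypothesis \ref{hyp:gr} enters.

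First I would record the estimate on $SC_\Nov$ itself. A $\Bbbk$‑linear generator of $SC_\Nov = SC^*(X;\Bbbk)\otimes_\Bbbk\Nov$ has the form $\gamma\otimes e^a$ with $a\in A$, where $\gamma$ is a $\Bbbk$‑generator of the $1$‑ray telescope $SC^*(X;\Bbbk) = tel(\cC_{SH})$. By Lemma \ref{lem-ind-final} together with Lemma \ref{lem:same_action_index} and Hypothesis \ref{hyp:gr}, every $SH$‑type orbit satisfies $i(\gamma,u_{in})\ge 0$, and since the mapping‑telescope construction shifts degrees only by a bounded amount, $\gamma$ sits in a (rational) degree $\ge 0$ in $SC^*(X;\Bbbk)$ — this is the bookkeeping of Section \ref{sss-C}. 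Using $e^a = q^{a/a_0}$, so that $\cQ(\gamma\otimes e^a) = \cQ(e^a) = a/a_0$, a generator of degree $i$ therefore has
$$\cQ(\gamma\otimes e^a) = \frac{a}{a_0} = \frac{i - \deg\gamma}{a_0} \le \frac{i}{a_0},$$
and hence $\cQ_{\ge q}SC^i_\Nov = 0$ whenever $q > i/a_0$.

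Next I would propagate this through the two layers of the construction of $\wt{SC}_\Nov$. Each $SC^{(p)}_\Nov = \sigma_{<p}\cF_{\ge p}SC_\Nov$ is, in each fixed degree, a $\Bbbk$‑subspace of $SC_\Nov$ carrying the restricted $\cQ$‑filtration, so $\cQ_{\ge q}SC^{(p),i}_\Nov = 0$ for $q > i/a_0$, uniformly in $p$. Then $\wt{SC}_\Nov = \underset{\leftarrow}{tel}(\mathcal{SC_\Nov})$ is, as a filtered $\Nov$‑module, built degreewise out of the products $\prod_p SC^{(p),j}_\Nov$ for finitely many values of $j$ in a bounded range around $i$ (the mapping‑cone shift of the inverse telescope), with $\wt{\cQ}$ the product of the induced filtrations (see Section \ref{ss-invtel} and Remark \ref{rmk:filtinvlim}); the previous bound then gives $\wt{\cQ}_{\ge q}\wt{SC}^i_\Nov = 0$ for all $q$ above a constant depending only on $i$. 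Passing to the increasing filtration $\bar{\cQ}_j = \wt{\cQ}_{\ge -j}$, this reads $\bar{\cQ}_j\wt{SC}^i_\Nov = 0$ for all $j$ below a constant depending only on $i$, and one may take $q(i) = -\lceil i/a_0\rceil - 1$ (or a similar explicit value), which is exactly what the lemma asks for.

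The main point is really just the inequality $i(\gamma,u_{in})\ge 0$: it is the sole place Hypothesis \ref{hyp:gr} is used, and it is already proved in Lemma \ref{lem-ind-final}. Everything else is formal bookkeeping of the degree shifts in the two nested telescopes and of the induced filtration on the inverse telescope. The one thing I would be careful about is that "bounded below" here is a property of the complex $\wt{SC}_\Nov$ itself, not of its cohomology, so it cannot be transported across the filtered quasi‑isomorphism of Lemma \ref{lem:SCRfiltqi} and must be verified directly on $\wt{SC}_\Nov$ as above.
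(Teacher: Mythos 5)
Your proposal is correct and follows essentially the same route as the paper: the whole content is the inequality $i(\gamma,u_{in})\ge 0$ from Lemma \ref{lem-ind-final} (via Lemma \ref{lem:same_action_index}), which gives $a_0\,\wt{\cQ}(\gamma\otimes e^a)=a=i(\gamma\otimes e^a)-i(\gamma)\le i$ and hence a vanishing threshold $q(i)$ depending only on $i$ (your $-\lceil i/a_0\rceil-1$ agrees with the paper's $\lfloor -i/a_0\rfloor-1$). The extra bookkeeping you do through the subquotients and the inverse telescope is harmless and the paper simply leaves it implicit.
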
 
\begin{proof}
If $i(\gamma \otimes e^a) = i$, then
\[ a_0\wt{\cQ}(\gamma \otimes e^a) = a = i(\gamma \otimes e^a) - i(\gamma) \le i\] 
by Lemma \ref{lem-ind-final}. Thus we may take $q(i) = \lfloor -i/a_0\rfloor -1$. 
\end{proof}

\begin{proof}[Proof of Theorem \ref{thm:specseqa}]
We start by establishing that the inclusion
$$\left(\bigcup_q \bar{\cQ}_{q} \wt{SC}_\Nov,\partial\right) \subset \left(\wt{SC}_\Nov,\partial\right)$$
is a quasi-isomorphism. 
This follows as
\begin{align*}
H^*\left(\bigcup_q \bar{\cQ}_{q} \wt{SC}_\Nov,\partial\right) & = \varinjlim_q H^*\left(\bar{\cQ}_{q} \wt{SC}_\Nov,\partial\right) \qquad \text{as direct limit commutes with cohomology} 
\\
&= H^*\left(\wt{SC}_\Nov,\partial\right) \qquad \text{by Lemma \ref{lem:Q-exh} below.}
\end{align*}
 
The spectral sequences induced by these filtered complexes are identical (this follows immediately from the construction). 
The $\bar{\cQ}$-filtration on $\bigcup_q \bar{\cQ}_{q} \wt{SC}_\Nov$ is exhaustive by construction, and bounded below by Lemma \ref{lem-Q-bb}. 
Therefore the corresponding spectral sequence converges to $H^*\left(\wt{SC}_\Nov,\partial\right)$ by \cite[Theorem 5.5.1]{Weibel}; and this is isomorphic to $QH^*(M;\Nov)$ by Theorem \ref{thm:aa} \eqref{it:3a}. 

Now we identify the $E_1$ page. 
By definition we have $E_0^{j,k} = \mathrm{Gr}_{j}^{\bar{\cQ}} \wt{SC}^{j+k}_\Nov$, and $d_0^{j,k}$ is the differential induced by $\partial$. 
The latter is equal to the differential induced on the associated graded by $\widetilde{d}$, by Theorem \ref{thm:aa} \eqref{it:2a} (combined with the fact that any cylinder $u$ satisfying $u \cdot \bflambda > 0$ satisfies $u \cdot \bflambda \ge a_0$). 
Therefore $\left(E_0^{j,k},d_0^{j,k}\right) = \left(\mathrm{Gr}_j^{\bar{\cQ}} \wt{SC}^{j+k}_\Nov,\widetilde{d}\right)$, which is quasi-isomorphic to $\left(\mathrm{Gr}_j^{\bar{\cQ}} SC^{j+k}_\Nov,d \otimes id_\Nov\right) $ by Lemma \ref{lem:SCRfiltqi}. 
Observe that $\mathrm{Gr}_j^{\bar{\cQ}} \Nov$ is spanned by $q^{-j}$, and hence is concentrated in degree $-ja_0$. 
It follows that $E_1^{j,k} = SH^{j(1+a_0)+k}(K^R_\s;\Bbbk) \otimes_\Bbbk \Bbbk \cdot q^{-j}$ as claimed.
\end{proof}

\begin{lem}\label{lem:Q-stab}
The map
$$H^j\left (\cQ_{\ge q} SC_\Nov^{(p)},\partial^{(p)}\right) \to H^j\left (\cQ_{\ge q} SC_\Nov^{(r)},\partial^{(r)}\right)$$
is an isomorphism, for all $p \ge r > j+2$.
\end{lem}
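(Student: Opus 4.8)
The plan is to unravel the definitions of $SC_\Nov^{(p)} = \sigma_{<p}\cF_{\ge p}SC_\Nov$ and track how the two truncation operations interact with taking cohomology in a fixed degree $j$. The key observation is that the map $SC_\Nov^{(p)} \to SC_\Nov^{(r)}$ for $p \ge r$ is the composite of the inclusion $\cF_{\ge p} \subset \cF_{\ge r}$ followed by the projection $\sigma_{<p} \twoheadrightarrow \sigma_{<r}$, and each of these is a quasi-isomorphism in a suitable range. Concretely, I would first observe that $\cQ_{\ge q}$ commutes with both $\cF_{\ge p}$ and $\sigma_{<p}$ (these filtrations are defined by filtration maps in independent ``directions''), so $\cQ_{\ge q}SC_\Nov^{(p)} = \sigma_{<p}\cF_{\ge p}\cQ_{\ge q}SC_\Nov$, with differential the restriction of $\partial^{(p)}$.

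First I would handle the $\sigma_{<p}$ truncation: for any cochain complex $C^\bullet$ the projection $\sigma_{<p}C^\bullet \to \sigma_{<r}C^\bullet$ (for $p > r$) induces an isomorphism on $H^j$ for $j < r-1$, since both compute $H^j(C^\bullet)$ in that range; here $j < r - 1$ is guaranteed by $r > j+2$, i.e. $r - 1 > j + 1 > j$. So it suffices to show the inclusion-induced map $H^j(\sigma_{<p}\cF_{\ge p}\cQ_{\ge q}SC_\Nov) \to H^j(\sigma_{<p}\cF_{\ge r}\cQ_{\ge q}SC_\Nov)$ is an isomorphism (keeping the degree truncation at level $p$), and then compose. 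Wait — this requires being slightly careful: the intermediate object $\sigma_{<p}\cF_{\ge r}\cQ_{\ge q}SC_\Nov$ is not one of the $SC_\Nov^{(p)}$, so I would instead argue directly. By Lemma \ref{lem:Fpqiso}, the inclusions $\cF_{\ge p}\cQ_{\ge q}SC_\Nov \subset \cQ_{\ge q}SC_\Nov$ and $\cF_{\ge r}\cQ_{\ge q}SC_\Nov \subset \cQ_{\ge q}SC_\Nov$ are both quasi-isomorphisms, hence so is the inclusion $\cF_{\ge p} \subset \cF_{\ge r}$ between them. Applying the degree truncation functor $\sigma_{<p}$ — which is exact and preserves quasi-isomorphisms in degrees $< p - 1$ — the induced map $H^j(\sigma_{<p}\cF_{\ge p}\cQ_{\ge q}SC_\Nov) \to H^j(\sigma_{<p}\cF_{\ge r}\cQ_{\ge q}SC_\Nov)$ is an isomorphism for $j < p - 1$, which holds since $p > j + 2$.

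Putting the two steps together: the map in question factors (up to the identifications above) as
$$H^j\!\left(\cQ_{\ge q}SC_\Nov^{(p)}\right) \xrightarrow{\ \sim\ } H^j\!\left(\sigma_{<p}\cF_{\ge r}\cQ_{\ge q}SC_\Nov\right) \xrightarrow{\ \sim\ } H^j\!\left(\cQ_{\ge q}SC_\Nov^{(r)}\right),$$
where the first arrow is an isomorphism by the previous paragraph (needs $p > j+2$) and the second by the $\sigma$-truncation comparison (needs $r > j+2$, equivalently $r-1 > j+1$). Both hypotheses are contained in $p \ge r > j + 2$, so the composite is an isomorphism. I would also note that $\partial^{(p)}$ and $d^{(p)}$ differ by something strictly increasing the $\cQ$-filtration, so on each $\cQ$-graded piece they agree; but since here I am working with $\cQ_{\ge q}$-subcomplexes rather than a single graded piece this is not strictly needed — all the maps above are genuine chain maps for the $\partial^{(p)}$ differentials because the structure maps of the inverse system are. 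The main obstacle is purely bookkeeping: making sure the commutation of $\cQ_{\ge q}$ with $\cF_{\ge p}$ and $\sigma_{<p}$ is legitimate (it is, since these come from filtration maps valued in independent coordinates — action-plus-$\delta\ell_n$ versus $\cQ$-weight versus cohomological degree) and that the range conditions line up so that both truncation comparisons apply simultaneously, which is exactly why the bound is $p \ge r > j+2$ rather than something sharper.
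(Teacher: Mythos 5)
There is a genuine gap, and it is precisely the point you wave away at the end. Lemma \ref{lem:Fpqiso} asserts that $\cF_{\ge p}\cQ_{\ge q}SC_\Nov \subset \cQ_{\ge q}SC_\Nov$ is a quasi-isomorphism for the \emph{undeformed} differential $d\otimes id_\Nov$; your two-step factorization therefore proves the lemma with $d^{(p)},d^{(r)}$ in place of $\partial^{(p)},\partial^{(r)}$. The statement, however, concerns the deformed differentials, and "the maps are chain maps for $\partial^{(p)}$" does not transport the quasi-isomorphism property across the deformation. Since $\partial^{(p)}-d^{(p)}$ strictly increases the $\cQ$-filtration, what you actually know is that your map is a filtered chain map inducing an isomorphism on the cohomology of the associated gradeds (in the relevant range of degrees); to upgrade this to an isomorphism on the cohomology of the $\partial$-complexes you need the spectral sequences of the $\cQ$-filtration to converge. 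This is not automatic and is exactly where Hypothesis \ref{hyp:gr} enters: the paper's example of $M=S^2$, $D$ a point shows that a filtered complex whose associated graded is acyclic can have nonzero (or, after completion, zero) cohomology when the filtration is not bounded below.

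The paper's proof supplies the missing step: form the cone $C$ of the map $\bigl(\cQ_{\ge q}SC^{(p)}_\Nov,\partial^{(p)}\bigr)\to\bigl(\cQ_{\ge q}SC^{(r)}_\Nov,\partial^{(r)}\bigr)$, observe that its $\cQ$-filtration is bounded above by $q$ and bounded below by Lemma \ref{lem-Q-bb} (which uses Hypothesis \ref{hyp:gr}), so the associated spectral sequence converges by the Classical Convergence Theorem; the $E_1$ page is the cohomology of the cone for the $d$-differentials, and your two-step argument (Lemma \ref{lem:Fpqiso} plus the degree-truncation comparison, with the range conditions $p\ge r>j+2$) is essentially the computation showing that this $E_1$ page vanishes in total degree $<r-2$. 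So your bookkeeping of the truncations is correct and recovers the $E_1$-page analysis, but as written the argument establishes the wrong statement; you must insert the cone-plus-convergence step to pass from $d$ to $\partial$.
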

\begin{proof}
Let $(C,\partial)$ be the cone of the chain map $\left(\cQ_{\ge q} SC_\Nov^{(p)},\partial^{(p)}\right) \to \left (\cQ_{\ge q} SC_\Nov^{(r)},\partial^{(r)}\right)$. 
The $\cQ$-filtration on $C$ is bounded below by Lemma \ref{lem-Q-bb}, and it is clearly bounded above by $q$. 
Therefore the corresponding spectral sequence converges to the cohomology of $(C,\partial)$ by \cite[Theorem 5.5.1]{Weibel}. 

The $E_1$ page is the cohomology of the cone of the chain map $\left(\cQ_{\ge q} SC_\Nov^{(p)},d^{(p)}\right) \to \left (\cQ_{\ge q} SC_\Nov^{(r)},d^{(r)}\right)$. 
This cone coincides with the cone of the chain map $\cQ_{\ge q} \cF_{\ge p}SC_\Nov \to \cQ_{\ge q}\cF_{\ge r}SC_\Nov$ in degrees $<r-1$. 
The latter cone is acyclic, by Lemma \ref{lem:Fpqiso}. 
Therefore $E_1^{j,k} = 0$ for $j+k < r-2$. 
Because the spectral sequence converges, this means $H^j(C,\partial) = 0$ for $j<r-2$.
This implies the result.
\end{proof}

\begin{lem}\label{lem:Q-exh}
The natural map
$$\varinjlim_q H^*\left(\bar{\cQ}_q \wt{SC}_\Nov,\partial\right) \to H^*\left(\wt{SC}_\Nov,\partial\right)$$ 
is an isomorphism.
\end{lem}
\begin{proof}
Note that 
$$\left(\cQ_{\ge q} \wt{SC}_\Nov,\partial\right) = \underset{\leftarrow}{tel} \left(\cQ_{\ge q} SC^{(p)},\partial^{(p)}\right).$$
The inverse system $H^j(\cQ_{\ge q} SC^{(p)},\partial^{(p)})$ has the Mittag-Leffler property for all $j,q$, by Lemma \ref{lem:Q-stab}, so 
\begin{align*}
H^j\left(\cQ_{\ge q} \wt{SC}_\Nov,\partial\right) &\cong \varprojlim_p H^j \left(\cQ_{\ge q} SC^{(p)},\partial^{(p)}\right) \qquad \text{by Lemma \ref{lem-invtelses}}\\
&\cong H^j \left(\cQ_{\ge q} SC^{(p)},\partial^{(p)}\right) \qquad \text{for any $p>j+2$, by Lemma \ref{lem:Q-stab}.}
\end{align*}
A similar argument, using Lemma \ref{lem:trunc-PSS}, shows that
$$H^j\left(\wt{SC}_\Nov,\partial\right) = H^j\left(SC^{(p)}_\Nov,\partial^{(p)}\right) \qquad \text{for any $p>j+1$. }$$

Therefore we have an identification
$$\xymatrix{\varinjlim H^j\left(\bar{\cQ}_q \wt{SC}_\Nov,\partial\right) \ar[r] \ar@{=}[d] & H^j\left(\wt{SC}_\Nov,\partial\right) \ar@{=}[d]\\
\varinjlim H^j\left(\cQ_{\ge q} SC^{(p)}_\Nov,\partial^{(p)}\right) \ar[r] & H^j\left(SC^{(p)}_\Nov,\partial^{(p)}\right),}$$
for any $p>j+2$. 
The bottom map is an isomorphism, because the $\cQ$-filtration on $SC^{(p)}$ is exhaustive.
\end{proof}

\subsection{Proof of Theorem \ref{thm-superheavy}}\label{ss-proof-superheavy}

The key to the proof is the following:

\begin{prop}
	\label{prop:index_bd_collar_invt}
	Let $\s_{crit}^D(R)<\s_1<\s_2<1$. Then there exists an isomorphism
	$$
	SH^*_M\left(\overline{M\setminus K^R_{\s_1}};\Nov\right)\cong SH^*_M\left(\overline{M \setminus K^R_{\s_2}};\Nov\right).
	$$
\end{prop}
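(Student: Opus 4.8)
\textbf{Proof strategy for Proposition \ref{prop:index_bd_collar_invt}.}

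The plan is to adapt the `contact Fukaya trick' of \cite{Tonkonog2020} to our setting. The first step is to observe that, because the contact form $\sigma^{-1}\iota^*_{Y^R_\s}\theta$ is independent of $\s$ (Definition \ref{def:KRs}), the Liouville flow from time $\log(\s_1)$ to time $\log(\s_2)$ gives a family of contactomorphisms identifying the neighbourhoods $\{\s_1 \le \rho^R \le \s_2\}$ and $\{\s_1/\s_2 \cdot \s_1 \le \rho^R \le \s_1\}$ (say), which on the nose rescales the Liouville coordinate. This lets us set up acceleration data $(H_\tau,J_\tau)$ for $\overline{M \setminus K^R_{\s_2}}$ and $(\tilde H_\tau,\tilde J_\tau)$ for $\overline{M \setminus K^R_{\s_1}}$ which are related by this contactomorphism in the collar region, together with a choice of Hamiltonians that are linear in $\rho^R$ with the same slopes $\ell_n$ on the region $\{\rho^R \le \s_1\}$, so that one gets a tautological identification of the underlying Floer $1$-rays $\cC(H_\tau,J_\tau) \cong \cC(\tilde H_\tau,\tilde J_\tau)$ as $1$-rays of $\Nov$-modules and chain maps. (Here it is important that the relevant one-periodic orbits all lie inside $\{\rho^R \le \s_1\}$, where the two Hamiltonians agree; the rescaling only affects the neck, which carries no orbits.) This identification, however, does not respect the action filtrations: rescaling the Liouville coordinate by a factor scales the action of an orbit lying in the rescaled region.

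The heart of the argument is then to show that, despite this, the two action filtrations on the common telescope complex $tel(\cC) \cong tel(\tilde\cC)$ are \emph{topologically equivalent}, i.e. each is bounded with respect to the other after a shift depending on $n$ that is $o(\ell_n)$, or more precisely that the shifts are controlled so that the degreewise-completed telescopes compute isomorphic cohomology. In \cite{Tonkonog2020} the comparison between the two filtrations was controlled using McLean's index-boundedness. In our setting the substitute is the mixed-index estimate: by Lemma \ref{lem:mindex}, for a $D$-type orbit $\gamma$ (one lying outside $K^R_{\s_1}$, hence in the region where the Hamiltonian is linear with slope $\ell_n$) we have
$$ i_{mix}(\gamma) = \sum_i \left(2 - \kappa^{-1} r_i^{max}(\bar\gamma)\right) \cdot \nu^h_i(\bar\gamma) - \kappa^{-1} h(\rho^R(\bar\gamma)) + D(\gamma),$$
with $D(\gamma)$ bounded; combining this with Lemma \ref{lem-D} (this is exactly why we need $\s_1 > \s_{crit}^D(R)$, so that $\eta(R)>0$) gives a lower bound of the form $i_{mix}(\gamma) \ge \kappa^{-1}\eta(R)\,\ell_n\,(\rho^R(\bar\gamma)-\s^D_{crit}(R)) + (\text{bounded})$. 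Since in a fixed degree $i(\gamma,u)=i$ this forces an upper bound $\cA(\gamma,u) \le \kappa i$ that is uniform in $n$, the degreewise completion of the telescope complex changes nothing, and the `extra' action coming from the rescaling of the collar is controlled by a function of $R$ alone (independent of $n$) on the part of the telescope that can support nonzero cohomology in a given degree.

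Concretely I would proceed as follows: (i) make the identification of Floer $1$-rays precise, following \cite{Tonkonog2020}; (ii) compute the discrepancy between the two action filtration maps on the generators, showing it equals a bounded multiple of $\rho^R$-dependent quantities on orbits in the collar and zero on orbits in $\{\rho^R \le \s_1\}$; (iii) use Lemma \ref{lem:mindex} together with Lemma \ref{lem-D} to bound the mixed index of collar orbits from below, deducing a degreewise upper bound on action that is independent of $n$; (iv) conclude that $\widehat{tel}(\cC) \cong \widehat{tel}(\tilde\cC)$ as $\Nov$-modules on cohomology, using the appropriate lemma from the completion appendix (as in the proof of Proposition \ref{prop-heavy}, where the completion had no effect). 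The main obstacle is step (iii): one must show that the bounded-ness of $D(\gamma)$ in Lemma \ref{lem:mindex} is genuinely uniform over the relevant family of acceleration data (including the interpolating data $(H_\tau,J_\tau)_{\tau \in [n,n+1]}$ and the one used in the continuation maps), and that the index-vs-action trade-off interacts correctly with the filtration shift introduced by the Fukaya trick rescaling. This is precisely the point where `estimates on the mixed index, which have a different nature' replace index-boundedness, and care is needed to ensure all the error terms are $R$-controlled rather than $\ell_n$-controlled.
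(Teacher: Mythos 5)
Your overall strategy matches the paper's: transport acceleration data for $\overline{M\setminus K^R_{\s_2}}$ to acceleration data for $\overline{M\setminus K^R_{\s_1}}$ by a Liouville-flow rescaling $\phi$ (the contact Fukaya trick), obtain a tautological isomorphism of Floer $1$-rays that need not respect action, and then compare the two action filtrations using mixed-index estimates in place of index-boundedness, with Lemma \ref{lem-D} supplying the positivity that requires $\s_1>\s_{crit}^D(R)$. However, your steps (iii)--(iv) contain a genuine error. You claim that a mixed-index lower bound forces $\cA(\gamma,u)\le\kappa i$ degreewise, uniformly in $n$, so that ``the degreewise completion of the telescope complex changes nothing,'' as in Proposition \ref{prop-heavy}. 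That cannot be right here: the acceleration data is for the \emph{complement} $\overline{M\setminus K^R_\s}$, so the Hamiltonians $h_n$ are decreasing in $\rho^R$ and equal to the large constant $\ell_n\eps$ on $\{\rho^R\le\s-2\eps\}$. The constant orbits there have bounded (Morse) index but action $\approx\ell_n\eps\to+\infty$, so $i_{mix}\to-\infty$ along them and the action is \emph{not} bounded above in a fixed degree. Indeed the completion must have a drastic effect, since the uncompleted telescope always computes $QH^*(M;\Nov)\neq 0$ while the completed one is supposed to vanish for $\s$ close to $1$. Relatedly, you have the geometry of the orbits backwards: the orbits with wrapping of order $\ell_n$ sit near $\partial K^R_{\s_2}$ (with $\nu^h\le 0$), while the orbits near $D$ see only the small slope $\delta_n$.

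What is actually needed, and what the paper proves (Lemma \ref{lem:keybound}), is a \emph{two-sided} comparison between the mixed indices of corresponding orbits: $-i_{mix}(\gamma)-B\ge -i_{mix}(\phi(\gamma))\ge \eta\cdot(-i_{mix}(\gamma))+C$ with $\eta>0$. Since $\phi$ preserves the index of capped orbits, this says that within each fixed degree a sequence of capped orbits has action tending to $+\infty$ on one side if and only if it does on the other (Lemma \ref{lem:A-equiv}); that is the precise sense in which the two filtrations are topologically equivalent, and it is exactly what is needed for the isomorphism of $1$-rays to descend to an isomorphism of the degreewise completions. The upper bound uses $f(\rho)\le\rho$ and the positivity of $h_n$ on $\{\rho^R\le\s_2-\eps\}$; the lower bound is where Lemma \ref{lem-D} and the hypothesis $\s_1>\s_{crit}^D(R)$ enter, via $2-\kappa^{-1}r_i^{max}(\phi(\bar{\gamma}))\ge 2\eta>0$ wherever $\nu^h_i\neq 0$. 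A one-sided estimate, even if it were available, would not suffice: you need control in both directions to transfer Cauchy sequences and their limits between the two completions.
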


The proof relies on the Contact Fukaya Trick of \cite[Section 4]{Tonkonog2020}, with which we assume some familiarity. 

We first describe a special choice of acceleration data for $\overline{M\setminus K^R_{\s_2}} \subset M$. 
Let $\eps>0$ be such that $0<\s_2-2\eps$, $\s_2+3\eps < 1$, and $(\s_1/\s_2) \cdot (\s_2 - 2\eps) > \s_{crit}^D(R)$. 
Let $0<\ell_1<\ell_2<\ldots$ and $\delta_1<\delta_2<\ldots <0$ be reals such that the Reeb flow on $Y^R_{\s_2}=\partial K^R_{\s_2}$ has no $\ell_n$-periodic orbits or $\delta_n$-periodic orbits for all $n$, and $\ell_n \to \infty$, $\delta_n \to 0$ as $n \to \infty$.

\begin{figure}[ht]
\centering
\begin{minipage}[b]{0.45\linewidth}
\includegraphics[width=0.9\textwidth]{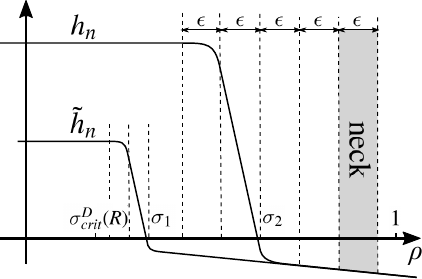}
\caption{The functions $h_n$ and $\tilde{h}_n$.}
\label{fig:D}
\end{minipage}
\begin{minipage}[b]{0.45\linewidth}
\includegraphics[width=0.9\textwidth]{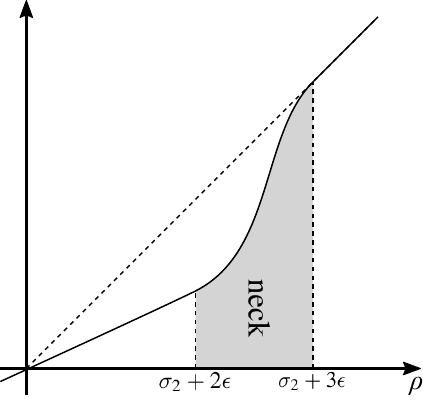}
\caption{The function $f$.}
\label{fig:D2}
\end{minipage}
\end{figure}

We choose smooth functions $h_n:\R \to \R$ satisfying:
\begin{itemize}
\item $h_1<h_2<\ldots$ (pointwise);
\item $h_n'(\rho) \le 0$;
\item $h_n(\rho) = \ell_n \eps$ for $\rho\le \s_2-2\eps$;
\item $h_n(\rho) = -\ell_n(\rho - \s_2) + \delta_n$ for $\s_2-\eps \le \rho \le \s_2$;
\item $h_n(\rho) = \delta_n \rho$ for $\rho \ge \s_2+\eps$.
\end{itemize}
Note that $h_n$ converges monotonically to $0$ on $[\s_2,\infty)$ and to $+\infty$ outside it.
We define $\tilde{h}_n(\rho) = \frac{\s_1}{\s_2}h_n\left(\frac{\s_2}{\s_1}\rho\right)$, and observe that $\tilde{h}_n$ converges monotonically to $0$ on $[\s_1,\infty)$ and to $+\infty$ outside it. See Figure \ref{fig:D}.

We extend $h_n$ to $h_\tau$ by linear interpolation as before, and make a choice of acceleration data $(H_\tau,J_\tau)$ for $\overline{M\setminus K^R_{\s_2}}$ such that $H_n$ is a perturbation of $h_n \circ \rho^R$ in accordance with Lemma \ref{lem:pertH}. 
We assume that $H_n>0$ over the region $\{\rho^R \le \s_2-\eps\}$ (we can arrange this so long as $h_n \circ \rho^R > 0$ over this region, which is true so long as the $\delta_n$ are chosen sufficiently small). 
We need to make some special assumptions over the `neck' region $\{\s_2+2\eps \le \rho^R \le \s_2+3\eps\}$, 
which make the contact Fukaya trick work: first we assume that $J_\tau$ is of contact type over the neck (this includes the assumption that $J_\tau$ is invariant under translation by the Liouville vector field); second we assume that the perturbation term $H_\tau - h_\tau \circ \rho^R$ vanishes over the neck, which is possible as $h_n \circ \rho^R = \delta_n \rho^R$ has no periodic orbits over this region. 

We now choose a smooth function $\ginv: \R \to \R$ satisfying:\footnote{Our function $f$ corresponds to the function $g^{-1}$ from \cite{Tonkonog2020}.}
\begin{itemize}
\item $\ginv'(\rho)>0$;
\item $\ginv(\rho) \le \rho$;
\item $\ginv(\rho) = \frac{\sigma_1}{\s_2} \cdot \rho$ for $\rho \le \s_2+2\eps$;
\item $\ginv(\rho) = \rho$ for $\rho \ge \s_2+3\eps$.
\end{itemize}
See Figure \ref{fig:D2}. 
We then define a diffeomorphism $\phi:M \to M$ by:
$$ \phi(m) = \left\{ \begin{array}{ll}
									\varphi_{\log \left(\frac{\ginv\left(\rho^R(m)\right)}{\rho^R(m)}\right)}(m) & \text{for $m \in X$;}\\
									m & \text{for $\rho^R(m) > \s_2+3\eps$},
										\end{array}\right.$$
where $\varphi_t:X \to X$ denotes the time-$t$ Liouville flow. 
The definition is chosen so that $\rho^R(\phi(m)) = \ginv(\rho^R(m))$. 
Note that $\phi$ sends $K^R_{\s_2}$ to $K^R_{\s_1}$ via the time-$\log(\s_1/\s_2)$ Liouville flow. 

We now define acceleration data $(\tilde{H}_\tau,\tilde{J}_\tau)$ for $\overline{M \setminus K^R_{\s_1}}$ by taking
\begin{align*}
\tilde{J}_\tau &= \phi_*J_\tau;\\
\tilde{H}_\tau &= \left\{\begin{array}{ll}
										\frac{\s_1}{\s_2} \phi_* H_\tau & \text{on $\phi\left(\{\rho^R \le \s_2+3\eps\}\right)$};\\
										\phi_* H_\tau & \text{on $\phi\left(\{\rho^R \ge \s_2+2\eps\}\right)$.}
											\end{array}\right.
\end{align*}
Note that the definition of $H_\tau$ agrees on the overlaps, using the fact that $H_\tau = \delta_\tau \rho^R$ and $\phi = \varphi_{\s_1/\s_2}$ over this region.
Furthermore, we observe that $\phi_*X_{H_\tau} = X_{\tilde{H}_\tau}$. 
(This is relatively easy to check on the complement of the image of the neck region $\phi(\{\s_2+2\eps \le \rho^R \le \s_2+3\eps\})$; over the neck region it uses the fact that both $H_\tau$ and $\tilde{H}_\tau$ are equal to $\delta_\tau\rho^R$.) 
The fact that $J_\tau$ is of contact type over the neck ensures that $\phi_* J_\tau$ is $\omega$-compatible.

Thus we have constructed acceleration data for  $\overline{M\setminus K^R_{\s_2}}$ and $\overline{M\setminus K^R_{\s_1}}$ leading to Floer $1$-rays 
$$\cC_{\s_2} := \cC(H_\tau,J_\tau) \qquad \text{and} \qquad \cC_{\s_1} := \cC(\tilde{H}_\tau,\tilde{J}_\tau),$$
such that the map $(\gamma,u) \mapsto \phi(\gamma,u) :=(\phi \circ \gamma, \phi \circ u)$ defines an isomorphism $\cC_{\s_2} \xrightarrow{\sim} \cC_{\s_1}$, which however need not respect the action filtrations. 
We want to prove that this map of $1$-rays induces an isomorphism of the completed telescopes $$\widehat{tel}(\mathcal{C}_{\s_2})\to \widehat{tel}(\mathcal{C}_{\s_1}).$$ 

\begin{lem}\label{lem:keybound}
There exist constants $B$, $\eta>0$, $C$ such that
$$-i_{mix}(\gamma) -B \ge -i_{mix}(\phi(\gamma)) \ge \eta \cdot (-i_{mix}(\gamma)) + C$$
for any orbit $\gamma$ of $H_n$, and the corresponding orbit $\phi(\gamma)$ of $\tilde{H}_n$.
\end{lem}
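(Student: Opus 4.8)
The plan is to compute the mixed index of $\gamma$ and of $\phi(\gamma)$ using the outer caps, via Lemma~\ref{lem:mindex}, and compare the two expressions directly using the fact that $\phi$ is a (partial) Liouville flow and hence rescales $\theta$ and the various toric coordinates in a controlled way. First I would record, following Lemma~\ref{lem:mindex} (with the harmless bounded error terms $D(\gamma), D(\phi(\gamma))$ absorbed into the constants $B$ and $C$), that
$$ i_{mix}(\gamma) = \sum_i \left(2-\kappa^{-1}r_i^{max}(\bar\gamma)\right)\nu_i^h(\bar\gamma) - \kappa^{-1}h_n(\rho^R(\bar\gamma)) + O(1),$$
and similarly for $\phi(\gamma)$ with $h_n$ replaced by $\tilde h_n$ and $\rho^R$ replaced by $\ginv\circ\rho^R$. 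The key geometric observations are: (i) $\rho^R(\phi(m)) = \ginv(\rho^R(m))$ by construction; (ii) on the region where $\ginv(\rho) = \tfrac{\s_1}{\s_2}\rho$ (i.e.\ $\rho^R \le \s_2+2\eps$) the diffeomorphism $\phi$ is a fixed-time Liouville flow, so it scales each $r_i^{max}$ by the factor $\s_1/\s_2$ and, since $\tilde h_n(\rho) = \tfrac{\s_1}{\s_2}h_n(\tfrac{\s_2}{\s_1}\rho)$, we get $\tilde h_n(\ginv(\rho^R(\bar\gamma))) = \tfrac{\s_1}{\s_2}h_n(\rho^R(\bar\gamma))$ and $\nu^{\tilde h}_i(\phi(\bar\gamma)) = \nu^h_i(\bar\gamma)$ (the winding numbers are preserved by the Liouville flow); (iii) on the neck and beyond, $\phi$ is either the identity or again a fixed-time Liouville flow, and $H_\tau = \tilde H_\tau = \delta_\tau\rho^R$ has no orbits over the neck, so the only orbits of $D$-type in this outer region are governed by the linear piece $\delta_n\rho$, which contributes a uniformly bounded amount to $i_{mix}$.

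With these in hand, the core computation is to subtract the two formulas. On the inner region $\{\rho^R\le\s_2-2\eps\}$ the winding numbers $\nu_i^h$ vanish (the Hamiltonian is constant there), so on that region $i_{mix}$ and $-i_{mix}$ are uniformly bounded for both $\gamma$ and $\phi(\gamma)$, and the asserted inequalities hold trivially for suitable constants. The interesting orbits have $\rho^R(\bar\gamma)$ in the range $[\s_2-2\eps,\s_2+\eps]$ roughly, and there one writes
$$ -i_{mix}(\phi(\gamma)) = -\sum_i\left(2-\kappa^{-1}\tfrac{\s_1}{\s_2}r_i^{max}(\bar\gamma)\right)\nu_i^h(\bar\gamma) + \kappa^{-1}\tfrac{\s_1}{\s_2}h_n(\rho^R(\bar\gamma)) + O(1).$$
Comparing term by term with $-i_{mix}(\gamma)$: the $\kappa^{-1}h_n$ terms differ by the positive factor $\s_1/\s_2<1$; and for the $\nu_i^h$ terms, the crucial input is Lemma~\ref{lem-D}, which (for $\s>\s_{crit}^D(R)$) gives $2-\kappa^{-1}r_i^{max} \ge \eta(R)\cdot(\tilde\rho^R_I - \s^D_{crit}(R))>0$ whenever $\nu_i^h\neq 0$; so the rescaled factor $2-\kappa^{-1}\tfrac{\s_1}{\s_2}r_i^{max}$ is trapped between $2-\kappa^{-1}r_i^{max}$ and $\tfrac{\s_1}{\s_2}(2-\kappa^{-1}r_i^{max})$ up to bounded error (using $\tfrac{\s_1}{\s_2}\le\ginv(\rho)/\rho\le 1$ on the relevant range, by our choice of $\eps$ ensuring $(\s_1/\s_2)(\s_2-2\eps)>\s_{crit}^D(R)$). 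This pins $-i_{mix}(\phi(\gamma))$ between $-i_{mix}(\gamma)-B$ and $\eta\cdot(-i_{mix}(\gamma))+C$ for the constant $\eta = \s_1/\s_2$ (or a slightly smaller positive constant), after collecting the bounded error terms.

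I expect the main obstacle to be bookkeeping the region where $\ginv$ transitions from linear to the identity (the neck $\{\s_2+2\eps\le\rho^R\le\s_2+3\eps\}$ and the piece just inside it): there $\phi$ is a Liouville flow by a non-constant amount $\log(\ginv(\rho^R)/\rho^R)$, so the naive scaling identities for $r_i^{max}$ and the winding numbers must be replaced by inequalities, and one needs to check that over this region the relevant orbits either don't exist (true over the neck, since $H = \delta_n\rho^R$ has slope not a Reeb period) or contribute only uniformly bounded terms to $i_{mix}$. The positivity $\ginv'>0$, $\ginv(\rho)\le\rho$, and the lower bound $\ginv(\rho)/\rho\ge\s_1/\s_2$ are exactly what is needed to keep the two-sided estimate with a uniform positive $\eta$; making this precise, and verifying that the choice of $\eps$ at the start of the subsection guarantees we never leave the domain where Lemma~\ref{lem-D} applies, is the technical heart. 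Once Lemma~\ref{lem:keybound} is established, the proof of Proposition~\ref{prop:index_bd_collar_invt} follows by the contact Fukaya trick exactly as in \cite[Section~4]{Tonkonog2020}, using the two-sided bound to show the action filtrations on $\widehat{tel}(\cC_{\s_1})$ and $\widehat{tel}(\cC_{\s_2})$ are topologically equivalent, so that the chain isomorphism $\cC_{\s_2}\xrightarrow{\sim}\cC_{\s_1}$ descends to an isomorphism on completed telescopes.
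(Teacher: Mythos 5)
Your overall strategy --- compute $i_{mix}$ for both $\gamma$ and $\phi(\gamma)$ via Lemma \ref{lem:mindex} and compare term by term, with Lemma \ref{lem-D} supplying the crucial positivity --- is the right one, but the central computation contains an error that breaks the lower bound. The Liouville flow does not scale $r_i^{max}$ by $\s_1/\s_2$: since $Z(r_i^{max}) = r_i^{max}-\kappa\lambda_i$, it is $r_i^{max}-\kappa\lambda_i$ that scales, so on the region where $\ginv$ is linear one has $2-\kappa^{-1}r_i^{max}(\phi(\bar\gamma)) = (2-\lambda_i)\bigl(1-\tfrac{\s_1}{\s_2}\bigr) + \tfrac{\s_1}{\s_2}\bigl(2-\kappa^{-1}r_i^{max}(\bar\gamma)\bigr)$. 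The discrepancy $(2-\lambda_i)(1-\s_1/\s_2)$ from your claimed "trapping" is a fixed \emph{negative} number whenever $\lambda_i>2$ --- precisely the case in which Theorem \ref{thm-superheavy} has content --- and it is multiplied by $-\nu^h_i(\bar\gamma)\ge 0$, which is \emph{unbounded} over the family of orbits (the winding numbers of the Reeb-type orbits in $\{\s_2-2\eps\le\rho^R\le\s_2\}$ grow with $\ell_n$). So the error is not bounded, and $\eta=\s_1/\s_2$ does not give $-i_{mix}(\phi(\gamma))\ge\eta(-i_{mix}(\gamma))+C$. The correct deployment of Lemma \ref{lem-D} is different from what you describe: apply it at the \emph{image} point, using $\rho^R(\phi(\bar\gamma))>\tfrac{\s_1}{\s_2}(\s_2-2\eps)>\s_{crit}^D(R)$, to obtain a uniform positive lower bound $2-\kappa^{-1}r_i^{max}(\phi(\bar\gamma))\ge 2\eta$ whenever $\nu^h_i\ne 0$, and pair it with the trivial upper bound $2-\kappa^{-1}r_i^{max}(\bar\gamma)\le 2$; the resulting ratio constant is $\eta=\tfrac12\eta(R)\bigl(\tfrac{\s_1}{\s_2}(\s_2-2\eps)-\s^D_{crit}(R)\bigr)$ (further decreased below $\s_1/\s_2$ to handle the $h_n$ term), not $\s_1/\s_2$. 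No exact scaling relation between $r_i^{max}(\bar\gamma)$ and $r_i^{max}(\phi(\bar\gamma))$ is needed; for the upper bound only the monotonicity $r_i^{max}(\phi(\bar\gamma))\ge r_i^{max}(\bar\gamma)$ (from $\ginv(\rho)\le\rho$ and $Z(r_i^{max})<0$) is used.

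A secondary error: on the inner region $\{\rho^R\le\s_2-2\eps\}$ the winding numbers indeed vanish, but $i_{mix}$ is \emph{not} uniformly bounded there --- the constant orbits have $h_n=\ell_n\eps\to\infty$, so $-i_{mix}\approx\kappa^{-1}\ell_n\eps\to+\infty$ --- so the inequalities do not "hold trivially". They hold because $\tilde h_n(\ginv(\rho))=\tfrac{\s_1}{\s_2}h_n(\rho)$ and $h_n>0$ there, which is exactly why the positivity of $H_n$ on $\{\rho^R\le\s_2-\eps\}$ is arranged in the construction and why $\eta$ must be taken $\le\s_1/\s_2$. Your treatment of the neck and of the winding-number invariance $\nu^h_i(\phi(\bar\gamma))=\nu^h_i(\bar\gamma)$ is correct, and your term-by-term argument for the first inequality does go through once the $r_i^{max}$ scaling is replaced by the monotonicity above together with the sign $\nu^h_i\le 0$.
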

\begin{proof}

First we show that $-i_{mix}(\gamma) -B \ge -i_{mix}(\phi(\gamma))$, for some $B>0$ that we specify below.
Note that $i(\gamma,u_{out}) = i(\phi(\gamma),\phi(u)_{out})$, so it suffices to show $\cA(\gamma,u_{out}) -B \ge \cA(\phi(\gamma),\phi(u)_{out})$. 
Let $(\bar{\gamma},\bar{u}_{out})$ be a capped orbit of $h_n \circ \rho^R$ corresponding to $(\gamma,u_{out})$ under Lemma \ref{lem:pertH}. 
Then we have
\begin{equation}\label{eq:act-form} \cA(\gamma,u_{out}) = h_n(\rho^R(\bar{\gamma})) + \sum_i \nu^h_i(\bar{\gamma})\cdot r_i^{max}(\bar{\gamma}) + \eps(\gamma)\end{equation}
by Lemma \ref{lem:act}, where $\eps(\gamma)$ is bounded, and similarly for $\phi(\gamma,u_{out})$. 
We consider the first term on the RHS. 
Note that orbits occur either in the region $\{\rho^R \le \s_2-\eps\}$, in which case $\tilde{h}_n(\rho^R(\phi(\bar{\gamma}))) = \frac{\s_1}{\s_2}h_n(\rho^R(\bar{\gamma}))< h_n(\rho^R(\bar{\gamma})$, because $h_n(\rho^R(\bar{\gamma}))>0$ (we ensured this positivity when choosing our perturbation); or in the region $\{\rho^R \ge \s_2\}$, where both $h_n(\rho^R(\bar{\gamma})) = \delta_n \rho^R(\bar{\gamma})$ and $\tilde{h}_n(\rho^R(\phi(\bar{\gamma}))) = \delta_n \rho^R(\phi(\bar{\gamma}))$ lie in the bounded interval $(\delta_1 \cdot (1+\eps_1(R)),0)$. 
In either case, we have $h_n(\rho^R(\bar{\gamma})) \ge \tilde{h}_n(\rho^R(\phi(\bar{\gamma}))) + B'$ for some fixed $B'$. 
For the second term on the RHS of \eqref{eq:act-form}, note that $h'(\rho) \le 0$, so $\nu^h_i(\bar{\gamma}) \le 0$. 
We have $\nu^h_i(\bar{\gamma}) = \nu^h_i(\phi(\bar{\gamma}))$, and $r_i^{max}(\bar{\gamma}) \le r_i^{max}(\phi(\bar{\gamma}))$ (here we use our assumption that $\ginv(\rho) \le \rho$, as well as the fact that $Z(r_i^{max})<0$). 
Together this yields $$\sum_i \nu^h_i(\bar{\gamma}) \cdot r_i^{max}(\bar{\gamma}) \ge \sum_i \nu^h_i(\phi(\bar{\gamma})) \cdot r_i^{max}(\phi(\bar{\gamma})).$$ 
Adding the bounds together, and taking $B>B'+2|\eps(\gamma)|$ for all $\gamma$, gives the result.

Now we consider the $-i_{mix}(\phi(\gamma)) \ge \eta \cdot (-i_{mix}(\gamma)) + C$ part of the statement.  
By Lemma \ref{lem:mindex}, we have
\begin{equation}\label{eq:mixindform}
 -i_{mix}(\gamma) = \sum_i -\nu^h_i(\bar{\gamma}) \cdot (2- \kappa^{-1} r_i^{max}(\bar{\gamma})) - \kappa^{-1}h_n(\rho^R(\bar{\gamma})) +D(\gamma)
 \end{equation}
where $|D(\gamma)|$ is bounded. 
We focus on the first term on the RHS. 
We start by recalling that $-\nu^h_i(\phi(\bar{\gamma})) = -\nu^h_i(\bar{\gamma})  \ge 0$. 
Note that if $\bar{\gamma}$ is non-constant, then $\rho^R(\bar{\gamma}) > \s_2-2\eps$, so $\rho^R(\phi(\bar{\gamma})) > \frac{\s_1}{\s_2}(\s_2 - 2\eps)>\s_{crit}^D(R)$. 
Therefore, by Lemma \ref{lem-D}, whenever $\nu^h_i(\bar{\gamma}) \neq 0$ we have 
$$2-\kappa^{-1} r_i^{max}(\phi(\bar{\gamma})) > 2\eta,$$
where $2\eta = \eta(R) \cdot (\frac{\s_1}{\s_2}(\s_2 - 2\eps)-\s_{crit}^D(R))>0$. 
As a result we have
$$\sum_i -\nu^h_i (\phi(\bar{\gamma})) \cdot (2- \kappa^{-1} r_i^{max}(\phi(\bar{\gamma}))) \ge \eta \cdot \sum_i -\nu^h_i(\bar{\gamma}) \cdot (2- \kappa^{-1} r_i^{max}(\bar{\gamma})).$$
Note that this inequality also holds for the constant orbits, as then we have $\nu^h_i(\bar{\gamma}) = \nu^h_i(\phi(\bar{\gamma})) = 0$.

Now we focus on the second term on the RHS of \eqref{eq:mixindform}. 
We saw in the first part of the proof that $\tilde{h}_n(\rho^R(\phi(\bar{\gamma}))) = \frac{\s_1}{\s_2} \cdot h_n(\rho^R(\bar{\gamma})) > 0$ for orbits with $\rho^R(\bar{\gamma})<\s_2-\eps$. 
Decreasing $\eta$ if necessary so that it is less than $\s_1/\s_2$, and recalling that $D(\gamma)$ is bounded, we obtain the desired bound for such orbits.  
For the remaining orbits we recall from the first part of the proof that both $\tilde{h}_n(\rho^R(\phi(\bar{\gamma})))$ and $h_n(\rho^R(\bar{\gamma}))$ are bounded. 
Therefore, decreasing $C$ if necessary, we obtain the desired bound for the remaining orbits.
\end{proof}

\begin{lem}\label{lem:A-equiv}
If $(\gamma_j,u_j)$ is a sequence of capped orbits of $H_{n_j}$ such that $i(\gamma_j,u_j) = i$ is constant, then
$$ \cA(\gamma_j,u_j) \to +\infty \qquad \iff \qquad \cA(\phi(\gamma_j,u_j)) \to + \infty.$$
\end{lem}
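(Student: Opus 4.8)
The plan is to reduce Lemma \ref{lem:A-equiv} to the two-sided mixed-index comparison already established in Lemma \ref{lem:keybound}. The point is that once the index $i(\gamma_j,u_j) = i$ is fixed, the action and the mixed index determine each other: from the definition $i_{mix}(\gamma) = i(\gamma,u) - \kappa^{-1}\cA(\gamma,u)$ we get $\cA(\gamma_j,u_j) = \kappa(i - i_{mix}(\gamma_j))$, and similarly $\cA(\phi(\gamma_j,u_j)) = \kappa(i - i_{mix}(\phi(\gamma_j)))$, using that $i(\phi(\gamma_j),\phi(u_j)) = i(\gamma_j,u_j) = i$ because $\phi$ is a diffeomorphism intertwining the relevant trivializations of the canonical bundle (it is the Liouville flow on $X$, so it preserves $\eta_{\bflambda}$ up to homotopy, and fixes things outside). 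Hence $\cA(\gamma_j,u_j) \to +\infty$ if and only if $-i_{mix}(\gamma_j) \to +\infty$, and likewise on the $\phi$ side.

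Next I would feed this into the chain of inequalities from Lemma \ref{lem:keybound}: there are constants $B$, $\eta>0$, $C$ with
$$-i_{mix}(\gamma_j) - B \ge -i_{mix}(\phi(\gamma_j)) \ge \eta\cdot(-i_{mix}(\gamma_j)) + C.$$
If $\cA(\gamma_j,u_j) \to +\infty$, then $-i_{mix}(\gamma_j) \to +\infty$, so the right-hand inequality forces $-i_{mix}(\phi(\gamma_j)) \to +\infty$ (since $\eta>0$ and $C$ is fixed), hence $\cA(\phi(\gamma_j,u_j)) \to +\infty$. Conversely, if $\cA(\phi(\gamma_j,u_j)) \to +\infty$, then $-i_{mix}(\phi(\gamma_j)) \to +\infty$, so the left-hand inequality $-i_{mix}(\gamma_j) \ge -i_{mix}(\phi(\gamma_j)) + B$ forces $-i_{mix}(\gamma_j) \to +\infty$, hence $\cA(\gamma_j,u_j) \to +\infty$. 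This gives both directions of the claimed equivalence.

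The one genuinely technical point to verify carefully — and the step I expect to be the main obstacle — is the identification $\phi(\gamma_j,u_j) = (\phi\circ\gamma_j, \phi\circ u_j)$ together with the claim $i(\phi(\gamma_j),\phi(u_j)) = i(\gamma_j,u_j)$, i.e. that applying $\phi$ does not shift the $\Z$-grading. This requires knowing that $\phi$ acts trivially (up to the relevant homotopy) on $\Lambda^{top}_\C TM^{\otimes 2N}$ along the relevant discs; since $\phi$ is isotopic to the identity through the Liouville flow reparametrization and is the identity near $D$, the section defining $\eta_{\bflambda}$ is carried to a homotopic one, and hence the winding number computation in Lemma \ref{lem:same_action_index} is unchanged. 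Granting that — which is implicitly built into the construction of $\cC_{\s_1}$ as an isomorphic $1$-ray — the rest is the elementary bookkeeping above, and no further moduli-space input is needed beyond Lemma \ref{lem:keybound}. I would then note that this lemma, combined with the completeness/cofinality machinery of Section \ref{sscomplete}, is exactly what is needed to promote the (non-filtered) isomorphism of $1$-rays $\cC_{\s_2} \xrightarrow{\sim} \cC_{\s_1}$ to an isomorphism of completed telescopes, completing the proof of Proposition \ref{prop:index_bd_collar_invt}.
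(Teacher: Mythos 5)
Your proposal is correct and follows essentially the same route as the paper: the paper's proof is precisely to rewrite the two-sided bound of Lemma \ref{lem:keybound} in terms of actions using $-i_{mix} = \kappa^{-1}\cA - i$ with $i$ fixed, and read off both implications. The index-preservation point you flag ($i(\phi(\gamma),\phi(u)_{out}) = i(\gamma,u_{out})$) is indeed needed and is asserted in the paper at the start of the proof of Lemma \ref{lem:keybound}, so no new input is required.
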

\begin{proof}
Lemma \ref{lem:keybound} gives
\begin{align*}
 \kappa^{-1}\cA(\gamma_j,u_j) - i - B &\ge  \kappa^{-1} \cA(\phi(\gamma_j,u_j)) - i \ge \eta \cdot (\kappa^{-1}\cA(\gamma_j,u_j) - i) + C \\
\Rightarrow \qquad \cA(\gamma_j,u_j) -\kappa B& \ge \cA(\phi(\gamma_j,u_j)) \ge \eta \cdot \cA(\gamma_j,u_j) + \kappa C + (1-\eta)\kappa i
 \end{align*}
where $\eta>0$, from which the result follows.
\end{proof}

\begin{rmk} Notice that the contact Fukaya trick that we presented here is simpler than the one in \cite{Tonkonog2020} (compare Figure \ref{fig:D} above with Figure 2 in \cite{Tonkonog2020}). We would like to stress that it is possible to use this simpler version because we are in a different situation. 
\end{rmk}

\begin{proof}[Proof of Proposition \ref{prop:index_bd_collar_invt}]
By Lemma \ref{lem:A-equiv}, the isomorphism $\cC_{\s_1} \cong \cC_{\s_2}$ induces an isomorphism of the corresponding degreewise-action-completed telescope complexes; so $$SC^*_M\left(\overline{M\setminus K^R_{\s_1}};\Nov\right) \cong SC^*_M\left(\overline{M\setminus K^R_{\s_2}};\Nov\right),$$ and the result follows by taking cohomology.
\end{proof}

We continue with the following observation of McLean:

\begin{prop}[see Proposition 6.20 of \cite{McLean2020}] 
\label{prop:D_Stably_disp}	
	Let $D$ be an SC divisor in a symplectic manifold $M$. Then $D$ is stably displaceable. \qed
\end{prop}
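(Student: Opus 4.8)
The statement is that any simple crossings symplectic divisor $D = \bigcup_{i=1}^N D_i$ in a closed symplectic manifold $M$ is stably displaceable, i.e. $D \times Z \subset M \times T^*S^1$ can be displaced from itself by a Hamiltonian diffeomorphism, where $Z$ is the zero section. The plan is to reduce to the smooth case component-by-component and then apply an $h$-principle. First I would recall that by Remark \ref{rmk:Dorth} (McLean, \cite[Section 5]{McL12T}) we may isotope $D$ through SC divisors so that it is orthogonal, and in particular admits a system of commuting Hamiltonians $\{r_i : UD_i \to [0,R)\}$ near $D$ as in Section \ref{ss:scH}; stable displaceability is clearly invariant under such an isotopy (the displacing Hamiltonian on $M \times T^*S^1$ can be conjugated along the ambient isotopy).

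Next, the key point is that each smooth hypersurface $D_i$ is individually stably displaceable: a real codimension-two submanifold of $M$ has a neighbourhood modelled on a disc bundle, and one pushes $D_i$ off itself inside this neighbourhood using the $S^1$-rotation coordinate together with the $T^*S^1$-direction — this is exactly the classical observation (popularized by McLean in \cite[Proposition 6.20]{McLean2020} and going back to Gürel, Polterovich--Rosen, and the general philosophy that any closed subset contained in a stabilized-displaceable chart is stably displaceable) that the product with the zero section of $T^*S^1$ can be rotated off itself using the extra $S^1$. Concretely, in $M \times T^*S^1$ one uses a Hamiltonian of the form $H_i(m,q,p) = \chi(r_i(m)) \cdot f(q,p)$, where $\chi$ is a bump function equal to a nonzero constant near $r_i = 0$ and $f$ generates a translation on $T^*S^1$ sufficient to move the $q$-coordinate past the circle-worth of rotation; the rotation in the $r_i$-fibres is synchronized with translation in $q$ so that no point of $D_i \times Z$ returns to $D_i \times Z$.

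Then I would assemble these into a displacement of the whole union $D$. The commuting Hamiltonians $r_i$ Poisson-commute on overlaps, so the flows $\phi^{t}_{H_i}$ on $M \times T^*S^1$ (with the $H_i$ as above, chosen with disjointly-scaled translation speeds in $q$) commute where their supports overlap; composing $\phi^1_{H_1} \circ \cdots \circ \phi^1_{H_N}$ and arranging the translation amounts to be suitably different (say $H_i$ translates $q$ by an amount in a distinct window $(a_i, b_i)$ with the windows chosen so that along $D_I = \bigcap_{i\in I} D_i$ the total $q$-displacement is always bounded away from $0 \bmod 1$) displaces $D \times Z$ from itself: a point lying on $D_I$ gets rotated by the torus $(\mathbb{R}/\mathbb{Z})^I$ with a nontrivial net $q$-translation, and one checks it cannot return to the zero section simultaneously with returning to $D$. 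Cutting off $f$ in the $p$-direction keeps everything compactly supported.

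The main obstacle — and the only genuinely delicate point — is the combinatorial bookkeeping at the deepest strata $D_I$: one must choose the translation speeds and the bump functions so that for every $I$ the net displacement in the $q$-coordinate is uniformly bounded away from the integers over all of $D_I$, even as the rotation angles in the various $r_i$-directions range over a full torus. This is precisely the content of McLean's argument in \cite[Proposition 6.20]{McLean2020}, so rather than reproving it I would simply invoke that reference, after checking (using our orthogonal scH structure) that the hypotheses there are met. A secondary technical point is verifying that the Hamiltonians $H_i$, which are only defined on the neighbourhoods $UD_i \times T^*S^1$, can be cut off to globally-defined compactly supported Hamiltonians on $M \times T^*S^1$ without destroying the displacement property; this is routine since the cutoff can be taken to vanish only in the region $r_i \geq R/2$, which is disjoint from $D_i$.
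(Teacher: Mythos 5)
The paper itself offers no argument for this proposition: it is stated with a terminal \qed and the proof is, in its entirety, the citation to \cite[Proposition 6.20]{McLean2020}. So the only part of your proposal that matches the paper is the final appeal to McLean. The difficulty is that the explicit mechanism you offer as the content of that citation does not work, and it misrepresents where the difficulty lies.

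Concretely: for $H_i(m,q,p)=\chi(r_i(m))\cdot f(q,p)$ one has $X_{H_i}=f\,\chi'(r_i)X_{r_i}+\chi(r_i)X_f$. Since $D_i$ is the fixed locus of the circle action generated by $r_i$, we have $X_{r_i}|_{D_i}=0$, so along $D_i\times T^*S^1$ the flow is $(m,z)\mapsto (m,\phi^t_{cf}(z))$ with $c=\chi(0)$: the $M$-component of every point of $D_i$ is \emph{fixed} ("the rotation in the $r_i$-fibres" does not move points of $D_i$ at all), and the $T^*S^1$-component moves by a globally defined Hamiltonian flow of $T^*S^1$. No such flow displaces the zero section $Z$ (it is a non-displaceable exact Lagrangian, $HF(Z,Z)\cong H^*(S^1)\neq 0$); in particular "translating the $q$-coordinate" preserves $Z=\{p=0\}$ pointwise in $p$, and a genuine translation in $p$ is generated by the multivalued function $-q$ and is not Hamiltonian. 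So your construction fails already for a single smooth component $D_i$, before any "combinatorial bookkeeping at the deepest strata" arises. This is exactly why the statement is a theorem and not an exercise: the proof in \cite{McLean2020} goes through an $h$-principle for displacing nowhere-coisotropic subsets (in the lineage of Gromov, Laudenbach--Sikorav, Polterovich, G\"urel), where the stabilization by $T^*S^1$ is used to solve the \emph{formal} (bundle-theoretic) displacement problem --- e.g.\ to produce a nowhere-vanishing "displacing direction" with exact flux, which need not exist for $D_i$ alone when $e(N_MD_i)\neq 0$ --- and flexibility then upgrades the formal solution to an actual Hamiltonian displacement. If you wish to say more than the bare citation, the honest route is to state that $h$-principle and verify its hypotheses for $D\times Z$, not to sketch an explicit rotation--translation Hamiltonian. (The reduction to the orthogonal case via Remark \ref{rmk:Dorth} is harmless but unnecessary, as McLean's proposition applies to arbitrary SC divisors.)
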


\begin{proof}[Proof of Theorem \ref{thm-superheavy}]
It follows from Proposition \ref{prop:D_Stably_disp} that a neighbourhood of our divisor $D$ is stably displaceable. 
Suppose that $R$ is sufficiently small that the domains $UD_i$ of our system of commuting Hamiltonians $\{r_i:UD_i \to [0,R)\}$ are contained in this stably displaceable neighbourhood, for all $i$. 
This ensures that $\overline{M \setminus K^R_{\s}}$ is contained in this neighbourhood for $\s$ sufficiently close to $1$. In particular, $$SH^*_M\left(\overline{M\setminus K^R_{\s}};\Nov\right)=0$$ for such $\s$, by Theorem \ref{analog_thesis2}. By Proposition \ref{prop:index_bd_collar_invt}, we see that in fact we have the same result for any $\s_{crit}^D(R)<\s<1$. 
This completes the proof using Theorem \ref{analog_thesis3}, as the sets $\left\{\overline{M\setminus K^R_{\s}}\right\}_{R>0,\s > \s_{crit}^D(R)}$ exhaust $M \setminus K_{crit}$ (this follows from the fact that $\rho^R \to \rho^0$ and $\s_{crit}^D(R) \to \s_{crit}$ as $R \to 0$). 
\end{proof}

\appendix

\section{Algebraic background}

\subsection{Filtration maps}\label{ss-filtration-maps}

In this section, we present an elementary framework to better deal with the type of filtrations that we encounter in this paper, which are in particular indexed by real numbers. 

A filtration map on an abelian group $A$ is a map $\rho: A\to \mathbb{R}\cup\{\infty\}$ satisfying the inequality $$\rho(x+y)\geq \min{(\rho(x),\rho(y))},$$  equality $\rho(x)=\rho(-x)$, and sending $0$ to $\infty$. A filtration map defines a filtration by the subgroups $$F_{\geq \rho_0}A:=\{\a\in A\mid \rho(a)\geq \rho_0\}.$$ 

 Note that if $(V_{\alpha}, \rho_{\alpha})$ are abelian groups equipped with filtration maps indexed by a set $\alpha\in I$, then $\bigoplus_{\alpha\in I} V_{\alpha}$ is equipped with a filtration map given by $$\rho\left(\sum v_i\right):=\min{\left(\rho_i(v_i)\right)}.$$ Let us call this the $min$ construction.

We can define a pseudometric on an abelian group $A$ with a filtration map $\rho$ by $d(a,a'):=e^{-\rho(a-a')}$. The completion $\widehat{A}$ of $A$ is defined by taking the abelian group of Cauchy sequences in $A$ and modding out by the subgroup of sequences which converge to $0$. $\widehat{A}$ is equipped with a canonical filtration map: $$\rho((a_i)_{i\in \mathbb{N}})=\varinjlim \rho(a_i).$$ We call $A$ complete,  if the natural map $A\to \widehat{A}$ is bijective. 

We define filtration maps $\cQ: \Nov\to \mathbb{R}$ by setting $\cQ(q^a)= a$ and using the $min$ construction. 

A filtration map on a $\mathbb{Z}$-graded $\Nov$-module $A$ is a filtration map for each $A^i$ which in addition is  additive for the module action by homogenous elements of $\Nov$. A filtration map on a $\Nov$-cochain complex $C$ is a filtration map on the underlying $\mathbb{Z}$-graded $\Nov$-module, which satisfies the condition that the differential does not decrease the filtration map. Let $F_{\geq \rho_0}C:=\bigoplus_{i\in\mathbb{Z}}F_{\geq \rho_0}C^i, $ which is of course nothing but the filtration associated to the filtration map on $C$ constructed by the min construction.

Filtered chain maps between $\Nov$-cochain complexes equipped with filtration maps are defined to be chain maps that do not decrease the values of the filtration maps. Filtered chain homotopies between filtered chain maps are defined in the same fashion.

\subsection{Quasi-isomorphic subcomplexes of the telescope}\label{ss-cofinal}

Let \begin{align}
\xymatrix{ 
\mathcal{C}:=\mathcal{C}_1\ar[r]^-{f_1}& \mathcal{C}_2\ar[r]^-{f_2} &\mathcal{C}_3\ar[r]^-{f_3}& \ldots }
\end{align}be a $1$-ray of $\mathbb{Q}$-graded chain complexes.

The telescope $tel(\mathcal{C})$ of $\mathcal{C}$ is defined to be the cone of the chain map $$id-f: \bigoplus_{i=1}^{\infty} \mathcal{C}_i\to \bigoplus_{i=1}^{\infty} \mathcal{C}_i.$$

Assume that we have a commutative diagram \begin{align*}
\xymatrix{ 
\mathcal{C}_1'\ar[r]\ar[d]& \mathcal{C}_2'\ar[d]\ar[r] &\mathcal{C}_3'\ar[r]\ar[d]& \ldots\\ \mathcal{C}_1\ar[r] &\mathcal{C}_2 \ar[r]&\mathcal{C}_3\ar[r]&\ldots}, 
\end{align*} where the vertical maps are inclusions of subcomplexes. We call the top $1$-ray $\mathcal{C}'$.

We obtain the commutative diagram of $\mathbb{Q}$-graded abelian groups \begin{align}\label{diag-limits}
\xymatrix{
H(tel(\mathcal{C}')) \ar[d]\ar[r]  &H(tel(\mathcal{C}))\ar[d] \\ H\left(\varinjlim(\mathcal{C}_i')\right) \ar[r] &H\left(\varinjlim  (\mathcal{C}_i)\right)}
\end{align} where the vertical maps are isomorphisms (see \cite[Lemma 2.2.2]{Varolgunes2018} for the proof).

\begin{lem}\label{lem:limit_filt_qiso} Assume that every element $\gamma$ of $\mathcal{C}_i$ lands inside $\mathcal{C}_{i+N(\gamma)}'$ for some $N(\gamma)>0.$ Then, $$tel(\mathcal{C}')\to tel(\mathcal{C})$$ is a quasi-isomorphism.
\end{lem}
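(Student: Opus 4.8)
The strategy is to deduce Lemma \ref{lem:limit_filt_qiso} from the diagram \eqref{diag-limits} together with the hypothesis that $\mathcal{C}'$ is cofinal in $\mathcal{C}$ in the strong sense stated. Since the vertical maps in \eqref{diag-limits} are already known to be isomorphisms (by \cite[Lemma 2.2.2]{Varolgunes2018}), it suffices to show that the bottom horizontal map $H(\varinjlim \mathcal{C}_i') \to H(\varinjlim \mathcal{C}_i)$ is an isomorphism. Because homology commutes with filtered colimits, this reduces further to showing that the map of direct limits $\varinjlim \mathcal{C}_i' \to \varinjlim \mathcal{C}_i$ is itself an isomorphism of chain complexes (even an equality, after the obvious identifications).

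First I would set up the two colimits concretely: $\varinjlim \mathcal{C}_i$ is the quotient of $\bigoplus_i \mathcal{C}_i$ by the relations $\gamma \sim f_i(\gamma)$, and similarly for $\varinjlim \mathcal{C}_i'$, with a natural map $\iota: \varinjlim \mathcal{C}_i' \to \varinjlim \mathcal{C}_i$ induced by the vertical inclusions. For surjectivity of $\iota$: given a class in $\varinjlim \mathcal{C}_i$ represented by some $\gamma \in \mathcal{C}_i$, the hypothesis gives $N(\gamma)>0$ with (the image of) $\gamma$ lying in $\mathcal{C}_{i+N(\gamma)}'$; since $\gamma$ and its image in $\mathcal{C}_{i+N(\gamma)}$ represent the same class in $\varinjlim \mathcal{C}_i$, and the latter is in the image of $\mathcal{C}_{i+N(\gamma)}'$, the class lies in the image of $\iota$. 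For injectivity: suppose $\gamma' \in \mathcal{C}_i'$ maps to zero in $\varinjlim \mathcal{C}_i$; then its image in some $\mathcal{C}_j$ (with $j \ge i$, via the maps $f$) is zero, hence already its image in $\mathcal{C}_j'$ is zero (the vertical maps being injective), so $\gamma'$ represents zero in $\varinjlim \mathcal{C}_i'$. Thus $\iota$ is an isomorphism of chain complexes, hence induces an isomorphism on homology.

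Combining: in \eqref{diag-limits} the two vertical maps are isomorphisms and the bottom map is an isomorphism, so the top map $H(tel(\mathcal{C}')) \to H(tel(\mathcal{C}))$ is an isomorphism, which is precisely the claim. I do not expect any serious obstacle here; the only point requiring minor care is making sure the cofinality hypothesis is applied at the level of the maps $f_i$ rather than naively (i.e. tracking that "$\gamma$ lands inside $\mathcal{C}_{i+N(\gamma)}'$" means the image of $\gamma$ under the composition $f_{i+N(\gamma)-1}\circ\cdots\circ f_i$ lies in the subcomplex $\mathcal{C}_{i+N(\gamma)}'$), and that the gradings and filtration maps play no role in this particular statement — it is a purely homological-algebra cofinality argument. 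The fact that $tel$ is the cone of $\mathrm{id}-f$ is used only through \cite[Lemma 2.2.2]{Varolgunes2018}, which we are allowed to invoke.
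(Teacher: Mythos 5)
Your proof is correct and follows essentially the same route as the paper's: both reduce via diagram \eqref{diag-limits} to showing that the map $\varinjlim \mathcal{C}_i' \to \varinjlim \mathcal{C}_i$ is an isomorphism, which is exactly where the cofinality hypothesis enters. The paper phrases this step by showing $\varinjlim(\mathcal{C}_i/\mathcal{C}_i') = 0$ (using exactness of filtered colimits) rather than checking injectivity and surjectivity element-wise as you do, but this is a cosmetic difference only.
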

\begin{proof}
Because direct limits commute with quotients, we have $$\varinjlim \mathcal{C}_i/\varinjlim \mathcal{C}_i'\simeq \varinjlim \mathcal{C}_i/\mathcal{C}_i',$$ as $\mathbb{Q}$-graded chain complexes. A basic property of filtered direct limits is that any element in $\varinjlim \mathcal{C}_i/\mathcal{C}_i'$ is in the image of the canonical map $\mathcal{C}_i/\mathcal{C}_i'\to \varinjlim \mathcal{C}_i/\mathcal{C}_i'$ for some $i>0$. This and the given condition implies that the direct limit on the RHS is zero. In particular, the lower horizontal map in Diagram \eqref{diag-limits} is also an isomorphism. This finishes the proof.
\end{proof}

\subsection{Completed telescopes}\label{sscomplete}

Let $FiltCh_{\Nov}$ be the category of free $\Q$-graded $\Nov$-cochain complexes equipped with a filtration map and morphisms given by filtered chain maps. 

Let \begin{align}
\xymatrix{ 
\mathcal{C}:=\mathcal{C}_1\ar[r]^-{f_1}& \mathcal{C}_2\ar[r]^-{f_2} &\mathcal{C}_3\ar[r]^-{f_3}& \ldots }
\end{align} be a $1$-ray in $FiltCh_{\Nov}$.  Let us equip $tel(\mathcal{C})$ with the filtration map obtained from the min construction. If we define $$\mathcal{C}_{A_0}=F_{\geq A_0} \mathcal{C}_1\to F_{\geq A_0} \mathcal{C}_2\to ...,$$
then by construction \begin{align}\label{eqnfiltrationtelescope} F_{\geq A_0}tel(\mathcal{C})=tel(\mathcal{C}_{A_0}).\end{align} 

We define maps between two $1$-rays in $FiltCh_{\Nov}$ as diagrams \begin{align}
\xymatrix{ 
\mathcal{C}_1\ar[r]\ar[d]& \mathcal{C}_2\ar[d]\ar[r] &\mathcal{C}_3\ar[r]\ar[d]& \ldots \\ \mathcal{C}_{1}'\ar[r] &\mathcal{C}_2'\ar[r]&\mathcal{C}_3'\ar[r]&\ldots}
\end{align}where the horizontal arrows are filtered chain maps and each square is equipped with a map $\mathcal{C}_i\to\mathcal{C}'_{i+1}$, which is a filtered chain homotopy between the two filtered chain maps $\mathcal{C}_i\to\mathcal{C}'_{i+1}$ obtained by composing the arrows at the edges of the square. The resulting category we call $1\text{-$ray$-}Ch_{\Nov}$. 

 In $1\text{-$ray$-}Ch_{\Nov}$ we also have a notion of two morphisms being {\it equivalent}, defined by the existence of a homotopy of maps of $1$-rays. The definition is identical to \cite{Varolgunes2018} except that here we require all the homotopy maps to not decrease the filtration values, instead of requiring them to be $\Lambda_{\geq 0}$-module maps.
 
The following is a direct analogue of the second bullet point of Lemma 2.1.9 in \cite{Varolgunes2018} for $n=1$. The proof is omitted. 

\begin{lem}\label{lemstrongtel} Let us start with a morphism in $1\text{-$ray$-}Ch_{\Nov}$  \begin{align}
\xymatrix{ 
\mathcal{C}_1\ar[r]\ar[d]& \mathcal{C}_2\ar[d]\ar[r] &\mathcal{C}_3\ar[r]\ar[d]& \ldots \\ \mathcal{C}_{1}'\ar[r] &\mathcal{C}_2'\ar[r]&\mathcal{C}_3'\ar[r]&\ldots}
\end{align}  Then there is an induced filtered chain map  $tel(\mathcal{C})\to tel(\mathcal{C}')$. Hence the telescope construction is a functor $$tel:1\text{-$ray$-}Ch_{\Nov}\to FiltCh_{\Nov}.$$

Moreover, equivalent morphisms in $1\text{-$ray$-}Ch_{\Nov}$ gives rise to filtered homotopy equivalent chain maps.
\end{lem}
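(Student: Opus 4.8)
\textbf{Proof plan for Lemma \ref{lemstrongtel}.}

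The statement asserts two things: first, that a morphism of $1$-rays in $1\text{-}ray\text{-}Ch_{\Nov}$ induces a filtered chain map on telescopes (functoriality), and second, that equivalent morphisms induce filtered homotopic chain maps. Since the corresponding unfiltered (or $\Lambda_{\geq 0}$-linear) statement is Lemma 2.1.9 of \cite{Varolgunes2018}, the entire content here is bookkeeping: one must check that the maps and homotopies constructed there, when the input data consists of \emph{filtered} chain maps and \emph{filtered} chain homotopies, are themselves filtration-non-decreasing. The plan is therefore to recall the explicit formulas from \cite{Varolgunes2018} and verify the filtration estimates term by term.

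First I would recall the model for the telescope: $tel(\mathcal{C})$ has underlying graded module $\bigoplus_i \mathcal{C}_i \oplus \bigoplus_i \mathcal{C}_i[1]$, with the filtration map obtained from the $min$ construction applied to the given filtration maps on the $\mathcal{C}_i$ (note each summand carries $\rho_i$, possibly with a degree shift that does not affect the filtration value). Given a morphism of $1$-rays $\{g_i:\mathcal{C}_i \to \mathcal{C}'_i\}$ together with square homotopies $\{h_i:\mathcal{C}_i \to \mathcal{C}'_{i+1}\}$, the induced map $tel(\mathcal{C}) \to tel(\mathcal{C}')$ is, on the degree-zero summand, $\bigoplus g_i$, and on the shifted summand it is built from $\bigoplus g_i$ together with the $h_i$ (the $h_i$ appear as the ``off-diagonal'' correction terms needed to make the whole thing a chain map; compare the standard mapping-telescope formulas). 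Since each $g_i$ and each $h_i$ does not decrease the filtration by hypothesis, and the filtration on the telescope is defined by taking $\min$ over components, any finite sum of such components again does not decrease the filtration. Hence the induced map lies in $FiltCh_{\Nov}$. Functoriality (compatibility with composition, up to the equivalence relation, and identities) is then inherited verbatim from \cite{Varolgunes2018}, as composing filtered maps yields filtered maps.

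For the second assertion, I would recall that an equivalence of morphisms in $1\text{-}ray\text{-}Ch_{\Nov}$ is by definition a homotopy of maps of $1$-rays all of whose constituent maps are filtration-non-decreasing (this is precisely the modification of the definition flagged right before the lemma in the excerpt: we require the homotopy maps not to decrease filtration values rather than to be $\Lambda_{\geq 0}$-module maps). The telescope construction turns such a homotopy into a chain homotopy between the two induced maps $tel(\mathcal{C}) \to tel(\mathcal{C}')$; its formula is again an explicit expression in the constituent homotopy maps (see \cite{Varolgunes2018}), each summand of which does not decrease filtration, so by the same $min$-construction argument the resulting chain homotopy is filtered. I would then conclude by simply citing \cite[Lemma 2.1.9]{Varolgunes2018} for the algebraic identities and noting that the only new input is the filtration estimate, which has now been checked.

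The main obstacle — really the only point requiring care — is the bookkeeping of \emph{which} maps appear in the formulas for the induced telescope map and the induced telescope homotopy, and confirming that \emph{every} one of them is among the data assumed to be filtration-non-decreasing (as opposed to, say, some auxiliary map built by an operation that could a priori lower filtration, such as an inverse or a division). Inspecting the formulas, the only operations used are direct sums, degree shifts, and compositions of the given $g_i$, $h_i$, differentials, and the structure maps $f_i$, $f'_i$ — all of which preserve or raise filtration — so no such pathology occurs, and the estimate goes through exactly as in the $\Lambda_{\geq 0}$-linear case. Consequently the proof is genuinely a matter of saying ``the same formulas work, and every term is filtered,'' which is why the authors omit it.
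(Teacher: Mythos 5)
Your proposal is correct and matches the paper's intent exactly: the paper omits the proof, stating only that the lemma is a direct analogue of the second bullet point of \cite[Lemma 2.1.9]{Varolgunes2018}, and your verification that every constituent map ($g_i$, $h_i$, differentials, structure maps) is filtration-non-decreasing, combined with the $\min$ construction on the telescope, is precisely the bookkeeping the authors leave to the reader.
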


Degreewise completion defines a functor $$\widehat{\cdot}: FiltCh_{\Nov}\to FiltCh_{\Nov}.$$

Let us call a chain map $C\to C'$ between $\Nov$-cochain complexes equipped with filtration maps a strong filtered quasi-isomorphism if it induces a quasi-isomorphism $$F_{\geq \rho_0}C\to F_{\geq \rho_0}C',$$ for every $\rho_0\in\mathbb{R}$. Because the filtrations are exhaustive, a strong filtered quasi-isomorphism is a quasi-isomorphism.

\begin{lem}\label{lemcomplete}Under the degreewise completion functor\begin{itemize}
\item a strong filtered quasi-isomorphism is sent to a strong filtered quasi-isomorphism.
\item a filtered chain homotopy is sent to a filtered chain homotopy.
\end{itemize}  
\end{lem}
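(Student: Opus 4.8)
The statement, Lemma \ref{lemcomplete}, concerns the degreewise completion functor $\widehat{\cdot}: FiltCh_{\Nov}\to FiltCh_{\Nov}$; the two bullet points are really two separate claims that should be checked in turn. Since completion is defined degreewise, and both the notions of strong filtered quasi-isomorphism and of filtered chain homotopy are assembled degreewise out of their restrictions to each filtration level $F_{\geq \rho_0}$, the first step is to reduce everything to a statement about a single filtered abelian group (or rather a single complex of them): we fix a cohomological degree and work with the corresponding subcomplex. The plan is then to handle the two bullets by the same underlying linear-algebra input, namely the interaction of completion with short exact sequences of filtered complexes.

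\textbf{First bullet (strong filtered quasi-isomorphisms).} Given a strong filtered quasi-isomorphism $f\colon C\to C'$, I would fix $\rho_0\in\R$ and aim to show $\widehat{f}\colon F_{\geq\rho_0}\widehat C\to F_{\geq\rho_0}\widehat{C'}$ is a quasi-isomorphism. The key observation is that the degreewise completion of the filtered complex $C$ can be written as an inverse limit: $F_{\geq\rho_0}\widehat C = \varprojlim_{\rho_1\to\infty} F_{\geq\rho_0}C / F_{\geq\rho_1}C$, with the analogous formula for $C'$, and these limits are taken degreewise so they interact with cohomology via the $\varprojlim^1$ exact sequence. For each pair $\rho_0<\rho_1$, the map $f$ induces a map of short exact sequences of complexes
\[ 0 \to F_{\geq\rho_1}C \to F_{\geq\rho_0}C \to F_{\geq\rho_0}C/F_{\geq\rho_1}C \to 0, \]
and since $f$ is a quasi-isomorphism on $F_{\geq\rho_1}C$ and on $F_{\geq\rho_0}C$ by hypothesis, the five lemma shows it is a quasi-isomorphism on the quotient $F_{\geq\rho_0}C/F_{\geq\rho_1}C$ as well. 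Now I would pass to the inverse limit over $\rho_1$: because the complexes are free over $\Nov$ and the filtrations are exhaustive and degreewise discrete enough (only finitely many jumps occur in each degree below any given level, as the module is free on a basis with well-defined filtration values), the relevant inverse systems of cohomology groups satisfy the Mittag--Leffler condition, so $\varprojlim^1$ vanishes and cohomology commutes with the inverse limit. Hence $\widehat f$ is a quasi-isomorphism on $F_{\geq\rho_0}$ for every $\rho_0$, i.e. a strong filtered quasi-isomorphism. (One must also note $\widehat f$ is genuinely a filtered chain map, which is immediate from functoriality of completion.)

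\textbf{Second bullet (filtered chain homotopies).} If $K\colon C\to C'[-1]$ is a filtered chain map witnessing $f\simeq g$ via $f-g = dK+Kd$, then since $K$ does not decrease filtration values it descends to maps on each subquotient $F_{\geq\rho_0}C/F_{\geq\rho_1}C$ compatibly, hence induces a map $\widehat K$ on the completions (completion applied to a filtered map of degree $-1$ is again such a map — this is where the degreewise nature matters, to ensure the sums defining $\widehat K$ on Cauchy sequences converge). Functoriality of completion applied to the identity $f-g=dK+Kd$ then gives $\widehat f-\widehat g = \widehat d\,\widehat K + \widehat K\,\widehat d$, so $\widehat K$ is the desired filtered chain homotopy; one checks it does not decrease filtration values because $K$ does not and this property is preserved under the inverse-limit construction.

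\textbf{Main obstacle.} The routine-looking but genuinely delicate point is the vanishing of $\varprojlim^1$ in the first bullet: one needs to know that the inverse system $\{H^*(F_{\geq\rho_0}C/F_{\geq\rho_1}C)\}_{\rho_1}$ is Mittag--Leffler, which requires using the freeness of the $\Nov$-module together with the structure of the $\cQ$-type filtrations (finitely generated in each degree below any level, so the transition maps are eventually surjective, in fact eventually isomorphisms in a fixed degree). I would isolate this as a small lemma about free $\Nov$-complexes with such filtrations before running the five-lemma-plus-inverse-limit argument, since it is exactly the hypothesis that makes completion behave well here. Everything else is formal manipulation of short exact sequences and functoriality.
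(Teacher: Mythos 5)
Your second bullet is fine and coincides with the paper's one-line argument (additivity/functoriality of completion). The issue is in the first bullet. Your reduction to the quotients $F_{\geq\rho_0}C/F_{\geq\rho_1}C$ via the five lemma is correct, and the identification $F_{\geq\rho_0}\widehat{C}=\varprojlim_{\rho_1}F_{\geq\rho_0}C/F_{\geq\rho_1}C$ is also correct; but the step where you pass to the inverse limit rests on an unjustified --- and in the relevant examples false --- finiteness claim. You assert that each degree of $F_{\geq\rho_0}C/F_{\geq\rho_1}C$ is finitely generated (``only finitely many jumps occur in each degree below any given level''), and deduce Mittag--Leffler and hence $\varprojlim^1=0$. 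The category $FiltCh_\Nov$ only requires a free $\Q$-graded $\Nov$-complex with a filtration map; for the telescope complexes to which this lemma is applied, a fixed degree contains generators coming from infinitely many Hamiltonians $H_n$, whose filtration values can accumulate in a bounded window $[\rho_0,\rho_1)$ --- indeed, if each such window were finite-dimensional the completion would frequently be unnecessary. So the Mittag--Leffler property of the inverse system of cohomologies is not established, and your ``small lemma about free $\Nov$-complexes'' does not exist in the stated generality.

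Fortunately the gap is avoidable, in two ways. (a) You do not need $\varprojlim^1$ to vanish: the transition maps of the inverse system of \emph{complexes} are degreewise surjective, so the Milnor sequence $0\to\varprojlim^1 H^{j-1}\to H^j(\varprojlim)\to\varprojlim H^j\to 0$ holds for both $C$ and $C'$; since your five-lemma step shows $f$ induces an \emph{isomorphism} on every term $H^j(F_{\geq\rho_0}C/F_{\geq\rho_1}C)$, it induces isomorphisms on both $\varprojlim$ and $\varprojlim^1$, and the five lemma applied to the map of Milnor sequences finishes the argument. (b) The paper instead only compares associated gradeds: $\mathrm{Gr}^F_i\widehat{C}=\mathrm{Gr}^F_iC$, the map on each $\mathrm{Gr}^F_i$ is a quasi-isomorphism (same five-lemma input as yours), and the filtration on $F_{\geq\rho_0}\widehat{C}$ is complete and exhaustive, so the Eilenberg--Moore comparison theorem \cite[Theorem 5.5.11]{Weibel} applies directly --- note that this theorem, unlike the classical convergence theorem, requires no boundedness or Mittag--Leffler hypothesis, which is exactly why it is the right tool here. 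Either repair makes your outline work; as written, the key analytic input you flag as the ``main obstacle'' is not available.
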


\begin{proof}
The first bullet point follows from a spectral sequence comparison theorem. Precisely, we must show that the chain map $F_{\ge \rho_0}\widehat{C} \to F_{\ge \rho_0}\widehat{C}'$ is a quasi-isomorphism, for all $\rho_0 \in \R$. We consider the spectral sequences associated to these filtered complexes, and the map of spectral sequences between them associated to the strong filtered quasi-isomorphism. We observe that this map is an isomorphism on the $E_1$ page. To see this, we first observe that the map $\mathrm{Gr}_i^F C \to \mathrm{Gr}_i^F C'$ is a quasi-isomorphism for all $i$, using the long exact sequence associated to a short exact sequence of chain complexes. We have $\mathrm{Gr}_i^F \widehat{C} = \mathrm{Gr}_i^F C$, and similarly for $C'$, so the map $\mathrm{Gr}_i^F \widehat{C} \to \mathrm{Gr}_i^F \widehat{C}'$ is also a quasi-isomorphism; it then follows by construction that the map is an isomorphism on the $E_1$ page. The filtrations are both complete and exhaustive by construction, so the Eilenberg--Moore Comparison Theorem \cite[Theorem 5.5.11]{Weibel} gives the result.

The second bullet point follows from the fact that the completion is an additive functor. 
\end{proof}

\begin{rmk}
The first bullet point of Lemma \ref{lemcomplete} is not explicitly used in the present paper. It would be an input in the proof of the well-definedness of relative symplectic cohomology (Proposition \ref{prop:SHrelwelldef}), which we omitted.
\end{rmk}

\begin{lem}\label{unboundedcofinal}
Let $\mathcal{C}=\mathcal{C}_1\to \mathcal{C}_2\to \mathcal{C}_3\to\ldots$ be a $1$-ray in $FiltCh_{\Nov}$ with the following property: for any integer $i\geq 1$ and real number $r$ there exists a positive integer $N$ such that the composition of the $\Nov$-chain maps from the $1$-ray $C_i\to C_{i+N}$ increases the filtration map by at least $r$ for any element of $C_i$. Then, $\widehat{tel}(\mathcal{C})$ is acyclic.
\end{lem}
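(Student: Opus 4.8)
The plan is to recognize $\widehat{tel}(\mathcal{C})$ as the mapping cone of an \emph{isomorphism}, from which acyclicity is immediate. Write $A := \bigoplus_{i\ge 1}\mathcal{C}_i$ with its min-construction filtration, and let $f\colon A\to A$ be the ``shift'' endomorphism whose restriction to $\mathcal{C}_i$ is $f_i\colon\mathcal{C}_i\to\mathcal{C}_{i+1}$ followed by the inclusion into $A$; by definition $tel(\mathcal{C})=\mathrm{Cone}(id_A-f)$. Since degreewise completion is an additive functor and preserves finite direct sums, it commutes with the mapping cone construction, so $\widehat{tel}(\mathcal{C})=\mathrm{Cone}\big(\widehat{id_A-f}\colon\widehat A\to\widehat A\big)$, where $\widehat A$ is the degreewise completion of $A$ and $\widehat{id_A-f}$ is the induced map. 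Because the mapping cone of an isomorphism of cochain complexes is acyclic, it suffices to prove that $\widehat{id_A-f}$ is an isomorphism in $FiltCh_{\Nov}$.

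First I would record a description of $\widehat A$: in each degree $k$, $\widehat{A^k}$ is identified with the subgroup of $\prod_i\widehat{\mathcal{C}_i^k}$ consisting of those $(t_i)_i$ such that for every $\rho_0\in\mathbb{R}$ the set $\{i:\rho_i(t_i)<\rho_0\}$ is finite, with filtration map $(t_i)_i\mapsto\inf_i\rho_i(t_i)$; this is a routine computation with Cauchy sequences. Since filtered chain maps extend to completions, $f$ extends to $\widehat A$, and $f^n$ carries (the completion of) $\mathcal{C}_j$ into that of $\mathcal{C}_{j+n}$ via $f_{j+n-1}\circ\cdots\circ f_j$. The crucial input is the hypothesis of the Lemma: for any $j$, any $v\in\widehat{\mathcal{C}_j}$, and any $\rho_0$, applying the hypothesis with $r=\rho_0-\rho_j(v)$ yields an integer $N$ such that $\rho_{j+n}(f^n v)\ge\rho_0$ for all $n\ge N$ (the maps past the $N$-th step do not lower the filtration). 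I would then deduce that $\rho(f^n b)\to\infty$ for any fixed $b=(b_j)_j\in\widehat A$: given $\rho_0$, only finitely many $j$ satisfy $\rho_j(b_j)<\rho_0$, and for those one takes the maximum of the finitely many integers $N$ just produced, while for the remaining $j$ one already has $\rho_{j+n}(f^n b_j)\ge\rho_j(b_j)\ge\rho_0$. Hence the partial sums of $\sum_{n\ge0}f^n$ are degreewise Cauchy, and the series converges to a filtered $\Nov$-module map $g\colon\widehat A\to\widehat A$. The identities $(id_A-f)\circ\sum_{n\ge0}f^n=id_{\widehat A}=\big(\sum_{n\ge0}f^n\big)\circ(id_A-f)$ hold since $f^n b\to 0$, so $g$ is a two-sided inverse of $\widehat{id_A-f}$; being inverse to a chain map, $g$ is itself a chain map, and therefore $\widehat{id_A-f}$ is an isomorphism of cochain complexes. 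This completes the proof, and with it the argument for Theorem~\ref{analog_thesis2}.

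The only genuinely delicate point is the interplay between the infinite direct sum and the completion: an element of $\widehat A$ has $\rho$-small components in only finitely many summands, and it is exactly this finiteness that lets one absorb the non-uniformity in the hypothesis (the integer $N$ depends both on the summand and on the element) by passing to a maximum over a finite set. Once the description of $\widehat A$ and this observation are in place, the remainder is the standard Neumann-series manipulation.
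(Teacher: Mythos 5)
Your proof is correct, but it takes a genuinely different route from the paper's. The paper argues via the spectral sequence of the filtration: as in Lemma \ref{lemcomplete}, the Eilenberg--Moore comparison theorem reduces acyclicity of $\widehat{tel}(\mathcal{C})$ to acyclicity of each associated graded piece $\mathrm{Gr}_i\, tel(\mathcal{C})$, which is the telescope of the $1$-ray $\mathrm{Gr}_i\,\mathcal{C}_1 \to \mathrm{Gr}_i\,\mathcal{C}_2 \to \ldots$; since the homology of a telescope computes the homology of the direct limit, and the hypothesis forces every class in $\mathrm{Gr}_i\,\mathcal{C}_j$ to die after finitely many steps, that direct limit vanishes on the nose. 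You instead invert $id-f$ directly on the completion $\widehat{A}$ of $\bigoplus_i \mathcal{C}_i$ via the Neumann series $\sum_{n\ge 0} f^n$, using that completion commutes with the cone and that the cone of an isomorphism is acyclic. Both arguments hinge on the same finiteness phenomenon — in your case, that an element of $\widehat{A}$ has components of small filtration in only finitely many summands, so the non-uniform $N$'s from the hypothesis can be absorbed into a maximum; in the paper's case, that the direct limit of the associated gradeds is zero. What your approach buys: it is more elementary (no comparison theorem for spectral sequences) and yields a strictly stronger conclusion, namely that $\widehat{tel}(\mathcal{C})$ is the cone of an isomorphism and hence contractible, with an explicit contraction coming from $\bigl(\sum_{n\ge 0} f^n\bigr)$. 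What it costs: you must identify the degreewise completion of the infinite direct sum concretely (the ``completed direct sum'' inside $\prod_i \widehat{\mathcal{C}_i}$) and note that the filtration-increasing property of the hypothesis persists after extending the maps to completions — both routine, and you flag the former correctly as the delicate point. The paper's route avoids any explicit description of the completion, needing only that completion does not change the associated graded, and reuses the tool already set up for Lemma \ref{lemcomplete}.
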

\begin{proof}
As in the proof of Lemma \ref{lemcomplete}, it suffices by the Eilenberg--Moore Comparison Theorem to show that $\mathrm{Gr}_i tel(\cC)$ is acyclic for all $i$. 
Using Equation \ref{eqnfiltrationtelescope} and elementary homological algebra, we obtain that $\mathrm{Gr}_i tel(\mathcal{C})$ is quasi-isomorphic to the telescope of 
$$ \mathrm{Gr}_i \cC_1 \to \mathrm{Gr}_i \cC_2 \to \ldots .$$
Because the homology of the telescope is isomorphic to the homology of the direct limit, it suffices to show the acyclicity of the direct limit of this diagram. It is easy to see that the direct limit is in fact trivial on the nose (i.e. at the chain level).
\end{proof}

\subsection{Homotopy inverse limit}\label{ss-invtel}

\begin{defn}
Let $\cC$ be an inverse system of cochain complexes and cochain maps:
$$  C^*_0 \xleftarrow{i_{01}} C^*_1 \xleftarrow{i_{12}} \ldots .$$
We define the cochain complex $\prod_p C^*_p$ to be the degreewise direct product of the $C^*_p$. 
There is a natural chain map $id - i: \prod_p C^*_p \to \prod_p C^*_p$, sending $(c_p) \mapsto (c_p - i_{p,p+1}(c_{p+1}))$. 
We define the inverse telescope complex
$$ \underset{\leftarrow}{tel}(\cC) := Cone\left(\prod_p C^*_p \xrightarrow{id - i} \prod_pC^*_p\right)[-1].$$
\end{defn}

The following recovers the Milnor exact sequence if $\cC$ satisfies the Mittag-Leffler condition. We believe that it is standard, but we could not locate it in the literature.

\begin{lem}\label{lem-invtelses}
There is a short exact sequence
$$ 0 \to \varprojlim{}\!^1 H^{j-1}\left(C^*_p\right) \to H^j\left(\underset{\leftarrow}{tel}(\cC)\right) \to \varprojlim H^j\left(C^*_p\right) \to 0.$$
\end{lem}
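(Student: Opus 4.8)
\textbf{Proof plan for Lemma \ref{lem-invtelses}.}

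The plan is to extract the short exact sequence directly from the long exact cohomology sequence of the cone, combined with a hands-on analysis of the map $id - i$ on the product complexes. By definition, $\underset{\leftarrow}{tel}(\cC)$ is the shifted cone of $id - i \colon \prod_p C^*_p \to \prod_p C^*_p$, so there is a short exact sequence of cochain complexes
$$ 0 \to \left(\prod_p C^*_p\right)[-1] \to \underset{\leftarrow}{tel}(\cC) \to \prod_p C^*_p \to 0. $$
Taking the associated long exact sequence in cohomology, and using that cohomology commutes with direct products (so $H^j(\prod_p C^*_p) = \prod_p H^j(C^*_p)$), we get
$$ \cdots \to \prod_p H^{j-1}(C^*_p) \xrightarrow{\,id - i_*\,} \prod_p H^{j-1}(C^*_p) \to H^j\left(\underset{\leftarrow}{tel}(\cC)\right) \to \prod_p H^j(C^*_p) \xrightarrow{\,id - i_*\,} \prod_p H^j(C^*_p) \to \cdots $$
where the connecting maps in the long exact sequence are precisely $id$ minus the map induced by the inverse system maps $i_{p,p+1}$ on cohomology (one checks the sign/indexing works out by inspecting the cone differential). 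This is the one genuine computation: identifying the connecting homomorphism $H^j(\prod_p C_p) \to H^{j}((\prod_p C_p)[-1]) = H^{j+1}(\prod_p C_p)$ — wait, more carefully, the relevant connecting map $H^{j-1}(\prod C_p^*) \to H^{j}((\prod C_p^*)[-1]) = H^{j-1}(\prod C_p^*)$ — with $id - i_*$, which follows from writing the cone differential explicitly as $\begin{pmatrix} d & \pm(id-i) \\ 0 & d\end{pmatrix}$ and chasing a class.

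Granting this identification, the long exact sequence breaks into short exact sequences
$$ 0 \to \operatorname{coker}\left(id - i_* \colon \prod_p H^{j-1}(C^*_p) \to \prod_p H^{j-1}(C^*_p)\right) \to H^j\left(\underset{\leftarrow}{tel}(\cC)\right) \to \ker\left(id - i_* \colon \prod_p H^j(C^*_p) \to \prod_p H^j(C^*_p)\right) \to 0. $$
Now I invoke the standard identification of $\varprojlim$ and $\varprojlim^1$ for a tower of abelian groups $(A_p)$ with transition maps $i_{p,p+1}$: one has $\varprojlim A_p = \ker(id - i \colon \prod A_p \to \prod A_p)$ and $\varprojlim^1 A_p = \operatorname{coker}(id - i \colon \prod A_p \to \prod A_p)$ (this is one of the standard constructions of $\varprojlim^1$, e.g. Weibel §3.5). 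Applying this with $A_p = H^{j-1}(C^*_p)$ for the cokernel term and $A_p = H^j(C^*_p)$ for the kernel term yields exactly
$$ 0 \to \varprojlim{}\!^1 H^{j-1}(C^*_p) \to H^j\left(\underset{\leftarrow}{tel}(\cC)\right) \to \varprojlim H^j(C^*_p) \to 0, $$
as desired. When the tower $H^{j-1}(C^*_p)$ satisfies the Mittag--Leffler condition, $\varprojlim^1$ vanishes and one recovers the Milnor sequence (here simply the isomorphism $H^j(\underset{\leftarrow}{tel}(\cC)) \cong \varprojlim H^j(C^*_p)$), which is the form used in Lemmas \ref{lem:SCRfiltqi} and \ref{lem:Q-exh}.

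The main obstacle — such as it is — is purely bookkeeping: verifying that the connecting homomorphism of the cone's long exact sequence is genuinely $id - i_*$ with the correct signs, rather than, say, $i_* - id$ or something shifted. This is routine but must be done with care about the shift $[-1]$ and the sign convention in the cone differential; everything else is a formal consequence of the long exact sequence and the textbook description of $\varprojlim^1$ as a cokernel. I would present the cone differential explicitly, chase one representative cocycle through the snake, and cite Weibel for the $\varprojlim/\varprojlim^1$ identification.
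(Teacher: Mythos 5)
Your proposal is correct and follows essentially the same route as the paper: the short exact sequence of complexes coming from the cone, its long exact cohomology sequence with connecting map identified as $[id-i]$, and the description of $\varprojlim$ and $\varprojlim^1$ as the kernel and cokernel of $id-i$ on the product. The extra care you flag about signs and the shift is reasonable but does not change the argument.
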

\begin{proof}
The long exact sequence associated to the short exact sequence of cochain complexes
$$ 0 \to \prod_p C^*_p[-1] \to \underset{\leftarrow}{tel}(\cC) \to \prod_p C^*_p  \to 0$$
gives an exact sequence
$$H^{j-1}\left(\prod_p C^*_p\right) \xrightarrow{[id - i]} H^{j-1}\left(\prod_p C^*_p\right) \to H^j\left(\underset{\leftarrow}{tel}(\cC)\right) \to H^j\left(\prod_p C^*_p\right) \xrightarrow{[id - i]} H^j\left(\prod_p C^*_p\right).$$
This gives the desired short exact sequence, as the $\varprojlim$ is defined to be the kernel of $[id - i]$, and $\varprojlim^1$ is defined to be the cokernel.
\end{proof}

\begin{rmk}\label{rmk:filtinvlim}
If $\cC$ is an inverse system of \emph{filtered} cochain complexes with \emph{filtered} cochain maps, then the inverse telescope complex acquires a filtration by
$$F_{\ge p}\left( \underset{\leftarrow}{tel}(\cC) \right) := \underset{\leftarrow}{tel}(F_{\ge p}\cC)$$
(it is clear how to regard the RHS as a subcomplex of $\underset{\leftarrow}{tel}(\cC)$).
\end{rmk}


\bibliographystyle{alpha}
\bibliography{./QHisSHD}


\end{document}